\newtheorem{thm}{Theorem}[section]
\newtheorem{cor}[thm]{Corollary}
\newtheorem{lem}[thm]{Lemma}
\newtheorem{prop}[thm]{Proposition}
\newtheorem{defn}[thm]{Definition}
\newtheorem{rem}[thm]{Remark}
\newcommand{\AbbsA}[1]{\lvert #1 \rvert}
\newcommand{\AbbsB}[1]{\bigl\lvert #1 \bigr\rvert}
\newcommand{\BracketB}[1]{\bigl\lbrace #1 \bigr\rbrace}
\newcommand{\BracketC}[1]{\Bigl\lbrace #1 \Bigr\rbrace}
\newcommand{\ContainB}[1]{\bigl( #1 \bigr)}
\newcommand{\ContainC}[1]{\Bigl( #1 \Bigr)}
\newcommand{\GroupingB}[1]{\bigl[ #1 \bigr]}
\newcommand{\GroupingD}[1]{\biggl[ #1 \biggr]}
\newcommand{\RR}[1]{\mathbb{R}^{#1}} %Euclidean space
\newcommand{\Tempered}{\mathscr{S}'(\RR{n})} %Tempered distributions
\newcommand{\Aclass}{\mathcal{A}} % Muckenhoupt weight class macro
\newcommand{\lesi}{\lesssim}
\newcommand{\dx}{d\mu(x)}
\newcommand{\dy}{d\mu(y)}
\newcommand{\rx}{\rho(x)}
\newcommand{\rxx}{\rho(x_0)}
\newcommand{\ry}{\rho(y)}
\newcommand{\supp}{\operatorname{supp}}
\newcommand{\LL}{\mathfrak{L}}
\newcommand{\f}{\frac}
\newcommand{\Om}{\Omega}
\newcommand{\vc}{\infty}
\newcommand{\rad}{\rm rad}
\title[Maximal function characterizations for new local Hardy type spaces]{Maximal function characterizations for new local Hardy type spaces on spaces of homogeneous type}         % Enter your title between curly braces
\author{The Anh Bui}
\address{Department of Mathematics, Macquarie University, NSW 2109,
Australia}
\email{the.bui@mq.edu.au, bt\_anh80@yahoo.com}
\author{Xuan Thinh Duong}
\address{Department of Mathematics, Macquarie University, NSW 2109,
Australia}
\email{xuan.duong@mq.edu.au}
\author{Fu Ken Ly}
\address{Mathematics Learning Centre, University of Sydney, NSW 2006, Australia}
\email{ken.ly@sydney.edu.au}
\keywords{Hardy space, atomic decomposition, the nontangential maximal function, the radial maximal function, critical function, Schr\"odinger operator.}
\subjclass[2010]{42B30, 42B35, 47B38}
\begin{document}

\begin{abstract}
	Let $X$ be a space of homogeneous type and let $\mathfrak{L}$ be a nonnegative self-adjoint operator on $L^2(X)$ enjoying Gaussian estimates. The main aim of this paper is twofold. Firstly, we prove (local) nontangential and radial maximal function characterizations for the local Hardy spaces associated to $\mathfrak{L}$. This gives the maximal function characterization for local Hardy spaces in the sense of Coifman and Weiss provided that $\mathfrak{L}$ satisfies certain extra conditions. Secondly we introduce  local Hardy spaces associated with a critical function $\rho$ which are motivated by the theory of Hardy spaces related to Schr\"odinger operators and of which include the local Hardy spaces of Coifman and Weiss as a special case. We then prove that these local Hardy spaces can be characterized by (local) nontangential and radial maximal functions related to $\mathfrak{L}$ and $\rho$, and by global maximal functions associated to `perturbations' of $\mathfrak{L}$. We apply our theory to obtain a number of new results on maximal characterizations for the local Hardy type spaces in various settings ranging from Schr\"odinger operators on manifolds to Schr\"odinger operators on connected and simply connected nilpotent Lie groups.
\end{abstract}
\date{}

\maketitle

\tableofcontents

\section{Introduction}\label{sec: intro}

The main aim of this article is to obtain maximal function characterizations of various local Hardy-type spaces beyond the classical local Hardy spaces on a space of homogeneous type. 

Hardy spaces, which originated in the study of boundary values of holomorphic functions, have since  proven to be highly useful in many problems in analysis and partial differential equations. See for example \cite{BGS, FS, SW} and the references therein. Part of their usefulness arises from their many characterizations. We shall highlight the ones most pertinent to our article, which are maximal  and atomic characterizations. For $0<p\le 1$, a distribution $f\in \Tempered$ belongs to the Hardy space $H^p(\RR{n})$ if any of the following occurs:
\begin{enumerate}[(i)]
	\item $\sup\limits_{0<t<\infty} |e^{-t^2\Delta}f(x)|\in L^p(\RR{n})$
	\item $\sup\limits_{0<t<\infty}\sup\limits_{|x-y|<t} |e^{-t^2\Delta}f(y)|\in L^p(\RR{n})$
	\item $f$ has a decomposition $f=\sum_{j=0}^\infty \lambda_j a_j$, with $\sum_j |\lambda_j|^p<\infty$, and each $a_j$ is an `atom' in the following sense: $a_j$ is supported in some ball $B$, $|a_j|\le |B|^{-1/p}$, and the cancellation $\int x^\beta a_j(x)\,dx=0$ holds whenever $\beta$ is a multi-index of order $|\beta|\le \lfloor n(\f{1}{p}-1)\rfloor$. When $\f{n}{n+1}<p\le 1$ then one can use atoms with $\int a_j(x)\,dx=0$. 
\end{enumerate}
The objects in (i) and (ii) are typically referred to as the radial (or vertical) and the non-tangential maximal functions respectively. If we denote the spaces arising from (i), (ii) and (iii) by $H^p_{\Delta, \rad}(\RR{n})$, $H^p_{\Delta, \max}(\RR{n})$ and $H^p_{at}(\RR{n})$ then we can describe the above characterization more succinctly as
\begin{align}\label{intro1}
	H^p_{\Delta, \rad}(\RR{n})\equiv H^p_{\Delta, \max}(\RR{n}) \equiv H^p_{at}(\RR{n})
\end{align}
for all $0<p\le 1$. 

We are interested in generalizations of \eqref{intro1} to metric spaces other than $\RR{n}$ and to operators other than the Laplacian $-\Delta$. In the first direction Coifman and Weiss \cite{CW} introduced $H^p_{at}(X)$  on a space $X$ of homogeneous type (see \eqref{doublingcondition} below) and gave versions of \eqref{intro1} under further geometric conditions on $X$. Whether something like \eqref{intro1} holds without any extra condition on $X$ is still open, but the case when $X$ has  `reverse doubling' has been solved in \cite{U, YZ2} for $p\in (p_0,1]$ with certain $p_0\in (0,1)$.

For the second direction (in generalizing the Laplacian to some other operator $\LL$) we cite the body of work in \cite{DHMMY,DL,DY,DZ1,DZ2,DZ,HLMMY,HM,JY}. The starting point here is to replace the semigroup $e^{-t^2\Delta}$ in (i) and (ii) by some other semigroup $e^{-t^2\LL}$, but one can define an adaptation of (iii) by encoding the cancellation of atoms using $\LL$ in a certain way (see \cite{HLMMY} and also Definition \ref{def: L-atom} below). One may ask to what extent \eqref{intro1} can hold in these settings? That is, when do we have
\begin{align}\label{intro2}
	H^p_{\LL, \rad}(X)\equiv H^p_{\LL, \max}(X) \equiv H^p_{\LL,at}(X) 
\end{align}
for $0<p\le 1$? It turns out this can be achieved if $\LL$ is a non-negative and self adjoint on $L^2(X)$ with  Gaussian upper bounds on the kernel of $e^{-t^2\LL}$ (see assumptions (A1) and (A2) in Section \ref{sec: local hardy operator}). This was proved in full only recently in \cite{SY} (see also \cite{SY2}). Prior to \cite{SY} the direction $H^p_{\LL, \rad}(X)\supseteq H^p_{\LL, \max}(X) \supseteq H^p_{\LL,at}(X)$ can be found in \cite{DL,JY,HLMMY}, but the reverse direction was only known for special cases of $\LL$ \cite{DZ,HLMMY,HM}. 

We would like to point out in passing that one can add the atomic space of Coifman and Weiss $H^p_{at}(X)$, to the picture in \eqref{intro2} if the semigroup $e^{-t^2\LL}$ has H\"older regularity and is conservative (see assumptions (A3) and (A4) in Section \ref{sec: local hardy} below) for $\f{n}{n+\delta}<p\le 1$ where $\delta$ is the H\"older regularity exponent. We refer the reader to Lemma 9.1 in \cite{HLMMY} and the proof of Theorem \ref{mainthm2} in the present article. This yields one answer to the question of Coifman and Weiss when $X$ may not have reverse doubling but admits the existence of an operator $\LL$ with the appropriate properties. 

\bigskip

Our paper is concerned with local versions of the above theory. Local Hardy spaces $h^p(\RR{n})$ were introduced by Goldberg \cite{Go} to address certain shortcomings of their global counterparts (a good account of this is in \cite{Go}) and have proven to be more useful for certain problems in partial differential equations. They can be defined by restricting $t$ to less than 1 in the maximal functions of (i) and (ii) above, or by restricting the cancellation requirement in (iii) to only balls whose radii are less than 1. Then the following local version of \eqref{intro1} 
\begin{align}\label{intro3}
	h^p_{\Delta, \rad}(\RR{n})\equiv h^p_{\Delta, \max}(\RR{n}) \equiv h^p_{at}(\RR{n})
\end{align}
holds for $0<p\le 1$ (see \cite{Go}).

In the first part of our article we consider an operator $\LL$ satisfying (A1) and (A2) and by an appropriate modfication of (i)-(iii) we can define the local Hardy spaces $h^p_{\LL, \rad}(X)$, $h^p_{\LL, \max}(X)$ and $h^p_{\LL,at}(X)$  (see section \ref{sect-mainresult}). We then prove a generalization of \eqref{intro2} and \eqref{intro3} to 
\begin{align}\label{intro4}
	h^p_{\LL, \rad}(X)\equiv h^p_{\LL, \max}(X) \equiv h^p_{\LL,at}(X) 
\end{align}
for $0<p\le 1$, which is the content of Theorem \ref{mainthm1}. This can be viewed as a local version of those in \cite{SY}. If one further assumes (A3) and (A4) then one can add $h^p_{at}(X)$ to picture for $\f{n}{n+\delta}<p\le 1$, which is the content of Theorem \ref{mainthm2}. We remark that the ideas in the proof of Theorem \ref{mainthm1} rely on the innovations in \cite{SY}, although some significant modifications are needed, not least of which the development of an inhomogeneous Calder\'on reproducing formula (Proposition \ref{Calderon prop}).

In the second part of our article we consider local Hardy-type spaces where the notion of `localness' may vary spatially. More precisely we replace the role of 1 in the definitions of the spaces in \eqref{intro3} and \eqref{intro4} by a positive function $\rho(x)$ (which we call a `critical radius function') that does not fluctuate too quickly in a certain sense (see \eqref{criticalfunction}). Spaces induced by such a function $\rho$ arise as spaces related to lower order perturbations of $\LL$. A model case is the Schr\"odinger operator $-\Delta+V$ where one has
\begin{align}\label{intro5}
	H^p_{-\Delta+V, \,\rad}(X)=h^p_{at,\rho}(X)
\end{align}
for certain potentials $V$ and with $\rho$ related to $V$. We wish to point out that the atomic space in \eqref{intro5} is a modification of the atomic spaces of Coifman and Weiss -- see Definition \ref{def: rho atoms}. 
The spaces in \eqref{intro5} and their identification  were originally studied   in \cite{DZ1,DZ2,DZ} for $X=\RR{n}$, while variations have since  been considered in say \cite{D2,D3,LL,YZ}. 

With these examples in mind, we are interested in developing a general framework for \eqref{intro5} on a space $X$ of homogeneous type. This was done in \cite{YZ} for $p=1$ assuming that $X$  has reverse doubling (there the term `admissible function' is used for $\rho$); however we found we could not extend their approach to $p$ below 1. Thus a key motivation for our work is to find a way to address the scale $p<1$. We emphasize that we do not assume the reverse doubling condition in the theory.

We firstly obtain a generalization of \eqref{intro3} and \eqref{intro4} in Theorem \ref{mainthm2s}:
\begin{align}\label{intro6}
	h^p_{\LL, \rad,\rho}(X)\equiv h^p_{\LL, \max,\rho}(X) \equiv h^p_{at,\rho}(X) 
\end{align}
for $\f{n}{n+\delta}<p\le 1$. Next we extend \eqref{intro5} to an operator $L$ that can be considered a perturbation of $\LL$ in a sense (encapsulated in assumptions (B1)-(B3) in Section \ref{sec: local hardy rho}) and obtain
\begin{align}\label{intro7}
H^p_{L,\rad}(X)=H^p_{L,\max}(X)=h^p_{at,\rho}(X)
\end{align}
for a suitable range of $p$. This is contained in Theorem \ref{mainthm3}. It is worth noting that the proof \eqref{intro7} relies on the theory of local Hardy spaces that we develop for \eqref{intro4}. 

We conclude this introduction with some comments on our results. Firstly we give a list of examples of our setting in Section \ref{sec: applications}. Although the list is not exhaustive, this is intended to show the variety of possible
applications and the generality of our assumptions. Secondly we remark that our setting provides a unifying way to studying the maximal function chracterization for  local Hardy type spaces related to Schr\"odinger-type operators with non-negative potentials satisfying a reverse H\"older inequality. Note that these conditions are technical conditions which exclude potentials with small negative parts. We believe that our approach is flexible enough to give maximal function chracterizations for local  Hardy type spaces with weights or local Musielak-Orlicz Hardy type spaces. We shall leave these for a future project. Thirdly our approach can be adapted to settings with reverse doubling to give maximal function characterizations in terms of certain `approximations of the identity', extending the results in \cite{YZ} for $p=1$ to $0<p\leq 1$. See Remark \ref{rem-YZ}.

The rest of the article is organized in the following manner. Section \ref{sect-mainresult} gives the statement of our main results. In Section \ref{sec: prelim} we give some preliminary material including a covering lemma,  an inhomogeneous Calder\'on reproducing formula and some estimates for critical functions and functional calculus kernels. We prove \eqref{intro3} and \eqref{intro4} in Section \ref{sec: proof local hardy}, and \eqref{intro6} and \eqref{intro7} in Section \ref{sec: proof local hardy rho}. Section \ref{sec: applications} contains examples of situations for which our setting applies, and a few of the more technical proofs are relegated to the appendix in Section \ref{sec: appendix}.

\bigskip

Throughout the paper, we always use $C$ and $c$ to denote positive constants that are independent of the main parameters involved but whose values may differ from line to line. We will write $A\lesi B$ if there is a universal constant $C$ so that $A\leq CB$ and $A\sim B$ if $A\lesi B$ and $B\lesi A$. We denote $
a\wedge b =\min\{a,b\}, a\vee b =\max\{a,b\}$.  We will repeatedly apply the inequality $e^{-x}x^\alpha\leq C(\alpha) e^{-x/2}$ for $x\geq 0$ and $ \alpha>0$ without mention. We write $B(x,r)$ to denote the ball centred at $x$ with radius $r$. By a `ball $B$' we mean the ball $B(x_B, r_B)$ with some fixed centre $x_B$ and radius $r_B$. 
 
\section{Statement of main results}\label{sect-mainresult}
Throughout the rest of this article $X$ will be a space of homogeneous type. That is, $(X,d, \mu)$ is a metric space endowed with a nonnegative Borel measure $\mu$ with the following `doubling' condition: there exists a constant $C_1>0$ such that
\begin{equation}\label{doublingcondition}
\mu(B(x,2r))\leq C_1\mu(B(x,r))
\end{equation}
for all $x\in X$ and $r>0$ and all balls $B(x,r):=\{y\in X: d(x,y)<r\}$. In this paper, we assume that $\mu(X)=\vc$.

It is not difficult to see that the condition \eqref{doublingcondition} implies that there exists a constant $n\geq 0$ so that
\begin{equation}\label{doub2}
\mu(B(x,\lambda r))\leq C_2\lambda^n \mu(B(x,r))
\end{equation}
for all $x\in X, r>0$ and $\lambda\geq 1$, and
\begin{equation}\label{doub2s}
\mu(B(x, r))\leq C_3\mu(B(y,r))\Big(1+\f{d(x,y)}{r}\Big)^n
\end{equation}
for all $x,y\in X, r>0$.

Note that the doubling condition \eqref{doub2} implies that 
\[
\f{1}{\mu(B(x,\sqrt{t}))}\exp\Big(-\f{d(x,y)^2}{ct}\Big)\lesi \f{1}{\mu(B(y,\sqrt{t}))}\exp\Big(-\f{d(x,y)^2}{c't}\Big)
\]
and
\[
\f{1}{\mu(B(x,\sqrt{t}))}\exp\Big(-\f{d(x,y)^2}{ct}\Big)\lesi \f{1}{\mu(B(x,d(x,y)))}\exp\Big(-\f{d(x,y)^2}{c't}\Big)
\]
for any $c'>c$. These two inequalities will be used frequently without mentioning.

In this paper, unless otherwise specified, for a ball $B$ we shall mean $B=B(x_B,r_B)$.

\subsection{Local Hardy spaces associated to operators}\label{sec: local hardy operator}
Let $\mathfrak{L}$ be a nonnegative self-adjoint operator on $L^2(X)$ which generates semigroups $\{e^{-t\mathfrak{L}}\}_{t>0}$. Denote by $\widetilde{p}_t(x,y)$ and $\tilde{q}_t(x,y)$ the  kernels associated with $e^{-t\mathfrak{L}}$ and $t\mathfrak{L} e^{-t\mathfrak{L}}$, respectively. 

We assume that $\LL$ satisfies the following conditions.

\begin{enumerate}
	\item[(A1)] $\LL$ is  a nonnegative self-adjoint operator on $L^2(X)$;
	\item[(A2)] The kernel $\widetilde{p}_t(x,y)$ of $e^{-t\LL}$ admits a Gaussian upper bound. That is, there exist two positive constants $C$ and  $c$ so that for all $x,y\in X$ and $t>0$,
	\begin{equation}
	\tag{GE}\label{GE}
	\displaystyle |\widetilde{p}_t(x,y)|\leq \f{C}{\mu(B(x,\sqrt{t}))}\exp\Big(-\f{d(x,y)^2}{ct}\Big).
	\end{equation}
\end{enumerate}

We now give a definition of the (local) atomic Hardy spaces associated to operators for $0<p\leq 1$. Note that the particular case $p=1$ was investigated in \cite{GLY}. 
\begin{defn}\label{def: L-atom}
	Let $p\in (0,1]$, $q\in [1,\vc]\cap (p,\vc]$ and $M\in \mathbb{N}$. A function $a$ supported in a ball $B$ is called a  (local) $(p,q,M)_\LL$-atom if $\|a\|_{L^q(X)}\leq \mu(B)^{1/q-1/p}$ and either
	\begin{enumerate}[{\rm (a)}]
		\item $r_B\geq 1$;  or
		\item $r_B<1$ and  if there exists a
		function $b\in {\mathcal D}(\LL^M)$ such that
		\begin{enumerate}[{\rm (i)}]
			\item  $a=\LL^M b$;
		    \item $\supp\LL^{k}b\subset B, \ k=0, 1, \dots, M$;
		    \item $\|(r_B^2\LL)^{k}b\|_{L^q(X)}\leq
		r_B^{2M}\mu(B)^{\f{1}{q}-\f{1}{p}},\ k=0,1,\dots,M$.
			\end{enumerate}
	\end{enumerate}
\end{defn}
It is obvious that the atoms in (a) does not depend on $\LL$ and $M$ but for the sake of convenience we shall abuse notation and use $(p,q,M)_\LL$ to refer to atoms in both (a) and (b) of Definition \ref{def: L-atom}.\\

Next we define the atomic Hardy space $h^{p,q}_{\LL,at,M}(X)$.
\begin{defn}
	Given $p\in (\f{n}{n+1},1]$, $q\in [1,\vc]\cap (p,\vc]$ and $M\in \mathbb{N}$, we  say that $f=\sum
	\lambda_ja_j$ is an (local) atomic $(p,q,M)_\LL$-representation if
	$\{\lambda_j\}_{j=0}^\infty\in l^p$, each $a_j$ is a (local) $(p,q,M)_\LL$-atom,
	and the sum converges in $L^2(X)$. The space $h^{p,q}_{\LL,at,M}(X)$ is then defined as the completion of
	\[
	\left\{f\in L^2(X):f \ \text{has an atomic
		$(p,q,M)_\LL$-representation}\right\},
	\]
	with the norm given by
	$$
	\|f\|_{h^{p,q}_{\LL,at,M}(X)}=\inf\left\{\left(\sum|\lambda_j|^p\right)^{1/p}:
	f=\sum \lambda_ja_j \ \text{is an atomic $(p,q,M)_\LL$-representation}\right\}.
	$$

\end{defn}
For $f\in L^2(X)$, we define the localized nontangential maximal function as
$$
f^*_{\LL}(x) = \sup_{0<t<1}\sup_{d(x,y)<t}|e^{-t^2\LL}f(y)|
$$
and the localized radial maximal function as
$$
f^+_{\LL}(x) = \sup_{0<t<1}|e^{-t^2\LL}f(x)|.
$$

The maximal Hardy spaces associated to $\mathfrak{L}$ is defined as follows.
\begin{defn}
	Given $p\in (0,1]$,  the Hardy space $h^{p}_{\LL, {\rm max}}(X)$ is defined as the completion of
	$$
	\left\{f
	\in L^2(X): f^*_{\LL} \in L^p(X)\right\},
	$$
	with the norm given by
	$$
	\|f\|_{h^{p}_{\LL, {\rm max}}(X)}=\|f^*_{\LL}\|_{L^p(X)}.
	$$
	
	Similarly, the Hardy space $h^{p}_{\LL, {\rm rad}}(X)$ is defined as the completion of
	$$
	\left\{f
	\in L^2(X): f^+_{\LL} \in L^p(X)\right\},
	$$
	with the norm given by
	$$
	\|f\|_{h^{p}_{\LL,{\rm rad}}(X)}=\|f^+_{\LL}\|_{L^p(X)}.
	$$
\end{defn}
It is obvious that $h^{p}_{\LL, {\rm max}}(X)\subset h^{p}_{\LL, {\rm rad}}(X)$ for $0<p\leq 1$. Moreover, by a similar argument to Step I in the proof of Theorem 3.5 in \cite{DHMMY}, we obtain that $h^{p,q}_{\LL,at,M}(X)\subset h^{p}_{\LL, {\rm max}}(X)$ provided $p\in (0,1]$, $q\in [1,\vc]\cap (p,\vc]$ and $M>\f{n}{2}(\f{1}{p}-1)$. Hence, the following conclusion holds true
\begin{equation}\label{inclusion of HL}
h^{p,q}_{\LL,at,M}(X)\subset h^{p}_{\LL, {\rm max}}(X)\subset h^{p}_{\LL, {\rm rad}}(X).
\end{equation}

So it is both natural and interesting to raise the question of whether the reverse inclusion of \eqref{inclusion of HL} still holds true. Our first main result is to give an affirmative answer to this question.

\begin{thm}\label{mainthm1}
	Let $\LL$ satisfy (A1) and (A2). Let $p\in (0,1]$, $q\in [1,\vc]\cap (p,\vc]$ and $M>\f{n}{2}(\f{1}{p}-1)$. Then the Hardy spaces $h^{p,q}_{\LL,at,M}(X)$, $h^{p}_{\LL, {\rm max}}(X)$ and $h^{p}_{\LL, {\rm rad}}(X)$ coincide with equivalent norms.
\end{thm}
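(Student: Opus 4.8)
The inclusions in \eqref{inclusion of HL} already give $h^{p,q}_{\LL,at,M}(X)\subseteq h^{p}_{\LL,{\rm max}}(X)\subseteq h^{p}_{\LL,{\rm rad}}(X)$, so the whole content is the reverse chain; since all three spaces are defined as completions of subsets of $L^2(X)$ it suffices to prove the norm estimate $\|f\|_{h^{p,q}_{\LL,at,M}(X)}\lesssim \|f^+_{\LL}\|_{L^p(X)}$ for $f\in L^2(X)$ with $f^+_\LL\in L^p(X)$, i.e.\ to produce from the radial maximal control an atomic $(p,q,M)_\LL$-decomposition with the right coefficient bound. The plan is to run a Calder\'on--Zygmund / good-$\lambda$ stopping-time decomposition of $f$ adapted to the sublevel sets of $f^+_\LL$, following the strategy of \cite{SY} but in the inhomogeneous (local) setting, the principal new tool being the inhomogeneous Calder\'on reproducing formula of Proposition \ref{Calderon prop}.

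First I would fix the representation of the identity coming from Proposition \ref{Calderon prop}: write $f = \sum_{k} Q_k^{(1)} R_k f$ (schematically) where the $Q_k$ are built from $\psi(t\LL)$-type operators with $t$ truncated at scale $1$, so that the pieces at scales $\geq 1$ carry no cancellation (these will feed the type-(a) atoms, $r_B\ge1$) and the pieces at scales $<1$ carry the $\LL^M$-cancellation needed for type-(b) atoms. For each $j\in\ZZ$ set $\Omega_j=\{x: f^+_\LL(x)>2^j\}$, which is open with $\mu(\Omega_j)<\infty$, and apply the Whitney-type covering lemma from Section \ref{sec: prelim} to get balls $\{B_{j,i}\}$ with bounded overlap, comparable-size dilates still inside $\Omega_j$, and radii controlled by the distance to $\Omega_j^c$; attach a partition of unity $\{\phi_{j,i}\}$. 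Then decompose each integral band of the reproducing formula against $\{\Omega_j\}$ and $\{\phi_{j,i}\}$, grouping the contributions landing in the annular region $\Omega_j\setminus\Omega_{j+1}$. The $L^2$-convergence of all the rearranged sums has to be checked (this is where nonnegativity and self-adjointness of $\LL$, plus Gaussian bounds / functional calculus off-diagonal estimates from Section \ref{sec: prelim}, get used).

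The core step is then the \emph{molecular-to-atomic} bookkeeping: show that each grouped piece $b_{j,i}$ is (a fixed multiple of) a $(p,q,M)_\LL$-atom associated to an enlarged ball $\tilde B_{j,i}$, and that $\sum_{j,i}|\lambda_{j,i}|^p\lesssim \sum_j 2^{jp}\mu(\Omega_j)\sim \|f^+_\LL\|_{L^p(X)}^p$. Concretely: (i) the support condition — one truncates the $t$-integral and uses finite propagation speed / Gaussian decay of the kernels of $\psi(t\LL)$ and $(t^2\LL)^k\tilde\psi(t\LL)$ to localize $b_{j,i}$ and $\LL^k b_{j,i}$ into $\tilde B_{j,i}$, at the cost of a rapidly-decaying tail that is absorbed by summing geometrically over dilated shells (the standard ``$\sum 2^{-N\ell}$'' trick); (ii) the cancellation/size conditions in Definition \ref{def: L-atom}(b) — writing $b_{j,i}=\LL^M b_{j,i}'$ via $\psi$ chosen so that $\psi(s)=s^M\eta(s)$, and estimating $\|(r_B^2\LL)^k b_{j,i}'\|_{L^q}$ by duality against $g\in L^{q'}$ using that on the annulus $f^+_\LL\le 2^{j+1}$ and hence $\|\psi(t\LL)f\|_{L^\infty(\tilde B_{j,i})}\lesssim 2^j$ for $t\lesssim r_{B_{j,i}}$ — plus, near the boundary scale $t\sim r_{B}$, a tail term controlled using $f^+_\LL$ over $\Omega_j^c$; (iii) the $\ell^p$ bound — with $\lambda_{j,i}\sim 2^j\mu(\tilde B_{j,i})^{1/p}$ one uses bounded overlap of $\{B_{j,i}\}_i$ to sum over $i$ and then telescopes over $j$. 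The role of the hypothesis $M>\tfrac n2(\tfrac1p-1)$ is exactly to make the tail sums in (i) and (ii) converge (the ``molecule has enough decay'' condition).

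I expect the main obstacle to be the honest construction and deployment of the \emph{inhomogeneous} reproducing formula and the attendant truncation of the scale parameter at $1$: unlike the homogeneous case one cannot integrate $t$ over all of $(0,\infty)$, so one has a boundary term at $t\sim1$ (and, inside the Whitney decomposition, boundary terms at $t\sim r_{B_{j,i}}$) which do not have the clean cancellation of the interior band; controlling these boundary pieces — showing they still split into type-(a) atoms plus acceptable error, and that the errors telescope against $\sum_j 2^{jp}\mu(\Omega_j)$ — is the delicate part and is precisely where ``significant modifications'' to the method of \cite{SY} are needed, as the introduction warns. A secondary technical nuisance is keeping all the functional-calculus kernel estimates uniform as the scales range over $(0,1]$ while interacting with the doubling bounds \eqref{doub2}--\eqref{doub2s}; these are routine but must be arranged so the constants in the atom normalization do not blow up.
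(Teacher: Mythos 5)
Your plan has the right overall shape and matches the paper's strategy: \eqref{inclusion of HL} reduces everything to showing the radial space embeds into the atomic one, and one builds the atomic decomposition by feeding $f$ through the inhomogeneous Calder\'on reproducing formula (Proposition~\ref{Calderon prop}), splitting the $t$-integral into an interior band (giving type-(b) atoms via $(t^2\LL)^M$) plus a boundary term at $t\sim 1/2$ (giving type-(a) atoms), then running a Whitney/tent decomposition over level sets of a maximal function. The handling of the boundary term, the use of finite propagation speed to control supports, and the role of $M>\tfrac n2(\tfrac1p-1)$ as the molecular decay threshold are all correctly identified.

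However, there is a genuine gap: you propose to run the level-set decomposition directly on $\Omega_j=\{f^+_\LL>2^j\}$, the level sets of the \emph{radial} maximal function, and you invoke ``on the annulus $f^+_\LL\le 2^{j+1}$ and hence $\|\psi(t\sqrt{\LL})f\|_{L^\infty(\tilde B_{j,i})}\lesssim 2^j$ for $t\lesssim r_{B_{j,i}}$.'' That implication does not follow as stated. The Whitney balls $\tilde B_{j,i}$ live \emph{inside} $\Omega_j$, where $f^+_\LL$ is large; what you actually know is that $f^+_\LL\le 2^{j+1}$ at some nearby point $y\in\Omega_j^c$ with $d(x,y)\lesssim r_{B_{j,i}}\sim t$. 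To pass from a bound on the \emph{radial} quantity at $y$ to a bound on $|\psi(t\sqrt{\LL})f(x)|$ with $x\ne y$ (and for a function $\psi$ other than the Gaussian $e^{-\lambda^2}$), you need an estimate on a \emph{nontangential} (indeed Peetre-type grand) maximal function at $y$, and you need to know its $L^p$ norm is comparable to $\|f^+_\LL\|_{L^p}$. This is exactly the content of Propositions~\ref{prop1} and~\ref{prop2} in the paper --- the vector-valued/Peetre maximal function self-improving argument, driven by the inhomogeneous discrete reproducing formula of Lemma~\ref{lem-DKP} and the off-diagonal kernel estimates of Lemma~\ref{lem1} --- and it occupies the bulk of Section~\ref{sec: proof local hardy}. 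Your proposal never mentions it. Without this step the chain $h^p_{\LL,\rad}\subset h^p_{\LL,\max}$ (and the freedom to replace $e^{-t^2\LL}$ by $\eta_\ell(t\sqrt{\LL})$, $\Psi^{(k)}(t\sqrt{\LL})$ and to enlarge the aperture) is unavailable, and the atom size bounds cannot be closed. So the proposal is essentially the paper's route for the Calder\'on/tent part, but it silently omits the radial-to-nontangential/Peetre comparison, which is a nontrivial and necessary component of the argument rather than a formality.
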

Due to this coincidence, we shall write $h^p_{\LL}(X)$ for any $h^{p,q}_{\LL,at,M}(X)$, $h^{p}_{\LL, {\rm max}}(X)$ and $h^{p}_{\LL, {\rm rad}}(X)$ with $p\in (0,1]$, $q\in [1,\vc]\cap (p,\vc]$ and $M>\f{n}{2}(\f{1}{p}-1)$.\\

%The theorem can be considered as a local version of the main results in \cite{SY} (see also \cite{SY2}). However, in order to adapt the approach  in \cite{SY} to our present situation, it requires some significant modifications and improvements such as a new Calder\'on reproducing formula. 

\subsection{Local Hardy spaces}\label{sec: local hardy}

The second main result is to give a maximal function characterization for the local Hardy spaces on a space of homogeneous type. Note that this was proved by Goldberg \cite{Go} in the Euclidean setting, however in spaces of homogeneous type this problem is much more difficult. This was solved by Uchiyama \cite{U} for the Hardy spaces $H^p$, but the range of $p$ seems not to be optimal. The complete solution can be found in \cite{YZ2} under the extra condition of the reverse doubling condition imposed in the underlying spaces. The second main aim of this paper is to deliver a new result on maximal function characterizations of local Hardy spaces  associated to an operator. 

For convenience we recall the notion of (local) atomic Hardy spaces \cite{CW,Go,YZ}.

\begin{defn}
	Let $p\in (\f{n}{n+1},1]$ and $q\in [1,\vc]\cap (p,\vc]$. A function $a$ is called a  $(p,q)$-atom associated to the ball $B$ if
	\begin{enumerate}[{\rm (i)}]
		\item ${\rm supp}\, a\subset B$;
		\item $\|a\|_{L^q(X)}\leq \mu(B)^{1/q-1/p}$;
		\item $\displaystyle \int a(x)d\mu(x) =0$ if $r_B\leq 1$.
	\end{enumerate}
\end{defn}

We now define the atomic Hardy space on $X$.
\begin{defn}
	Given $p\in (\f{n}{n+1},1]$ and $q\in [1,\vc]\cap (p,\vc]$, we  say that $f=\sum
	\lambda_ja_j$ is an atomic $(p,q)$-representation if
	$\{\lambda_j\}_{j=0}^\infty\in l^p$, each $a_j$ is a $(p,q)$-atom,
	and the sum converges in $L^2(X)$. The space $h^{p,q}_{at}(X)$ is then defined as the completion of
	\[
		\left\{f\in L^2(X):f \ \text{has an atomic
		$(p,q)$-representation}\right\},
	\]
	with the norm given by
	$$
	\|f\|_{h^{p,q}_{at}(X)}=\inf\left\{\left(\sum|\lambda_j|^p\right)^{1/p}:
	f=\sum \lambda_ja_j \ \text{is an atomic $(p,q)$-representation}\right\}.
	$$
	\end{defn}

Assume now that the operator $\LL$ satisfies the following two additional conditions:

\begin{enumerate}
	\item[(A3)] There is a positive constant $\delta_1>0$ so that
\begin{equation}\tag{H}\label{H}
	|\widetilde{p}_t(x,y)-\widetilde{p}_t(\overline{x},y)|\leq \f{C}{\mu(B(x,\sqrt{t}))}\Big[\f{d(x,\overline{x})}{\sqrt{t}}\Big]^{\delta_1}\exp\Big(-\f{d(x,y)^2}{ct}\Big),
	\end{equation}
	whenever $d(x,\overline{x})\leq [\sqrt{t}+d(x,y)]/2$ and $t>0$;

	\item [(A4)] For every $x\in X$,
	\begin{equation}\tag{C}\label{C}
	\displaystyle \int_X \widetilde{p}_t(x,y)\dx= 1.
	\end{equation}
\end{enumerate}

Then we have the following. 
\begin{thm}\label{mainthm2}
	Let $\LL$ satisfy (A1), (A2), (A3) and (A4). Let $p\in (\f{n}{n+\delta_1},1]$ and $q\in [1,\vc]\cap (p,\vc]$. Then the Hardy spaces $h^{p,q}_{at}(X)$, $h^{p}_{\LL, {\rm max}}(X)$ and $h^{p}_{\LL, {\rm rad}}(X)$ coincide with equivalent norms. Hence, in this case, we shall write $h^p(X)$ for any $h^{p,q}_{at}(X)$, $h^{p}_{\LL, {\rm max}}(X)$ and $h^{p}_{\LL, {\rm rad}}(X)$ with $p\in (\f{n}{n+\delta_1},1]$ and $q\in [1,\vc]\cap (p,\vc]$.
\end{thm}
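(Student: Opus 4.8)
The plan is to deduce Theorem \ref{mainthm2} from Theorem \ref{mainthm1} together with the classical identification of atomic spaces; the new content is therefore the comparison $h^{p,q}_{at}(X)\equiv h^{p,q}_{\LL,at,M}(X)$ (with $M>\tfrac n2(\tfrac1p-1)$) under the extra hypotheses (A3)--(A4), in the range $p\in(\tfrac{n}{n+\delta_1},1]$. First I would recall that by Theorem \ref{mainthm1} the spaces $h^{p,q}_{\LL,at,M}(X)$, $h^p_{\LL,\max}(X)$, $h^p_{\LL,\rad}(X)$ all coincide; so it suffices to show $h^{p,q}_{at}(X)\hookrightarrow h^p_{\LL,\max}(X)$ and, conversely, $h^p_{\LL}(X)\hookrightarrow h^{p,q}_{at}(X)$. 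For the first inclusion, given a classical $(p,q)$-atom $a$ supported in a ball $B$ with $r_B\le 1$ and $\int a=0$, one estimates $f^*_\LL$ on $2B$ using the $L^q\to L^q$ (indeed $L^q\to L^p$) boundedness of the maximal operator derived from (GE), and off $2B$ one uses the cancellation of $a$ together with the Hölder regularity (H) of the kernel $\widetilde p_t$ in exactly the manner of the standard Calderón--Zygmund argument (this is where $p>\tfrac{n}{n+\delta_1}$ is needed so that the resulting series $\sum_k 2^{-k\delta_1}\mu(2^kB)^{1-p/\ldots}$ converges in the $L^p$ quasi-norm). For atoms with $r_B\ge 1$ no cancellation is needed and the off-diagonal decay from (GE) alone suffices. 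This is essentially Lemma 9.1 of \cite{HLMMY}, so I would cite that and indicate the minor modifications for $q<\infty$ and for the local truncation $0<t<1$.

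For the reverse inclusion $h^p_{\LL}(X)\hookrightarrow h^{p,q}_{at}(X)$, I would start from the atomic characterization $h^p_{\LL}(X)=h^{p,q}_{\LL,at,M}(X)$ and show that every $(p,q,M)_\LL$-atom can be written as a (convergent, norm-controlled) sum of classical $(p,q)$-atoms. Atoms of type (a) (with $r_B\ge 1$) are already, up to a harmless constant, classical atoms since no cancellation is required when $r_B\ge 1$. For an atom $a=\LL^M b$ of type (b), supported in a ball $B$ of radius $r_B<1$, one needs to produce the vanishing-integral condition: here conservativity (C) is the crucial ingredient, since $\int_X a\,d\mu=\int_X \LL^M b\,d\mu=\langle \LL^M b,\mathbf 1\rangle$, and (C) (equivalently $e^{-t\LL}\mathbf 1=\mathbf 1$, so $\LL\mathbf1=0$ in the appropriate weak sense) forces $\int_X a\,d\mu=0$. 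Then the pointwise bounds in (b)(iii) with $k=0$ give $\|a\|_{L^q}\le \mu(B)^{1/q-1/p}$ and $\supp a\subset B$ with $r_B<1$, so $a$ is itself a classical $(p,q)$-atom — no decomposition is even needed. Assembling, a $(p,q,M)_\LL$-representation becomes a $(p,q)$-representation with the same $\ell^p$ coefficient bound, giving the inclusion with control of norms; density of $L^2$-functions in both completions then upgrades this to equality of the spaces.

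The one genuinely delicate point is making the identity $\int_X \LL^M b\,d\mu=0$ rigorous, because $\mathbf 1\notin L^2(X)$ (recall $\mu(X)=\infty$) so the pairing $\langle \LL^M b,\mathbf 1\rangle$ is not literally an $L^2$ inner product; one must justify it by a limiting argument, e.g. writing $\int_X a\,d\mu=\lim_{t\to0}\int_X e^{-t\LL}a\,d\mu$ and using the Gaussian bound (GE) together with $\supp a\subset B$ to pass the limit, then using $\int_X e^{-t\LL}a\,d\mu=\int_X a(y)\big(\int_X\widetilde p_t(x,y)\,d\mu(x)\big)d\mu(y)=\int_X a\,d\mu$ by (C) — so actually $\int_X a\,d\mu$ is independent of nothing and one instead computes it via $\int_X a = \int_X \LL^M b$ and integrates by the spectral theorem against approximations of $\mathbf 1$; alternatively invoke that (C) implies $e^{-t\LL}$ extends consistently to bounded functions fixing constants and $\LL$ annihilates constants. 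This kind of argument is standard in the Schrödinger/Hardy-space literature, and I would present it as a short lemma rather than belabour it. Everything else is a routine assembly of Theorem \ref{mainthm1}, the classical atomic theory of Coifman--Weiss--Goldberg type spaces on $X$, and the cited estimate from \cite{HLMMY}.
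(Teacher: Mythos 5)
Your proposal follows essentially the same route as the paper: both inclusions are handled as you indicate, the forward one being the "standard" maximal-function estimate for classical local atoms, and the reverse one deducing from Theorem \ref{mainthm1} an atomic $(p,q,M)_\LL$-representation and then observing that each $(p,q,M)_\LL$-atom is already a classical $(p,q)$-atom, with conservativity (C) supplying the vanishing moment for atoms of type (b) via the argument of Lemma 9.1 in \cite{HLMMY}. The paper states this more tersely, but the substance, including the appeal to \cite{HLMMY}, matches yours.
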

As mentioned earlier, the maximal function chracterization result for local Hardy spaces was proved in \cite{YZ2} under the presence of the reverse doubling condition. Hence, the main contribution of Theorem \ref{mainthm2} is to remove the reverse doubling condition. This allows us to apply the theorem to more general settings.

As a direct consequence of Theorem \ref{mainthm1} and Theorem \ref{mainthm2}, we obtain:
\begin{cor}
	Let $\LL$ satisfy (A1), (A2), (A3) and (A4). Let $p\in (\f{n}{n+\delta_1},1]$, $q\in [1,\vc]\cap (p,\vc]$ and $M>\f{n}{2}(\f{1}{p}-1)$. Then the Hardy spaces $h^{p,q}_{at}(X)$ and $h^{p,q}_{\LL,at,M}$ coincide with equivalent norms.
\end{cor}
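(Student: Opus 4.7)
The plan is very short: this corollary is a direct consequence of the two preceding theorems, so the proof amounts to chaining their identifications through the common intermediate space $h^p_{\LL,\mathrm{max}}(X)$ (or equivalently $h^p_{\LL,\mathrm{rad}}(X)$). Since the hypotheses (A1)--(A4) of the corollary subsume the hypotheses of both Theorem \ref{mainthm1} and Theorem \ref{mainthm2}, and the range $p\in(\f{n}{n+\delta_1},1]$ is contained in $(0,1]$, both theorems are applicable simultaneously under the stated assumptions on $p$, $q$, and $M$.

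First I would invoke Theorem \ref{mainthm1}, which under (A1) and (A2) gives the equivalence
\[
h^{p,q}_{\LL,at,M}(X)=h^{p}_{\LL,\mathrm{max}}(X)=h^{p}_{\LL,\mathrm{rad}}(X)
\]
with equivalent (quasi-)norms, for any $p\in(0,1]$, any $q\in[1,\vc]\cap(p,\vc]$, and any $M>\f{n}{2}(\f{1}{p}-1)$. Then I would invoke Theorem \ref{mainthm2}, which under the additional assumptions (A3) and (A4) and for $p\in(\f{n}{n+\delta_1},1]$ yields
\[
h^{p,q}_{at}(X)=h^{p}_{\LL,\mathrm{max}}(X)=h^{p}_{\LL,\mathrm{rad}}(X)
\]
with equivalent (quasi-)norms. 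Transitivity of the norm equivalences then delivers
\[
h^{p,q}_{at}(X)=h^{p,q}_{\LL,at,M}(X),
\]
which is the claim.

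No obstacle is expected; this is a formal consequence of the two main theorems. The only minor point worth noting is that the atomic representations defining $h^{p,q}_{at}(X)$ and $h^{p,q}_{\LL,at,M}(X)$ both converge in $L^2(X)$, so the identifications above are genuine identifications of subspaces of $L^2(X)$ (after completion) rather than merely of equivalence classes of Cauchy sequences, and the norm equivalences carry over to their completions without any additional argument.
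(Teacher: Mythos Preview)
Your proposal is correct and matches the paper's approach exactly: the paper presents this corollary as ``a direct consequence of Theorem \ref{mainthm1} and Theorem \ref{mainthm2}'' without further argument, and your chaining of the two identifications through the common maximal space $h^{p}_{\LL,\mathrm{max}}(X)$ is precisely what is intended.
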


We note that apart from examples given in Section 6, our results can be applied to certain operators defined on an open subset of $\mathbb{R}^2$. More precisely, when $X=\Omega$ is an unbounded domain of $\mathbb{R}^n$ with smooth boundary and $L=-\Delta_D$ is the Laplace operator  on $\Om$ with Dirichlet boundary condition, then $L$ satisfies (A1) and (A2). If instead we take $L=-\Delta_N$ to be the Laplace operator  on $\Om$ with Neumann boundary condition then $L$ satisfies (A1)--(A4).

\subsection{Local Hardy spaces asscociated to critical functions}\label{sec: local hardy rho}
A  function $\rho :X\to (0,\infty)$  is called a \emph{critical function} if there exist positive constants $C$ and $k_0$ so that
\begin{equation}\label{criticalfunction}
	\ry\leq C\rx\left(1 +\f{d(x,y)}{\rx}\right)^{\f{k_0}{k_0+1}}
\end{equation}
for all $x,y\in X$.

Note that the concept of critical functions was introduced in the setting of Schr\"odinger operators on $\mathbb{R}^n$ in \cite{F} (see also \cite{Sh}) and then was extended to the spaces of homogeneous type in \cite{YZ}.

A simple example of a critical function is $\rho\equiv 1$. Moreover, one of the most important classes of the critical functions is the one involving the weights satisfying the reverse H\"older's inequality. Recall that a nonnegative locally integrable function $w$ is said to be in the reverse H\"older class $RH_q(X)$ with $q>1$ if there exists a constant $C>0$ so that
$$
\Big(\fint_B w(x)^q\dx\Big)^{1/q}\leq \fint_B w(x)\dx
$$
for all balls $B\subset X$.
Note that if $w\in RH_q(X)$ then $w$ is a Muckenhoupt weight. See \cite{ST}.

Now suppose  $V\in RH_q(X)$ for some $q>1$ and, following \cite{Sh, YZ}, set
\begin{equation}\label{gammafunction}
	\rho(x)=\sup\Big\{r>0: \f{r^2}{\mu(B(x,r))}\int_{B(x,r)}V(y)\dy\leq 1\Big\}.
\end{equation}
Then it was proved in \cite{Sh,YZ} that $\rho$ is a critical function provided $n\geq 1$ and $q>\max\{1,n/2\}$.

We now introduce new local Hardy spaces associated to critical functions $\rho$
\begin{defn}\label{def: rho atoms}
	Let $\rho$ be a critical function on $X$. Let $p\in (\f{n}{n+1},1]$, $q\in [1,\vc]\cap (p,\vc]$ and $\epsilon\in (0,1]$. A function $a$ is called a  $(p,q,\rho,\epsilon)$-atom associated to the ball $B(x_0,r)$ if
	\begin{enumerate}[{\rm (i)}]
		\item ${\rm supp}\, a\subset B(x_0,r)$;
		\item $\|a\|_{L^q(X)}\leq \mu(B(x_0,r))^{1/q-1/p}$;
		\item $\displaystyle \int a(x)d\mu(x) =0$ if $r<\epsilon \rho(x_0)/4$.
	\end{enumerate}
\end{defn}
For the sake of convenience, when $\epsilon=1$ we shall write $(p,q,\rho)$ atom instead of $(p,q,\rho,\epsilon)$-atom. 

\begin{defn}
	Let $\rho$ be a critical function on $X$. Let $p\in (\f{n}{n+1},1]$, $q\in [1,\vc]\cap (p,\vc]$ and $\epsilon\in (0,1]$. We  say that $f=\sum
	\lambda_ja_j$ is an atomic $(p,q,\rho,\epsilon)$-representation if
	$\{\lambda_j\}_{j=0}^\infty\in l^p$, each $a_j$ is a $(p,q,\rho,\epsilon)$-atom,
	and the sum converges in $L^2(X)$. The space $h^{p,q}_{at,\rho,\epsilon}(X)$ is then defined as the completion of
	\[
	\left\{f\in L^2(X):f \ \text{has an atomic
		$(p,q,\rho,\epsilon)$-representation}\right\},
	\]
	with the norm given by
	$$
	\|f\|_{h^{p,q}_{at,\rho,\epsilon}(X)}=\inf\Big\{\Big(\sum|\lambda_j|^p\Big)^{1/p}:
	f=\sum \lambda_ja_j \ \text{is an atomic $(p,q,\rho,\epsilon)$-representation}\Big\}.
	$$
\end{defn}
In the particular case $\epsilon=1$ we write $h^{p,q}_{at,\rho}(X)$ instead of $h^{p,q}_{at,\rho,\epsilon}(X)$. It is clear when $\rho\equiv  1$ (or any fixed positive constant) we have $h^{p,q}_{at,\rho}(X)\equiv h^{p,q}_{at}(X)$.

Assume that the operator $\LL$ satisfies (A1)-(A4). Let $\rho$ be a critical function on $X$. For $f\in L^2(X)$ we define 
\[
f^*_{\LL,\rho}(x)= \sup_{0<t<\rho(x)^2}\sup_{d(x,y)<t}|e^{-t^2\LL}f(y)|
\]
and
\[
f^+_{\LL,\rho}(x) = \sup_{0<t<\rho(x)^2}|e^{-t^2\LL}f(x)|
\]
for all $x\in X$.\\

The maximal Hardy spaces associated to $\mathfrak{L}$ and $\rho$ are defined as follows.
\begin{defn}
	Let $\LL$ satisfy (A1)-(A4) and let $\rho$ be a critical function on $X$. Given $p\in (0,1]$,  the Hardy space $h^p_{\LL,\max,\rho}(X)$ is defined as the completion of
	\[
		\{f
	\in L^2(X): f^*_{\LL,\rho} \in L^p(X)\}
	\]
	under the norm given by
	$$
	\|f\|_{h^p_{\LL,\max,\rho}(X)}=\|f^*_{\LL,\rho}\|_{L^p(X)}.
	$$
	Similarly, the Hardy space $h^p_{\LL,\rad,\rho}(X)$ is defined as a completion of
	\[
	\{f
	\in L^2(X): f^+_{\LL,\rho} \in L^p(X)\}
	\]
	under the norm given by
	$$
	\|f\|_{h^p_{\LL,\rad,\rho}(X)}=\|f^+_{\LL,\rho}\|_{L^p(X)}.
	$$
\end{defn}
We have the following result.
\begin{thm}\label{mainthm2s}
Let $\LL$ satisfy (A1), (A2), (A3) and (A4) and let $\rho$ be a critical function on $X$. Let $p\in (\f{n}{n+\delta_1},1]$ and $q\in [1,\vc]\cap (p,\vc]$. Then we have
\[
h^{p,q}_{at,\rho}(X)\equiv h^p_{\LL,\max,\rho}(X)\equiv h^p_{\LL,\rad,\rho}(X).
\]
\end{thm}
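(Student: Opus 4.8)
The plan is to deduce the $\rho$-localized characterization from the already-established non-$\rho$ version (Theorem \ref{mainthm2}, giving \eqref{intro4}/\eqref{intro3}) by a localization-and-gluing argument built on a partition of $X$ into "critical balls" of radius comparable to $\rho$. The inclusions $h^{p,q}_{at,\rho}(X)\subset h^p_{\LL,\max,\rho}(X)\subset h^p_{\LL,\rad,\rho}(X)$ are the easy directions: the second is immediate from the pointwise bound $f^+_{\LL,\rho}\le f^*_{\LL,\rho}$, and for the first one checks that a $(p,q,\rho)$-atom has $f^*_{\LL,\rho}$-norm controlled by a constant — atoms supported in balls $B$ with $r_B\ge\epsilon\rho(x_0)/4$ are handled using only the Gaussian bound (A2) (the maximal operator is cut off at scale $t<\rho(x)^2$, and the slow variation \eqref{criticalfunction} of $\rho$ keeps the relevant geometry under control), while atoms on smaller balls have the cancellation $\int a=0$ and are treated exactly as classical $h^p_{at}$ atoms via (A2) and (A3). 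So the real content is the reverse inclusion $h^p_{\LL,\rad,\rho}(X)\subset h^{p,q}_{at,\rho}(X)$.

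For that direction I would first fix, via the covering lemma announced in Section \ref{sec: prelim}, a countable family of balls $\{B_i=B(x_i,\rho(x_i))\}$ with bounded overlap whose dilates cover $X$, together with a subordinate partition of unity $\{\varphi_i\}$, $\sum_i\varphi_i=1$, $\supp\varphi_i\subset c_0 B_i$. Given $f$ with $f^+_{\LL,\rho}\in L^p$, write $f=\sum_i f\varphi_i$ (convergence in $L^2$). The key point is that, because $\rho$ varies slowly, on each enlarged ball $\widetilde B_i$ the truncation level $\rho(x)^2$ is comparable to $\rho(x_i)^2$; hence the local maximal function $f^+_{\LL,\rho}$ restricted to $\widetilde B_i$ essentially controls the maximal function with truncation at a fixed scale $\rho(x_i)^2$, which after rescaling is the operator $\LL$ has $f^+_{\LL}$ of Theorem \ref{mainthm1}/\ref{mainthm2} applied to the rescaled operator $\rho(x_i)^{-2}$... more precisely, one shows that each piece $f\varphi_i$, viewed locally, lies in the ordinary local Hardy space $h^p_\LL$ at scale $\rho(x_i)$ (a dilation of Theorem \ref{mainthm2}) with $\|f\varphi_i\|\lesssim \|f^+_{\LL,\rho}\|_{L^p(c_0B_i)}$, up to the error terms coming from commuting $\varphi_i$ with the semigroup. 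Theorem \ref{mainthm2} then provides an atomic $(p,q)$-decomposition of $f\varphi_i$ into atoms on balls contained in $C B_i$; atoms at scale $\ge \rho(x_i)/4$ there become (after inflating the ball to $CB_i$, which stays within the $\epsilon$-threshold for fixed $\epsilon$) $(p,q,\rho,\epsilon)$-atoms of type (i)–(ii) requiring no cancellation, while the small atoms already carry $\int a=0$. Summing over $i$ and using the bounded overlap together with $\sum_i\|f^+_{\LL,\rho}\|_{L^p(c_0B_i)}^p\lesssim\|f^+_{\LL,\rho}\|_{L^p(X)}^p$ gives the $l^p$ control of coefficients and finishes the reverse inclusion; a final comparison (absorbing the fixed $\epsilon$) identifies $h^{p,q}_{at,\rho,\epsilon}(X)$ with $h^{p,q}_{at,\rho}(X)$, which is a standard consequence of \eqref{criticalfunction}.

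The main obstacle I anticipate is controlling the commutator errors $[\varphi_i, e^{-t^2\LL}]$ in the regime $t\lesssim\rho(x_i)$: multiplying $f$ by the cutoff $\varphi_i$ destroys the exact semigroup structure, so $(f\varphi_i)^+_\LL$ is not literally dominated by $f^+_{\LL,\rho}\mathbf 1_{\widetilde B_i}$ plus something harmless — one must show the difference is an $L^p$-bounded "tail" that can itself be absorbed into atoms, using the off-diagonal Gaussian decay (A2) and Hölder regularity (A3) to gain smallness from the mismatch $d(x,y)\gtrsim\rho(x_i)$ when $x\in\supp\varphi_i$, $y\notin 2c_0B_i$. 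This is where the slow-variation inequality \eqref{criticalfunction} and the $k_0/(k_0+1)$ exponent enter decisively, and getting the bookkeeping of these error terms to close (so that the summed-up errors are again $\lesssim\|f^+_{\LL,\rho}\|_{L^p}$) is the technical heart of the argument; everything else is a dilation-invariance observation plus the machinery already in place from Theorems \ref{mainthm1} and \ref{mainthm2}.
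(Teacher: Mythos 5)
Your outline captures the correct strategy --- a partition of unity on critical balls, a local application of Theorem~\ref{mainthm2} on each piece $f\psi_\alpha$, and then gluing --- and you correctly single out the commutator errors $[\psi_\alpha,e^{-t\LL}]$ as the main obstruction. This is indeed the route the paper takes. However, the single most important idea that makes the argument \emph{close} is absent from your sketch, and without it the approach is genuinely incomplete rather than merely "bookkeeping-intensive."

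You say the commutator difference must be "an $L^p$-bounded tail that can itself be absorbed into atoms," but that is not how the paper resolves it, and it is unclear how one would carry it out as stated. The actual mechanism is a \emph{self-referential $\epsilon$-bootstrap}: the paper works in the family of scaled atomic spaces $h^{p,q}_{at,\rho,\epsilon}(X)$ (cancellation required only on balls with $r<\epsilon\rho(x_0)/4$) and proves three smallness lemmas, each giving a bound of the form $\lesssim\epsilon^\kappa\|f\|^p_{h^{p,q}_{at,\rho,\epsilon}(X)}$: (i) the off-ball tail of $e^{-t\LL}(f\psi_\alpha)$ outside $B_\alpha^*$ (Lemma~\ref{Lem1: Hardy spaces} / Corollary~\ref{Cor1: Hardy spaces}); (ii) the commutator $\sum_\alpha\sup_t|\psi_\alpha e^{-t\LL}f-e^{-t\LL}(f\psi_\alpha)|$ (Lemma~\ref{Lem2: Hardy spaces}, proved via a weak $L^p$-boundedness Lemma~\ref{lem-Tpsi} combined with an atomic test Lemma~\ref{lem-Tatom}); and (iii) the difference between $e^{-t\LL}$ and a damped auxiliary operator $T_{t,\rho}$ whose maximal function is pointwise dominated by $f^+_{\LL,\rho}$ (Lemma~\ref{Lem3: Hardy spaces}). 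Feeding these into the decomposition, one arrives at an inequality of the shape
\[
\|f\|^p_{h^{p,q}_{at,\rho,\epsilon}(X)}\lesssim \epsilon^{\kappa}\|f\|^p_{h^{p,q}_{at,\rho,\epsilon}(X)}+\Big\|\sup_{0<t<\rho(\cdot)^2}|T_{t,\rho}f|\Big\|_{L^p(X)}^p,
\]
which can be absorbed provided $\epsilon$ is small enough --- and here it is essential that (a) the $\epsilon^\kappa$ factor is gained uniformly over atoms, (b) the implied constants and the comparability $\|f\|_{h^{p,q}_{at,\rho,\widetilde\epsilon}}\sim\|f\|_{h^{p,q}_{at,\rho,\epsilon}}$ ($\widetilde\epsilon=c_0\epsilon$) are uniform in $\epsilon$, and (c) $f^+_{\LL,\rho}$ appears with no $\epsilon$-loss. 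Your proposal treats $\epsilon$ as a "final comparison," but it is the load-bearing parameter: the entire argument hinges on a fixed-point/absorption in $\epsilon$, not on showing the errors are atoms.

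A secondary gap worth flagging: when you say the atoms from Theorem~\ref{mainthm2} on a piece $f\psi_\alpha$ with $r_B\lesssim\rho(x_\alpha)$ "already carry $\int a=0$," you still need to check that the resulting atoms are genuine $(p,q,\rho,\epsilon)$-atoms relative to their \emph{own} centres $x_0$ (the cancellation threshold is $\epsilon\rho(x_0)/4$, not $\epsilon\rho(x_\alpha)/4$), which requires the slow-variation inequality \eqref{criticalfunction} to compare $\rho(x_0)$ with $\rho(x_\alpha)$ on the dilated ball $B_\alpha^*$; the paper handles this cleanly through Proposition~\ref{Hardyfunc-compactsupp} and Lemma~\ref{lem-criticalfunction}(a). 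This is not hard, but it is a step you would need to make explicit.
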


We now consider another nonnegative self-adjoint operator $L$ on $L^2(X)$ which acts as a pertubation of the operator $\LL$. Denote by $p_t(x,y)$ the  kernels associated with $e^{-tL}$, and $q_t(x,y)=p_t(x,y)-\widetilde{p}_t(x,y)$, where $\widetilde{p}_t(x,y)$ is a kernel of $e^{-t\LL}$. We assume the following conditions:

\begin{enumerate}[(B1)]
	\item For all $N>0$, there exist positive constants $c$ and $C$ so that
	$$
	|p_t(x,y)|\leq \f{C}{\mu(B(x,\sqrt{t}))}\exp\Big(-\f{d(x,y)^2}{ct}\Big)\Big(1+\f{\sqrt{t}}{\rho(x)}+\f{\sqrt{t}}{\rho(y)}\Big)^{-N}
	$$
	for all $x,y\in X$ and $t>0$;
	
	%\item There is a positive constant $\delta_2>0$ so that for all $N>0$, there exist positive constants $c$ and $C$
	%\begin{equation*}
	%|p_{t}(x,y)-p_{t}(\overline{x},y)|\leq C\Big(\f{d(x,\overline{x})}{\sqrt{t}}\Big)^{\delta}\f{1}{\mu(B(x,\sqrt{t}))}\exp\Big(-\f{d(x,y)^2}{ct}\Big)\Big(1+\f{\sqrt{t}}{\rho(x)}+\f{\sqrt{t}}{\rho(y)}\Big)^{-N},
	%\end{equation*}
	%for all $t>0$ and $d(x,\overline{x})<\sqrt{t}$;
	
	\item There is a positive constant $\delta_2>0$ so that
	$$
	|q_t(x,y)|\leq \Big(\f{\sqrt{t}}{\sqrt{t}+\rho(x)}\Big)^{\delta_2}\f{C}{\mu(B(x,\sqrt{t}))}\exp\Big(-\f{d(x,y)^2}{ct}\Big)
	$$
	for all $x,y\in X$ and $t>0$
	
	\item There are a positive constant $\delta_3>0$ so that
	\[
	|q_t(x,y)-q_t(\overline{x},y)|\leq \min\left\{\Big[\f{d(x,\overline{x})}{\rho(y)}\Big]^{\delta_3},\Big[\f{d(x,\overline{x})}{\sqrt{t}}\Big]^{\delta_3} \right\}\f{C}{\mu(B(x,\sqrt{t}))}\exp\Big(-\f{d(x,y)^2}{ct}\Big)
	\]
	whenever $d(x,\overline{x})\leq \min\{d(x,y)/4,\rho(x)\}$ and $t>0$.
\end{enumerate}
\begin{rem}
	\label{rem1}
	The assumptions (A3) and (B3) imply that
	\begin{equation}
	\label{Holder ptxy}
	|p_t(x,y)-p_t(\overline{x},y)|\leq \Big[\f{d(x,\overline{x})}{\sqrt{t}}\Big]^{\delta_3\wedge \delta_1}\f{C}{\mu(B(x,\sqrt{t}))}\exp\Big(-\f{d(x,y)^2}{ct}\Big)
	\end{equation}
	whenever $d(x,\overline{x})\leq \min\{d(x,y)/4,\rho(x)\}$ and $t>0$, where $\delta_3\wedge \delta_1=\min\{\delta_1,\delta_3\}$.
\end{rem}

%\begin{rem}\label{rem1}
%	Note that the condition {\rm (B1)} implies that for all $N>0$, there exist positive constants $c$ and $C$ so that
%	\begin{equation}\label{kernelqt}
%	|q_t(x,y)|\leq \f{C}{\mu(B(x,\sqrt{t}))}\exp\Big(-\f{d(x,y)^2}{ct}\Big)\Big(1+\f{\sqrt{t}}{\rho(x)}+\f{\sqrt{t}}{\rho(y)}\Big)^{-N}
%	\end{equation}
%	for all $x,y\in X$ and $t>0$.
	
%	Since the proof of (\ref{kernelqt}) is quite standard, we leave to the interested reader.
%\end{rem}

Let $\rho$ be a critical function on $X$. For $f\in L^2(X)$ we define

$$
\mathcal{M}_{\max, L}f(x) = \sup_{t>0} \sup_{d(x,y)<t}|e^{-t^2L}f(y)|
$$
and 
\[
\mathcal{M}_{{\rm rad}, L}f(x) = \sup_{t>0}|e^{-t^2L}f(x)|
\]
for all $x\in X$.\\

The maximal Hardy spaces associated to $L$ are defined as follows.
\begin{defn}
	Given $p\in (0,1]$, the Hardy space $H^p_{L, {\rm max}}(X)$ is defined as a completion of
	$$
	\left\{f
	\in L^2(X):  \mathcal{M}_{\max, L}f\in L^p(X)\right\},
	$$
	under the norm
	$$
	\|f\|_{H^p_{L, {\rm max}}(X)}=\|\mathcal{M}_{\max, L}\|_{L^p(X)}.
	$$
	Similarly, the Hardy space $H^p_{L, \rad}(X)$ is defined as a completion of
	$$
	\left\{f
	\in L^2(X):  \mathcal{M}_{{\rm rad}, L}f\in L^p(X)\right\},
	$$
	under the norm
	$$
	\|f\|_{H^p_{L, \rad}(X)}=\|\mathcal{M}_{{\rm rad}, L}f\|_{L^p(X)}.
	$$
	
\end{defn}
\begin{thm}\label{mainthm3}
	Let $\LL$ and $L$ satisfy (A1)-(A4) and (B1)-(B3), respectively. Let $p\in (\f{n}{n+\delta_0},1]$ and $q\in [1,\vc]\cap (p,\vc]$, where $\delta_0=\min\{\delta_1,\delta_2,\delta_3\}$. Then we have
	\[
	h^{p,q}_{at,\rho}(X)\equiv H^p_{L, {\rm max}}(X)\equiv H^p_{L, \rad}(X).
	\]

\end{thm}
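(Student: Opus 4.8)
The plan is to prove Theorem~\ref{mainthm3} by chaining together three identifications, using the theory already developed for $\LL$ (Theorems~\ref{mainthm1} and \ref{mainthm2s}) as a bridge. First I would establish the trivial inclusions $H^p_{L,\max}(X)\subseteq H^p_{L,\rad}(X)$, which is immediate since the radial maximal function is dominated pointwise by the nontangential one. The substantive work is then to show
\[
h^{p,q}_{at,\rho}(X)\subseteq H^p_{L,\max}(X)\quad\text{and}\quad H^p_{L,\rad}(X)\subseteq h^{p,q}_{at,\rho}(X).
\]
For the first inclusion I would show that every $(p,q,\rho)$-atom $a$ satisfies $\|\mathcal{M}_{\max,L}a\|_{L^p(X)}\lesssim 1$ with a bound independent of the atom; this is the standard ``atom implies maximal function'' estimate. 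One splits into the cases $r_B\geq \rho(x_0)/4$ and $r_B<\rho(x_0)/4$. In the large-ball case one uses (B1): the extra decay factor $(1+\sqrt t/\rho(x))^{-N}$ plays exactly the role that the cancellation of atoms plays in the classical argument, killing the contribution for $t$ large, while for $t$ small the Gaussian bound and the size condition on $a$ suffice together with the doubling estimate \eqref{doub2s}. In the small-ball case one additionally exploits the vanishing moment of $a$ together with the H\"older regularity \eqref{Holder ptxy} from Remark~\ref{rem1}. Summing over atoms with the $\ell^p$ quasi-norm and using $p\le 1$ (so the $\ell^p$-triangle inequality holds for the $p$-th powers of the $L^p$ norms) gives the claim; a density/completion argument extends it from finite atomic representations to the whole space.

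For the harder inclusion $H^p_{L,\rad}(X)\subseteq h^{p,q}_{at,\rho}(X)$, the strategy is to compare the $L$-maximal function with the localized $\LL$-maximal function relative to $\rho$. Concretely, write $e^{-t^2L}=e^{-t^2\LL}+(e^{-t^2L}-e^{-t^2\LL})$ and use (B2): the kernel $q_t$ of the difference carries a factor $(\sqrt t/(\sqrt t+\rho(x)))^{\delta_2}$, so for $t<\rho(x)^2$ this difference term is controlled, in $L^p$, by the Hardy--Littlewood maximal function of $f$ (or of an appropriate average), hence by $\|f\|_{L^p}$-type quantities, and more precisely by $\|f^+_{\LL,\rho}\|_{L^p}$ after absorbing it. Meanwhile the part with $t\ge\rho(x)^2$ must be handled by (B1), which forces decay in $\sqrt t/\rho(x)$ and shows $\sup_{t\ge\rho(x)^2}|e^{-t^2L}f(x)|$ is again dominated (in $L^p$ norm) by a maximal operator applied to $f$. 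The upshot should be a pointwise-in-$L^p$ inequality of the form $\|\mathcal{M}_{\rad,L}f\|_{L^p}\sim \|f^+_{\LL,\rho}\|_{L^p}$, i.e. $H^p_{L,\rad}(X)\equiv h^p_{\LL,\rad,\rho}(X)$; combined with Theorem~\ref{mainthm2s} (which gives $h^p_{\LL,\rad,\rho}(X)\equiv h^{p,q}_{at,\rho}(X)$) this closes the loop. The constraint $p>\frac{n}{n+\delta_0}$ with $\delta_0=\min\{\delta_1,\delta_2,\delta_3\}$ enters precisely because $\delta_1$ is needed for Theorem~\ref{mainthm2s} to apply, while $\delta_2,\delta_3$ govern the perturbation estimates on $q_t$.

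The main obstacle I expect is the second inclusion, and specifically making rigorous the claim that the perturbative difference $e^{-t^2L}-e^{-t^2\LL}$ contributes only lower-order terms controllable by the $\rho$-localized $\LL$-maximal function. The difficulty is twofold: one must handle the full range of $t$, not just $t<\rho(x)^2$, and for $t$ comparable to or larger than $\rho(x)^2$ there is no smallness in $q_t$ itself --- only (B1) saves the situation by giving decay in $\sqrt t/\rho(x)$, and one must be careful that this decay is strong enough (i.e. take $N$ large) to beat the polynomial growth in \eqref{doub2s}. A clean way to organize this is to introduce the ``global'' perturbation maximal operator and prove a transference lemma: if $f^+_{\LL,\rho}\in L^p$ then $\mathcal M_{\rad,L}f\in L^p$ with comparable norm, and conversely. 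This lemma, whose proof amounts to the kernel estimates just described plus Fefferman--Stein-type maximal function bounds on spaces of homogeneous type, is the technical heart; once it is in place, Theorem~\ref{mainthm3} follows formally from Theorem~\ref{mainthm2s} together with the (easier) direct estimate that $\rho$-atoms have uniformly bounded $\mathcal M_{\max,L}$-images. I would also need the analogue of the ``atom $\Rightarrow$ $H^p_{L,\max}$'' bound to be uniform in $\epsilon$ only for $\epsilon=1$, which simplifies matters.
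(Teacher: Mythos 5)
Your overall decomposition into the three inclusions is the same as the paper's, and your treatment of $h^{p,q}_{at,\rho}\subseteq H^p_{L,\max}$ (atom estimates via (B1) for the large-ball case and (B1) together with \eqref{Holder ptxy} for the small-ball case) matches the paper. However, you misidentify where the difficulty lies in the remaining inclusion $H^p_{L,\rad}\subseteq h^{p,q}_{at,\rho}$, and the ``transference lemma'' you propose as the technical heart is not achievable by the means you describe.

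You suggest proving $\|\mathcal{M}_{\rad,L}f\|_{L^p}\sim \|f^+_{\LL,\rho}\|_{L^p}$ directly ``via kernel estimates plus Fefferman--Stein-type maximal function bounds,'' and flag the contribution from $t\gtrsim\rho(x)^2$ in $\mathcal{M}_{\rad,L}$ as the main obstacle. But the inclusion $H^p_{L,\rad}\subseteq h^{p,q}_{at,\rho}$ only requires the lower bound $\sup_{0<t<\rho(x)^2}|e^{-tL}f(x)|\leq \mathcal{M}_{\rad,L}f(x)$, so the large-$t$ part of the $L$-maximal function never needs to be estimated in this direction: it is bypassed, not analysed. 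Conversely, the estimate $\|\mathcal{M}_{\rad,L}f\|_{L^p}\lesssim\|f^+_{\LL,\rho}\|_{L^p}$ (the other half of your claimed equivalence) \emph{cannot} be obtained by a direct kernel comparison between $e^{-t^2L}$ at scale $t\gtrsim\rho$ and $e^{-s^2\LL}$ at scale $s<\rho$; the only route available is the detour through the atomic decomposition (atom $\Rightarrow$ uniform bound on $\mathcal{M}_{\max,L}$, then invoke Theorem~\ref{mainthm2s}), which you have essentially already set up in the first inclusion. So the equivalence holds \emph{a posteriori}, but not as a standalone kernel-level lemma.

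The genuine technical heart of $H^p_{L,\rad}\subseteq h^{p,q}_{at,\rho}$, which your proposal does not surface, is the $\epsilon$-absorption mechanism carried over from the proof of Theorem~\ref{mainthm2s}: one works with $(p,q,\rho,\epsilon)$-atoms, uses the covering $\{\psi_\alpha\}$ from Lemma~\ref{Lem2: rho} together with Proposition~\ref{Hardyfunc-compactsupp} to build local atomic decompositions at scale $\epsilon\rho$, and then shows (via the analogues of Lemmas~\ref{Lem1: Hardy spaces}, \ref{Lem2: Hardy spaces} and the perturbation estimate Lemma~\ref{Lem3: Hardy spaces-thm3}, the last using (B2) and (B3)) that all the error terms carry a factor $\epsilon^\kappa$ and can therefore be absorbed into the left-hand side by taking $\epsilon$ small. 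Your kernel comparison using (B2)/(B3) is roughly the content of Lemma~\ref{Lem3: Hardy spaces-thm3}, but without the $\epsilon$-parametrised family of atomic norms \eqref{eq0-mainthm}, the localization via $\psi_\alpha$, and the final absorption step, the argument does not close. I would suggest reorganising the second inclusion around these ingredients rather than around a direct maximal-function equivalence, at which point it becomes a near-verbatim repetition of the end of the proof of Theorem~\ref{mainthm2s} with $T_{t,\rho}$ replaced by $e^{-tL}$ and Lemma~\ref{Lem3: Hardy spaces} replaced by Lemma~\ref{Lem3: Hardy spaces-thm3}.
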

%The theory of Hardy spaces associated to critical functions was originated from the studies on Hardy spaces associated to Shr\"odinger operators (see for example \cite{BDT, D2, D3, DZ, DZ1, DZ2, LL}). In \cite{YZ} the authors proved a maximal function characterization for Hardy spaces aasociated to critical functions for $p=1$, meanwhile the case $p<1$ is still open. 

\section{A covering lemma, critical functions, a Calder\'on reproducing formula and some kernel estimates}\label{sec: prelim}

For a measurable subset $E\subset X$ and $f\in L^1(E)$ we denote
\[
\fint_E f d\mu =\f{1}{\mu(E)}\int_E fd\mu.
\]
We denote by $\mathcal{M}$ the Hardy-Littlewood maximal function define by
\[
\mathcal{M}f(x)=\sup_{B\ni x} \fint_B|f|d\mu
\]
where the supremum is taken over all balls $B$ containing $x$.

We will now recall  an important covering lemma from \cite{C}. The open sets described in the lemma play the role of dyadic cubes in our setting.
\begin{lem}\label{Christ'slemma} There
	exists a collection of open sets $\{Q_\tau^k\subset X: k\in
	\mathbb{Z}, \tau\in I_k\}$, where $I_k$ denotes certain (possibly
	finite) index set depending on $k$, and constants $\rho\in (0,1),
	a_0\in (0,1]$ and $C_1\in (0,\vc)$ such that
	\begin{enumerate}[(i)]
		\item $\mu(X\backslash \cup_\tau Q_\tau^k)=0$ for all $k\in
		\mathbb{Z}$;
		\item if $i\geq k$, then either $Q_\tau^i \subset Q_\beta^k$ or $Q_\tau^i \cap
		Q_\beta^k=\emptyset$;
		\item for $(k,\tau)$ and each $i<k$, there exists a unique $\tau'$
		such $Q_\tau^k\subset Q_{\tau'}^i$;
		\item the diameter ${\rm diam}\,(Q_\tau^k)\leq C_1 \rho^k$;
		\item each $Q_\tau^k$ contains certain ball $B(x_{Q_\tau^k}, a_0\rho^k)$.
	\end{enumerate}
\end{lem}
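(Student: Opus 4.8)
The plan is to carry out the dyadic cube construction of M.~Christ \cite{C}, with the constants adapted to the doubling hypothesis \eqref{doublingcondition}. Fix $\rho\in(0,1)$, to be chosen small depending only on the doubling constant. For each $k\in\mathbb{Z}$ one selects, by Zorn's lemma, a set $\{x_\tau^k\}_{\tau\in I_k}$ that is maximal subject to $d(x_\tau^k,x_{\tau'}^k)\ge\rho^k$ for $\tau\ne\tau'$; maximality forces $\bigcup_\tau B(x_\tau^k,\rho^k)=X$ and separation forces the balls $B(x_\tau^k,\rho^k/2)$ to be pairwise disjoint. I would make these choices coherently across scales: pick $\{x_\tau^0\}$ first, then successively extend to maximal $\rho^{k+1}$-separated supersets for $k\ge 0$ and pass to maximal $\rho^{k}$-separated subsets for $k<0$; after a harmless adjustment of constants this yields $\{x_\tau^k\}_\tau\subseteq\{x_\tau^{k+1}\}_\tau$ for all $k$, keeping the covering and packing properties. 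Fix also a well-ordering of $\bigcup_k I_k$.

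Next I would build an ``ancestor'' structure: the parent of $x_\tau^{k+1}$ is declared to be the nearest point of the level-$k$ net, ties broken by the well-ordering, so that its parent lies within $C\rho^{k}$ of it, and a net point already present at level $k$ is its own parent. Iterating the parent map gives, for every $j\ge k$ and $\tau\in I_j$, a unique ancestor $\tau^{(k)}\in I_k$, consistent under composition, and telescoping the parent bounds yields $d(x_\tau^j,x_{\tau^{(k)}}^k)\le C\sum_{i\ge k}\rho^i\le C'\rho^k$ since $\rho<1$.

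For a small constant $c_0=c_0(\rho)$ I would then define the preliminary cube
\[
Q_\tau^{k,\ast}=\bigcup_{j\ge k}\ \bigcup_{\sigma\in I_j:\ \sigma^{(k)}=\tau}B(x_\sigma^j,c_0\rho^j),
\]
and obtain $Q_\tau^k$ from $Q_\tau^{k,\ast}$ by resolving overlaps along the fixed order and passing to the interior of the closure (equivalently: define closed cubes $\overline{Q}_\tau^k$ with pairwise disjoint interiors and set $Q_\tau^k=\mathrm{int}\,\overline{Q}_\tau^k$). Then (v) holds because the $j=k$ term contributes $B(x_\tau^k,c_0\rho^k)$, which for $\rho$ small meets no other preliminary level-$k$ cube and therefore survives into $Q_\tau^k$, giving $B(x_\tau^k,a_0\rho^k)\subset Q_\tau^k$ with $a_0\sim c_0$; (iv) is immediate from the telescoping estimate, with ${\rm diam}\,Q_\tau^k\le C_1\rho^k$; and for (ii)--(iii), if $\sigma^{(k)}=\tau$ then every ball of $Q_\sigma^{i,\ast}$ also appears in $Q_\tau^{k,\ast}$ so that $Q_\sigma^{i,\ast}\subseteq Q_\tau^{k,\ast}$ and $\tau$ is the only level-$k$ index for which this is possible, while if $\sigma^{(k)}\ne\tau$ the telescoping bound together with $d(x_{\sigma^{(k)}}^k,x_\tau^k)\ge\rho^k$ separates the two preliminary cubes once $\rho$ is small, leaving an overlap that is removed when passing to $Q_\tau^k$. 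For (i), the balls over all indices at a fixed level essentially exhaust $X$, so $\bigcup_\tau Q_\tau^{k,\ast}$ differs from $X$ by a null set, and disjointification removes at most the union of the cube boundaries.

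The main obstacle is the last stage: making the $Q_\tau^k$ simultaneously \emph{open}, pairwise \emph{disjoint} at each level, correctly \emph{nested} across levels, and still covering $X$ up to a null set. Trimming the preliminary cubes to be disjoint threatens openness unless the trimmed set is negligible, so one must show the relevant ``boundary layers'' have measure zero, and, relatedly, that the nesting of the preliminary cubes is exact and not merely approximate. This is exactly where $\rho$ has to be taken small and where the doubling condition \eqref{doublingcondition} enters quantitatively (via a volume estimate on a $\rho^{j+1}$-neighbourhood of the union of the boundaries). Since all of this is classical, one could instead simply invoke \cite{C}.
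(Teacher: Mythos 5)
The paper gives no proof of this lemma; it is quoted directly from Christ \cite{C}, of which it is the main theorem, and you too ultimately defer to that reference, so at the level of what is actually established the two treatments agree. Your intermediate sketch outlines the standard construction (nets at every scale chosen by Zorn, a parent/ancestor map, preliminary cubes as unions of descendant balls, disjointification) in a reasonable way, and you correctly pinpoint the genuine remaining content: proving that the region where disjointification must cut has measure zero, which is precisely where the doubling condition enters through a quantitative volume estimate.

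One claim in the middle, though, is inaccurate and worth correcting. By your own telescoping estimate a preliminary level-$k$ cube sits inside a ball about $x_\tau^k$ of radius on the order of $\rho^k/(1-\rho)^2$, which is \emph{never} below the $\rho^k$-separation of the level-$k$ net no matter how small $\rho$ is taken; so neighbouring preliminary cubes overlap substantially, not merely in a thin collar of fine-scale balls, and the phrase ``separates the two preliminary cubes once $\rho$ is small'' does not hold. Christ's construction circumvents this by first fixing a single coherent ``which ancestor claims this index'' choice at every scale (so each descendant is assigned unambiguously to one ancestor at each level) and only then forming closed cubes and passing to interiors: disjointness is then automatic, and the only substantive step left is the measure-zero-boundary estimate you already flagged as the crux.
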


The following elementary estimate will be used frequently. Its proof is simple and we omit it.
\begin{lem}\label{lem-ele est}
	Let $\epsilon >0$. We have
	$$
	\int_X\f{1}{\mu(B(x,s))\wedge \mu(B(y,s))}\Big(1+\f{d(x,y)}{s}\Big)^{-n-\epsilon}|f(y)|\dy\lesi \mathcal{M}f(x).
	$$
	for all $x\in X$ and $s>0$.
\end{lem}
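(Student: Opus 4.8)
The plan is to split the integral into dyadic annuli centred at $x$ at scale $s$ and to bound the contribution of each annulus by $\mathcal{M}f(x)$ times a geometrically small factor. Concretely, set $A_0=B(x,s)$ and, for $j\geq 1$, let $A_j=B(x,2^js)\setminus B(x,2^{j-1}s)$, so that $X=\bigcup_{j\geq 0}A_j$ up to a $\mu$-null set and $A_j\subset B(x,2^js)$ for every $j\geq 0$. For $y\in A_j$ with $j\geq 1$ one has $d(x,y)\geq 2^{j-1}s$, hence $\big(1+d(x,y)/s\big)^{-n-\epsilon}\lesi 2^{-j(n+\epsilon)}$, and the same bound holds trivially for $j=0$.

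Next I would estimate the denominator from below on $A_j$. Since $y\in B(x,2^js)$, the doubling property \eqref{doub2} gives $\mu(B(x,2^js))\lesi 2^{jn}\mu(B(x,s))$. On the other hand $B(x,2^js)\subset B(y,2^{j+1}s)$, so \eqref{doub2} again yields $\mu(B(x,2^js))\leq \mu(B(y,2^{j+1}s))\lesi 2^{jn}\mu(B(y,s))$. Combining the two estimates,
\[
\mu(B(x,2^js))\lesi 2^{jn}\,\big[\mu(B(x,s))\wedge\mu(B(y,s))\big]\qquad\text{for all }y\in A_j.
\]

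Putting these two observations together, the factor $2^{jn}$ coming from the denominator cancels the corresponding factor in $2^{-j(n+\epsilon)}2^{jn}$, and since $x\in B(x,2^js)$ the average of $|f|$ over $B(x,2^js)$ is at most $\mathcal{M}f(x)$; hence
\[
\int_{A_j}\f{1}{\mu(B(x,s))\wedge\mu(B(y,s))}\Big(1+\f{d(x,y)}{s}\Big)^{-n-\epsilon}|f(y)|\dy
\lesi 2^{-j\epsilon}\fint_{B(x,2^js)}|f|\,d\mu
\le 2^{-j\epsilon}\,\mathcal{M}f(x).
\]
Summing over $j\geq 0$ and using $\sum_{j\geq 0}2^{-j\epsilon}<\infty$ — which is exactly where the hypothesis $\epsilon>0$ is used — gives the claimed bound. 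There is essentially no obstacle here; the only step requiring a touch of care is the lower bound for $\mu(B(y,s))$, which requires first enlarging $B(x,2^js)$ to a ball centred at $y$ and then applying doubling, and the remainder is a routine annular summation.
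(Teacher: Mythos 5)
Your proof is correct, and since the paper explicitly states "Its proof is simple and we omit it," it gives no argument to compare against; your dyadic annular decomposition, with the key observation that $B(x,2^js)\subset B(y,2^{j+1}s)$ for $y\in B(x,2^js)$ to control $\mu(B(y,s))$ from below, is exactly the standard argument the authors must have had in mind.
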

%\begin{proof}
%	The proof of the lemma is simple and hene we omit details.
%\end{proof}

\subsection{Critical functions}

For $x\in X$, we call the ball $B(x,\rx)$ a critical ball. We now give some basic properties for the critical functions and critical balls.
\begin{lem}\label{lem-criticalfunction} Let $\rho$ be a critical function on $X$.
\begin{enumerate}[{\rm (a)}]
\item For $\lambda>0$ and $x\in X$, we have
	$$
	(1+\lambda)^{-k_0}\rx\lesi \ry\lesi (1+\lambda)^{\f{k_0}{k_0+1}}\rx \ \text{for all $y\in B(x,\lambda \rx)$}.
	$$
	
\item For all $x,y\in X$, we have $\rx +d(x,y) \approx \ry + d(x,y)$.
	
\item There exists a constant $C$ so that
	$$
	\ry\geq C[\rx] ^{1+k_0}[\ry +d(x,y)]^{-k_0}
	$$
	for all $x,y \in X$.
	
\item Let $\epsilon\in (0,1]$ and $a>0$. For any $N>0$ we have
\begin{equation}
\label{eq1-rho exp}
\exp\Big(-\f{d(x,y)^2}{a[\epsilon\rho(x)]^2}\Big)\leq c(a,N)\Big(\f{\epsilon\rho(y)}{d(x,y)}\Big)^N,
\end{equation}
and
\begin{equation}
\label{eq-rho exp}
\exp\Big(-\f{d(x,y)^2}{a[\epsilon\rho(x)]^2}\Big)\f{1}{\rho(y)}\leq \f{c(a,N)}{\rho(x)}\Big(\f{\epsilon\rho(y)}{d(x,y)}\Big)^N,
\end{equation}

for all $x,y\in X$.
\end{enumerate}
\end{lem}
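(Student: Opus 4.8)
\textbf{Proof proposal for Lemma~\ref{lem-criticalfunction}.}

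The plan is to derive parts (a)--(d) in order, each one essentially a direct manipulation of the defining inequality \eqref{criticalfunction}. For part (a), the upper bound $\ry\lesi (1+\lambda)^{k_0/(k_0+1)}\rx$ for $y\in B(x,\lambda\rx)$ is immediate from \eqref{criticalfunction} since $d(x,y)<\lambda\rx$ gives $1+d(x,y)/\rx<1+\lambda$. For the lower bound, I would apply \eqref{criticalfunction} with the roles of $x$ and $y$ swapped to get $\rx\lesi\ry(1+d(x,y)/\ry)^{k_0/(k_0+1)}$; combining this with the (already proved) upper bound to control $d(x,y)/\ry$ from above by a multiple of $(1+\lambda)^{k_0/(k_0+1)}$, and then using $(1+\lambda)^{k_0/(k_0+1)}\le 1+\lambda$, one solves for $\ry$ and obtains $\ry\gtrsim(1+\lambda)^{-k_0}\rx$ after bookkeeping the exponents. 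The only mild subtlety is getting the exponent $k_0$ (rather than something larger) on the lower bound, which comes from feeding the $(k_0+1)$-th power version of \eqref{criticalfunction} back into itself once.

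For part (b), I would prove $\ry+d(x,y)\lesi\rx+d(x,y)$ (the reverse then follows by symmetry). If $d(x,y)\le\rx$ this is part (a) with $\lambda=1$; if $d(x,y)>\rx$ then \eqref{criticalfunction} gives $\ry\lesi\rx(d(x,y)/\rx)^{k_0/(k_0+1)}\le\rx\cdot d(x,y)/\rx=d(x,y)$ (using $k_0/(k_0+1)<1$ and $d(x,y)/\rx>1$), so $\ry+d(x,y)\lesi d(x,y)\le\rx+d(x,y)$. Part (c) is a rearrangement: apply \eqref{criticalfunction} with $x,y$ swapped to get $\rx\lesi\ry(1+d(x,y)/\ry)^{k_0/(k_0+1)}\lesi\ry^{1/(k_0+1)}(\ry+d(x,y))^{k_0/(k_0+1)}$, then raise to the power $k_0+1$ and solve for $\ry$ to obtain $\ry\gtrsim[\rx]^{k_0+1}[\ry+d(x,y)]^{-k_0}$.

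For part (d), inequality \eqref{eq1-rho exp}: if $d(x,y)\le\epsilon\rho(y)$ the right-hand side is $\ge c$ and the left side is $\le 1$, so it is trivial after adjusting constants; if $d(x,y)>\epsilon\rho(y)$, I would use the elementary bound $e^{-u^2}\le c_N u^{-N}$ applied with $u=d(x,y)/(a^{1/2}\epsilon\rx)$, reducing matters to showing $(\epsilon\rx/d(x,y))^N\lesi(\epsilon\rho(y)/d(x,y))^N$ up to a constant, i.e.\ $\rx\lesi\ry(1+\text{something})$, which in the regime $d(x,y)>\epsilon\rho(y)$ follows from part (b) (or part (c)) after absorbing powers of $d(x,y)/\ry$; one may need to enlarge $N$ slightly to soak up the polynomial factor $(1+d(x,y)/\rx)^{k_0/(k_0+1)}$ coming from \eqref{criticalfunction}, which is harmless since $N>0$ is arbitrary. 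Finally \eqref{eq-rho exp} follows from \eqref{eq1-rho exp} together with part (c): write $1/\ry\lesi[\rx]^{-(k_0+1)}[\ry+d(x,y)]^{k_0}\lesi[\rx]^{-1}(1+d(x,y)/\rx)^{k_0}$ (using part (b)), and then the extra polynomial in $d(x,y)/\rx$ is again absorbed by increasing the exponent $N$ in \eqref{eq1-rho exp}.

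The main obstacle, such as it is, lies in the bookkeeping of exponents: the defining inequality has the awkward power $k_0/(k_0+1)$, and getting the \emph{sharp} powers claimed in (a) and (c) ($k_0$ and $k_0+1$ respectively) rather than weaker ones requires iterating \eqref{criticalfunction} exactly once and being careful about which variable plays which role. None of the steps is deep; the whole lemma is a careful unwinding of \eqref{criticalfunction}, and the ``arbitrary $N$'' in part (d) means all polynomial losses in the exponential estimates are cost-free.
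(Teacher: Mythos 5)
Your proposal is essentially correct and follows the same route as the paper: everything is unwound directly from \eqref{criticalfunction} by swapping roles of $x,y$ and splitting cases, with Gaussian decay absorbing all polynomial losses. A few remarks on detail. For (a), the paper proves the lower bound by distinguishing $\ry\geq\rx$ from $\ry\leq\rx$ rather than invoking the already-proved upper bound; your variant works, though the phrase ``control $d(x,y)/\ry$ from above'' is a little misleading --- what one actually controls is $\ry+d(x,y)\lesi(1+\lambda)\rx$ after rewriting $\ry(1+d(x,y)/\ry)^{k_0/(k_0+1)}=\ry^{1/(k_0+1)}(\ry+d(x,y))^{k_0/(k_0+1)}$. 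For (b) and (c) the paper simply cites \cite[Lemma 2.1]{YZ}, so you are supplying proofs that the paper defers; both of yours are correct. In (d) the paper splits on $d(x,y)\lessgtr\ry$ rather than $\lessgtr\epsilon\ry$, but your split works equally well. Finally, in your derivation of \eqref{eq-rho exp} from \eqref{eq1-rho exp}, the extra factor $(1+d(x,y)/\rx)^{k_0}$ is not really absorbed by ``increasing $N$'' (that route runs into trouble in the regime $d(x,y)\leq\epsilon\ry$) but rather by folding the polynomial into the Gaussian, i.e.\ using $e^{-u}u^{\alpha}\lesi e^{-u/2}$ and then applying \eqref{eq1-rho exp} with $a$ replaced by $2a$; this is exactly the device the paper says it will use repeatedly without mention, and it is also what the paper's choice of exponent $N(k_0+1)+k_0$ encodes.
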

\begin{proof} (a) By (\ref{criticalfunction}), we have $\ry\lesi (1+\lambda)^{\f{k_0}{k_0+1}}\rx$ for all $y\in B(x,\lambda \rx)$. It remains to prove the first inequality. Indeed, if $\ry\geq \rx$, there is nothing to prove. If $\ry\leq \rx$, by (\ref{criticalfunction}), we write
	$$
	\rx\lesi [\ry] ^{\f{1}{1+k_0}}[\ry +d(x,y)]^{\f{k_0}{k_0+1}}\lesi (1+\lambda)^{\f{k_0}{k_0+1}}[\ry] ^{\f{1}{1+k_0}}[\rx]^{\f{k_0}{k_0+1}}
	$$
	It implies that $\ry \geq (1+\lambda)^{-k_0}\rx$. This completes the proof of (a).
	
	For the proofs of (b) and (c), we refer to \cite[Lemma 2.1]{YZ}.
	
	\noindent (d) We only provide the proof of \eqref{eq-rho exp}, since the proof of \eqref{eq-rho exp} is similar and  easier.
	
	We consider two cases. 
	
	\noindent {\bf Case 1: $d(x,y)\leq \ry$.} From (c) we have $\rx\lesi \ry$. This, along with the fact that $e^{-x^2}\lesi x^{-N}$, yields \eqref{eq-rho exp}.
	
	\noindent {\bf Case 2: $d(x,y)> \ry$.} From \eqref{criticalfunction} we have	
	\[
	\rx\leq C\ry\left(\f{d(x,y)}{\ry}\right)^{\f{k_0}{k_0+1}}.
	\]

	This, in combination with inequality $e^{-x^2}\lesi x^{-N(k_0+1)-k_0}$, implies
	\[	\begin{aligned}
	\exp\Big(-\f{d(x,y)^2}{a[\epsilon\rho(x)]^2}\Big)\f{1}{\rho(y)}&\lesi \f{1}{\rho(x)}\left(\f{d(x,y)}{\ry}\right)^{\f{k_0}{k_0+1}}\Big(\f{\epsilon\rho(x)}{d(x,y)}\Big)^{N(k_0+1)+k_0} \\
	&\lesi \f{1}{\rho(x)}\left(\f{d(x,y)}{\ry}\right)^{\f{k_0}{k_0+1}}\left[\f{\epsilon\rho(y)}{d(x,y)}\left(\f{d(x,y)}{\ry}\right)^{\f{k_0}{k_0+1}}\right]^{N(k_0+1)+k_0}\\
	&\lesi \f{\epsilon^{k_0(N+1)}}{\rho(x)}\left(\f{d(x,y)}{\ry}\right)^{\f{k_0}{k_0+1}}\left[\f{\epsilon\rho(y)}{d(x,y)}\right]^{N},
	\end{aligned}\]
	which implies \eqref{eq-rho exp}.
\end{proof}

A direct consequence of Lemma \ref{lem-criticalfunction} is that whenever $B:=B(x_0,\rxx)$ is a critical ball then $\rxx\sim \rx$ for all $x\in B.$

The following result will be useful in the sequel. See Lemma 2.3 and Lemma 2.4 in \cite{YZ}.
\begin{lem}\label{coveringlemm}
	\label{Lem2: rho}
	Let $\rho$ be a critical function on $X$. Then there exists a sequence of points $\{x_\alpha\}_{\alpha\in \mathcal{I}}\subset X$ and a family of functions $\{\psi_\alpha\}_{\alpha\in \mathcal{I}}$ satisfying for some $C>0$
	%so that the family of balls $\{B_\alpha\}_{\alpha\in \mathcal{I}}$ given by $B_\alpha=B(x_\alpha, \rho(x_\alpha))$ satisfies
	\begin{enumerate}[{\rm (i)}]
		\item $\displaystyle \bigcup_{\alpha\in \mathcal{I}} B(x_\alpha, \rho(x_\alpha)) = X$.
		\item For every $\lambda \geq 1$ there exist constants $C$ and $N_1$ such that $\displaystyle \sum_{\alpha\in \mathcal{I}} \chi_{B(x_\alpha, \rho(x_\alpha))}\leq C\lambda^{N_1}$.
		\item ${\rm supp}\, \psi\subset B(x_\alpha, \rho(x_\alpha)/2)$ and $0\leq \psi_\alpha(x)\leq 1$ for all $x\in X$;
		\item $\displaystyle|\psi_\alpha(x)-\psi_\alpha(y)|\leq C d(x,y)/\rho(x_\alpha)$;
		\item $\displaystyle \sum_{\alpha\in \mathcal{I}}\psi_\alpha(x)=1$ for all $x\in X$.
	\end{enumerate}
\end{lem}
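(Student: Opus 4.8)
\textbf{Proof proposal for Lemma~\ref{coveringlemm} (the partition of unity adapted to $\rho$).}
The plan is to construct the points $\{x_\alpha\}$ by a maximal-disjointness (Vitali-type) argument and then manufacture the functions $\psi_\alpha$ by smoothing and normalizing the indicators of the resulting balls, using the slow variation \eqref{criticalfunction} of the critical function throughout. First I would fix the family of balls $\{B(x,\rho(x)/4) : x\in X\}$ and extract a maximal subfamily $\{B(x_\alpha,\rho(x_\alpha)/4)\}_{\alpha\in\mathcal I}$ that is pairwise disjoint (possible by Zorn's lemma, or by a standard greedy selection exploiting that $X$ is separable under the doubling condition). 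Maximality forces every $x\in X$ to lie within $\rho(x)/4 + \rho(x_\alpha)/4$ of some selected center $x_\alpha$ with $\rho(x_\alpha)\le \rho(x)$ up to the controlled factor from Lemma~\ref{lem-criticalfunction}(a); enlarging the radii by a fixed constant (absorbed into $\rho$ via \eqref{criticalfunction}) then gives $\bigcup_\alpha B(x_\alpha,\rho(x_\alpha))=X$, which is (i).

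Next I would prove the bounded-overlap property (ii). Suppose $x\in B(x_\alpha,\lambda\rho(x_\alpha))$ for several indices $\alpha$. By Lemma~\ref{lem-criticalfunction}(a)--(b) all these $\rho(x_\alpha)$ are comparable to $\rho(x)$ with constants depending only on $\lambda$, so all the balls $B(x_\alpha,\rho(x_\alpha)/4)$ are contained in a single ball $B(x, c\lambda\rho(x))$ and each contains a ball of radius $\sim\rho(x)$; since these small balls are disjoint, the doubling inequality \eqref{doub2} bounds their number by $C\lambda^{N_1}$ with $N_1$ depending on $n$ and the constants in \eqref{criticalfunction}. This is the step I expect to be the main obstacle, because it is where the defining inequality for a critical function must be used in both directions (via Lemma~\ref{lem-criticalfunction}(a)) to pin down the comparability of radii uniformly in $\lambda$, and one must track how $N_1$ grows with $\lambda$; the doubling dimension $n$ and the exponent $k_0$ both enter here.

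Finally, for (iii)--(v) I would pick a fixed Lipschitz cutoff $\varphi:[0,\infty)\to[0,1]$ with $\varphi=1$ on $[0,1/4]$ and $\varphi=0$ on $[1/2,\infty)$, set $\widetilde\psi_\alpha(x)=\varphi\bigl(d(x,x_\alpha)/\rho(x_\alpha)\bigr)$, and normalize $\psi_\alpha = \widetilde\psi_\alpha / \sum_{\beta}\widetilde\psi_\beta$. The denominator is bounded below by a positive constant because of (i) (at least one term equals $1$ near each point, after adjusting the constant in the covering), and bounded above by $C\lambda^{N_1}$ with $\lambda$ a fixed constant by (ii), so $\psi_\alpha$ is well defined with values in $[0,1]$, supported in $B(x_\alpha,\rho(x_\alpha)/2)$, and $\sum_\alpha\psi_\alpha\equiv 1$. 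The Lipschitz bound (iv) follows from the Lipschitz bound for $\widetilde\psi_\alpha$ (with constant $\|\varphi'\|_\infty/\rho(x_\alpha)$), the lower bound on the denominator, and the fact that only $O(1)$ terms are nonzero near any point by (ii); since $\rho(x_\alpha)\sim\rho(x)$ on the support of $\psi_\alpha$ by Lemma~\ref{lem-criticalfunction}(a), the bound $|\psi_\alpha(x)-\psi_\alpha(y)|\le Cd(x,y)/\rho(x_\alpha)$ holds as stated. I would remark that this is exactly the construction in \cite[Lemmas~2.3--2.4]{YZ}, so the write-up can be kept brief by citing that reference for the routine parts and highlighting only the use of Lemma~\ref{lem-criticalfunction}.
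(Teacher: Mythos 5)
The paper does not give a proof of this lemma; it simply refers to Lemmas~2.3 and~2.4 of \cite{YZ}, and your outline is precisely the argument found there (maximal disjoint subfamily, then normalize Lipschitz bumps). Your reasoning for (ii)--(v) is sound: the bounded overlap follows from $\rho(x_\alpha)\sim_\lambda\rho(x)$ via Lemma~\ref{lem-criticalfunction}(a) and doubling, and the Lipschitz bound for $\psi_\alpha=\widetilde\psi_\alpha/\sum_\beta\widetilde\psi_\beta$ follows from the lower and upper bounds on the denominator, the finite overlap, and $\rho(x_\beta)\sim\rho(x_\alpha)$ whenever the relevant bumps are simultaneously nonzero (with the trivial bound $|\psi_\alpha(x)-\psi_\alpha(y)|\le 1$ taking care of $d(x,y)\gtrsim\rho(x_\alpha)$).

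The one place where you are too casual is the passage from the Vitali selection to property~(i), and to the positivity of the denominator in (v). Extracting a maximal disjoint family from $\{B(x,\rho(x)/4)\}$ only gives $d(x,x_\alpha)<\rho(x)/4+\rho(x_\alpha)/4$ for some $\alpha$, and when $\rho(x_\alpha)<\rho(x)$ the lower bound $\rho(x_\alpha)\gtrsim\rho(x)$ from Lemma~\ref{lem-criticalfunction}(a) carries a multiplicative constant that may be smaller than $1/2$, so $x$ need not lie in $B(x_\alpha,\rho(x_\alpha))$, let alone in the smaller ball $B(x_\alpha,\rho(x_\alpha)/2)$ required for the denominator $\sum_\beta\widetilde\psi_\beta$ to be bounded away from zero. ``Absorbing the enlargement into $\rho$'' is not available here because (i) and (iii) name the balls $B(x_\alpha,\rho(x_\alpha))$ and $B(x_\alpha,\rho(x_\alpha)/2)$ explicitly. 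The correct fix is to run the maximal-disjointness selection on $\{B(x,\tau\rho(x))\}$ with $\tau$ chosen small enough, depending only on $C$ and $k_0$ from \eqref{criticalfunction}, that the two-case check gives $d(x,x_\alpha)<s\rho(x_\alpha)$ for a fixed $s<1/2$; then one takes $\varphi\equiv1$ on $[0,s]$ and $\varphi\equiv0$ on $[1/2,\infty)$, after which (i), the strict positivity of $\sum_\beta\widetilde\psi_\beta$, and (iii) all hold at once. With this modification your proof is complete.
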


\subsection{Calder\'on reproducing formula and some kernel estimates}
{\it In this subsection, we assume that $\LL$  satisfies (A1) and (A2) only.}

Denote by $E_\LL(\lambda)$ a spectral decomposition of $\LL$. Then by spectral theory, for any bounded Borel funtion $F:[0,\vc)\to \mathbb{C}$ we can define
$$
F(\LL)=\int_0^\vc F(\lambda)dE_\LL(\lambda)
$$
as a bounded operator on $L^2(X)$. It is well-known that the kernel $K_{\cos(t\sqrt{\LL})}$ of $\cos(t\sqrt{\LL})$ satisfies 
\begin{equation}\label{finitepropagation}
{\rm supp}\,K_{\cos(t\sqrt{\LL})}\subset \{(x,y)\in X\times X:
d(x,y)\leq t\}.
\end{equation}
See for example \cite{CS}.
We have the following useful lemma.
\begin{lem}[\cite{HLMMY}]\label{lem:finite propagation}
	Let $\varphi\in C^\vc_0(\mathbb{R})$ be an even function with {\rm supp}\,$\varphi\subset (-1, 1)$ and $\int \varphi =2\pi$. Denote by $\Phi$ the Fourier transform of $\varphi$. For every $\ell\in \mathbb{N}$, set $\Phi^{(\ell)}(\xi):=\f{d^\ell}{d\xi^\ell}\Phi(\xi)$. Then for every $k,\ell\in \mathbb{N}$ and $k+\ell\in 2\mathbb{N}$, the kernel $K_{(t\sqrt{\LL})^k\Phi^{(\ell)}(t\sqrt{\LL})}$ of $(t\sqrt{\LL})^k\Phi^{(\ell)}(t\sqrt{\LL})$ satisfies 
	\begin{equation}\label{eq1-lemPsiL}
	\displaystyle
	{\rm supp}\,K_{(t\sqrt{\LL})^k\Phi^{(\ell)}(t\sqrt{\LL})}\subset \{(x,y)\in X\times X:
	d(x,y)\leq t\},
	\end{equation}
	and
	\begin{equation}\label{eq2-lemPsiL}
	|K_{(t\sqrt{\LL})^k\Phi^{(\ell)}(t\sqrt{\LL})}(x,y)|\leq \f{C}{\mu(B(x,t))}.
	\end{equation}
	%and
	%\begin{equation}\label{eq4-lemPsiL}
	%\Big|K_{(t^2\LL)^k\Phi^{(\ell)}(t\sqrt{\LL})}(x,y)-K_{(t^2\LL)^k\Phi^{(m)}(t%\sqrt{\LL})}(x',y)\Big|\leq %\f{C}{\mu(B(x,t))}\Big(\f{d(x,x')}{t}\Big)^{\delta_1}
	%\end{equation}
	%for all $t>0$ and $d(x,x')<t$.
\end{lem}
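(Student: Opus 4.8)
The statement to prove is Lemma~\ref{lem:finite propagation}, which is attributed to \cite{HLMMY}, so the plan is to reconstruct its proof from the finite propagation speed property \eqref{finitepropagation} together with the Gaussian estimate (A2). The key observation is that $\Phi^{(\ell)}(t\sqrt{\LL})$ can be written as an integral superposition of the wave operators $\cos(s t\sqrt{\LL})$ over a \emph{bounded} range of $s$. Indeed, since $\varphi$ is even with $\supp\varphi\subset(-1,1)$ and $\Phi=\widehat{\varphi}$, Fourier inversion gives $\Phi(\xi)=\frac{1}{2\pi}\int_{-1}^{1}\varphi(s)\,e^{is\xi}\,ds=\frac{1}{2\pi}\int_{-1}^{1}\varphi(s)\cos(s\xi)\,ds$ using evenness. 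Differentiating $\ell$ times in $\xi$ and then multiplying by $\xi^k$, one writes $\xi^k\Phi^{(\ell)}(\xi)$ as $\frac{1}{2\pi}\int_{-1}^{1}\varphi(s)\,\xi^k (is)^\ell\cos^{(\ell)}(s\xi)\,ds$ (up to reorganizing real/imaginary parts, which is where the parity condition $k+\ell\in 2\mathbb{N}$ is used to guarantee the resulting multiplier is a genuine even real function of $\xi$, expressible through $\cos$ rather than $\sin$). After integrating by parts in $s$ the requisite number of times to turn the $\xi^k$ into further $s$-derivatives landing on $\varphi$, one arrives at a representation
\[
(t\sqrt{\LL})^k\Phi^{(\ell)}(t\sqrt{\LL})=\int_{-1}^{1}\Theta(s)\,\cos\!\big(s\,t\sqrt{\LL}\big)\,ds
\]
for some $\Theta\in C_0^\infty(\mathbb{R})$ supported in $(-1,1)$ depending only on $\varphi,k,\ell$.

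Granting this representation, the support bound \eqref{eq1-lemPsiL} is immediate: for $|s|\le 1$ the kernel of $\cos(s t\sqrt{\LL})$ is supported in $\{d(x,y)\le |s|t\}\subset\{d(x,y)\le t\}$ by \eqref{finitepropagation}, so the $s$-average inherits this support. The heart of the matter is the pointwise size bound \eqref{eq2-lemPsiL}. For this I would \emph{not} work directly with the wave representation but instead use it only to localize; the size estimate is obtained by the standard heat-kernel route. Write $F(\lambda)=\lambda^k\Phi^{(\ell)}(\lambda)$; since $\varphi\in C_0^\infty$, $\Phi$ and all its derivatives are Schwartz, so $F$ is a Schwartz function on $\mathbb{R}$ (in particular rapidly decaying with all derivatives). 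One then expresses $F(t\sqrt{\LL})$ via the heat semigroup, e.g. writing $F(\lambda)=G(\lambda^2)e^{-\lambda^2}$ with $G$ again rapidly decreasing, so that $F(t\sqrt{\LL})=G(t^2\LL)e^{-t^2\LL}$, and uses the Gaussian bound (A2) for $e^{-t^2\LL}$ together with the fact that $G(t^2\LL)$ is bounded on $L^2$ with operator norm $\le\|G\|_\infty$. Combining the $L^2\to L^2$ boundedness with the on-diagonal heat kernel bound $\widetilde p_{t^2}(x,x)\lesssim 1/\mu(B(x,t))$ and a standard Davies--Gaffney / finite-speed localization argument (using the already-established support property to control off-diagonal contributions) yields $|K_{F(t\sqrt{\LL})}(x,y)|\lesssim 1/\mu(B(x,t))$ for $d(x,y)\le t$, which is exactly \eqref{eq2-lemPsiL} given that the kernel vanishes for $d(x,y)>t$.

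I would organize the steps as: (1) derive the wave-operator representation of $(t\sqrt{\LL})^k\Phi^{(\ell)}(t\sqrt{\LL})$ by Fourier inversion and integration by parts, carefully tracking the parity condition $k+\ell\in2\mathbb{N}$; (2) deduce \eqref{eq1-lemPsiL} directly from \eqref{finitepropagation}; (3) prove the on-diagonal size bound via the heat-semigroup factorization of the Schwartz multiplier $\lambda^k\Phi^{(\ell)}(\lambda)$ and the Gaussian estimate; (4) upgrade to the full kernel bound \eqref{eq2-lemPsiL} by combining the $L^2$ bound with the finite-speed support localization to kill off-diagonal terms. The main obstacle is step~(1): one must verify that after all the integrations by parts the polynomial factor $\lambda^k$ is fully absorbed and the surviving multiplier is even and real so that it is represented purely through $\cos(st\sqrt{\LL})$ --- this is precisely where $k+\ell\in2\mathbb{N}$ enters, and getting the bookkeeping right (rather than ending up with an extra $\sin$, whose kernel is not finitely supported in the same clean way) is the delicate point. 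Everything else is a routine combination of spectral calculus with the Gaussian bound, standard in this circle of ideas (cf.\ \cite{CS,HLMMY}), so I would simply cite those arguments rather than reproduce the details.
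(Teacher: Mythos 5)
The paper itself does not prove this lemma; it is simply quoted from \cite{HLMMY}, so there is no paper proof to compare against, and your reconstruction has to be judged on its own.

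Your steps (1)–(2) are sound: writing $\Phi(\xi)=\frac{1}{2\pi}\int_{-1}^1\varphi(s)\cos(s\xi)\,ds$, differentiating $\ell$ times, absorbing $\xi^k$ by $k$ integrations by parts in $s$ (all boundary terms vanish since $\varphi\in C_0^\infty(-1,1)$), and using $k+\ell$ even to guarantee that the surviving multiplier is expressed through $\cos$ rather than $\sin$, indeed yields $(t\sqrt{\LL})^k\Phi^{(\ell)}(t\sqrt{\LL})=\int_{-1}^1\Theta(s)\cos(st\sqrt{\LL})\,ds$ with $\Theta=c\,\frac{d^k}{ds^k}[s^\ell\varphi(s)]\in C_0^\infty(-1,1)$, even. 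Finite propagation speed \eqref{finitepropagation} then gives the support inclusion \eqref{eq1-lemPsiL}. (The factor $(is)^\ell$ in your sketch should read $s^\ell\cos^{(\ell)}(s\xi)$, but that is only a bookkeeping slip.)

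There is, however, a genuine gap in step (3). You write $F(\lambda)=G(\lambda^2)e^{-\lambda^2}$ ``with $G$ again rapidly decreasing'' and then invoke $\NormA{G(t^2\LL)}_{L^2\to L^2}\le\NormA{G}_\infty$. But $G(\mu)=F(\sqrt\mu)\,e^{\mu}$ is not rapidly decreasing; it is not even bounded. By Paley--Wiener, $F=\lambda^k\Phi^{(\ell)}(\lambda)$ extends to an entire function of exponential type $\le 1$, so on the real line it is Schwartz but decays only superpolynomially, which is nowhere near enough to compensate $e^{\lambda^2}$. Hence the proposed $L^2$ estimate is vacuous and step (4) cannot close from there. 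The standard way to establish \eqref{eq2-lemPsiL} is different: use the Gaussian bound (A2) to prove a Plancherel-type estimate of the form $\|K_{H(\sqrt{\LL})}(\cdot,y)\|_{L^2}\lesssim\mu(B(y,R^{-1}))^{-1/2}\|H\|_\infty$ for $H$ supported in $[0,R]$ (one proves this by writing $H(\sqrt\LL)=[H(\sqrt\LL)e^{\LL/R^2}]e^{-\LL/R^2}$, which is legitimate precisely because $H$ has \emph{spectral} support $\le R$, so $H(\lambda)e^{\lambda^2/R^2}$ \emph{is} bounded), then dyadically decompose $F=\sum_j F_j$ in frequency and sum, finally passing from the $L^2$ kernel bound to the pointwise bound by factoring $F=F_1F_2$ with both $\widehat{F_i}$ supported in $(-\tfrac12,\tfrac12)$ and using Cauchy--Schwarz together with the already-proved finite-speed support; see \cite{CS}, \cite[Lemma 2.3]{CD}, or \cite[Lemma 3.5]{HLMMY}. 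Your overall plan (support via waves, size via Gaussian/functional-calculus) is the right one, but the specific heat-semigroup factorization you propose does not work, and the passage from $L^2$ to pointwise needs the factorization/Cauchy--Schwarz device to be made precise.
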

The following inhomogeneous Calder\'on reproducing formula related to $\LL$ will be crucial for the development of our paper. 
\begin{prop}
	\label{Calderon prop}
	Let $\varphi$ be as in Lemma \ref{lem:finite propagation}. Let $\psi\in C^\vc_0(\mathbb{R})$ be an even function with {\rm supp}\,$\psi \subset (-1, 1)$ and $\int \psi=2\pi$. For every $k,j\in \mathbb{N}$, set $\Phi_{k,j}(\xi):=\xi^j \Phi^{(k)}(\xi)$ and $\Psi_{k,j}(\xi):=\xi^j\Psi^{(k)}(\xi)$, where $\Phi$ and $\Psi$ are the Fourier transforms of $\varphi$ and $\psi$, respectively. Then for each $M\in \mathbb{N}$ and $f\in L^2(X)$ there exist numbers $c(M,k)$ and $c(M,k,j)$ so that
	\begin{equation}\label{eq-Calderon}
	\begin{aligned}
	f=&\sum_{k=0}^{M+1}c(M,k)\int_0^{1/2}(t^2\LL)^M\Phi_{2k,2}(t\sqrt{\LL})\Psi^{(2M-2k+2)}(t\sqrt{\LL})f\f{dt}{t}\\
	&+\sum_{k=0}^{M}c(M,k)\int_0^{1/2}(t^2\LL)^M\Phi_{2k+1,1}(t\sqrt{\LL})\Psi_{2M-2k+1,1}(t\sqrt{\LL})f\f{dt}{t}\\
	&+\sum_{k=1}^{2M+2}\sum_{j=0}^kc(M,k,j)\Phi_{j,j}(2^{-1}\sqrt{\LL})\Psi_{k-j,k-j}(2^{-1}\sqrt{\LL})f
	\end{aligned}
	\end{equation}
	in $L^2(X)$.
\end{prop}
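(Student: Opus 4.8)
The starting point is the standard one-dimensional reproducing identity obtained by expanding the function $1$ on the spectrum of $\sqrt{\LL}$ through the Fourier transforms of $\varphi$ and $\psi$. Concretely, since $\varphi$ and $\psi$ are even, $C_0^\infty$, supported in $(-1,1)$ with $\int\varphi=\int\psi=2\pi$, their Fourier transforms $\Phi$ and $\Psi$ are Schwartz functions with $\Phi(0)=\Psi(0)=1$. The plan is to write a clean identity of the form
\begin{equation*}
1 = \int_0^{1/2} G(t\xi)\,\frac{dt}{t} + R(\xi),\qquad \xi\ge 0,
\end{equation*}
where $G$ is built from $\Phi$, $\Psi$ and their derivatives so that $G(s) = O(s^{2M})$ as $s\to 0$ and $G$ has rapid decay at $\infty$ (forcing the integral to converge), and $R$ is supported away from $0$ (in fact $R(\xi)$ will be a finite linear combination of terms $\Phi_{j,j}(\xi/2)\Psi_{k-j,k-j}(\xi/2)$). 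The key algebraic device is the one used in \cite{SY}: apply the fundamental theorem of calculus to $s\mapsto \Phi(s)\Psi(s)$ (or a suitable modification) on the interval $[s/2, \ldots]$ and iterate, integrating by parts $2M+2$ times so that $2M$ powers of $t^2\LL$ can be extracted. Each integration by parts either lands a derivative on $\Phi$ or on $\Psi$ and contributes a factor of $\xi$; bookkeeping the parity (using $k+\ell\in 2\mathbb N$ as in Lemma \ref{lem:finite propagation}) yields the two families of integral terms (the $\Phi_{2k,2}\Psi^{(2M-2k+2)}$ family and the $\Phi_{2k+1,1}\Psi_{2M-2k+1,1}$ family), while the boundary terms at $t=1/2$ collapse into the finite sum $\sum_{k=1}^{2M+2}\sum_{j=0}^k c(M,k,j)\Phi_{j,j}(2^{-1}\sqrt\LL)\Psi_{k-j,k-j}(2^{-1}\sqrt\LL)$ and the boundary term at $t=0$ vanishes because $G$ vanishes to order $2M$ there.

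First I would isolate the scalar identity: set $H(s) = \Phi(s)\Psi(s)$, note $H(0)=1$, and write $1 - H(s) = -\int_0^1 \frac{d}{du}H(us)\,du = -\int_0^1 s H'(us)\,du$; rescaling $u = $ (something) turns this into a $\int_0^{1/2}\frac{dt}{t}$ integral of $t\sqrt\LL\,H'(t\sqrt\LL)$ plus the boundary value $H(s/2)$ at the other endpoint. Iterating this $2M+2$ times, or equivalently applying a Taylor-with-integral-remainder expansion to $H$ at $0$ of order $2M+1$, produces the leading factor $(t^2\LL)^M$ times $(t\sqrt\LL)^2$ (to make up the extra power needed for convergence) acting on a product of high-order derivatives of $\Phi$ and $\Psi$, together with the finite collection of boundary terms which, after using $H = \Phi\Psi$ and the Leibniz rule repeatedly, reorganize into products $\Phi_{j,j}(\cdot/2)\Psi_{k-j,k-j}(\cdot/2)$ for $1\le k\le 2M+2$, $0\le j\le k$. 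Second I would check the two technical points that make the operator identity legitimate: (a) convergence of the integrals in the strong $L^2$ sense, which follows from the spectral theorem since the scalar multipliers $s\mapsto (t s^2)^M\Phi_{2k,2}(ts)\Psi^{(2M-2k+2)}(ts)$ etc.\ are uniformly bounded and the $t$-integral of their sup over the spectrum converges (using $G(s)=O(s^{2M})\cdot O(s^2)$ near $0$ and Schwartz decay near $\infty$); and (b) that the scalar identity, which holds pointwise for all $\xi\ge 0$, upgrades to the operator identity on $L^2(X)$ by functional calculus — this is immediate once (a) is in place and one may interchange the $t$-integral with the spectral integral by Fubini/dominated convergence.

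The bulk of the work — and the step I expect to be the main obstacle — is the \emph{combinatorial bookkeeping} in the iterated integration by parts: tracking exactly which derivative orders $k,\ell$ of $\Phi$ and $\Psi$ appear, verifying the parity constraints $k+\ell\in 2\mathbb N$ needed so that Lemma \ref{lem:finite propagation} (and its $\psi$-analogue) applies downstream, pinning down the exact upper limits of the summations ($k$ from $0$ to $M+1$ in the first family, $0$ to $M$ in the second, $1$ to $2M+2$ for the boundary terms), and confirming that the coefficients $c(M,k)$, $c(M,k,j)$ exist as the claimed (dimension- and $\LL$-independent) real numbers coming purely from repeated differentiation of products and the binomial/Leibniz expansions. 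None of this is conceptually deep, but it is error-prone, and one has to be careful that the "extra" factor $(t\sqrt\LL)^2$ needed to guarantee the integral converges (beyond the $(t^2\LL)^M$ that is there for atomic-cancellation purposes) is correctly absorbed — this is why the first family carries $\Phi_{2k,2}$ (two extra powers split off evenly) and the second carries $\Phi_{2k+1,1}\Psi_{\cdot,1}$ (the odd case, one extra power on each factor). Once the scalar identity is nailed down with all indices and parities correct, promoting it to $L^2(X)$ via the spectral theorem is routine, and the convergence of the resulting integrals of kernel-operators will be used later in the paper, not needed for the statement itself.
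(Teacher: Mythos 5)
Your proposal follows essentially the same route as the paper: iterated integration by parts on $\int_0^{1/2}(t\sqrt\LL)^{2M+2}(\Phi\Psi)^{(2M+2)}(t\sqrt\LL)\,\frac{dt}{t}$ (equivalently a Taylor-with-remainder expansion of $\Phi\Psi$ using $\Phi(0)\Psi(0)=1$), Leibniz expansion of $(\Phi\Psi)^{(k)}$ to express everything in the $\Phi_{j,j}$, $\Psi_{k,j}$ basis, splitting the $2M+2$ powers of $t\sqrt\LL$ according to the parity of the derivative order so that Lemma~\ref{lem:finite propagation}'s constraint $k+\ell\in 2\mathbb N$ is respected, and then promotion to an $L^2(X)$ operator identity by the spectral theorem. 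One small inaccuracy — the constant reproducing $f$ (the $(2M+2)!$ in the paper's display) comes from the surviving boundary value $(\Phi\Psi)(0)=1$ at the bottom of the integration-by-parts chain, not from the boundary terms at $t=0$ vanishing as you state — but your subsequent "Taylor-with-integral-remainder" phrasing shows you have the right mechanism, so this is a wording slip rather than a gap.
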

\begin{proof}
	
	By Lebnitz's rule we have for any $k\in \mathbb{N}$
	\begin{equation}\label{eq-Lebnitz}
	\f{d^k}{ds^k}(\Phi(s)\Psi(s))=\sum_{j=0}^kC^k_j\Phi^{(j)}(s)\Psi^{(k-j)}(s).
	\end{equation}
	
	On the other hand, by integration by parts and a straightforward calculation we obtain
	\[
	\begin{aligned}
	\int_0^{1/2} (tz)^{2M+2}&(\Phi \Psi )^{(2M+2)}(tz)\f{dt}{t}\\
	&=\sum_{k=0}^{2M+1} (-1)^{k-1}\f{(2M+2)!}{(k+1)!}\Big(\f{z}{2}\Big)^{k}(\Phi\Psi)^{(k)}\Big(\f{z}{2}\Big)+(2M+2)!\\
	&=\sum_{k=0}^{2M+1}\sum_{j=0}^k (-1)^{k-1}C^k_j\f{(2M)!}{(k+1)!}\Phi_{j,j}\Big(\f{z}{2}\Big)\Psi_{k-j,k-j}\Big(\f{z}{2}\Big)+(2M+2)!.
	\end{aligned}
	\]
	This, along with spectral theory, implies
	\[
	\begin{aligned}
	f=&\f{1}{(2M+2)!}\int_0^{1/2} (t\sqrt{\LL})^{2M+2}(\Phi\Psi)^{(2M+2)}(t\sqrt{\LL}) \f{dt}{t}\\
	&+\sum_{k=0}^{2M+1}\sum_{j=0}^kc(M,k,j)\Phi_{j,j}(2^{-1}\sqrt{\LL})\Psi_{k-j,k-j}(2^{-1}\sqrt{\LL})f.
	\end{aligned}
	\]
	Moreover, from \eqref{eq-Lebnitz} we can find that
	\[
	\begin{aligned}
	\int_0^{1/2} (t\sqrt{\LL})^{2M+2}&(\Phi\Psi)^{(2M+2)}(t\sqrt{\LL}) \f{dt}{t}\\
	=&\sum_{k=0}^{2M+2} C^{2M+2}_k\int_0^{1/2}(t^2\LL)^{M+2}\Phi^{(k)}(t\sqrt{\LL})\Psi^{(2M-k+2)}(t\sqrt{\LL})f\f{dt}{t}\\
	=&\sum_{k=0}^{M+1}c(M,k)\int_0^{1/2}(t^2\LL)^M\Phi_{2k,2}(t\sqrt{\LL})\Psi^{(2M-2k+2)}(t\sqrt{\LL})f\f{dt}{t}\\
	&+\sum_{k=0}^{M}c(M,k)\int_0^{1/2}(t^2\LL)^M\Phi_{2k+1,1}(t\sqrt{\LL})\Psi_{2M-2k+1,1}(t\sqrt{\LL})f\f{dt}{t}.
	\end{aligned}
	\]
	Taking these two estimates we obtain \eqref{eq-Calderon}.
\end{proof}

We record the following result in \cite{DKP}.
\begin{lem}
	\label{lem-DKP}
	Let $\varphi\in\mathscr{S}(\mathbb{R})$ be even function with $\varphi(0)=1$ and let $N>0$. Then there exist even functions $\phi,\psi\in \mathscr{S}(\mathbb{R})$ with $\phi(0)=1$ and $ \psi^{(\nu)}(0)=0, \nu=0,1,\ldots, N$ so that for every $f\in L^2(X)$ and every $j\in \mathbb{Z}$ we have
	$$
	f=\phi(2^{-j}\sqrt{\LL})\varphi(2^{-j}\sqrt{\LL})f+\sum_{k\geq j}\psi(2^{-k}\sqrt{\LL})[\varphi(2^{-k}\sqrt{\LL})-\varphi(2^{-k+1}\sqrt{\LL})]f \ \ \text{in $L^2(X)$}.
	$$
\end{lem}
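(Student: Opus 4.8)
The plan is to reduce the statement to a pointwise identity of bounded spectral multipliers on $[0,\infty)$ and then build $\phi$ and $\psi$ explicitly from a telescoping identity. First I would pass to the spectral side: since $\LL$ is nonnegative and self-adjoint, the spectral theorem assigns to every bounded Borel function $m$ on $[0,\infty)$ a bounded operator $m(\sqrt{\LL})$ on $L^2(X)$ with $\|m(\sqrt{\LL})\|\le\|m\|_\infty$, the assignment $m\mapsto m(\sqrt{\LL})$ is multiplicative, it carries bounded pointwise limits to strong operator limits, and it obeys the composition rule $G(F(\sqrt{\LL}))=(G\circ F)(\sqrt{\LL})$ for Borel $F,G$. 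Hence it suffices to produce even $\phi,\psi\in\mathscr{S}(\mathbb{R})$ with $\phi(0)=1$ and $\psi^{(\nu)}(0)=0$ for $\nu=0,\dots,N$ such that the scalar identity
\[
1=\phi(2^{-j}\lambda)\,\varphi(2^{-j}\lambda)+\sum_{k\ge j}\psi(2^{-k}\lambda)\bigl[\varphi(2^{-k}\lambda)-\varphi(2^{-k+1}\lambda)\bigr]
\]
holds for all $\lambda\ge 0$ and $j\in\mathbb{Z}$; applying the functional calculus to $\sqrt{\LL}$, turning this into an operator identity term by term, and using the strong-limit property to pass to the limit in the (telescoping) partial sums then yields the asserted identity in $L^2(X)$.

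To construct $\phi$ and $\psi$ I would start from the telescoping identity $\sum_{k\ge j}[\varphi(2^{-k}\lambda)-\varphi(2^{-k+1}\lambda)]=1-\varphi(2^{-j+1}\lambda)$, valid because $\varphi(2^{-K}\lambda)\to\varphi(0)=1$ as $K\to\infty$. To insert a weight $\psi$ vanishing to high order at $0$ without destroying the telescoping, fix a nondecreasing $G\in C^\infty(\mathbb{R})$ with $G\equiv 0$ on a neighbourhood of $0$ and $G\equiv 1$ on a neighbourhood of $1$, and set
\[
\psi(\lambda):=\int_0^1 G'\!\bigl((1-s)\varphi(2\lambda)+s\,\varphi(\lambda)\bigr)\,ds,\qquad
\phi(\lambda):=\frac{G\bigl(\varphi(2\lambda)\bigr)}{\varphi(\lambda)}.
\]
The fundamental theorem of calculus gives $\psi(\lambda)[\varphi(\lambda)-\varphi(2\lambda)]=G(\varphi(\lambda))-G(\varphi(2\lambda))$ for every $\lambda$, so the series in the displayed scalar identity telescopes to $1-G(\varphi(2^{-j+1}\lambda))$; since $\phi(2^{-j}\lambda)\varphi(2^{-j}\lambda)=G(\varphi(2^{-j+1}\lambda))$, the identity follows. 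The remaining properties are read off directly: $\varphi$ even makes $\phi,\psi$ even; for $\lambda$ near $0$ the argument $(1-s)\varphi(2\lambda)+s\varphi(\lambda)$ lies close to $\varphi(0)=1$, where $G'\equiv 0$, so $\psi$ vanishes identically near $0$ and hence $\psi^{(\nu)}(0)=0$ for all $\nu$; since $\varphi\in\mathscr{S}(\mathbb{R})$ the same argument shows $\psi\equiv 0$ for $|\lambda|$ large, so $\psi\in\mathscr{S}(\mathbb{R})$; and $\phi(0)=G(\varphi(0))/\varphi(0)=G(1)=1$, while $G(\varphi(2\lambda))\equiv 0$ once $|\lambda|$ is large.

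The one genuinely delicate point — the step I expect to be the main obstacle — is verifying that $\phi$ is a true smooth (hence, being eventually zero, Schwartz) function rather than an object only defined off the zero set of $\varphi$: one must check that $G(\varphi(2\lambda))$ vanishes, to an order at least that of $\varphi$, at every real zero of $\varphi$. This is automatic when $\varphi$ is nonvanishing; in the general case it is arranged by a more careful choice of $G$ (and, if necessary, a harmless modification inside the telescoping) together with the division lemma for Schwartz functions, and is the kind of routine-but-technical point handled in \cite{DKP}. Once this is granted, the $L^2(X)$ convergence asserted in the lemma follows from the reduction above: the telescoped operator partial sums equal $G(\varphi(2^{-K}\sqrt{\LL}))f-G(\varphi(2^{-j+1}\sqrt{\LL}))f$, and $\varphi(2^{-K}\sqrt{\LL})\to I$ strongly as $K\to\infty$, so $G(\varphi(2^{-K}\sqrt{\LL}))f\to G(1)f=f$ in $L^2(X)$, which completes the argument.
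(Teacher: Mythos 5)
Your reduction to a scalar identity via the bounded Borel functional calculus is correct, and the integral formula for $\psi$ is a clean device: the fundamental theorem of calculus makes the series telescope on the nose, $\psi$ is even and in fact compactly supported, and $\psi$ vanishes identically in a neighbourhood of $0$, so every derivative of $\psi$ vanishes there. The passage to the $L^2(X)$ limit by bounded pointwise convergence of the multipliers $G(\varphi(2^{-K}\cdot))\to 1$ is also fine. All of that part of your argument is sound; the paper itself defers the proof to \cite{DKP} without giving details.

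The gap is the one you flag in constructing $\phi$, and it is not routine; in particular, no choice of $G$ repairs it. For $\phi = G(\varphi(2\cdot))/\varphi$ to be smooth you need $G(\varphi(2\lambda))$ to vanish, to at least the order of $\varphi$, at every real zero of $\varphi$, yet $G$ is forced to equal $1$ on a neighbourhood of $\varphi(0)=1$. Take any even $\varphi\in\mathscr{S}(\mathbb{R})$ with $\varphi(0)=1$, $\varphi(\lambda_0)=0$ and $\varphi(2\lambda_0)=1$ for some $\lambda_0>0$ (such functions are easy to manufacture, e.g.\ an even $C_c^\infty$ bump equal to $1$ near $0$ and near $\pm 2$ and equal to $0$ near $\pm 1$). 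Then $G(\varphi(2\lambda_0))=1$ for every admissible $G$, so $\phi$ has a genuine pole at $\lambda_0$; your construction produces an object that is not even continuous, let alone Schwartz. The other escape you suggest, ``a harmless modification inside the telescoping,'' cannot be waved through either: perturbing $\psi$ to absorb the bad part changes the low-frequency factor $\phi\varphi$ accordingly, and the divisibility issue simply reappears at the zeros of $\varphi$. Producing a smooth companion $\phi$ for a general even $\varphi$ with $\varphi(0)=1$ (not assumed nonvanishing) is precisely the nontrivial content of the lemma, and your argument does not supply it.
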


The following results give some kernel estimates which play an important role in the proof of main results.

\begin{lem}
	\label{lem1}
	\begin{enumerate}[{\rm (a)}]
		\item Let $\varphi\in \mathscr{S}(\mathbb{R})$ be an even function. Then for any $N>0$ there exists $C$ such that 
		\begin{equation}
		\label{eq1-lema1}
		|K_{\varphi(t\sqrt{\LL})}(x,y)|\leq \f{C}{\mu(B(x,t))+\mu(B(y,t))}\Big(1+\f{d(x,y)}{t}\Big)^{-n-N},
		\end{equation}
		for all $t>0$ and $x,y\in X$.
		\item Let $\varphi_1, \varphi_2\in \mathscr{S}(\mathbb{R})$ be even functions. Then for any $N>0$ there exists $C$ such that
		\begin{equation}
		\label{eq2-lema1}
		|K_{\varphi_1(t\sqrt{\LL})\varphi_2(s\sqrt{\LL})}(x,y)|\leq C\f{1}{\mu(B(x,t))+\mu(B(y,t))}\Big(1+\f{d(x,y)}{t}\Big)^{-n-N},
		\end{equation}
		for all $t\leq s<2t$ and $x,y\in X$.
		\item Let $\varphi_1, \varphi_2\in \mathscr{S}(\mathbb{R})$ be even functions with $\varphi^{(\nu)}_2(0)=0$ for $\nu=0,1,\ldots,2\ell$ for some $\ell\in\mathbb{Z}^+$. Then for any $N>0$ there exists $C$ such that
		\begin{equation}
		\label{eq3-lema1}
		|K_{\varphi_1(t\sqrt{\LL})\varphi_2(s\sqrt{\LL})}(x,y)|\leq C\Big(\f{s}{t}\Big)^{2\ell} \f{1}{\mu(B(x,t))+\mu(B(y,t))}\Big(1+\f{d(x,y)}{t}\Big)^{-n-N},
		\end{equation}
		for all $t\geq s>0$ and $x,y\in X$.
	\end{enumerate}
\end{lem}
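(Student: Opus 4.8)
The plan is to prove all three estimates by exploiting the finite propagation speed of the wave operator together with spectral multiplier techniques. The starting point is the key observation that for a Schwartz function $\varphi$, one can write $\varphi(t\sqrt{\LL})$ via the Fourier inversion formula as an average of wave operators: $\varphi(t\sqrt{\LL}) = \frac{1}{2\pi}\int_{\mathbb{R}}\hat{\varphi}(\xi)\cos(\xi t\sqrt{\LL})\,d\xi$ (using that $\varphi$ is even), and then split the integral according to whether $|\xi|\le d(x,y)/t$ or not. For the piece with $|\xi|$ large, the rapid decay of $\hat{\varphi}$ (a Schwartz function) gives smallness, while for the piece with $|\xi|$ small the finite propagation property \eqref{finitepropagation} forces the kernel to vanish when $d(x,y) > |\xi| t$. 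To turn this into a pointwise kernel bound one combines it with the on-diagonal bound $|K_{\varphi(t\sqrt{\LL})}(x,y)| \lesssim \mu(B(x,t))^{-1}$, which itself follows from (GE) via a standard argument (write $\varphi$ as a superposition/integral against the heat semigroup, or use that $\varphi(t\sqrt{\LL}) = \Theta(t^2\LL)$ with $\Theta$ of fast decay and invoke the Gaussian bound on $e^{-t^2\LL}$). This is precisely the type of argument in \cite{HLMMY} and I would cite it; part (a) with $N$ arbitrary is then obtained by iterating or by being careful with the number of derivatives of $\hat\varphi$ one controls.

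For part (b), I would write $\varphi_1(t\sqrt{\LL})\varphi_2(s\sqrt{\LL}) = \Theta(t\sqrt{\LL})$ where $\Theta(\lambda) := \varphi_1(\lambda)\varphi_2((s/t)\lambda)$. Since $1 \le s/t < 2$, the function $\Theta$ lies in a bounded subset of $\mathscr{S}(\mathbb{R})$ (all Schwartz seminorms are bounded uniformly in the ratio $s/t \in [1,2)$), and it is even. Hence part (a) applied to $\Theta$ at scale $t$ gives \eqref{eq2-lema1} directly, with constant independent of $s,t$. The only point requiring a word is the uniform control of Schwartz seminorms, which is immediate from the chain rule since $(s/t)^k$ stays bounded.

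For part (c), the extra vanishing hypothesis $\varphi_2^{(\nu)}(0) = 0$ for $\nu \le 2\ell$ means $\varphi_2(\lambda) = \lambda^{2\ell}\eta(\lambda)$ for some even $\eta \in \mathscr{S}(\mathbb{R})$ (by Taylor's theorem with the evenness forcing the odd Taylor coefficients to vanish anyway). Then
\[
\varphi_1(t\sqrt{\LL})\varphi_2(s\sqrt{\LL}) = \Big(\frac{s}{t}\Big)^{2\ell}\,\varphi_1(t\sqrt{\LL})\,(t\sqrt{\LL})^{2\ell}\eta(s\sqrt{\LL}) = \Big(\frac{s}{t}\Big)^{2\ell}\,\widetilde{\Theta}(t\sqrt{\LL}),
\]
where $\widetilde{\Theta}(\lambda) := \lambda^{2\ell}\varphi_1(\lambda)\,\eta((s/t)\lambda)$. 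Now $0 < s/t$ is no longer bounded above, but since $t \ge s$ we have $s/t \le 1$, so again $\widetilde\Theta$ lies in a bounded subset of $\mathscr{S}(\mathbb{R})$ (the factor $\eta((s/t)\lambda)$ has Schwartz seminorms controlled by those of $\eta$ since $s/t \le 1$, and $\lambda^{2\ell}\varphi_1(\lambda)$ is a fixed Schwartz function). Applying part (a) to $\widetilde\Theta$ at scale $t$ yields \eqref{eq3-lema1}.

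The main obstacle, and the step deserving the most care, is establishing part (a) cleanly with the dimension-free tail exponent $-n-N$ for arbitrary $N$ and with the symmetric denominator $\mu(B(x,t)) + \mu(B(y,t))$; the symmetrization uses \eqref{doub2s} to pass between $\mu(B(x,t))$ and $\mu(B(y,t))$ at the cost of a factor $(1 + d(x,y)/t)^n$, which is why the exponent is $-n-N$ rather than $-N$. Once (a) is in hand with this formulation, parts (b) and (c) are essentially bookkeeping about Schwartz seminorms, as sketched above. I would therefore structure the proof as: (1) recall/cite the on-diagonal bound and the finite-propagation argument giving (a); (2) deduce (b) by the substitution $\Theta(\lambda) = \varphi_1(\lambda)\varphi_2((s/t)\lambda)$; (3) deduce (c) by factoring out $\lambda^{2\ell}$ from $\varphi_2$ and using $s/t \le 1$.
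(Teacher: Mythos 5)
Your proof is correct, and for parts (b) and (c) it takes a genuinely different route from the paper's. For part (a) both you and the paper essentially cite the same finite-propagation/wave-operator argument (the paper invokes \cite[Lemma 2.3]{CD}; you sketch it via Fourier inversion and \eqref{finitepropagation}), so there is no real divergence there. The divergence is in (b) and (c): the paper computes the kernel of the product $\varphi_1(t\sqrt{\LL})\varphi_2(s\sqrt{\LL})$ as the composition $\int_X K_{\varphi_1(t\sqrt{\LL})}(x,z)K_{\varphi_2(s\sqrt{\LL})}(z,y)\,d\mu(z)$, applies (a) to each factor, and then estimates the resulting integral directly using Lemma \ref{lem-ele est} and the doubling property. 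You instead absorb the product into a \emph{single} spectral multiplier $\widetilde\Theta(t\sqrt{\LL})$ with $\widetilde\Theta(\lambda)=\varphi_1(\lambda)\varphi_2((s/t)\lambda)$ (resp.\ $\widetilde\Theta(\lambda)=\lambda^{2\ell}\varphi_1(\lambda)\eta((s/t)\lambda)$ after factoring $\varphi_2(\lambda)=\lambda^{2\ell}\eta(\lambda)$), observe that $\widetilde\Theta$ ranges over a bounded subset of $\mathscr{S}(\mathbb{R})$ as $s/t$ varies in $[1,2)$ (resp.\ $(0,1]$), and invoke (a) once at scale $t$. Your route is shorter and avoids the convolution estimate, but it does implicitly rely on the (true, and worth stating) fact that the constant $C$ in (a) depends on $\varphi$ only through finitely many Schwartz seminorms, so that it is uniform over the family $\{\widetilde\Theta\}$ --- you flag this point, which is the only step needing care. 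The paper's route is slightly longer but fully explicit and does not require tracking seminorm dependence in (a). For (c), also note that the paper's factorization ($\psi_1=\lambda^{2\ell}\varphi_1$, $\psi_2=\lambda^{-2\ell}\varphi_2$) followed by the same kernel composition requires a short argument to handle $s\le t$ rather than $s\sim t$ (using that a kernel concentrated at the smaller scale $s$ composed with one at scale $t$ is still concentrated at scale $t$); your route bypasses this entirely since $s/t\le 1$ is exactly the regime in which $\eta((s/t)\cdot)$ has controlled seminorms.
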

\begin{proof}
	\noindent (a) The estimate \eqref{eq1-lema1} was proved in \cite[Lemma 2.3]{CD} in the particular case $X=\mathbb{R}^n$ but the proof is still valid in the spaces of homogeneous type. \\

	\noindent (b) We have
	\[
	\begin{aligned}
	K_{\varphi_1(t\sqrt{\LL})\varphi_2(s\sqrt{\LL})}(x,y)=\int_XK_{\varphi_1(t\sqrt{\LL})}(x,z)K_{\varphi_2(t\sqrt{\LL})}(z,y)dz.
	\end{aligned}
	\]
	This along with (a) implies that
	\[
	\begin{aligned}
	|K_{\varphi_1(t\sqrt{\LL})\varphi_2(s\sqrt{\LL})}(x,y)|&\lesi\int_X\f{1}{\mu(B(x,t))}\Big(1+\f{d(x,z)}{t}\Big)^{-2n-N}\f{1}{\mu(B(y,s))}\Big(1+\f{d(z,y)}{s}\Big)^{-3n-N-1}dz\\
	&\lesi\int_X\f{1}{\mu(B(x,t))}\Big(1+\f{d(x,z)}{t}\Big)^{-2n-N}\f{1}{\mu(B(y,t))}\Big(1+\f{d(z,y)}{t}\Big)^{-3n-N-1}dz\\	
	&\lesi\int_X\f{1}{\mu(B(x,t))}\Big(1+\f{d(x,y)}{t}\Big)^{-2n-N}\f{1}{\mu(B(y,t))}\Big(1+\f{d(z,y)}{t}\Big)^{-n-1}dz\\
	&\lesi \f{1}{\mu(B(x,t))}\Big(1+\f{d(x,y)}{t}\Big)^{-2n-N},
	\end{aligned}
	\]
	where in the second inequality we used the fact that $s\sim t$ and in the last inequality we used Lemma \ref{lem-ele est}.
	
	This, in combination with \eqref{doub2}, gives (b).\\

	\noindent (c) Set $\psi_{1}(\lambda)=\lambda^{2\ell}\varphi_{1}(\lambda)$ and $\psi_{2}(\lambda)=\lambda^{-2\ell}\varphi_{2}(\lambda)$. It is obvious that $\psi_1,\psi_2$ are even functions and $\psi_1\in \mathscr{S}(\mathbb{R})$. Moreover, since $\varphi^{(\nu)}_2(0)=0$ for $\nu=0,1,\ldots,2\ell$, one has $\psi_2\in \mathscr{S}(\mathbb{R})$. Moreover,
	\[
	K_{\varphi_1(t\sqrt{\LL})\varphi_2(s\sqrt{\LL})}(x,y)=\Big(\f{s}{t}\Big)^{2\ell}K_{\psi_1(t\sqrt{\LL})\psi_2(s\sqrt{\LL})}(x,y).
	\]
	At this stage, arguing similarly to (b) we obtain (c).
	
\end{proof}

\section{Maximal function characterizations for local Hardy spaces related to $\LL$ }\label{sec: proof local hardy}

%\section{Maximal function characterization for Hardy spaces $h^{p,q}_{\LL,at, M}(X)$ and $h^{p,q}_{at}(X)$}\label{sec: proof local hardy}
The bulk of this section will devoted to the proof of Theorem \ref{mainthm1}. Theorem \ref{mainthm2} will then be deduced from Theorem \ref{mainthm1} at the end of the section.

\subsection{Proof of Theorem \ref{mainthm1}}
Due to \eqref{inclusion of HL}, to prove Theorem \ref{mainthm1} it suffices to prove that 
\begin{equation}\label{eq-Hrad subset Hmax}
h^{p}_{\LL, {\rm rad}}(X)\subset h^{p}_{\LL, {\rm max}}(X) 
\end{equation}
and
\begin{equation}\label{eq-Hmax subset HL}
h^{p}_{\LL, {\rm max}}(X)\subset h^{p,q}_{\LL,at, M}(X)
\end{equation}
for all $p\in (0,1]$, $q\in [1,\vc]\cap (p,\vc]$ and $M>\f{n}{2}(\f{1}{p}-1)$.\\

In order to prove \eqref{eq-Hrad subset Hmax} we need the following auxiliary results. \\

Let $F$ be a measurable function on $X\times (0,\vc)$. For $\alpha>0$ we set 
$$
F^*_\alpha(x)=\sup_{0<t<1}\sup_{d(x,y)<\alpha t}|F(y,t)|.
$$
In the particular case $\alpha=1$, we write $F^*$ instead of $F^*_\alpha$.\\

We have the following result whose proof is similar to that of \cite[Theorem 2.3]{CT}.
\begin{lem}
	\label{lem2} For any $p>0$ and $0<\alpha_2\leq \alpha_1$, there exists $C$ depending on $n$ and $p$ so that
	\[
	\|F^*_{\alpha_1}\|_{L^p(X)}\leq C\Big(1+\f{2\alpha_1}{\alpha_2}\Big)^{n/p}\|F^*_{\alpha_2}\|_{L^p(X)}.
	\]
\end{lem}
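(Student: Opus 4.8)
The statement is the standard change-of-aperture inequality for (local) nontangential maximal functions, and the plan is to adapt the Fefferman--Stein type argument from \cite{CT} to the local setting, i.e.\ with the supremum in $t$ restricted to $(0,1)$. First I would fix $\lambda>0$ and compare the superlevel sets $\{F^*_{\alpha_1}>\lambda\}$ and $\{F^*_{\alpha_2}>\lambda\}$. The key geometric observation is that if $F^*_{\alpha_1}(x)>\lambda$, then there exist $t\in(0,1)$ and $y$ with $d(x,y)<\alpha_1 t$ and $|F(y,t)|>\lambda$; for every $z$ with $d(z,y)<\alpha_2 t$ one has $F^*_{\alpha_2}(z)>\lambda$ (the same $t<1$ works), so the ball $B(y,\alpha_2 t)$ is contained in $\{F^*_{\alpha_2}>\lambda\}$. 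Since also $x\in B(y,\alpha_1 t)\subset B(y,(\alpha_1+\alpha_2)t)$ and $B(y,\alpha_2 t)\subset B(y,(\alpha_1+\alpha_2)t)$, the doubling inequality \eqref{doub2} gives, with $\lambda$-dependent choices suppressed,
\[
\mu\bigl(B(y,(\alpha_1+\alpha_2)t)\bigr)\leq C_2\Bigl(1+\tfrac{2\alpha_1}{\alpha_2}\Bigr)^{n}\,\mu\bigl(B(y,\alpha_2 t)\bigr)\leq C_2\Bigl(1+\tfrac{2\alpha_1}{\alpha_2}\Bigr)^{n}\,\mu\bigl(\{F^*_{\alpha_2}>\lambda\}\cap B(y,(\alpha_1+\alpha_2)t)\bigr).
\]
This says that at \emph{every} point $x$ of $\{F^*_{\alpha_1}>\lambda\}$ there is a ball containing $x$ on which $\{F^*_{\alpha_2}>\lambda\}$ occupies at least a fixed fraction $c(1+2\alpha_1/\alpha_2)^{-n}$ of the measure; equivalently, $\{F^*_{\alpha_1}>\lambda\}\subset\{\mathcal M(\mathbf 1_{\{F^*_{\alpha_2}>\lambda\}})>c(1+2\alpha_1/\alpha_2)^{-n}\}$.

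Next I would invoke the weak-type $(1,1)$ boundedness of the Hardy--Littlewood maximal function $\mathcal M$ on the space of homogeneous type (which holds by \eqref{doublingcondition}) to obtain
\[
\mu\bigl(\{F^*_{\alpha_1}>\lambda\}\bigr)\leq C\Bigl(1+\tfrac{2\alpha_1}{\alpha_2}\Bigr)^{n}\,\mu\bigl(\{F^*_{\alpha_2}>\lambda\}\bigr)
\]
for every $\lambda>0$. Finally, multiplying by $p\lambda^{p-1}$ and integrating in $\lambda$ over $(0,\infty)$, using the distribution-function formula $\|g\|_{L^p(X)}^p=\int_0^\infty p\lambda^{p-1}\mu(\{|g|>\lambda\})\,d\lambda$, yields
\[
\|F^*_{\alpha_1}\|_{L^p(X)}^p\leq C\Bigl(1+\tfrac{2\alpha_1}{\alpha_2}\Bigr)^{n}\,\|F^*_{\alpha_2}\|_{L^p(X)}^p,
\]
and taking $p$-th roots gives the claimed bound with exponent $n/p$. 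The constant depends only on the doubling constants (hence on $n$) and on $p$, as asserted.

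\textbf{Main obstacle.} The only genuinely delicate point is the \emph{local} restriction $0<t<1$: one must check that the same time $t$ used to witness $F^*_{\alpha_1}(x)>\lambda$ is admissible for $F^*_{\alpha_2}(z)>\lambda$, which it is precisely because the constraint $t<1$ does not involve $\alpha$ — so the argument goes through verbatim from the global case. A minor technical wrinkle is that $F^*_{\alpha_1}$ need not be measurable a priori, but one resolves this in the usual way by replacing the outer supremum over $t\in(0,1)$ with a supremum over $t$ in a countable dense subset (legitimate if $F(\cdot,t)$ has enough continuity, or else one simply works with the outer measure, which does not affect the $L^p$ estimate). I expect the proof to be short, essentially a transcription of \cite[Theorem 2.3]{CT} with $t$ confined to $(0,1)$.
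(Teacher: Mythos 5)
Your proof is correct and follows essentially the same change-of-aperture argument that the paper appeals to by citing \cite[Theorem 2.3]{CT}: comparing superlevel sets of $F^*_{\alpha_1}$ and $F^*_{\alpha_2}$ via doubling and the weak-type $(1,1)$ bound for the Hardy--Littlewood maximal function, then integrating the distribution function. Your observation that the local constraint $0<t<1$ is preserved when passing from $F^*_{\alpha_1}$ to $F^*_{\alpha_2}$ correctly handles the only point at which the local version could have differed from the global one.
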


From the lemma above we immediately imply the following result.
\begin{lem}
	\label{lem3} For any $p\in (0,1]$ and $\lambda>n/p$, there exists $C$ depending on $n$ and $p$ so that
	\[
	\Big\|\sup_{0<t<1}\sup_{y}F(y,t)\Big(1+\f{d(x,y)}{t}\Big)^{-\lambda}\Big\|_{L_x^p(X)}\leq C\|F^*\|_{L^p(X)}.
	\]
\end{lem}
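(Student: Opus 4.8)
The plan is to deduce the estimate directly from Lemma \ref{lem2} by a dyadic decomposition in the quantity $d(x,y)/t$. Write $G(x)$ for the function inside the $L^p$ norm on the left-hand side. First I would fix $x\in X$ and $t\in(0,1)$ and split the supremum over $y$ according to the shells $d(x,y)<t$ and $2^{k-1}t\leq d(x,y)<2^kt$ for $k\geq 1$. On the $k$-th shell one has $\big(1+d(x,y)/t\big)^{-\lambda}\lesi 2^{-k\lambda}$, while every such $y$ lies in $B(x,2^kt)$ with $t<1$, so $|F(y,t)|\leq F^*_{2^k}(x)$. Summing the (disjoint) shell contributions gives, uniformly in $t\in(0,1)$,
\[
\sup_{y}F(y,t)\Big(1+\f{d(x,y)}{t}\Big)^{-\lambda}\lesi \sum_{k\geq 0}2^{-k\lambda}F^*_{2^k}(x),
\]
and hence, taking the supremum over $t$, $G(x)\lesi \sum_{k\geq 0}2^{-k\lambda}F^*_{2^k}(x)$.

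Next I would take $L^p$ norms. Since $p\in(0,1]$, the $p$-th power is subadditive, so
\[
\|G\|_{L^p(X)}^p\lesi \sum_{k\geq 0}2^{-k\lambda p}\,\|F^*_{2^k}\|_{L^p(X)}^p .
\]
Applying Lemma \ref{lem2} with $\alpha_1=2^k$ and $\alpha_2=1$ yields $\|F^*_{2^k}\|_{L^p(X)}\leq C\big(1+2^{k+1}\big)^{n/p}\|F^*\|_{L^p(X)}\lesi 2^{kn/p}\|F^*\|_{L^p(X)}$, and therefore
\[
\|G\|_{L^p(X)}^p\lesi \Big(\sum_{k\geq 0}2^{k(n-\lambda p)}\Big)\|F^*\|_{L^p(X)}^p .
\]
The geometric series converges precisely because $\lambda>n/p$, i.e. $n-\lambda p<0$, which gives the claimed inequality.

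There is no serious obstacle here; the only points needing a little care are (i) making sure the shell bound for $\sup_{y}F(y,t)\big(1+d(x,y)/t\big)^{-\lambda}$ is genuinely uniform in $t\in(0,1)$, so that the supremum over $t$ may be pulled inside the sum over $k$, and (ii) not wasting any decay in the estimate $\big(1+d(x,y)/t\big)^{-\lambda}\lesi 2^{-k\lambda}$, since it is exactly the hypothesis $\lambda>n/p$ that renders the final geometric series summable.
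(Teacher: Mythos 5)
Your proof is correct and follows essentially the same route as the paper: dyadic decomposition in the ratio $d(x,y)/t$, the bound $|F(y,t)|\leq F^*_{2^k}(x)$ on the $k$-th shell, $p$-th power subadditivity, and then Lemma \ref{lem2} to compare $\|F^*_{2^k}\|_{L^p}$ with $\|F^*\|_{L^p}$, with the hypothesis $\lambda>n/p$ making the resulting geometric series converge. The only differences (the precise shell indexing, displaying the uniform-in-$t$ bound explicitly before taking the sup) are cosmetic.
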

\begin{proof} The proof is standard but we provide it for the sake of completeness.
	
	We have
	\[
	\begin{aligned}
		\sup_{0<t<1}\sup_{y}F(y,t)\Big(1+\f{d(x,y)}{t}\Big)^{-\lambda}&\leq F^*(x)+\sum_{k=0}^\vc \sup_{0<t<1}\sup_{2^kt\leq d(x,y)< 2^{k+1}t}F(y,t)\Big(1+\f{d(x,y)}{t}\Big)^{-\lambda}\\
		&\leq F^*(x)+\sum_{k=0}^\vc 2^{-k\lambda}F^*_{2^{k+1}}(x).
	\end{aligned}
	\]
	For $p\in (0,1]$, we then imply
	\[
	\begin{aligned}
	\Big\|\sup_{0<t<1}\sup_{y}F(y,t)\Big(1+\f{d(x,y)}{t}\Big)^{-\lambda}\Big\|^p_{L_x^p(X)}\leq \sum_{k=0}^\vc 2^{-kp\lambda}\|F^*_{2^{k}}\|^p_{L^p(X)}.
	\end{aligned}
	\]
	This, in combination with Lemma \ref{lem2}, yields that
	\[
	\begin{aligned}
	\Big\|\sup_{0<t<1}\sup_{y}F(y,t)\Big(1+\f{d(x,y)}{t}\Big)^{-\lambda}\Big\|^p_{L_x^p(X)}&\leq c_{n,p}\sum_{k=0}^\vc 2^{kn}2^{-kp\lambda}\|F^*\|^p_{L^p(X)}\\
	&\lesi \|F^*\|^p_{L^p(X)},
	\end{aligned}
	\]
	as long as $\lambda>n/p$.
\end{proof}

For any even function  $\varphi \in \mathscr{S}(\mathbb{R})$, $\alpha>0$ and $f\in L^2(X)$ we define
$$
\varphi^*_{\LL,\alpha}(f)(x)=\sup_{0<t<1}\sup_{d(x,y)<\alpha t}|\varphi(t\sqrt{\LL})f(y)|,
$$ 
and 
$$
\varphi^+_{\LL,\alpha}(f)(x)=\sup_{0<t<1}|\varphi(t\sqrt{\LL})f(x)|.
$$
As usual, we drop the index $\alpha$ as $\alpha=1$.

We now are in position to prove the following estimate.

\begin{prop}
	\label{prop1}
	Let $p\in (0,1]$. Let $\varphi_1, \varphi_2\in \mathscr{\mathbb{R}}$ be even functions with $\varphi_1(0)=1$ and $\varphi_2(0)=0$ and $\alpha_1, \alpha_2>0$. Then for every $f\in L^2(X)$ we have
	\begin{equation}
	\label{eq1-prop1}
	\|(\varphi_2)^*_{\LL,\alpha_2}f\|_{L^p(X)}\lesi \|(\varphi_1)^*_{\LL,\alpha_1}f\|_{L^p(X)}.
	\end{equation}
	As a consequence, for every even function $\varphi$ with $\varphi(0)=1$ and $\alpha>0$ we have
	\begin{equation}
	\label{eq2-prop1}
	\|\varphi^*_{\LL,\alpha}f\|_{L^p(X)}\sim \|f^*_{\LL, loc}\|_{L^p(X)}.
	\end{equation}
\end{prop}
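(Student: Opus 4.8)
The plan is to establish \eqref{eq1-prop1} first and then deduce \eqref{eq2-prop1} as a corollary. The starting point for \eqref{eq1-prop1} is the inhomogeneous Calder\'on reproducing formula of Proposition \ref{Calderon prop}, applied with the roles of $\varphi$ and $\psi$ chosen so that $\varphi_2$ plays the role of a function vanishing at $0$. More precisely, since $\varphi_2(0)=0$, one can write (after possibly composing with a suitable auxiliary even Schwartz function and using Lemma \ref{lem-DKP} or a reproducing formula adapted to $\varphi_1$) an identity of the form
\[
\varphi_2(t\sqrt{\LL})f = \int_0^{1/2} \varphi_2(t\sqrt\LL)\,\Theta(s\sqrt\LL)\,\widetilde\varphi_1(s\sqrt\LL) f \,\f{ds}{s} + (\text{discrete term at scale } s\sim 1),
\]
where $\Theta$ is an even Schwartz function vanishing to high order at $0$ and $\widetilde\varphi_1$ is built from $\varphi_1$ (using $\varphi_1(0)=1$). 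The discrete boundary term is harmless: it is a single operator $\widetilde\varphi_1(2^{-1}\sqrt\LL)$-type expression whose kernel satisfies the bounds in Lemma \ref{lem1}, hence it is controlled pointwise by $(\varphi_1)^*_{\LL,\alpha_1}f$ up to a factor $(1+d/t)^{-\lambda}$, which is absorbed by Lemma \ref{lem3}.

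\textbf{Key steps.} First I would fix $x$ and $d(x,y)<\alpha_2 t$ with $0<t<1$, and insert the reproducing formula into $\varphi_2(t\sqrt\LL)f(y)$, obtaining a kernel representation
\[
\varphi_2(t\sqrt\LL)f(y) = \int_0^{1/2}\!\!\int_X K(y,z,t,s)\,\big(\widetilde\varphi_1(s\sqrt\LL)f\big)(z)\,d\mu(z)\,\f{ds}{s} + \cdots,
\]
where $K(y,z,t,s) = K_{\varphi_2(t\sqrt\LL)\Theta(s\sqrt\LL)}(y,z)$. The composition kernel estimate is where Lemma \ref{lem1}(b),(c) enters: for $s\le t$ we gain a factor $(s/t)^{2\ell}$ from part (c) (using that $\Theta$ vanishes to order $2\ell$ at $0$), and for $s>t$ one swaps the roles and gains $(t/s)^{2\ell'}$ using that $\varphi_2$ vanishes at $0$ — so in all cases $|K(y,z,t,s)| \lesssim (\min(s,t)/\max(s,t))^{\epsilon}\,\mu(B(y,\max(s,t)))^{-1}(1+d(y,z)/\max(s,t))^{-n-N}$. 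Next I would bound $|(\widetilde\varphi_1(s\sqrt\LL)f)(z)|$ by $(\varphi_1)^*_{\LL,\alpha_1}f(w)\,(1+d(z,w)/s)^{\lambda}$ for any $w$ (when $s<1$), peeling off the nontangential sup; for $s$ near $1/2$ and the discrete term one argues analogously. Then one integrates in $z$ against the kernel using Lemma \ref{lem-ele est}, integrates in $s\in(0,1/2)$ using the gain $(\min/\max)^\epsilon$ to make $\int_0^{1/2} \f{ds}{s}$ converge, and is left with a pointwise bound
\[
(\varphi_2)^*_{\LL,\alpha_2}f(x) \lesssim \sup_{0<s<1}\sup_{w} (\varphi_1)^*_{\LL,\alpha_1}f(w)\Big(1+\f{d(x,w)}{s}\Big)^{-\lambda},
\]
for suitable large $\lambda$, plus a Hardy--Littlewood maximal type term. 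Taking $L^p$ norms and invoking Lemma \ref{lem3} (valid since $\lambda>n/p$ can be arranged) together with the $L^p$-boundedness consideration gives \eqref{eq1-prop1}. Finally, \eqref{eq2-prop1}: applying \eqref{eq1-prop1} with $(\varphi_1,\varphi_2)=(\varphi, \phi)$ where $\phi(\lambda)=e^{-\lambda^2}-\varphi(\lambda)\cdot(\text{correction})$ — more simply, write $e^{-t^2\LL} = \varphi(t\sqrt\LL) + (e^{-t^2\LL}-\varphi(t\sqrt\LL))$ where the difference vanishes at $0$, and symmetrically — shows $\|f^*_{\LL,loc}\|_p = \|(e^{-\cdot^2})^*_{\LL,1}f\|_p \sim \|\varphi^*_{\LL,\alpha}f\|_p$, the change of aperture $\alpha\leftrightarrow 1$ being handled by Lemma \ref{lem2}.

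\textbf{Main obstacle.} The crux is the two-sided scale gain in the composition kernel: one must exploit \emph{both} the vanishing of $\varphi_2$ at $0$ (to handle $s>t$) \emph{and} the vanishing of the auxiliary $\Theta$ at $0$ (to handle $s\le t$), making sure in the reproducing formula that $\Theta$ can be taken to vanish to arbitrarily high order while $\widetilde\varphi_1$ still reconstructs $f$ from $\varphi_1$ with $\varphi_1(0)=1$. Getting these cancellation orders large enough that, after the kernel integration in $z$ via Lemma \ref{lem-ele est} (which costs $n$ in the exponent) and the $s$-integration over $(0,1/2)$, one retains a clean factor $(1+d(x,w)/s)^{-\lambda}$ with $\lambda$ as large as desired — so that Lemma \ref{lem3} applies for the given $p\in(0,1]$ — is the technical heart. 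The convergence of the $s$-integral near $s=0$ and the treatment of the endpoint/discrete term at $s\sim 1/2$ are the remaining bookkeeping points, but they are routine given Lemmas \ref{lem1}--\ref{lem3}.
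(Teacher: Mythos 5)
Your overall architecture --- reproducing formula, then composition kernel bounds via Lemma \ref{lem1}, then the Peetre maximal function and Lemma \ref{lem3} --- matches the paper's, but the key organizational move is different, and your route carries an inaccuracy that happens to be forgivable. The paper applies Lemma \ref{lem-DKP} \emph{truncated at the scale of $t$}: it picks $j_0$ with $2^{-j_0}\sim t$ and writes $f=\phi(2^{-j_0}\sqrt\LL)\varphi_1(2^{-j_0}\sqrt\LL)f+\sum_{k\ge j_0}\psi(2^{-k}\sqrt\LL)[\varphi_1(2^{-k}\sqrt\LL)-\varphi_1(2^{-k+1}\sqrt\LL)]f$, so that \emph{every} intermediate scale satisfies $2^{-k}\le 2^{-j_0}\sim t$ and only Lemma \ref{lem1}(b),(c) with $\psi$'s vanishing are ever invoked; the hypothesis $\varphi_2(0)=0$ is never actually used. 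Your plan instead decomposes over all scales $s\in(0,1/2)$ (and partially leans on Proposition \ref{Calderon prop}, which does not involve $\varphi_1$ --- the correct tool here is Lemma \ref{lem-DKP}, as you also note), forcing you to handle the regime $s>t$ separately. There you claim a gain $(t/s)^{2\ell'}$ ``using that $\varphi_2$ vanishes at $0$'' with $\ell'$ taken as large as you wish; this is wrong. Since $\varphi_2$ is merely an even function with $\varphi_2(0)=0$, it vanishes only to order $2$, so the gain in Lemma \ref{lem1}(c) applied with the roles swapped is the \emph{fixed} factor $(t/s)^2$ and cannot be amplified. Your argument is nonetheless rescuable: in the regime $s>t$ the Peetre weight $(1+d(x,z)/s)^{-\lambda}$ is already at the correct scale $s$ (because $d(x,y)<\alpha_2 t<\alpha_2 s$), so converting it costs nothing and a fixed gain of $2$ is enough to sum the geometric series over dyadic $s\in(t,1/2)$. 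In short, your route works after correcting that step, but is strictly more delicate than the paper's; the truncation of the reproducing formula at the scale of $t$ is what makes the paper's argument clean and hypothesis-light.
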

\begin{proof} From Lemma \ref{lem2} it suffices to prove the proposition with $\alpha_1=\alpha_2=1$.\\

Fix $t\in (0,1)$ and let $j_0\in \mathbb{Z}^+$ so that $2^{-j_0+1}\leq t<2^{-j_0+2}$. According to Lemma \ref{lem-DKP} there exist even functions $\phi,\psi\in \mathscr{\mathbb{R}}$ with $\phi(0)=1$ and $\psi^{(\nu)}(0)=0$ for $\nu=0,1,\ldots,2\ell$ ($\ell$ will be determined later) so that
\[
f=\phi(2^{-j_0}\sqrt{\LL})\varphi_1(2^{-j_0}\sqrt{\LL})f+\sum_{k\geq j_0}\psi(2^{-k}\sqrt{\LL})[\varphi_1(2^{-k}\sqrt{\LL})-\varphi_1(2^{-k+1}\sqrt{\LL})]
\]
which implies
\begin{align*}
\varphi_2(t\sqrt{\LL})f&=\varphi_2(t\sqrt{\LL})\phi(2^{-j_0}\sqrt{\LL})\varphi_1(2^{-j_0}\sqrt{\LL})f\\
&\qquad +\sum_{k\geq j_0}\varphi_2(t\sqrt{\LL})\psi(2^{-k}\sqrt{\LL})[\varphi_1(2^{-k}\sqrt{\LL})-\varphi_1(2^{-k+1}\sqrt{\LL})]f.
\end{align*}
Hence,
\[
\begin{aligned}
\sup_{d(x,y)<t}|\varphi_2(t\sqrt{\LL})f(y)|&\leq \sup_{d(x,y)<t}|\varphi_2(t\sqrt{\LL})\phi(2^{-j_0}\sqrt{\LL})\varphi_1(2^{-j_0}\sqrt{\LL})f(y)|\\
&\ \ \ +\sum_{k\geq j_0}\sup_{d(x,y)<t}|\varphi_2(t\sqrt{\LL})\psi(2^{-k}\sqrt{\LL})\varphi_1(2^{-k}\sqrt{\LL})f(y)|\\
& \ \ \ +  \sum_{k\geq j_0}\sup_{d(x,y)<t}|\varphi_2(t\sqrt{\LL})\psi(2^{-k}\sqrt{\LL})\varphi_1(2^{-k+1}\sqrt{\LL})f(y)|\\
&=: I_1+I_2+I_3.
\end{aligned}
\]

Fix $\lambda>n/p$ and $N>0$. Using \eqref{eq2-lema1} we have
\[
\begin{aligned}
I_1&\lesi \sup_{d(x,y)<t}\int_X \f{1}{V(y,2^{-j_0})}\Big(1+\f{d(y,z)}{2^{-j_0}}\Big)^{-n-N-\lambda}|\varphi_1(2^{-j_0}\sqrt{\LL})f(z)|d\mu(z).
\end{aligned}
\]
Since $d(x,y)<t<2^{-j_0+2}$, we have
$$
\Big(1+\f{d(y,z)}{2^{-j_0}}\Big)^{-\lambda}\sim \Big(1+\f{d(x,z)}{2^{-j_0}}\Big)^{-\lambda}.
$$
As a consequence, we have
\begin{equation}\label{eq-I1}
\begin{aligned}
I_1&\lesi \sup_{z}\Big(1+\f{d(x,z)}{2^{-j_0}}\Big)^{-\lambda}|\varphi_1(2^{-j_0}\sqrt{\LL})f(z)| \int_X \f{1}{V(y,2^{-j_0})}\Big(1+\f{d(y,z)}{2^{-j_0}}\Big)^{-n-N}d\mu(z)\\
&\lesi \sup_{z}\Big(1+\f{d(x,z)}{2^{-j_0}}\Big)^{-\lambda}|\varphi_1(2^{-j_0}\sqrt{\LL})f(z)|\\
&\lesi \sup_{0<t<1}\sup_{z}\Big(1+\f{d(x,z)}{t}\Big)^{-\lambda}|\varphi_1(t\sqrt{\LL})f(z)|.
\end{aligned}
\end{equation}

Note that $t\geq 2^{-k}$ as $k\geq j_0$. Hence, applying \eqref{eq3-lema1} we obtain
\[
\begin{aligned}
I_2&\lesi \sum_{k\geq j_0}\sup_{d(x,y)<t}\int_X \Big(\f{2^{-k}}{t}\Big)^{2\ell}\f{1}{\mu(B(y,t))}\Big(1+\f{d(y,z)}{t}\Big)^{-n-N-\lambda}|\varphi_1(2^{-k}\sqrt{\LL})f(z)|d\mu(z)\\
&\lesi \sum_{k\geq j_0}\sup_{d(x,y)<t}\int_X 2^{-2\ell(k-j_0)}\f{1}{\mu(B(y,t))}\Big(1+\f{d(y,z)}{t}\Big)^{-n-N-\lambda}|\varphi_1(2^{-k}\sqrt{\LL})f(z)|d\mu(z),
\end{aligned}
\]
where in the last inequality we used $t\sim 2^{-j_0}$.

On the other hand, we have
$$
\Big(1+\f{d(y,z)}{t}\Big)^{-\lambda}\sim \Big(1+\f{d(x,z)}{t}\Big)^{-\lambda}\sim \Big(1+\f{d(x,z)}{2^{-j_0}}\Big)^{-\lambda}\ \ \ \text{as $d(x,y)<t$}.
$$
Hence, by Lemma \ref{lem-ele est} we have
\[
\begin{aligned}
I_2
&\lesi \sum_{k\geq j_0} \int_X 2^{-2\ell(k-j_0)}\f{1}{\mu(B(y,t))}\Big(1+\f{d(y,z)}{t}\Big)^{-n-N}\Big(1+\f{d(x,z)}{2^{-j_0}}\Big)^{-\lambda}|\varphi_1(2^{-k}\sqrt{\LL})f(z)|d\mu(z)\\
&\lesi \sum_{k\geq j_0} \int_X 2^{-(2\ell-\lambda)(k-j_0)}\f{1}{\mu(B(y,t))}\Big(1+\f{d(y,z)}{t}\Big)^{-n-N}\Big(1+\f{d(x,z)}{2^{-k}}\Big)^{-\lambda}|\varphi_1(2^{-k}\sqrt{\LL})f(z)|d\mu(z)\\
&\lesi \sum_{k\geq j_0} 2^{-(2\ell-\lambda)(k-j_0)}\sup_{z}\Big(1+\f{d(x,z)}{2^{-k}}\Big)^{-\lambda}|\varphi_1(2^{-k}\sqrt{\LL})f(z)|.
\end{aligned}
\]
We now choose $\ell>\lambda/2$. Then from the inequality above we arrive at
\begin{equation}\label{eq-I2}
I_2\lesi \sup_{0<t<1}\sup_{z}\Big(1+\f{d(x,z)}{t}\Big)^{-\lambda}|\varphi_1(t\sqrt{\LL})f(z)|
\end{equation}
Similarly,
\begin{equation}\label{eq-I3}
I_3\lesi \sup_{0<t<1}\sup_{z}\Big(1+\f{d(x,z)}{t}\Big)^{-\lambda}|\varphi_1(t\sqrt{\LL})f(z)|.
\end{equation}
Taking these three estimates \eqref{eq-I1}, \eqref{eq-I2} and \eqref{eq-I3} into account and then applying Lemma \ref{lem3} we get \eqref{eq1-prop1} as desired.

\noindent

To prove \eqref{eq2-prop1}, we apply \eqref{eq1-prop1} for $\varphi_1(\lambda)=\varphi(\lambda)-e^{-\lambda^2}$, $\varphi_2(\lambda)=e^{-\lambda^2}$, $\alpha_1=
\alpha$ and $\alpha_2=1$ to obtain
$$
\Big\|\sup_{0<t<1}\sup_{d(x,y)<\alpha t}|\varphi(t\sqrt{\LL})f(y)-e^{-t^2\LL}f(y)|\Big\|_{L^p_x(X)}\lesi \|f^*_{\LL}\|_{L^p(X)}.
$$
This, along with Lemma \ref{lem2}, yields 
$$
\|\varphi^*_{\LL,\alpha}f\|_{L^p(X)}\lesi \|f^*_{\LL}\|_{L^p(X)}.
$$
Similarly, we obtain
 $$
 \|f^*_{\LL}\|_{L^p(X)}\lesi \|\varphi^*_{\LL,\alpha}f\|_{L^p(X)}.
 $$
 This proves \eqref{eq2-prop1}.
\end{proof}

For each $N>0$ and each even function $\varphi\in\mathscr{S}(\mathbb{R})$ we define
$$
M^*_{\LL,\varphi,N}f(x)=\sup_{0<t<1}\sup_{y\in X}\f{|\varphi(t\sqrt{\LL})f(y)|}{\Big(1+\f{d(x,y)}{t}\Big)^N},
$$
for each $f\in L^2(X)$.

Obviously, we have $\varphi^*_\LL f(x)\leq M^*_{\LL,\varphi,N}f(x)$ for all $x\in X, N>0$ and even functions $\varphi\in\mathscr{S}(\mathbb{R})$.

The inclusion \eqref{eq-Hrad subset Hmax} follows immediately from the following result.
\begin{prop}
	\label{prop2}
	Let $p\in(0,1]$. Let $\varphi\in\mathscr{S}(\mathbb{R})$ be an even function with $\varphi(0)=1$. Then we have, for every $f\in L^2(X)$,
	\begin{equation}\label{eq-petree}
	\Big\|M^*_{\LL,\varphi,N}f\Big\|_{L^p(X)}\lesi \|\varphi^+_{\LL}f\|_{L^p(X)},
	\end{equation}
provided $N>n/p$.

As a consequence, we have
\begin{equation*}
\Big\|\varphi^*_{\LL}f\Big\|_{L^p(X)}\lesi \|\varphi^+_{\LL}f\|_{L^p(X)},
\end{equation*}
\end{prop}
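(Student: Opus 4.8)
The goal is to prove the "Peetre-type" maximal estimate \eqref{eq-petree}, which bounds the tangential-with-polynomial-weight maximal function $M^*_{\LL,\varphi,N}f$ by the radial maximal function $\varphi^+_{\LL}f$. The strategy follows the classical Fefferman--Stein / Str\"omberg--Torchinsky argument adapted to the operator setting.

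\medskip

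\textbf{Plan.} First I would fix $t\in(0,1)$ and $y\in X$ and try to reproduce $\varphi(t\sqrt{\LL})f(y)$ in terms of $\varphi(s\sqrt{\LL})f$ for a range of $s$ comparable to $t$ (or a dyadic sum of such scales), using a reproducing formula of the type in Lemma \ref{lem-DKP}: writing $f=\phi(2^{-j_0}\sqrt{\LL})\varphi(2^{-j_0}\sqrt{\LL})f+\sum_{k\geq j_0}\psi(2^{-k}\sqrt{\LL})[\varphi(2^{-k}\sqrt{\LL})-\varphi(2^{-k+1}\sqrt{\LL})]f$, where $j_0$ is chosen with $2^{-j_0}\sim t$ and $\psi$ vanishes to high order at $0$. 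Applying $\varphi(t\sqrt{\LL})$ and using the kernel estimates in Lemma \ref{lem1} (parts (b) and (c), the latter giving the decay factor $(2^{-k}/t)^{2\ell}$), I would reduce $|\varphi(t\sqrt{\LL})f(y)|$ to a sum over $k\geq j_0$ of integrals of $|\varphi(2^{-k}\sqrt{\LL})f(z)|$ (and the analogous term with $2^{-k+1}$) against kernels with the right Gaussian/polynomial decay and $2^{-2\ell(k-j_0)}$ gain — this is the same mechanism used in the proof of Proposition \ref{prop1}.

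\medskip

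\textbf{The key idea and main obstacle.} The essential point — and the part I expect to be the crux — is to pass from the integrals $\int_X (\text{kernel})(y,z)\,|\varphi(s\sqrt{\LL})f(z)|\,d\mu(z)$ to the radial maximal function $\varphi^+_{\LL}f$. Since $\varphi^+_{\LL}f(z)=\sup_{0<s<1}|\varphi(s\sqrt{\LL})f(z)|$, one bounds $|\varphi(s\sqrt{\LL})f(z)|\leq \varphi^+_{\LL}f(z)$ pointwise, but this alone only recovers $\mathcal M(\varphi^+_{\LL}f)(x)$ via Lemma \ref{lem-ele est}, and the Hardy--Littlewood maximal function is not bounded on $L^p$ for $p\leq 1$. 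To fix this one uses the standard bootstrapping trick: introduce, for $\theta\in(0,1]$ to be chosen, the quantity $\big(M^*_{\LL,\varphi,N}f(x)\big)$ but raised to power $\theta$ inside the integral, i.e. estimate $|\varphi(s\sqrt{\LL})f(z)|\leq |\varphi(s\sqrt{\LL})f(z)|^{1-\theta}\,|\varphi(s\sqrt{\LL})f(z)|^{\theta}\leq \big(M^*_{\LL,\varphi,N}f(x)\big)^{1-\theta}\big(1+\tfrac{d(x,z)}{s}\big)^{N(1-\theta)}(\varphi^+_{\LL}f(z))^\theta$, absorb the polynomial weight by taking $N$ a bit larger in the kernel, and then apply Lemma \ref{lem-ele est} to $(\varphi^+_{\LL}f)^\theta$ to get $\mathcal M\big((\varphi^+_{\LL}f)^\theta\big)(x)^{1/\theta}$. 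Choosing $\theta<p$ makes $\mathcal M$ bounded on $L^{p/\theta}$, giving $\|M^*_{\LL,\varphi,N}f\|_{L^p}\lesssim \|M^*_{\LL,\varphi,N}f\|_{L^p}^{1-\theta}\|\varphi^+_{\LL}f\|_{L^p}^{\theta}$; provided $\|M^*_{\LL,\varphi,N}f\|_{L^p}<\infty$ a priori (which one establishes first for $f\in L^2(X)$ by a truncation/qualitative argument, or by noting the sum over $k$ converges because $\ell$ is chosen large), one divides through to conclude. The summation over $k\geq j_0$ needs $2\ell(1-\theta)>N(1-\theta)$ type conditions, handled by taking $\ell$ large depending on $N$, $p$, $\theta$.

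\medskip

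\textbf{Conclusion.} Once \eqref{eq-petree} is established, the stated consequence $\|\varphi^*_{\LL}f\|_{L^p(X)}\lesssim\|\varphi^+_{\LL}f\|_{L^p(X)}$ is immediate from the trivial pointwise bound $\varphi^*_{\LL}f(x)\leq M^*_{\LL,\varphi,N}f(x)$ noted just before the proposition; and combined with Proposition \ref{prop1} (equation \eqref{eq2-prop1} with $\varphi(\lambda)=e^{-\lambda^2}$) this yields $\|f^*_{\LL}\|_{L^p}\sim\|f^+_{\LL}\|_{L^p}$, i.e. the inclusion \eqref{eq-Hrad subset Hmax}. The main delicate points are therefore (i) the correct choice of the vanishing order $2\ell$ of $\psi$ relative to $N$ and $\theta$ so the dyadic sum converges, and (ii) justifying a priori finiteness of the left-hand side so that the absorption argument is legitimate.
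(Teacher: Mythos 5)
Your proposal is correct and follows essentially the same route as the paper: the inhomogeneous Littlewood--Paley decomposition of Lemma \ref{lem-DKP} at scale $2^{-j_0}\sim t$, the kernel bounds of Lemma \ref{lem1} to get the $2^{-2\ell(k-j_0)}$ gain, and the $\theta$-power bootstrapping to absorb $M^*_{\LL,\varphi,N}f$ and land on $\mathcal{M}\bigl((\varphi^+_{\LL}f)^\theta\bigr)^{1/\theta}$ with $\theta<p$. The only cosmetic difference is that the paper divides pointwise (obtaining $M^*_{\LL,\varphi,N}f(x)\lesssim \bigl[\mathcal{M}(|\varphi^+_{\LL}f|^\theta)(x)\bigr]^{1/\theta}$) rather than at the level of $L^p$ norms, which slightly weakens the a priori finiteness needed.
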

\begin{proof}
We fix $0<\theta<p$ and $\ell\in \mathbb{N}$ so that $N>n/\theta$  and $\ell>N/2$. Fix $t\in (0,1)$ and let $j_0\in \mathbb{Z}^+$ so that $2^{-j_0+1}\leq t<2^{-j_0+2}$. According to Lemma \ref{lem-DKP} there exist even functions $\phi,\psi\in \mathscr{\mathbb{R}}$ with $\phi(0)=1$ and $\psi^{(\nu)}(0)=0$ for $\nu=0,1,\ldots,2\ell$ so that
\[
\varphi(t\sqrt{\LL})f=\varphi(t\sqrt{\LL})\phi(2^{-j_0}\sqrt{\LL})\varphi(2^{-j_0}\sqrt{\LL})f+\sum_{k\geq j_0}\varphi(t\sqrt{\LL})\psi(2^{-k}\sqrt{\LL})[\varphi(2^{-k}\sqrt{\LL})-\varphi(2^{-k+1}\sqrt{\LL})]f.
\]
Hence, for any $y\in X$ we have
\begin{equation}
\label{eq-J123}
\begin{aligned}
\Big(1+\f{d(x,y)}{t}\Big)^{-N}|\varphi(t\sqrt{\LL})f(y)|&\leq \Big(1+\f{d(x,y)}{t}\Big)^{-N}|\varphi(t\sqrt{\LL})\phi(2^{-j_0}\sqrt{\LL})\varphi(2^{-j_0}\sqrt{\LL})f(y)|\\
&\ \ \ \ +\sum_{k\geq j_0}\Big(1+\f{d(x,y)}{t}\Big)^{-N}|\varphi(t\sqrt{\LL})\psi(2^{-k}\sqrt{\LL})\varphi(2^{-k}\sqrt{\LL})f(y)|\\
&\ \ \ \ +\sum_{k\geq j_0}\Big(1+\f{d(x,y)}{t}\Big)^{-N}|\varphi(t\sqrt{\LL})\psi(2^{-k}\sqrt{\LL})\varphi(2^{-k+1}\sqrt{\LL})f(y)|\\
&=:J_1+J_2+J_3.
\end{aligned}
\end{equation}
We now estimate the term $J_1$. Using \eqref{eq2-lema1} and the fact that $t\sim 2^{-j_0}$ we obtain
\begin{equation}\label{eq-J1}
\begin{aligned}
J_1&\lesi \int_X \f{1}{\mu(B(z,t))}\Big(1+\f{d(y,z)}{t}\Big)^{-N}\Big(1+\f{d(x,y)}{t}\Big)^{-N}|\varphi(t\sqrt{\LL})f(z)| d\mu(z)\\
&\lesi \int_X \f{1}{\mu(B(z,t))}\Big(1+\f{d(x,z)}{t}\Big)^{-N}|\varphi(t\sqrt{\LL})f(z)| d\mu(z)\\
&\lesi [M^*_{\LL,\varphi,N}f(x)]^{1-\theta}\times \int_X \f{1}{\mu(B(z,t))}\Big(1+\f{d(x,z)}{t}\Big)^{-N\theta}|\varphi(t\sqrt{\LL})f(z)|^{\theta} d\mu(z)\\
&\lesi [M^*_{\LL,\varphi,N}f(x)]^{1-\theta}\mathcal{M}(|\varphi^+_{\LL}f|^{\theta})(x),
\end{aligned}
\end{equation}
where we used Lemma \ref{lem-ele est} in the last inequality due to $N\theta>n$.

Since $t\geq 2^{-k}$ as $k\geq j_0$, using \eqref{eq3-lema1} we find that
\begin{equation}\label{eq1-proof prop2}
\begin{aligned}
J_2&\lesi \sum_{k\geq j_0}\int_X \Big(\f{2^{-k}}{t}\Big)^{2\ell}\f{1}{\mu(B(z,t))}\Big(1+\f{d(y,z)}{t}\Big)^{-N}\Big(1+\f{d(x,y)}{t}\Big)^{-N}|\varphi(2^{-k}\sqrt{\LL})f(z)|d\mu(z)\\
&\lesi \sum_{k\geq j_0}\int_X 2^{-2\ell(k-j_0)}\f{1}{V(z,2^{-j_0})}\Big(1+\f{d(x,z)}{2^{-j_0}}\Big)^{-N}|\varphi(2^{-k}\sqrt{\LL})f(z)|d\mu(z)\\
\end{aligned}
\end{equation}
where in the last inequality we used $t\sim 2^{-j_0}$.

Note that
$$
\Big(1+\f{d(x,z)}{2^{-j_0}}\Big)^{-N}\leq 2^{(k-j_0)N}\Big(1+\f{d(x,z)}{2^{-k}}\Big)^{-N}.
$$
Inserting this into \eqref{eq1-proof prop2}, we get that
\[
\begin{aligned}
J_2
&\lesi \sum_{k\geq j_0}2^{-(2\ell-N)(k-j_0)} \int_X \f{1}{V(z,2^{-k})}\Big(1+\f{d(x,z)}{2^{-k}}\Big)^{-N}|\varphi(2^{-k}\sqrt{\LL})f(z)|d\mu(z).
\end{aligned}
\]
Arguing similarly to \eqref{eq-J1} we obtain
\begin{equation}
\label{eq-J2}
\begin{aligned}
J_2&\lesi \sum_{k\geq j_0}2^{-(2\ell-N)(k-j_0)}[M^*_{\LL,\varphi,N}f(x)]^{1-\theta}\mathcal{M}(|\varphi^+_{\LL}f|^{\theta})(x)\\
&\lesi [M^*_{\LL,\varphi,N}f(x)]^{1-\theta}\mathcal{M}(|\varphi^+_{\LL}f|^{\theta})(x).
\end{aligned}
\end{equation}
Similarly,
\begin{equation}
\label{eq-J3}
\begin{aligned}
J_3&\lesi [M^*_{\LL,\varphi,N}f(x)]^{1-\theta}\mathcal{M}(|\varphi^+_{\LL}f|^{\theta})(x).
\end{aligned}
\end{equation}
Plugging the estimates $J_1$, $J_2$ and $J_3$ into \eqref{eq-J123} and then taking the supremum over $y\in X$ and $0<t<1$ we obtain
\[
M^*_{\LL,\varphi,N}f(x)\lesi [M^*_{\LL,\varphi,N}f(x)]^{1-\theta}\mathcal{M}(|\varphi^+_{\LL}f|^{\theta})(x).
\]
Hence,
\[
M^*_{\LL,\varphi,N}f(x)\lesi \left[\mathcal{M}(|\varphi^+_{\LL}f|^{\theta})(x)\right]^{\f{1}{\theta}}.
\]
Using the $L^{\f{p}{\theta}}$-boundedness of the maximal function $\mathcal{M}$ we get \eqref{eq-petree} as desired.
\end{proof}

To complete the proof of Theorem \ref{mainthm1}, we need only to show \eqref{eq-Hmax subset HL}. To do this, we need the following covering lemma in \cite{CW} (see also \cite{DKP}).

\begin{lem}
	\label{coveringlemma}
	Let $E\subset X$ be an open subset with finite measure. Then there exists a collection of balls $\{B_k:=B(x_{B_k},r_{B_k}): x_{B_k}\in E, r_{B_k}=d(x_{B_k},E^c)/2, k=0,1,\ldots\}$ so that
	\begin{enumerate}[{\rm (i)}]
		\item $\displaystyle E=\cup_k B(x_{B_k},r_{B_k})$;
		\item $\displaystyle  \{B(x_{B_k},r_{B_k}/5)\}_{k=1}^\vc$ are disjoint.
	\end{enumerate}
\end{lem}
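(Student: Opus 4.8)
The statement to prove is the Whitney-type covering lemma (Lemma \ref{coveringlemma}): given an open set $E \subset X$ of finite measure, produce balls $B_k = B(x_{B_k}, r_{B_k})$ with $r_{B_k} = d(x_{B_k}, E^c)/2$ covering $E$ and with the one-fifth shrinkings disjoint.

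The plan is to run the standard Vitali-type argument adapted to spaces of homogeneous type. First, for every $x \in E$ define its radius $r(x) = d(x, E^c)/2 > 0$ (finite and positive since $E$ is open and, being of finite measure with $\mu(X) = \infty$, has nonempty complement). Consider the collection $\mathcal{F} = \{B(x, r(x)/5) : x \in E\}$ of "small" balls, each of bounded radius (bounded by $\operatorname{diam}(E)/10$ if $E$ is bounded, and in any case one can localize). By the basic Vitali covering lemma valid in any metric space (or the version in spaces of homogeneous type), there is a countable, pairwise-disjoint subcollection $\{B(x_{B_k}, r(x_{B_k})/5)\}_k$ such that $\bigcup_{x \in E} B(x, r(x)/5) \subseteq \bigcup_k B(x_{B_k}, 5 \cdot r(x_{B_k})/5) = \bigcup_k B(x_{B_k}, r(x_{B_k}))$. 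To extract a countable subcollection one uses finiteness of $\mu(E)$ together with the doubling property: each selected small ball has positive measure, and a standard argument (grouping radii into dyadic scales and using that on each scale only finitely many disjoint balls of comparable size fit inside a set of finite measure) yields countability; alternatively invoke the $5r$-covering lemma directly in its metric-space form.

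The key steps in order: (1) set $r(x) = d(x, E^c)/2$ and verify $0 < r(x) < \infty$ for $x \in E$; (2) apply the $5r$-Vitali covering lemma to $\{B(x, r(x)/5)\}_{x \in E}$ to get a disjoint countable subfamily whose $5$-fold dilates cover $E$ — this gives (i) since $5 \cdot (r(x_{B_k})/5) = r(x_{B_k})$, i.e. $E = \bigcup_k B(x_{B_k}, r_{B_k})$; (3) observe (ii) is immediate because the subfamily $\{B(x_{B_k}, r_{B_k}/5)\}$ is exactly the disjoint family produced. One should also record the elementary geometric fact used implicitly and later: if $x \in B(x_{B_k}, r_{B_k})$ then $d(x, E^c) \approx r_{B_k}$, which follows from the triangle inequality and $r_{B_k} = d(x_{B_k}, E^c)/2$.

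The main obstacle — really the only subtlety — is ensuring the selected subfamily is \emph{countable} (the statement indexes it by $k = 0, 1, \ldots$). In a general metric space the $5r$-covering lemma only gives a disjoint subfamily covering in the dilated sense, without countability; countability here is forced by $\mu(E) < \infty$ together with doubling: since the $B(x_{B_k}, r_{B_k}/5)$ are disjoint and each has $\mu(B(x_{B_k}, r_{B_k}/5)) \gtrsim \mu(B(x_{B_k}, r_{B_k}))$ by \eqref{doub2}, and all are contained in (a fixed dilate of) $E$ which has finite measure, only countably many can have measure bounded below on each dyadic radius scale, and summing over scales keeps the total countable. This is routine but is the one place where the homogeneous-type structure (as opposed to a bare metric space) is genuinely used. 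Everything else is a direct citation of \cite{CW} and the standard Whitney decomposition, so the proof is short.
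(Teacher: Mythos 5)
The paper does not prove Lemma \ref{coveringlemma}; it simply cites \cite{CW} (see also \cite{DKP}), and your $5r$-Vitali argument is exactly the standard proof found there, so the approach matches and is correct. Two small remarks: first, the hypothesis on bounded radii needed to invoke the basic Vitali covering lemma is not entirely automatic here (doubling alone does not give a lower bound of the form $\mu(B(x,r))\gtrsim r^{s}$, so $\mu(E)<\infty$ does not obviously force $\sup_{x\in E} d(x,E^c)<\infty$); your parenthetical ``one can localize'' is the right fix, but it would be cleaner to say explicitly that one applies the selection scale by scale over the dyadic levels $\{x\in E: 2^{m}\leq d(x,E^c)<2^{m+1}\}$. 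Second, the countability argument can be simplified: since $\mu(X)=\infty$ and $\mu$ is doubling, every ball has strictly positive measure, and the shrunken balls $B(x_{B_k},r_{B_k}/5)$ are pairwise disjoint and contained in $E$ with $\mu(E)<\infty$, so at most finitely many can have measure exceeding $1/n$ for each $n$, giving countability directly without stratifying by scale.
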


\begin{proof}
	[Proof of \eqref{eq-Hmax subset HL}:]
	
	Since $h_{\LL,at,M}^{p,q}(X)\subset h_{\LL,at,M}^{p,\vc}(X)$ for all $p\in (0,1]$, $q\in [1,\vc]\cap (p,\vc]$ and $M>\f{n}{2}(\f{1}{p}-1)$, it suffices to prove that $h^p_{\max,\LL}\cap L^2(X)\subset h_{\LL,at,M}^{p,\vc}(X)$.
	
	Fix $f\in h^p_{\max,\LL}\cap L^2(X)$. Let $\Phi$ and $\Psi$ be functions in Proposition \ref{Calderon prop}. From Proposition \ref{Calderon prop}, for $M\in \mathbb{N}, M>\f{n}{2}(\f{1}{p}-1)$ we have
	\begin{equation}
	\label{Calderon forula}
	\begin{aligned}
	f=&\sum_{\ell=0}^{M+1}c(M,\ell)\int_0^{1/2}(t^2\LL)^M\Phi_{2\ell,2}(t\sqrt{\LL})\Psi^{(2M-2\ell+2)}(t\sqrt{\LL})f\f{dt}{t}\\
	&+\sum_{\ell=0}^{M}c(M,\ell)\int_0^{1/2}(t^2\LL)^M\Phi_{2\ell+1,1}(t\sqrt{\LL})\Psi_{2M-2\ell+1,1}(t\sqrt{\LL})f\f{dt}{t}\\
	&+\sum_{\ell=1}^{2M+2}\sum_{j=0}^\ell c(M,\ell,j)\Phi_{j,j}(2^{-1}\sqrt{\LL})\Psi_{\ell-j,\ell-j}(2^{-1}\sqrt{\LL})f\\
	=&:\sum_{\ell=0}^{M+1}f_{\ell,1} +\sum_{\ell=0}^{M}f_{\ell,2} +\sum_{\ell=1}^{2M+2}\sum_{j=0}^\ell g_{\ell,j}
	\end{aligned}
	\end{equation}
	in $L^2(X)$.	
	
	We will prove that functions $f_{\ell,1}, f_{\ell,2}$ and $g_{\ell,j}$ admit atomic $(p,\vc)$-representations.
	
	We now take care of $g_{\ell,j}$. Note that from Lemma \ref{Christ'slemma}, we can pick up a disjoint family of open sets $\{Q_k\}_{k=1}^\vc$ and $\{x_k\}_{k=1}^\vc$ so that $X=\cup_k Q_k$, $Q_k\subset B_k:=B(x_k,1/2)$ and $\mu(Q_k)\sim \mu(B_k)$ for all $k$. For each $m,\ell,j$ we decompose
	$$
	g_{\ell,j}=\sum_k c(M,\ell,j)\Phi_{j,j}(2^{-1}\sqrt{\LL})\left[\Psi_{\ell-j,\ell-j}(2^{-1}\sqrt{\LL})f.\chi_{Q_k}\right].
	$$
	We now set 
	$$
	\lambda_k =\mu(Q_k)^{1/p}\sup_{x\in Q_k}|\Psi_{\ell-j,\ell-j}(2^{-1}\sqrt{\LL})f(x)|,
	$$
	and
	$$
	 a_k=\f{c(M,\ell,j)}{\lambda_k}\Phi_{j,j}(2^{-1}\sqrt{\LL})\left[\Psi_{\ell-j,\ell-j}(2^{-1}\sqrt{\LL})f.\chi_{Q_k}\right].
	$$
	We then have 	$g_{\ell,j}=\sum_k\lambda_k a_k$, and
	\[
	\begin{aligned}
	|\lambda_k|^p&\leq \mu(Q_k)\inf_{x\in Q_k}\sup_{d(x,y)<1}|\Psi_{\ell-j,\ell-j}(2^{-1}\sqrt{\LL})f(y)|^p\\
	&\leq \mu(Q_k)\inf_{x\in Q_k} \left[\sup_{0<t<1}\sup_{d(x,y)<2t} |\Psi_{\ell-j,\ell-j}(t\sqrt{\LL})f(y)|\right]\\
	&\leq \int_{Q_k}|\Psi_{\ell-j,\ell-j} ^*f(x)|^pd\mu(x),
	\end{aligned}
	\]	
	where
	\[
	\Psi_{\ell-j,\ell-j} ^*f(x):=\sup_{0<t<1}\sup_{d(x,y)<2t} |\Psi_{\ell-j,\ell-j}(t\sqrt{\LL})f(y)|.
	\]
	This implies 
	\[
	\sum_k |\lambda_k|^p \leq \|\Psi_{\ell-j,\ell-j} ^*f\|^p_{L^p(X)}\lesi \|f\|^p_{h^p_{\LL,\max}(X)},
	\]
	where in the last inequality we used Lemma \ref{lem2}.
	
	It remains to show that $a_k$ is a multiple of a $(p,\vc,M)_{\LL}$ atom with a harmless constant for each $k$. Indeed, from \eqref{eq2-lemPsiL} we imply
	\[
	{\rm supp}\,a_k\subset B(x_k,1).
	\]
	Moreover, we have
	\[
	a_k(x)=
	\f{c(M,\ell,j)}{\lambda_k}\int_{Q_k}K_{\Phi_{j,j}(2^{-1}\sqrt{\LL})}(x,y)\Psi_{\ell-j,\ell-j}(2^{-1}\sqrt{\LL})f(y)d\mu(y).
	\]
	This, along with \eqref{eq1-lemPsiL} and the expression of $\lambda_k$, yields
	\[
	\begin{aligned}
	|a_k(x)|&\leq \mu(Q_k)^{-1/p}\int_{Q_k}|K_{\Phi_{j,j}(2^{-1}\sqrt{\LL})}(x,y)|d\mu(y)
	\lesi \mu(Q_k)^{-1/p}.
	\end{aligned}
	\]
	This shows $a_k$ is a multiple of a $(p,\vc,M)_{\LL}$ atom.\\
	
	We now take care of $f_{\ell,1}$. For a fixed $\ell\in \{0,1,\ldots, M+1\}$ we define
	\[
	\begin{aligned}
	\eta_{\ell}(x)=&\int_0^1(t^2x^2)^M\Phi_{2\ell,2}(tx)\Psi^{(2M-2\ell+2)}(tx)\f{dt}{t}
	=\int_0^xt^{2M+1}\Phi_{2\ell,2}(t)\Psi^{(2M-2\ell+2)}(t)dt.
	\end{aligned}
	\]
	Then $\eta_{\ell}\in \mathscr{S}(\mathbb{R})$ and $\eta_{\ell}(0)=0$ for each $\ell$.
	
	Moreover, we have, for any $a, b>0$,
\[
	\eta_{\ell}(b\sqrt{\LL})-\eta_{\ell}(a\sqrt{\LL})=\int_a^b (t^2\LL)^M\Phi_{2\ell,2}(t\sqrt{\LL})\Psi^{(2M-2\ell+2)}(t\sqrt{\LL})\f{dt}{t}.
	\]
	
	Define
	$$
	\mathbb{M}_{\LL}f(x)=\sup_{0<t<1}\sup_{d(x,y)<5t}\left[ |\eta_{\ell}(t\sqrt{\LL})f(y)| +|\Psi^{(2M-2\ell+2)}(t\sqrt{\LL})f(y)|\right].
	$$
	This along with Proposition \ref{prop1} yields
	\begin{equation}
	\label{eq1-proff mainthm1}
	\|\mathbb{M}_{\LL}f\|_{L^p(X)}\lesi \|f\|_{H^p_{\max, L}(X)}.
	\end{equation}

	The remainder of the proof is similar to that of \cite[Theorem 1.3]{SY}; hence we just sketch it here. For each $i\in \mathbb{Z}$ we set $O_i:=\{x\in X: \mathbb{M}_{\LL}f(x)>2^i\}$ and set $\widehat{O}_i:=(x,t)\in X\times (0,1): B(x,4t)\subset O$. Then we have
	$$
	X\times (0,1)= \bigcup_{i} \widehat{O}_i\backslash \widehat{O}_{i+1}=:\bigcup_{i} T_{i}.
	$$ 
	For each $O_i$ let $\{B_i^k\}_{k=1}^\vc$ be a family of balls covering $O_i$ as in Lemma \ref{coveringlemma}. For $i\in \mathbb{Z}$ and $k=0,1,\ldots$ we define
	$$
	\mathcal{R}(B_i^0):=\emptyset \ \ \ \text{and} \ \ \ \mathcal{R}(B_i^k):=\{(x,t)\in X\times (0,1): d(x,B_i^k)<t\}, \ \ \ k=1,2,\ldots
	$$
	Hence, $\widehat{O}_i\subset \cup_{k=0}^\vc \mathcal{R}(B_i^k)$. We now define, for $i\in \mathbb{Z}$ and $k=0,1,\ldots$,
	$$
	T^k_i=T_i\cap \left(\mathcal{R}(B_i^k)\backslash \cup_{j=0}^{k-1}\mathcal{R}(B_i^j)\right).
	$$
	It is obvious that $T_i^k\cap T_{j}^{l}=\emptyset$ either $i\neq j$ or $k\neq l$; moreover,
	$$
	X\times (0,1)=\bigcup_{i\in \mathbb{Z}}\bigcup_{k\in \mathbb{N}}T_{i}^k.
	$$
	We can write
		
	\[
	f_{\ell,1}=c(M,\ell)\sum_{i\in \mathbb{Z}, k\in \mathbb{N}}\int_0^{1/2}(t^2\LL)^M\Phi_{2\ell,2}(t\sqrt{\LL})\left[\Psi^{(2M-2\ell+2)}(t\sqrt{\LL})f .\chi_{T_i^k}\right]\f{dt}{t}.
	\] 
	We now define $\lambda_i^k=2^i\mu(B_i^k)^{1/p}$ and $a_i^k=\LL^Mb_k^i$ with
	\[
	b_i^k = \f{c(M,\ell)}{\lambda_i^k}\sum_{i\in \mathbb{Z}, k\in \mathbb{N}}\int_0^{1/2}t^{2M}\Phi_{2\ell,2}(t^2\sqrt{\LL})\left[\Psi^{(2M-2\ell+2)}(t\sqrt{\LL})f .\chi_{T_i^k}\right]\f{dt}{t}.
	\]

Hence, $f=\sum_{i\in \mathbb{Z}, k\in \mathbb{N}} \lambda_i^k a_i^k$ and it is not difficult to see that this series converges in $L^2(X)$. 

On the other hand, from the definition of the level set $O_i$ we obtain
\[
\begin{aligned}
\sum_{i\in \mathbb{Z}, k\in \mathbb{N}}|\lambda_i^k|^p&=\sum_{i\in \mathbb{Z}, k\in \mathbb{N}}2^{ip}\mu(B_i^k)\lesi \sum_{i\in \mathbb{Z}}2^{ip}\mu(O_i)
&\lesi \|\mathbb{M}_\LL f\|^p_{L^p(X)}
&\lesi \|f\|^p_{H^p_{\LL,\max}(X)}.
\end{aligned}
\]

It remains to prove that each $a_i^k$ is a multiple of $(p,\vc, M)_\LL$ atom with a universal constant. To see this, we observe that for $(y,t)\in T_i^k$ we then have $(y,t)\in \widehat{O}_i$ and hence $B(y,4t)\subset O_i$. This implies $d(y, O_i^c)>4t$. On the other hand, $(y,t)\in \mathcal{R}(B_i^k)$ and hence $d(y,B_i^k)<2t$. This leads to $d(y,x_{B_i^k})<t+r_{B_i^k}$. As a consequence, we have
\[
\begin{aligned}
4t&<d(y, O_i^c)\leq d(y, x_{B_i^k}) + d(x_{B_i^k},O_i^c)
&<t+r_{B_i^k} + 2r_{B_i^k},
\end{aligned}
\]
where in the last in equality we used the fact that $d(x_{B_i^k},O_i^c)=2r_{B_i^k}$.

This gives $t<r_{B_i^k}$. This along with \eqref{eq1-lemPsiL} implies that
\[
{\rm supp}\,\LL^{m}b_i^k\subset 3B_i^k, \ m=0, 1, \dots, M.
\]
Applying the argument in the proof of \cite[Theorem 1.3]{SY} mutatis mutandis we conclude that
\[
\|(r_{B_i^k}^2\LL)^{m}b_i^k\|_{L^\vc(X)}\leq
		r_{B_i^k}^{2M}\mu(B_i^k)^{-\f{1}{p}},\ m=0,1,\dots,M.\]

Similarly, we can prove that each $f_{\ell,2}$ admits a $(p,\vc,M)_\LL$-atom decomposition. This completes our proof of \eqref{eq-Hmax subset HL} and hence the proof of Theorem \ref{mainthm1} is complete.
\end{proof}

\subsection{Proof of Theorem \ref{mainthm2}}
%We now ready to give the proof of Theorem \ref{mainthm2}.

%\begin{proof}[Proof of Theorem \ref{mainthm2}:] 
	Since the proof of the inclusion $h^{p,q}_{at}(X)\subset h^{p}_{\LL, {\rm max}}(X)$ for $p\in (\f{n}{n+\delta_1},1]$ and $q\in [1,\vc]\cap (p,\vc]$ is standard and we will leave it to the interested reader. It remains to show that $h^{p}_{\LL, {\rm max}}(X)\subset h^{p,q}_{at}(X)$. Indeed, for $f\in h^{p}_{\LL, {\rm max}}(X)\cap L^2(X)$, from Theorem \ref{mainthm1} we can decompose $f=\sum_{j}\lambda_ja_j$ as an atomic $(p,q,M)_\LL$-representation with $M>\f{n}{2}(\f{1}{p}-1)$, where $a_j$ is a $(p,q,M)_\LL$-atom associated to a ball $B_j$ for $j\ge 1$. If $r_{B_j}\geq 1$, it is obvious that the $(p,\vc,M)_\LL$-atom $a_j$ is also a (local) $(p,q)$ atom. Otherwise, if $r_{B_j}< 1$, the argument used in Lemma 9.1 in \cite{HLMMY} shows that $\displaystyle \int a_jd\mu =0$. Hence, in this case  a $(p,q,M)_\LL$-atom $a_j$ is a (local) $(p,q)$ atom. As a consequence, $f=\sum_{j}\lambda_ja_j$ is an atomic $(p,q)$-repsentation, and hence $f\in  h^{p,q}_{at}(X)$.
	This completes the proof of Theorem \ref{mainthm2}.
	
%\end{proof}	
\bigskip

As a byproduct, by a careful examination of the proof of Theorem \ref{mainthm1} we obtain the following result.
\begin{prop}
	Let $\LL$ satisfies (A1) and (A2). Let $p\in (0,1]$, $q\in [1,\vc]\cap (p,\vc]$ and $M>\f{n}{2}(\f{1}{p}-1)$. If $f\in h^p_{\LL}(X)\cap L^2(X)$ and ${\rm supp}\, f\subset B(x_0,r)$, then $f$ has an atomic $(p,q,M)_\LL$-repsentation $f=\sum_{j=1}^\vc\lambda_j a_j$  with ${\rm supp}\, a_j\subset B(x_0,r+1)$ for all $j$. 
\end{prop}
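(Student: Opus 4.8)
The plan is to revisit the atomic decomposition constructed in the proof of \eqref{eq-Hmax subset HL} (inside the proof of Theorem \ref{mainthm1}) and simply keep track of the supports of the atoms it produces. Since $f\in h^p_{\LL}(X)\cap L^2(X)$ with $\supp f\subset B(x_0,r)$, that proof already furnishes an atomic $(p,q,M)_\LL$-representation $f=\sum_j\lambda_j a_j$ with $\sum_j|\lambda_j|^p\lesi\|f\|^p_{h^p_\LL(X)}$ and convergence in $L^2(X)$, so the $\ell^p$-summability, the $L^2$-convergence and the defining (size/cancellation) properties of each atom are inherited verbatim; the only new thing to establish is that every $a_j$ is supported in $B(x_0,r+1)$. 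Recall from \eqref{Calderon forula} that $f=\sum_{\ell=0}^{M+1}f_{\ell,1}+\sum_{\ell=0}^{M}f_{\ell,2}+\sum_{\ell=1}^{2M+2}\sum_{j=0}^\ell g_{\ell,j}$ with $M>\f{n}{2}(\f1p-1)$, and that the atoms arise by decomposing each $g_{\ell,j}$ over the cubes $\{Q_k\}$ and each $f_{\ell,1},f_{\ell,2}$ over the tents $\{T_i^k\}$.

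For the atoms coming from $g_{\ell,j}$ one has $a_k=\tfrac{c}{\lambda_k}\Phi_{j,j}(2^{-1}\sqrt{\LL})\big[(\Psi_{\ell-j,\ell-j}(2^{-1}\sqrt{\LL})f)\chi_{Q_k}\big]$. By Lemma \ref{lem:finite propagation} the kernels of $\Phi_{j,j}(2^{-1}\sqrt{\LL})$ and $\Psi_{\ell-j,\ell-j}(2^{-1}\sqrt{\LL})$ are supported in $\{d(x,y)\le 1/2\}$ (the parity condition $k+\ell\in2\mathbb{N}$ holds, the two relevant indices being $j$, resp. $\ell-j$), so $\Psi_{\ell-j,\ell-j}(2^{-1}\sqrt{\LL})f$ is supported in $B(x_0,r+1/2)$, multiplying by $\chi_{Q_k}$ does not enlarge this, and hence $\supp a_k\subset B(x_0,r+1)$. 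For the atoms $a_i^k=\LL^Mb_i^k$ coming from $f_{\ell,1}$ (and likewise from $f_{\ell,2}$) I would prove the stronger statement $\supp\LL^m b_i^k\subset B(x_0,r+1)$ for $m=0,\dots,M$: writing $t^{2M}\LL^m\Phi_{2\ell,2}(t\sqrt{\LL})=t^{2(M-m)}(t\sqrt{\LL})^{2m+2}\Phi^{(2\ell)}(t\sqrt{\LL})$, Lemma \ref{lem:finite propagation} (parity: $(2m+2)+2\ell$ is even) gives that this kernel is supported in $\{d(x,y)\le t\}$, and likewise the kernel of $\Psi^{(2M-2\ell+2)}(t\sqrt{\LL})$ is supported in $\{d(x,y)\le t\}$, so $\Psi^{(2M-2\ell+2)}(t\sqrt{\LL})f$ is supported in $B(x_0,r+t)$. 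Consequently, for $\LL^m b_i^k(x)$ to be nonzero there must exist $(y,t)\in T_i^k$ with $d(x,y)\le t$ and $d(y,x_0)<r+t$, whence $d(x,x_0)<r+2t\le r+1$ since $t<1/2$ on the domain of integration. Thus $\supp a_i^k\subset B(x_0,r+1)$, while the bounds $\|(r_{B_i^k}^2\LL)^m b_i^k\|_{L^\vc(X)}\lesi r_{B_i^k}^{2M}\mu(B_i^k)^{-1/p}$ are exactly those established in the proof of \eqref{eq-Hmax subset HL}.

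Assembling the families $\{a_k\}$ and $\{a_i^k\}$ (with their coefficients) into a single sequence then yields the desired atomic $(p,q,M)_\LL$-representation of $f$ with every atom supported in $B(x_0,r+1)$. The one point that genuinely requires the ``careful examination'' is the bookkeeping of the parity hypothesis $k+\ell\in2\mathbb{N}$ in Lemma \ref{lem:finite propagation} for \emph{every} operator that occurs, in particular for the compositions $\LL^m\Phi_{2\ell,2}(t\sqrt{\LL})$, $\LL^m\Phi_{2\ell+1,1}(t\sqrt{\LL})$ and the corresponding $\Psi$-factors (so that finite speed of propagation is available after moving the powers of $\LL$ inside); this is automatic from the way the inhomogeneous Calder\'on formula \eqref{eq-Calderon} was set up, and I do not anticipate any obstacle beyond this verification.
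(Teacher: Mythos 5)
Your proof is correct and is essentially the argument the paper has in mind: the paper states the proposition as a ``byproduct, by a careful examination of the proof of Theorem \ref{mainthm1}'' and omits the details, while you have spelled out precisely what that careful examination amounts to --- namely, (i) for the $g_{\ell,j}$-pieces, using finite propagation speed (Lemma \ref{lem:finite propagation}) for $\Psi_{\ell-j,\ell-j}(2^{-1}\sqrt{\LL})$ and then $\Phi_{j,j}(2^{-1}\sqrt{\LL})$, each propagating support by at most $1/2$; and (ii) for the $f_{\ell,1}$ and $f_{\ell,2}$ pieces, rewriting $t^{2M}\LL^m\Phi_{2\ell,2}(t\sqrt{\LL})$ as $t^{2(M-m)}(t\sqrt{\LL})^{2m+2}\Phi^{(2\ell)}(t\sqrt{\LL})$ so that the parity hypothesis in Lemma \ref{lem:finite propagation} applies, giving propagation by at most $t<1/2$ in each factor. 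Combined with $\supp f\subset B(x_0,r)$ this forces $\supp a_j\subset B(x_0,r+1)$, while the size bounds, $\ell^p$-summability of the coefficients, and $L^2$-convergence carry over verbatim from the proof of \eqref{eq-Hmax subset HL}. The one minor point worth making explicit is that the resulting $a_i^k$ are still atoms associated to the balls $3B_i^k$ (with the size bounds relative to those balls), and the statement requires only the additional containment $\supp a_i^k\subset B(x_0,r+1)$, which is what your argument provides; this is consistent with the definition, since an atom's support may lie strictly inside its associated ball.
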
 
In the proof of Theorem \ref{mainthm2} we have proved that if $\LL$ satisfies (A1)-(A4), then each $(p,q,M)_\LL$ atom is also $(p,q)$ atom. Hence, this along with 
the proposition above implies:
\begin{prop}
	\label{Hardyfunc-compactsupp}
	Let $\LL$ satisfies (A1)-(A4). Let $p\in (\f{n}{n+\delta_1},1]$. If $f\in h^p(X)\cap L^2(X)$ and ${\rm supp}\, f\subset B(x_0,r)$, then f has an atomic $(p,\vc)$-presentation $f=\sum_{j=1}^\vc\lambda_j a_j$  with ${\rm supp}\, a_j\subset B(x_0,r+1)$ for all $j$. 
	\end{prop}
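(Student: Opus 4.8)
The plan is to derive this directly from two facts already in hand. First, the proposition stated immediately above says that a compactly supported element of $h^p_{\LL}(X)\cap L^2(X)$ has an atomic $(p,q,M)_\LL$-representation whose atoms live in a ball enlarged by only $1$. Second, the argument carried out inside the proof of Theorem \ref{mainthm2} shows that, when $\LL$ satisfies (A1)--(A4), every $(p,q,M)_\LL$-atom is, up to a harmless constant, a $(p,q)$-atom associated to the \emph{same} ball. Since passing from an $\LL$-atom to an ordinary atom alters neither its support nor its associated ball, applying the second fact termwise to the decomposition furnished by the first will yield exactly the assertion.

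First I would fix $M\in\mathbb{N}$ with $M>\f{n}{2}(\f{1}{p}-1)$ and take $q=\vc$ (which is admissible since $p\le 1$). By Theorems \ref{mainthm1} and \ref{mainthm2}, both $h^p(X)$ and $h^p_{\LL}(X)$ coincide with $h^p_{\LL,\max}(X)$, so the hypothesis $f\in h^p(X)\cap L^2(X)$ is literally the statement $f\in h^p_{\LL}(X)\cap L^2(X)$. Together with $\supp f\subset B(x_0,r)$ this lets me invoke the preceding proposition to produce $\{\lambda_j\}\in\ell^p$ and $(p,\vc,M)_\LL$-atoms $a_j$, each associated to a ball $B_j$, with $f=\sum_{j\ge 1}\lambda_j a_j$ in $L^2(X)$ and $\supp a_j\subset B_j\subset B(x_0,r+1)$; the enlargement by exactly $1$ is inherited from the finite propagation speed support bound \eqref{eq1-lemPsiL} used in that proof.

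Next I would upgrade each $a_j$ to a $(p,\vc)$-atom associated to $B_j$. The conditions $\supp a_j\subset B_j$ and $\|a_j\|_{L^\vc(X)}\le \mu(B_j)^{-1/p}$ are already part of being a $(p,\vc,M)_\LL$-atom, so only the vanishing-mean condition for small balls needs attention. If $r_{B_j}\ge 1$ nothing further is required. If $r_{B_j}<1$, then $a_j=\LL^M b_j$ with $b_j\in\mathcal{D}(\LL^M)$ and $\supp\LL^k b_j\subset B_j$, and I would repeat the argument from the proof of Theorem \ref{mainthm2} (the argument of Lemma~9.1 in \cite{HLMMY}): using the conservation property \eqref{C} in the form $\int_X e^{-t\LL}g\,d\mu=\int_X g\,d\mu$, differentiating in $t$ and letting $t\to 0^+$ along $g=\LL^{M-1}b_j$ — all legitimated by the Gaussian bound \eqref{GE} and the H\"older estimate \eqref{H} — to conclude $\int_X a_j\,d\mu=\int_X\LL(\LL^{M-1}b_j)\,d\mu=0$. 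Hence each $a_j$ is a fixed multiple of a $(p,\vc)$-atom associated to $B_j\subset B(x_0,r+1)$.

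Finally, absorbing those fixed multiples into the $\lambda_j$ (which keeps $\{\lambda_j\}\in\ell^p$, the multipliers being uniformly bounded) turns $f=\sum_j\lambda_j a_j$ into an atomic $(p,\vc)$-representation of $f$ with $\supp a_j\subset B(x_0,r+1)$ for every $j$, and in particular places $f$ in $h^{p,\vc}_{at}(X)$ with the claimed support property. The only step carrying real content is the cancellation identity $\int_X a_j\,d\mu=0$ when $r_{B_j}<1$, and I do not expect this to be a genuine obstacle, since it is done verbatim in the proof of Theorem \ref{mainthm2} (following Lemma~9.1 of \cite{HLMMY}) and requires no new work. The one cosmetic nuisance is the mismatch between the case split ``$r_{B_j}<1$ vs.\ $r_{B_j}\ge 1$'' in Definition \ref{def: L-atom} and the threshold ``$r_{B_j}\le 1$'' in the definition of $(p,q)$-atoms; this is immaterial and is already tacitly dismissed in the proof of Theorem \ref{mainthm2}, so I would merely note it and proceed.
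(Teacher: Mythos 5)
Your argument is exactly the paper's: it combines the preceding proposition (compactly supported $f\in h^p_{\LL}(X)\cap L^2(X)$ has a $(p,q,M)_\LL$-representation with atoms supported in $B(x_0,r+1)$) with the observation made in the proof of Theorem \ref{mainthm2} that, under (A1)--(A4), every $(p,q,M)_\LL$-atom is also a $(p,q)$-atom associated to the same ball. Correct and identical in approach.
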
 

\section{Maximal function characterizations for local Hardy spaces associated to critical functions}\label{sec: proof local hardy rho}
%\section{Maximal function characterizations for local Hardy spaces associated to critical functions $h^{p,q}_{at,\rho}(X)$}\label{sec: proof local hardy rho}

This section is dedicated to the proof of Theorem \ref{mainthm2s} and Theorem \ref{mainthm3}.\\

We fix a family of balls $\{B_\alpha\}_{\alpha\in \mathcal{I}}$ and functions $\{\psi_\alpha\}_{\alpha\in \mathcal{I}}$ from Lemma \ref{Lem2: rho}. We then set, for each $\alpha$,
\begin{equation}\label{defIalpha}
\mathcal{I}_\alpha=\{j\in \mathcal{I}: B_j\cap B_\alpha\neq \emptyset\}.
\end{equation}
Then it follows from Lemma  \ref{Lem2: rho} and the doubling property that there exists $C>0$ so that
\begin{equation}\label{cardinalof Ialpha}
\sharp \mathcal{I}_\alpha\leq C, \ \ \text{for all $\alpha \in \mathcal{I}$}.
\end{equation}

From Lemma \ref{lem-criticalfunction}, we can see that  there exists $C_\rho$ so that if $y\in B(x,\rho(x))$ then $C_\rho^{-1}\rho(x)\leq \rho(y)\leq C_\rho\rho(x)$. We shall fix the constant $C_\rho$ and for any ball $B\subset$ we denote $B^* = 4C_\rho B$.

\begin{lem}\label{lem-Tatom}
	Let $\rho$ be a critical function on $X$. Let $p\in (\f{n}{n+1},1]$, $q\in [1,\vc]\cap (p,\vc]$ and $\epsilon\in (0,1]$. Assume that $T$ is a bounded sublinear operator on $L^2(X)$. If there exists $C$ so that 
	\[
	\|Ta\|_{L^p(X)}\leq C
	\]
	for all $(p,q,\rho,\epsilon)$ atom $a$, then $T$ can be extended to be bounded from $h^{p,q}_{at,\rho,\epsilon}(X)$ to $L^p(X)$.
\end{lem}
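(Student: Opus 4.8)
The plan is to follow the standard density-plus-uniform-bound argument for extending operators from atomic Hardy spaces, but with care taken over the fact that $h^{p,q}_{at,\rho,\epsilon}(X)$ is defined as a completion. First I would recall that, by definition, the set $\mathcal{H}:=\{f\in L^2(X): f \text{ has an atomic }(p,q,\rho,\epsilon)\text{-representation}\}$ is dense in $h^{p,q}_{at,\rho,\epsilon}(X)$, so it suffices to show that $T$ is bounded from $\mathcal{H}$ (with the $h^{p,q}_{at,\rho,\epsilon}$-norm) into $L^p(X)$, and then extend by density and completeness of $L^p(X)$ (recalling $p\le 1$, so $L^p$ is a complete quasi-normed space). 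Fix $f\in\mathcal{H}$ and an atomic representation $f=\sum_j\lambda_j a_j$ converging in $L^2(X)$, with each $a_j$ a $(p,q,\rho,\epsilon)$-atom and $\{\lambda_j\}\in\ell^p$.

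The key step is to pass the sublinear operator $T$ through the (a priori only $L^2$-convergent) sum. Since $T$ is bounded and sublinear on $L^2(X)$, the partial sums $T\big(\sum_{j\le N}\lambda_j a_j\big)$ converge in $L^2(X)$ to $Tf$, and hence a subsequence converges $\mu$-a.e. Combining sublinearity with the a.e.\ bound $|T(\sum_{j\le N}\lambda_j a_j)|\le \sum_{j\le N}|\lambda_j|\,|Ta_j|$ and Fatou's lemma, one gets the pointwise estimate $|Tf(x)|\le \sum_j|\lambda_j|\,|Ta_j(x)|$ for a.e.\ $x$. Then, using $p\le 1$ and the elementary inequality $(\sum_j c_j)^p\le\sum_j c_j^p$ for nonnegative $c_j$, I would estimate
\[
\|Tf\|_{L^p(X)}^p\le \int_X\Big(\sum_j|\lambda_j|\,|Ta_j(x)|\Big)^p d\mu(x)\le \sum_j|\lambda_j|^p\|Ta_j\|_{L^p(X)}^p\le C^p\sum_j|\lambda_j|^p.
\]
Taking the infimum over all atomic $(p,q,\rho,\epsilon)$-representations of $f$ gives $\|Tf\|_{L^p(X)}\le C\|f\|_{h^{p,q}_{at,\rho,\epsilon}(X)}$ for all $f\in\mathcal{H}$.

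Finally I would handle the extension: given $f\in h^{p,q}_{at,\rho,\epsilon}(X)$, pick $f_k\in\mathcal{H}$ with $f_k\to f$ in the $h^{p,q}_{at,\rho,\epsilon}$-norm; then $\{f_k\}$ is Cauchy, so by the bound just proved $\{Tf_k\}$ is Cauchy in $L^p(X)$, hence converges, and one defines $\widetilde T f$ as the limit, checking independence of the approximating sequence in the usual way. The resulting $\widetilde T$ is bounded from $h^{p,q}_{at,\rho,\epsilon}(X)$ to $L^p(X)$ with the same constant $C$. The main obstacle — and the only genuinely non-formal point — is justifying the pointwise domination $|Tf|\le\sum_j|\lambda_j||Ta_j|$ a.e.\ from the $L^2$-convergence of the representation together with sublinearity; this is where the passage to an a.e.-convergent subsequence and Fatou's lemma are needed, and it is the standard delicate point in such arguments. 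Everything else is routine.
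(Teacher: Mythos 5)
Your argument is correct and is the standard density-plus-uniform-bound proof that the paper itself invokes without repeating, referring to Lemma 4.1 of \cite{HM}. The one small wrinkle is that for a merely sublinear $T$ the claim ``$Tg_N\to Tf$ in $L^2$'' is more safely stated as $|Tg_N|\to|Tf|$ in $L^2$, which follows at once from $\bigl\||Tg_N|-|Tf|\bigr\|_{L^2}\le\|T(g_N-f)\|_{L^2}\le M\|g_N-f\|_{L^2}$, and after passing to an a.e.-convergent subsequence this yields exactly the pointwise domination $|Tf|\le\sum_j|\lambda_j|\,|Ta_j|$ that your estimate requires.
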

\begin{proof}
	The proof of the lemma is quite standard. See for example Lemma 4.1 in \cite{HM}. Hence, we omit details.
\end{proof}

We first concentrate on some localized maximal function estimates which will be useful in the proof of the main results.
\begin{lem}\label{Lem1: Hardy spaces}
	Let $\f{n}{n+\delta_1}<p\leq 1$ and $q\in (p,\vc]\cap [1,\vc]$. Then there exists $\kappa>0$ so that for any $0<\epsilon\leq 1$, we have
	$$
	\Big\|\sup_{0< t\leq [\epsilon \rho(x_\alpha)]^2}|e^{-t\LL}(f\psi_\alpha )(x)|\,\Big\|^p_{L^p(X\backslash B^*_\alpha)}\lesi \epsilon^{\kappa}\|f\|^p_{h^{p,q}_{at,\rho,\epsilon}(X)},
	$$
	for all $f\in h^{p,q}_{at,\rho,\epsilon}$ and each function $\psi_\alpha$ from Lemma \ref{Lem2: rho}.
\end{lem}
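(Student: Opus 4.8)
The plan is to reduce the estimate to the action of $T f = \sup_{0<t\le[\epsilon\rho(x_\alpha)]^2}|e^{-t\LL}(f\psi_\alpha)(\cdot)|$ on individual $(p,q,\rho,\epsilon)$-atoms via Lemma \ref{lem-Tatom}: it suffices to show that for every such atom $a$ associated to a ball $B=B(x_0,r)$ one has $\|Ta\|_{L^p(X\backslash B^*_\alpha)}\lesi \epsilon^{\kappa/p}$ with a constant uniform in $\alpha$ and in the atom. One subtlety is that $T$ as written already incorporates the cutoff $\psi_\alpha$, so I would first note $\supp(a\psi_\alpha)\subset B\cap B_\alpha$ and that $a\psi_\alpha$ is (a harmless multiple of) a function supported in a ball of comparable size; the cancellation of $a$ is generally destroyed by multiplying by $\psi_\alpha$, but $\|a\psi_\alpha\|_{L^q}\le\|a\|_{L^q}\le\mu(B)^{1/q-1/p}$ is retained. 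Since we only integrate over $X\backslash B_\alpha^*$ and the support of $a\psi_\alpha$ lies in $B_\alpha\subset\frac14 B_\alpha^*$, the relevant region is separated from the support, so we will be in the ``off-diagonal'' regime throughout and cancellation will not actually be needed.

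The main step is the pointwise kernel bound. For $x\notin B_\alpha^*$ and $y\in\supp(a\psi_\alpha)\subset B_\alpha$ we have $d(x,y)\gtrsim d(x,x_\alpha)\gtrsim \rho(x_\alpha)$ (once $x$ is outside $B^*_\alpha$, $d(x,x_\alpha)$ dominates $\rho(x_\alpha)$), and $t\le[\epsilon\rho(x_\alpha)]^2$. Using the Gaussian bound (GE) I would estimate, for each fixed $t$,
\[
|e^{-t\LL}(a\psi_\alpha)(x)|\le \int_{B_\alpha}\frac{C}{\mu(B(x,\sqrt t))}\exp\Big(-\frac{d(x,y)^2}{ct}\Big)|a(y)\psi_\alpha(y)|\,d\mu(y).
\]
I would split the Gaussian as $\exp(-d(x,y)^2/ct)\le \exp(-d(x,y)^2/2ct)\exp(-d(x,x_\alpha)^2/4ct)$ (using $d(x,y)\sim d(x,x_\alpha)$ in this regime, up to absorbing the part of $d(x,y)$ inside $B_\alpha$), and then exploit $t\le[\epsilon\rho(x_\alpha)]^2$ together with Lemma \ref{lem-criticalfunction}(d): the factor $\exp(-d(x,x_\alpha)^2/4ct)\le\exp(-d(x,x_\alpha)^2/4c\epsilon^2\rho(x_\alpha)^2)$ produces, for any large $N$, a gain of $\epsilon^{\kappa}\big(\rho(x)/d(x,x_\alpha)\big)^{N}$ (note $\rho(x)\sim\rho(x_\alpha)$ is false far from $B_\alpha$, but \eqref{eq1-rho exp} is precisely designed to handle exactly this — it converts the exponential in $\epsilon\rho(x_\alpha)$ into a power of $\epsilon\rho(y)/d(x,y)$ with $y=x$, i.e. $\epsilon\rho(x)/d(x,x_\alpha)$). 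After taking the supremum over $t$ and using that $1/\mu(B(x,\sqrt t))$ can be compared to $1/\mu(B(x,d(x,x_\alpha)))$ using the remaining Gaussian (as in the two displayed inequalities after \eqref{doub2s}), and applying Hölder in $y$ with the $L^q$ bound on $a\psi_\alpha$, I obtain
\[
Ta(x)\lesi \epsilon^{\kappa}\,\frac{\mu(B)^{1-1/p}\,[\text{something}]}{\mu(B(x,d(x,x_\alpha)))}\Big(1+\frac{d(x,x_\alpha)}{\rho(x_\alpha)}\Big)^{-N}.
\]

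The final step is to integrate this in $x$ over $X\backslash B_\alpha^*$ and raise to the $p$-th power. Here I would consider two cases for the atom's ball $B$: if $r\ge\epsilon\rho(x_0)/4$ (a ``large'' atom, no cancellation), the bound above combined with the doubling property \eqref{doub2} and the normalization $\mu(B)^{1-1/p}$ yields, after splitting $X\backslash B_\alpha^*$ into annuli $2^k\rho(x_\alpha)\le d(x,x_\alpha)<2^{k+1}\rho(x_\alpha)$ and summing the resulting geometric series (convergent once $N$ is chosen large relative to $n/p$), a total of $\lesi\epsilon^{\kappa p}$; the $\mu(B)^{p-1}\mu(B_\alpha)$-type factors are controlled because $B\cap B_\alpha\ne\emptyset$ forces $r\gtrsim\epsilon\rho(x_\alpha)$, which together with $p\le1$ keeps the measure ratios bounded. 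If instead $r<\epsilon\rho(x_0)/4$ the atom has the cancellation $\int a=0$, but as noted we do not need it since the off-diagonal decay already suffices; one only has to be slightly careful that in this small-ball case $\mu(B)$ may be much smaller than $\mu(B_\alpha)$, so I would instead place the whole mass of $a\psi_\alpha$ at the point $x_0$ and use the kernel bound centred at $x_0$, again getting the $\epsilon^\kappa$ gain from \eqref{eq1-rho exp} applied with $y=x_0$. \textbf{The hard part} will be bookkeeping the interplay between the three length scales $r$ (atom), $\rho(x_\alpha)$ (localization), and $d(x,x_\alpha)$ (decay), and verifying that the power of $\epsilon$ one extracts is genuinely positive and independent of the atom — this hinges on applying Lemma \ref{lem-criticalfunction}(d) with $N$ chosen large enough that the annular sum converges while leaving a positive residual power $\kappa>0$ of $\epsilon$.
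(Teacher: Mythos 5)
Your reduction to atoms via Lemma \ref{lem-Tatom} and your overall geometric setup (off-diagonal regime since $\supp(a\psi_\alpha)\subset B_\alpha$ while $x\notin B_\alpha^*$, conversion of $\mu(B(x,\sqrt t))^{-1}$ to $\mu(B(x,d(x,x_\alpha)))^{-1}$, use of \eqref{eq1-rho exp}) match the paper. Your treatment of the ``large'' atom case ($r\gtrsim\epsilon\rho(x_0)$) is essentially the paper's Case 1. However, there is a genuine gap in your claim that ``cancellation will not actually be needed'' in the small-atom case $r<\epsilon\rho(x_0)/4$.

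Concretely, without cancellation the pointwise bound you derive for $x\notin B_\alpha^*$ is of the form
\[
Ta(x)\lesim\frac{\mu(B)^{1-1/p}}{\mu(B(x_0,d(x,x_0)))}\exp\Bigl(-\frac{d(x,x_0)^2}{c\,\epsilon^2\rho(x_\alpha)^2}\Bigr),
\]
and integrating $|Ta|^p$ over annuli $\{d(x,x_0)\sim 2^k\rho(x_\alpha)\}$ gives a sum of terms $\sim\bigl(\mu(B(x_0,2^k\rho(x_\alpha)))/\mu(B)\bigr)^{1-p}\exp(-cp4^k/\epsilon^2)$. The exponential kills the growth in $k$, but the residual factor from the smallest annulus is of order $\bigl(\mu(B_\alpha)/\mu(B)\bigr)^{1-p}\gtrsim(\rho(x_\alpha)/r)^{n(1-p)}$, which is unbounded as $r\to 0$ with no $\epsilon$-dependent compensation. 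The key problem is that the exponential gain lives on the scale $\rho(x_\alpha)$ (this is the separation between $B_\alpha$ and $X\backslash B_\alpha^*$, hence is $r$-independent), while the atom normalization $\mu(B)^{1-1/p}$ degenerates on the scale $r$; nothing in the Gaussian bound alone reconciles these two scales. This is precisely why the paper splits off the cancellation term for small atoms: writing $e^{-t\LL}(a\psi_\alpha)$ via $\tilde p_t(x,y)-\tilde p_t(x,x_0)$ and $\psi_\alpha(y)-\psi_\alpha(x_0)$, using the H\"older estimate (A3) and the Lipschitz bound on $\psi_\alpha$, introduces a factor $(d(y,x_0)/\sqrt t)^{\delta_1}\lesim(r/d(x,x_0))^{\delta_1}$, producing the decay $(r/d(x,x_\alpha))^{\delta_1+n-n/p}\mu(B(x_\alpha,d(x,x_\alpha)))^{-1/p}$ whose integrability in $x$ requires exactly $p>n/(n+\delta_1)$. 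This is also where $r<\epsilon\rho(x_0)/4$ turns into $\epsilon^\kappa$. Your argument never invokes (A3) or the threshold $p>n/(n+\delta_1)$, which is a telltale sign that the cancellation step has been skipped rather than made unnecessary.
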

\begin{proof}
	It is obvious that
	\[
	\sup_{0< t\leq [\epsilon \rho(x_\alpha)]^2}|e^{-t\LL}(f\psi_\alpha )(x)\lesi \mathcal{M}f(x).
	\]
	Hence, from Lemma \ref{lem-Tatom} it suffices to prove that
	$$
	\Big\|\sup_{0< t\leq [\epsilon \rho(x_\alpha)]^2}|e^{-t\LL}(a\psi_\alpha)(x)|\,\Big\|_{L^p(X\backslash B^*_\alpha)}\lesi \epsilon^{\kappa},
	$$
	for all $(\rho,p,q,\epsilon)$ atoms associated to balls $B(x_0,r)$ so that $B(x_0,r)\cap B_\alpha\neq \emptyset$.
	
	To do this, we consider two cases:
	
	\textbf{Case 1: $\epsilon\rho(x_0)/4<r\leq \epsilon\rho(x_0)$}

	Using the Gaussian upper bound of $\widetilde{p}_t(x,y)$ and the fact that $d(x,y)\sim d(x,x_\alpha)$ for $x\in X\backslash B^*_\alpha$ and $y\in B_\alpha$,  we have, for $x\in X\backslash B^*_\alpha$,
	$$
	\begin{aligned}
	|e^{-t\LL}(a\psi_\alpha)(x)|
	&\lesi \int_{B_\alpha} \f{1}{\mu(B(x,\sqrt{t}))}\exp\Big(-\f{d(x,y)^2}{ct}\Big)|a(y)|d\mu(y)\\
	&\lesi \f{1}{\mu(B(x,d(x,x_\alpha)))}\exp\Big(-\f{d(x,x_\alpha)^2}{ct}\Big)\int_B|a(y)|d\mu(y)\\
	&
	\lesi \f{1}{\mu(B(x,d(x,x_\alpha)))}\exp\Big(-\f{d(x,x_\alpha)^2}{c[\epsilon\rho(x_\alpha)]^2}\Big)\mu(B(x_0,r))^{1-1/p}.
	\end{aligned}
	$$
	
	This implies that
	\begin{equation}\label{eq1-lem1Hardy}
	\begin{aligned}
	\Big\|\sup_{0<t\leq [\epsilon \rho(x_\alpha)]^2}|e^{-t\LL}(a\psi_\alpha)(x)|\,\Big\|_{L^p(X\backslash B^*_\alpha)}&\lesi e^{-c/\epsilon}\Big(\f{\mu(B_\alpha)}{\mu(B(x_0,r))}\Big)^{1/p-1}.
	\end{aligned}
	\end{equation}
	Note that since $B(x_0,r)\cap B_\alpha\neq \emptyset$ and $r\leq \rho(x_0)$, applying Lemma \ref{lem-criticalfunction} (i) we have $\rho(x_\alpha)\sim \rho(x_0)$. Hence, $\mu(B_\alpha)\sim \mu(B(x_0,\rho(x_0)))$. This together with (\ref{eq1-lem1Hardy}) yields that
	$$
	\begin{aligned}
	\Big\|\sup_{0<t\leq [\epsilon \rho(x_\alpha)]^2}|e^{-t\LL}(a\psi_\alpha)(x)|\,\Big\|_{L^p(X\backslash B^*_\alpha)}&\lesi e^{-c/\epsilon}\Big(\f{\mu(B(x_0,\rho(x_0)))}{\mu(B(x_0,r))}\Big)^{1/p-1}
	\\
	&
	\lesi e^{-c/\epsilon}\Big(\f{\rho(x_0)}{r}\Big)^{n(1/p-1)}
	%\\
	%&
	\lesi e^{-c/\epsilon}.
	\end{aligned}
	$$

	\textbf{Case 2: $r\leq \epsilon \rho(x_0)/4$}
	
	Using the cancellation property of $a$ we obtain
	%$$
	%\begin{aligned}
	%e^{-t\LL}(a\psi_\alpha)(x)&=\int_{B(x_0,r)}[\widetilde{p}_t(x,y)\psi_\alpha(y)-\widetilde{p}_t(x,x_0)\psi_\alpha(x_0)]a(y)d\mu(y)\\
	%&=\int_{B(x_0,r)}[\widetilde{p}_t(x,y)-\widetilde{p}_t(x,x_0)]\psi_\alpha(y)a(y)d\mu(y)\\
	%& \ \ +\int_{B(x_0,r)}\widetilde{p}_t(x,x_0)[\psi_\alpha(y)-\psi_\alpha(x_0)]a(y)d\mu(y)
	%\end{aligned}
	%$$
	%which implies that
	$$
	\begin{aligned}
	|e^{-t\LL}(a\psi_\alpha)(x)|&\lesi \int_{B(x_0,r)}|\widetilde{p}_t(x,y)-\widetilde{p}_t(x,x_0)|\, |a(y)|d\mu(y)\\
	& \ \ +\int_{B(x_0,r)}|\widetilde{p}_t(x,x_0)|\,|\psi_\alpha(y)-\psi_\alpha(x_0)|\, |a(y)|d\mu(y):=I_1(x)+I_2(x).
	\end{aligned}
	$$
	By \eqref{H} and a similar argument used in Case 1, we obtain that
	$$
	\begin{aligned}
	I_1(x)&\lesi \Big(\f{r}{\sqrt{t}}\Big)^{\delta_1}\f{1}{\mu(B(x,\sqrt{t}))}\exp\Big(-\f{d(x,x_0)^2}{ct}\Big)\mu(B(x_0,r))^{1-1/p}\\
	&\lesi \Big(\f{r}{d(x,x_0)}\Big)^{\delta_1}\f{1}{\mu(B(x_0,d(x,x_0)))}\exp\Big(-\f{d(x,x_0)^2}{ct}\Big)\mu(B(x_0,r))^{1-1/p}\\
	&\lesi \Big(\f{r}{d(x,x_0)}\Big)^{\delta_1} \f{1}{\mu(B(x_0,d(x,x_0)))}\mu(B(x_0,r))^{1-1/p}\\
	\end{aligned}
	$$
	which together with the doubling property implies that
	$$
	\begin{aligned}
	I_1(x)
	&\lesi \Big(\f{r}{d(x,x_0)}\Big)^{\delta_1+n-n/p} \f{1}{\mu(B(x_0,d(x,x_0)))} \mu(B(x_0,d(x,x_0)))^{1-1/p}\\
	&\lesi \Big(\f{r}{d(x,x_\alpha)}\Big)^{\delta_1+n-n/p} \mu(B(x_\alpha,d(x,x_\alpha)))^{-1/p}.
	\end{aligned}
	$$
%	where $\delta$ is some number so that $p>\f{n}{n+\delta}$.
	
	Similarly, by using the fact that $|\psi_\alpha(y)-\psi_\alpha(x_0)|\lesi d(y,x_0)/\rho(x_\alpha)$ we also obtain that
	$$
	I_2(x)\lesi \Big(\f{r}{d(x,x_\alpha)}\Big)^{\delta_1+n-n/p} \mu(B(x_\alpha,d(x,x_\alpha)))^{-1/p}.
	$$
	From these two estimates and the fact that $p>\f{n}{n+\delta_1}$ we deduce the desired estimate.
\end{proof}
From Lemma \ref{Lem1: Hardy spaces}, we deduce the following estimate.
\begin{cor}\label{Cor1: Hardy spaces}
	Let $\f{n}{n+\delta_1}<p\leq 1$ and $q\in (p,\vc]\cap [1,\vc]$. Then there exists $\kappa$ so that for any $0<\epsilon\leq 1$, we have
	$$
	\Big\|\sup_{0< t\leq [\epsilon \rho(x_\alpha)]^2}|e^{-t\LL}(f\psi_\alpha )(x)|\,\Big\|^p_{L^p(X\backslash B^*_\alpha)}\lesi \epsilon^{\kappa}\sum_{j\in \mathcal{I}_\alpha}\|f\psi_j\|^p_{h^{p,q}_{at,\rho,\epsilon}(X)},
	$$
	for all $f\in h^{p,q}_{at,\rho,\epsilon}(X)$ and each function $\psi_\alpha$ from Lemma \ref{Lem2: rho}, where $\mathcal{I}_\alpha$ is defined as in \eqref{defIalpha}.
\end{cor}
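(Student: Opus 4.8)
The plan is to deduce Corollary \ref{Cor1: Hardy spaces} directly from Lemma \ref{Lem1: Hardy spaces} by localizing $f$ through the partition of unity $\{\psi_j\}_{j\in\mathcal{I}}$ of Lemma \ref{Lem2: rho} and exploiting the uniform overlap bound \eqref{cardinalof Ialpha}. First I would write $f\psi_\alpha=\sum_{j\in\mathcal{I}}(f\psi_j)\psi_\alpha$ using $\sum_j\psi_j\equiv 1$; since $\supp\psi_j\subset B(x_j,\rho(x_j)/2)\subset B_j$ and $\supp\psi_\alpha\subset B_\alpha$, the product $\psi_j\psi_\alpha$ vanishes identically unless $B_j\cap B_\alpha\neq\emptyset$, i.e.\ unless $j\in\mathcal{I}_\alpha$. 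Hence in fact $f\psi_\alpha=\sum_{j\in\mathcal{I}_\alpha}(f\psi_j)\psi_\alpha$, a \emph{finite} sum of at most $C$ terms by \eqref{cardinalof Ialpha}, so there are no convergence issues in what follows.

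Next, writing $S_\alpha g:=\sup_{0<t\leq[\epsilon\rho(x_\alpha)]^2}|e^{-t\LL}(g\psi_\alpha)|$, I would use the linearity of $e^{-t\LL}$ together with the triangle inequality to obtain the pointwise bound $S_\alpha f\leq\sum_{j\in\mathcal{I}_\alpha}S_\alpha(f\psi_j)$. Taking the $L^p(X\backslash B^*_\alpha)$ quasi-norm to the $p$-th power and using $p\leq 1$ (so that $\|\sum_j g_j\|_{L^p}^p\leq\sum_j\|g_j\|_{L^p}^p$) then gives
\[
\big\|S_\alpha f\big\|_{L^p(X\backslash B^*_\alpha)}^p\leq\sum_{j\in\mathcal{I}_\alpha}\big\|S_\alpha(f\psi_j)\big\|_{L^p(X\backslash B^*_\alpha)}^p.
\]
Finally I would apply Lemma \ref{Lem1: Hardy spaces} with $f$ replaced by each $f\psi_j$ — which is legitimate provided $f\psi_j\in h^{p,q}_{at,\rho,\epsilon}(X)$, and if this fails for some $j$ then $\|f\psi_j\|_{h^{p,q}_{at,\rho,\epsilon}(X)}=\infty$ and the asserted estimate is vacuous — obtaining $\|S_\alpha(f\psi_j)\|_{L^p(X\backslash B^*_\alpha)}^p\lesssim\epsilon^{\kappa}\|f\psi_j\|_{h^{p,q}_{at,\rho,\epsilon}(X)}^p$ for the same exponent $\kappa$, and summing over the at most $C$ indices $j\in\mathcal{I}_\alpha$ yields the claim.

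I do not expect any genuine obstacle here: the corollary is essentially a bookkeeping consequence of Lemma \ref{Lem1: Hardy spaces}. The only two points that merit a line of justification are that $\psi_j\psi_\alpha$ is supported where $j\in\mathcal{I}_\alpha$ (which is what makes the decomposition of $f\psi_\alpha$ a sum of boundedly many terms, so that the constants do not degrade), and the harmless remark about membership of the localized pieces $f\psi_j$ in the atomic space. Everything else is a finite sum, sublinearity of the maximal operator $S_\alpha$, and the $\ell^p$-subadditivity of $\|\cdot\|_{L^p}^p$ for $p\le 1$.
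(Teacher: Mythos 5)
Your proposal is correct and follows essentially the same route as the paper: decompose $f\psi_\alpha=\sum_{j\in\mathcal{I}_\alpha}\psi_\alpha(f\psi_j)$ via the partition of unity and the support observation, use $\ell^p$-subadditivity of $\|\cdot\|_{L^p}^p$ for $p\le 1$, and apply Lemma \ref{Lem1: Hardy spaces} to each piece $f\psi_j$. The extra remarks you make (on why the sum localizes to $\mathcal{I}_\alpha$ and on membership of $f\psi_j$ in the atomic space) are fine but are implicit in the paper's one-line argument.
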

\begin{proof}
	We first note that
	$$
	f\psi_\alpha=\sum_{j\in \mathcal{I}_\alpha}\psi_\alpha (f\psi_j).
	$$
	Therefore,
	$$
	\Big\|\sup_{0< t\leq [\epsilon \rho(x_\alpha)]^2}|e^{-t\LL}(f\psi_\alpha )(x)|\,\Big\|^p_{L^p(X\backslash B^*_\alpha)}\leq \sum_{j\in \mathcal{I}_\alpha}\Big\|\sup_{0< t\leq [\epsilon \rho(x_\alpha)]^2}|e^{-t\LL}[\psi_\alpha(f\psi_j)](x)|\,\Big\|^p_{L^p(X\backslash B^*_\alpha)}
	$$
	which together with Lemma \ref{Lem1: Hardy spaces} implies that
	$$
	\Big\|\sup_{0< t\leq [\epsilon \rho(x_\alpha)]^2}|e^{-t\LL}(f\psi_\alpha )(x)|\,\Big\|^p_{L^p(X\backslash B^*_\alpha)}\lesi \epsilon^{\kappa}\sum_{j\in \mathcal{I}_\alpha}\|f\psi_j\|^p_{h^{p,q}_{at,\rho,\epsilon}}.
	$$
\end{proof}

For each $\epsilon\in(0,1]$ we define a sublinear operator $T_{\epsilon}$ by setting
\[
T_{\epsilon}f(x):=\sum_{\alpha\in \mathcal{I}}\sup_{0< t\leq [\epsilon \rho(x)]^2}|\psi_\alpha(x)e^{-t\LL}f(x) -e^{-t\LL}(f\psi_\alpha )(x)|.
\]

We first prove the $L^2$-boundedness of $T_{\epsilon}$. More precisely, we have the following result.
\begin{lem}
	\label{lem-Tpsi}
	For each $\epsilon\in(0,1]$, $T_{\epsilon}$ is bounded on $L^p(X)$ for all $1<p<\vc$.
\end{lem}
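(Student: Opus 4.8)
The plan is to prove $L^p$-boundedness of $T_\epsilon$ for $1<p<\infty$ by reducing the problem to a single summand and using the finite-overlap property of the balls $\{B_\alpha\}$. First I would fix $\epsilon\in(0,1]$ and observe that for a fixed $\alpha$ the quantity
\[
\sup_{0<t\le[\epsilon\rho(x)]^2}\bigl|\psi_\alpha(x)e^{-t\LL}f(x)-e^{-t\LL}(f\psi_\alpha)(x)\bigr|
=\sup_{0<t\le[\epsilon\rho(x)]^2}\Bigl|\int_X\widetilde p_t(x,y)\bigl(\psi_\alpha(x)-\psi_\alpha(y)\bigr)f(y)\,d\mu(y)\Bigr|
\]
is supported (as a function of $x$) in $B(x_\alpha,\rho(x_\alpha)/2)$ plus a Gaussian tail, because $\psi_\alpha(x)-\psi_\alpha(y)=0$ unless $x$ or $y$ lies in $\supp\psi_\alpha\subset B(x_\alpha,\rho(x_\alpha)/2)$. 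Using the Lipschitz bound $|\psi_\alpha(x)-\psi_\alpha(y)|\lesssim d(x,y)/\rho(x_\alpha)$ from Lemma \ref{Lem2: rho}(iv) together with the Gaussian estimate \eqref{GE}, one gets the pointwise control
\[
\sup_{0<t\le[\epsilon\rho(x)]^2}\Bigl|\int_X\widetilde p_t(x,y)\bigl(\psi_\alpha(x)-\psi_\alpha(y)\bigr)f(y)\,d\mu(y)\Bigr|\lesssim \mathcal M(f\Ind{B_\alpha^{**}})(x)
\]
for $x$ in a fixed dilate of $B_\alpha$, and an analogous bound with rapid decay for $x$ away from $B_\alpha$ (using $e^{-s}s^\kappa\lesssim 1$ to absorb the $d(x,x_\alpha)/(\epsilon\rho(x_\alpha))$ factor), where $B_\alpha^{**}$ is a suitable enlargement. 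Since $\sqrt t\le\epsilon\rho(x)\le\rho(x)$ forces $\rho(x)\sim\rho(x_\alpha)$ on the relevant region via Lemma \ref{lem-criticalfunction}, the cutoff in $t$ is harmless.

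Next I would sum over $\alpha$. Write $T_\epsilon f(x)\le\sum_\alpha g_\alpha(x)$ where $g_\alpha$ is the $\alpha$-th summand. The key structural fact is the bounded-overlap property: Lemma \ref{Lem2: rho}(ii) gives $\sum_\alpha\Ind{B_\alpha^{**}}\le C$ pointwise, and each $g_\alpha$ is essentially supported in (a fixed dilate of) $B_\alpha$. So for any $x$ only boundedly many $g_\alpha(x)$ are non-negligible, and by the bound above $g_\alpha(x)\lesssim\mathcal M(f\Ind{B_\alpha^{**}})(x)$ plus a tail. Thus $T_\epsilon f(x)\lesssim\sum_\alpha\mathcal M(f\Ind{B_\alpha^{**}})(x)$ modulo tails, and by finite overlap plus $L^p$-boundedness of $\mathcal M$,
\[
\|T_\epsilon f\|_{L^p(X)}^p\lesssim\sum_\alpha\|\mathcal M(f\Ind{B_\alpha^{**}})\|_{L^p(X)}^p\lesssim\sum_\alpha\|f\Ind{B_\alpha^{**}}\|_{L^p(X)}^p\lesssim\|f\|_{L^p(X)}^p,
\]
where the last step again uses $\sum_\alpha\Ind{B_\alpha^{**}}\lesssim1$; note $p>1$ is needed both for $\ell^p\subset\ell^1$-type summation to work in the right direction (here actually one sums $L^p$ norms to the $p$-th power, which is fine for any $p$) and, crucially, for the $L^p$-boundedness of $\mathcal M$. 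For the Gaussian tail terms, the decay in $d(x,x_\alpha)/(\epsilon\rho(x_\alpha))$ is strong enough to run the same argument with Lemma \ref{lem-ele est} controlling the resulting integral operator by $\mathcal M$ as well.

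I expect the main obstacle to be the careful handling of the $t$-supremum together with the spatially-varying localization scale $[\epsilon\rho(x)]^2$: one must check that, after taking the supremum over $t$, the resulting maximal-type operator still maps into $\mathcal M$ (or a finite sum of such), uniformly in $\alpha$ and $\epsilon$. This requires the observation that on the support of $\psi_\alpha(x)-\psi_\alpha(\cdot)$ one has $\rho(x)\sim\rho(x_\alpha)$, so the cutoff $t\le[\epsilon\rho(x)]^2$ can be replaced by $t\le[\epsilon\rho(x_\alpha)]^2$ up to constants, converting a genuinely $x$-dependent bound into the setting of Lemma \ref{Lem1: Hardy spaces}; after that the estimates are of standard Calderón--Zygmund type. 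A secondary technical point is verifying that the ``far'' contributions (where the Gaussian factor $\exp(-d(x,x_\alpha)^2/(ct))$ with $t\le[\epsilon\rho(x_\alpha)]^2$ produces decay) can be summed; this is where \eqref{eq1-rho exp} of Lemma \ref{lem-criticalfunction}(d) is used to trade Gaussian decay in $\rho$ for polynomial decay in $d(x,x_\alpha)$, which then feeds into Lemma \ref{lem-ele est}.
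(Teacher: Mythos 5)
Your overall idea---localize in $\alpha$, use the Lipschitz bound on $\psi_\alpha$ and the Gaussian estimate to dominate each summand by a localized maximal function, and close with finite overlap plus $L^p$-boundedness of $\mathcal M$---is in the right spirit, but the summation step is where the argument breaks down. The display
\[
\|T_\epsilon f\|_{L^p(X)}^p\lesssim\sum_\alpha\|\mathcal M(f\Ind{B_\alpha^{**}})\|_{L^p(X)}^p
\]
does not follow from the pointwise bound $T_\epsilon f(x)\lesssim\sum_\alpha\mathcal M(f\Ind{B_\alpha^{**}})(x)$ for $p>1$. Your parenthetical ``one sums $L^p$ norms to the $p$-th power, which is fine for any $p$'' is precisely the issue: for $p>1$ the inequality $\|\sum_\alpha h_\alpha\|_{L^p}^p\lesssim\sum_\alpha\|h_\alpha\|_{L^p}^p$ is generally false (take $h_\alpha=h$ for $N$ indices and the ratio blows up like $N^{p-1}$). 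It is salvaged only if at each $x$ boundedly many $h_\alpha(x)$ are nonzero; but the $\mathcal M(f\Ind{B_\alpha^{**}})$ (and the $g_\alpha$ themselves, because of the Gaussian tails) are all globally supported, so the indicator-overlap bound $\sum_\alpha\Ind{B_\alpha^{**}}\lesssim 1$ does not give the pointwise overlap you need. Calling the far contributions ``non-negligible'' and invoking Lemma \ref{lem-ele est} ``as well'' papers over exactly the part that requires work: you must show that the \emph{sum over $\alpha$} of the tails can be controlled, not that each tail individually is small.

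The paper's proof avoids this by decomposing in the $x$-variable rather than by the summation index $\alpha$: it writes $\|T_\epsilon f\|_{L^p}^p\lesssim\sum_\beta\int_{B_\beta}(\cdots)^p\,d\mu$ using the covering from Lemma \ref{coveringlemm}, and on each $B_\beta$ separates the $\alpha$'s into ``near'' (where $B_\alpha\cap B_\beta^*\ne\emptyset$, which are boundedly many and each summand is trivially $\lesssim\mathcal M f(x)$) and ``far''. For the far $\alpha$'s, $\psi_\alpha(x)=0$ on $B_\beta$, so the summand reduces to $|e^{-t\LL}(f\psi_\alpha)(x)|$, and the crucial move is to sum over $\alpha$ \emph{inside the integrand, before taking the $p$-th power}: since $d(x,y)\gtrsim\rho(x)$ for $y\in B_\alpha$, the whole sum over far $\alpha$ collapses to a single integral $\int_X\frac{1}{\mu(B(x,\rho(x)))}e^{-d(x,y)^2/(c\rho(x)^2)}|f(y)|\,d\mu(y)\lesssim\mathcal M f(x)$. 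Then one integrates over $B_\beta$ and sums in $\beta$. That reordering of summation and exponentiation is exactly what your route skips, and it is the content of the lemma.
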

\begin{proof}
	Observe that
	\[
	T_{\epsilon}f(x)=\sum_{\alpha\in \mathcal{I}}\sup_{0< t\leq [\epsilon \rho(x)]^2}\Big|\int_X(\psi_\alpha(x)-\psi_\alpha(y))\widetilde{p}_t(x,y)f(y)d\mu(y)\Big|.
	\]
	From Lemma \ref{coveringlemm} we obtain
	\[
	\begin{aligned}
		\|T_{\epsilon}f\|_{L^p(X)}^p\lesi& \sum_{\beta\in \mathcal{I}}\int_{B_\beta}\Big[\sum_{\alpha\in \mathcal{I}}\sup_{0< t\leq [\epsilon \rho(x)]^2}\Big|\int_X(\psi_\alpha(x)-\psi_\alpha(y))\widetilde{p}_t(x,y)f(y)d\mu(y)\Big|\Big]^pd\mu(x)\\
		\lesi& \sum_{\beta\in \mathcal{I}}\int_{B_\beta}\Big[\sum_{\alpha\in \mathcal{I}_{\beta,1}}\sup_{0< t\leq [\epsilon \rho(x)]^2}\Big|\int_X(\psi_\alpha(x)-\psi_\alpha(y))\widetilde{p}_t(x,y)f(y)d\mu(y)\Big|\Big]^pd\mu(x)\\
		&+ \sum_{\beta\in \mathcal{I}}\int_{B_\beta}\Big[\sum_{\alpha\in \mathcal{I}_{\beta,2}}\sup_{0< t\leq [\epsilon \rho(x)]^2}\Big|\int_X(\psi_\alpha(x)-\psi_\alpha(y))\widetilde{p}_t(x,y)f(y)d\mu(y)\Big|\Big]^pd\mu(x)\\
		=:&I_1+I_2,
	\end{aligned}
	\]
	where 
	\[
	\mathcal{I}_{\beta,1}=\{\alpha\in \mathcal{I}: B_\alpha\cap B_\beta^*\neq \emptyset\}, \ \ \text{and} \ \  \mathcal{I}_{\beta,2}=\{\alpha\in \mathcal{I}: B_\alpha\cap B_\beta^*= \emptyset\}.
	\]
	For each $\alpha\in \mathcal{I}$ we have
	\[
	\sup_{0< t\leq [\epsilon \rho(x)]^2}\Big|\int_X(\psi_\alpha(x)-\psi_\alpha(y))\widetilde{p}_t(x,y)f(y)d\mu(y)\Big|\leq 2\sup_{t>0}\int_X\widetilde{p}_t(x,y)|f(y)|d\mu(y)\lesi \mathcal{M}f(x).
	\]
	This, in combination with the fact that $\sharp I_{\beta,1}\lesi 1$, implies
	\[
	I_1\lesi \sum_{\beta\in \mathcal{I}}\int_{B_\beta}|\mathcal{M}f(x)|^p\dx\sim \|\mathcal{M}f\|_{L^p(X)}^p\lesi \|f\|_{L^p(X)}^p.
	\]
	To estimate $I_2$, we can see that $\psi_\alpha(x)=0$ for $x\in B_{\beta}, \alpha\in I_{\beta,2}$. Hence,
	\[
	\begin{aligned}
	I_2&=\sum_{\beta\in \mathcal{I}}\int_{B_\beta}\Big[\sum_{\alpha\in \mathcal{I}_{\beta,2}}\sup_{0< t\leq [\epsilon \rho(x)]^2}\Big|\int_{B_\alpha}\psi_\alpha(y)\widetilde{p}_t(x,y)f(y)d\mu(y)\Big|\Big]^pd\mu(x)\\
	&\lesi \sum_{\beta\in \mathcal{I}}\int_{B_\beta}\Big[\sum_{\alpha\in \mathcal{I}_{\beta,2}}\int_{B_\alpha}\f{1}{\mu(B(x,d(x,y)))}\exp\Big(-\f{d(x,y)^2}{c\rx^2}\Big)|f(y)|d\mu(y)\Big]^pd\mu(x).
	\end{aligned}
	\]
	Since $x\in B_\beta$ and $y\in B_\alpha, \alpha \in \mathcal{I}_{\beta,2}$, then $d(x,y)\geq r_{B_\beta}\sim \rho(x)$. Hence, we find that
\begin{equation}\label{eq-proof L2 Te}
\begin{aligned}
	I_2
	&\lesi \sum_{\beta\in \mathcal{I}}\int_{B_\beta}\Big[\sum_{\alpha\in \mathcal{I}_{\beta,2}}\int_{B_\alpha}\f{1}{\mu(B(x,\rho(x)))}\exp\Big(-\f{d(x,y)^2}{c\rx^2}\Big)|f(y)|d\mu(y)\Big]^pd\mu(x)\\
	&\lesi \sum_{\beta\in \mathcal{I}}\int_{B_\beta}\Big[\int_{X}\f{1}{\mu(B(x,\rho(x)))}\exp\Big(-\f{d(x,y)^2}{c\rx^2}\Big)|f(y)|d\mu(y)\Big]^pd\mu(x).
	\end{aligned}
		\end{equation}
	
	Moreover,
	\[
	\int_{X}\f{1}{\mu(B(x,\rho(x)))}\exp\Big(-\f{d(x,y)^2}{c\rx^2}\Big)|f(y)|d\mu(y)\lesi \mathcal{M}f(x).
	\]
	Inserting this into \eqref{eq-proof L2 Te}, we arrive at
	\[
	\begin{aligned}
	I_2 \lesi \sum_{\beta\in \mathcal{I}}\int_{B_\beta}|\mathcal{M}f(x)|^p\dx\sim \|\mathcal{M}f\|_{L^p(X)}^p\lesi \|f\|_{L^p(X)}^p.
	\end{aligned}
	\]
	This completes our proof.
\end{proof}
\begin{lem}\label{Lem2: Hardy spaces}
	Let $\f{n}{n+\delta_1}<p\leq 1$ and $q\in (p,\vc]\cap [1,\vc]$. Then there exists $\kappa$ so that for any $0<\epsilon\leq 1$, we have
	\begin{equation}\label{eq1-Lem2Hardyspaces}
	\Big\|\sum_{\alpha\in \mathcal{I}}\sup_{0< t\leq [\epsilon \rho(x)]^2}|\psi_\alpha(x)e^{-t\LL}f(x) -e^{-t\LL}(f\psi_\alpha )(x)|\,\Big\|^p_{L^p(X)}\lesi \epsilon^{\kappa}\|f\|^p_{h^{p,q}_{at,\rho,\epsilon}(X)},
	\end{equation}
	for all $f\in h^{p,q}_{at,\rho,\epsilon}(X)$.
\end{lem}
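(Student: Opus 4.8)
The goal is to establish $\|T_{\epsilon}f\|_{L^p(X)}^p\lesi\epsilon^{\kappa}\|f\|_{h^{p,q}_{at,\rho,\epsilon}(X)}^p$, and the plan is to reduce everything to a single atom. Since $T_{\epsilon}$ is sublinear and bounded on $L^2(X)$ (Lemma \ref{lem-Tpsi}), for any atomic $(p,q,\rho,\epsilon)$-representation $f=\sum_j\lambda_ja_j$ one has $T_{\epsilon}f\le\sum_j|\lambda_j|\,T_{\epsilon}a_j$ a.e.; using $p\le1$ and passing to the infimum over representations (the mechanism of Lemma \ref{lem-Tatom}), it suffices to produce $\kappa>0$, independent of $\epsilon$ and of the atom, with $\|T_{\epsilon}a\|_{L^p(X)}^p\lesi\epsilon^{\kappa}$ for every $(p,q,\rho,\epsilon)$-atom $a$ supported in a ball $B=B(x_0,r)$. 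Writing $g_\alpha(x)=\sup_{0<t\le[\epsilon\rx]^2}\big|\int_X(\psi_\alpha(x)-\psi_\alpha(y))\widetilde p_t(x,y)a(y)\dy\big|\ge0$, so that $T_{\epsilon}a=\sum_{\alpha\in\mathcal I}g_\alpha$, the elementary inequality $(\sum_\alpha g_\alpha)^p\le\sum_\alpha g_\alpha^p$ then reduces matters to bounding $\sum_{\alpha}\|g_\alpha\|_{L^p(B_\alpha^*)}^p$ (the local part) and $\sum_{\alpha}\|g_\alpha\|_{L^p(X\setminus B_\alpha^*)}^p$ (the off-diagonal part) by $\epsilon^{\kappa}$.

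The off-diagonal part is where the work already done in Lemma \ref{Lem1: Hardy spaces} and Corollary \ref{Cor1: Hardy spaces} gets reused. On $X\setminus B_\alpha^*$ the cutoff $\psi_\alpha$ vanishes, so $g_\alpha(x)=\sup_{0<t\le[\epsilon\rx]^2}|e^{-t\LL}(a\psi_\alpha)(x)|$, which is precisely the quantity controlled there, the only discrepancy being the supremum range $[\epsilon\rx]^2$ in place of $[\epsilon\rho(x_\alpha)]^2$. I would argue this costs nothing: for $x\notin B_\alpha^*$ and $y$ in the support of $a\psi_\alpha\subset B_\alpha$ one has $d(x,y)\sim d(x,x_\alpha)\gtrsim\rho(x_\alpha)$, so by the slow growth \eqref{criticalfunction} of $\rho$, $d(x,y)^2/t\gtrsim\epsilon^{-2}\big(d(x,x_\alpha)/\rho(x_\alpha)\big)^{2/(k_0+1)}$, which still furnishes a factor $e^{-c/\epsilon^2}$ together with super-polynomial spatial decay. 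Running the two cases of Lemma \ref{Lem1: Hardy spaces} (according as $r\lessgtr\epsilon\rxx/4$, the small-radius case using $\int a\,\dx=0$ and the regularity \eqref{H}) and then summing over $\alpha$ by means of the bounded overlap of $\{B_\alpha\}$ and Lemma \ref{lem-criticalfunction} yields $\sum_{\alpha}\|g_\alpha\|_{L^p(X\setminus B_\alpha^*)}^p\lesi\epsilon^{\kappa}$.

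For the local part, on $B_\alpha^*$ one has $\rx\sim\rho(x_\alpha)$ (Lemma \ref{lem-criticalfunction}(a)), hence $\sqrt t\le\epsilon\rx\lesi\epsilon\rho(x_\alpha)$, and the key mechanism is to trade the Lipschitz bound $|\psi_\alpha(x)-\psi_\alpha(y)|\lesi d(x,y)/\rho(x_\alpha)$ against the Gaussian via $\frac{d(x,y)}{\rho(x_\alpha)}e^{-d(x,y)^2/ct}\lesi\frac{\sqrt t}{\rho(x_\alpha)}e^{-d(x,y)^2/2ct}\lesi\epsilon\,e^{-d(x,y)^2/2ct}$, which gives the pointwise bound $g_\alpha(x)\lesi\epsilon\,\mathcal{M}a(x)$ on $B_\alpha^*$. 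For the indices $\alpha$ whose $B_\alpha^*$ meets a fixed dilate of $B$, the plan is to sum these using the bounded overlap of $\{B_\alpha^*\}$ (Lemma \ref{Lem2: rho}) and then estimate $\|\mathcal{M}a\|_{L^p}$ over a single dilate of $B$; for the remaining $\alpha$, each $x\in B_\alpha^*$ lies at distance $\gtrsim\rho(x_\alpha)$ from the support of $a$, so the Gaussian once more supplies $e^{-c/\epsilon^2}$ with enough $\alpha$-decay to sum. When $r\ge\epsilon\rxx/4$ the crude bound $\|\mathcal{M}a\|_{L^p(CB)}\lesi\|a\|_{L^q(X)}\mu(CB)^{1/p-1/q}$ and doubling give a power $\epsilon^{p(n+1)-n}$ (up to a logarithm); when $r<\epsilon\rxx/4$ one instead uses $\int a\,\dx=0$ together with \eqref{H}, which for $x$ away from a multiple of $B$ produces the extra modulus $\big(r/d(x,x_0)\big)^{\delta_1}$, while a residual factor $r/\rxx\lesi\epsilon$ supplies the smallness.

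The main obstacle will be this last case — the local estimate for small atoms. One must align the cancellation of $a$ with the H\"older bound \eqref{H} so that the sum over the dyadic annuli about $B$ simultaneously converges (this is exactly where $p>\f{n}{n+\delta_1}$ is used) and carries a genuine positive power of $\epsilon$ (this uses $r<\epsilon\rxx/4$ and $p>\f{n}{n+1}$, both of which hold under our hypotheses). By contrast the off-diagonal part is essentially a globalization of Lemma \ref{Lem1: Hardy spaces}, and the easy ingredient is the pointwise bound $g_\alpha\lesi\epsilon\,\mathcal{M}a$ on $B_\alpha^*$; taking $\kappa$ to be the minimum of the strictly positive exponents produced above then completes the argument.
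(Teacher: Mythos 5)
Your proposal takes essentially the same route as the paper. Both reduce to a single $(p,q,\rho,\epsilon)$-atom via Lemmas \ref{lem-Tatom} and \ref{lem-Tpsi}, trade the Lipschitz modulus of $\psi_\alpha$ and the constraint $\sqrt{t}\lesssim\epsilon\rho(x_\alpha)$ for a factor of $\epsilon$ near the atom, use the atom's cancellation together with \eqref{H} in the small-radius far-field regime, and appeal to Lemma \ref{lem-criticalfunction} for decay far away. The only difference is the bookkeeping of the decomposition: the paper first splits the index set into $\mathcal{I}_{1,B}$ (boundedly many $\alpha$ near $B$, for which it then splits $x$ into $4B$ vs.\ $X\setminus 4B$) and $\mathcal{I}_{2,B}$ (for which $a\psi_\alpha\equiv 0$), whereas you split, for each fixed $\alpha$, into $x\in B_\alpha^*$ and $x\notin B_\alpha^*$ and then subdivide the local piece. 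Both bookkeepings lead to the same core estimates.

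Two details in your sketch should be tightened, though neither is a gap in the idea. First, the pointwise bound $g_\alpha\lesssim\epsilon\,\mathcal{M}a$ should be integrated only over a \emph{fixed} dilate of $B$ such as $4B$, where H\"older and the $L^q$-boundedness of $\mathcal{M}$ give $\lesssim\epsilon^p$; integrating $(\epsilon\mathcal{M}a)^p$ over the union of the relevant $B_\alpha^*$, which is a ball of radius $\sim\rho(x_0)$ (hence $\sim\epsilon^{-1}r$ in the large-radius case), and then using only the crude bound $\NormA{\mathcal{M}a}_{L^p}^p\lesssim\NormA{a}_{L^q}^p\mu(\cdot)^{1-p/q}$ produces a factor $\epsilon^{-n(1-p/q)}$ that is \emph{not} compensated by $\epsilon^p$ when $q>1$ and $n\geq 2$. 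Your asserted exponent $\epsilon^{p(n+1)-n}$ is correct, but it requires the sharper annular decay $\mathcal{M}a\lesssim\NormA{a}_{L^1}/\mu(2^jB)$ on each annulus $S_j(B)$ rather than that crude bound, or, more simply, the paper's restriction of the maximal-function argument to $4B$ with the far-field estimate taking over outside. Second, the supremum over $t\leq[\epsilon\rho(x)]^2$ in the off-diagonal piece is genuinely larger than the one over $t\leq[\epsilon\rho(x_\alpha)]^2$ appearing in Lemma \ref{Lem1: Hardy spaces} when $x$ is far from $B_\alpha$, since $\rho(x)$ can grow; your proposed repair via \eqref{criticalfunction} is the right move but should be written out rather than asserted.
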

\begin{proof}
	Due to Lemma \ref{lem-Tatom} and Lemma \ref{lem-Tpsi} it suffices to prove (\ref{eq1-Lem2Hardyspaces}) for any $(p,q,\rho,\epsilon)$ atom $a$. Assume that $a$ is a $(p,q,\rho,\epsilon)$ atom associated to $B:=B(x_0,r)$. We then set
	$$
	\begin{aligned}
	\mathcal{I}_{1,B} &:=\{\alpha: B_\alpha\cap B(x_0,\rho(x_0))^* \neq \emptyset\} \\
	%$$
	%and
	%$$
	\mathcal{I}_{2,B} &:=\{\alpha: B_\alpha\cap B(x_0,\rho(x_0))^* = \emptyset\}.
	\end{aligned}
	$$
	Hence,
	\begin{equation}%\label{eq2-Lem2Hardyspaces}
	\begin{aligned}
	\Big\|\sum_{\alpha\in \mathcal{I}}\sup_{0< t\leq [\epsilon \rho(x)]^2}&|\psi_\alpha(x)e^{-t\LL}a(x) -e^{-t\LL}(a\psi_\alpha )(x)|\,\Big\|^p_{L^p(X)}\\
	&\leq 
	\sum_{\alpha\in \mathcal{I}}\Big\|\sup_{0< t\leq [\epsilon \rho(x)]^2}|\psi_\alpha(x)e^{-t\LL}a(x) -e^{-t\LL}(a\psi_\alpha )(x)|\,\Big\|^p_{L^p(X)}\\
	&\lesi \sum_{\alpha\in \mathcal{I}_{1,B}} \ldots+\sum_{\alpha\in \mathcal{I}_{2,B}} \ldots\\
	&=:J_1+J_2.
	\end{aligned}
	\end{equation}
	Since $a\psi_\alpha =0$ for all $\alpha\in \mathcal{I}_{2,B}$, then from Lemma \ref{Lem2: rho} we conclude that
	\begin{equation}%\label{eq2-Lem2Hardyspaces}
	\begin{aligned}
	J_2&=\sum_{\alpha\in \mathcal{I}_{2,B}} \Big\|\sup_{0< t\leq [\epsilon \rho(x)]^2}|\psi_\alpha(x)e^{-t\LL}a(x)|\,\Big\|^p_{L^p(X)}\lesi \sum_{\alpha\in \mathcal{I}_{2,B}} \Big\|\sup_{0< t\leq [\epsilon \rho(x)]^2}|e^{-t\LL}a(x)|\,\Big\|^p_{L^p(B_\alpha)}\\
	&\lesi \Big\|\sum_{\alpha\in \mathcal{I}_{2,B}} \sup_{0< t\leq [\epsilon \rho(x)]^2}|e^{-t\LL}a(x)|\,\Big\|^p_{L^p(X\backslash B(x_0,\rho(x_0))^*)}.
	\end{aligned}
	\end{equation}
	We can argue as in Lemma \ref{Lem1: Hardy spaces} and  arrive at $J_2\lesi \epsilon^{\kappa}$.
	
	It remains to show that $J_1\lesi \epsilon^{\kappa}$. To do this, we first note that due to Lemma \ref{Lem2: rho}, $\sharp \,\mathcal{I}_{1, B}\leq C$ where $C$ is a constant independent of $a$. Hence, in order to prove $J_1\lesi \epsilon^{\kappa}$, it suffices to prove that for $\alpha\in \mathcal{I}_{1, B}$, we have
	$$
	\Big\|\sup_{0< t\leq [\epsilon \rho(x)]^2}|\psi_\alpha(x)e^{-t\LL}a(x) -e^{-t\LL}(a\psi_\alpha )(x)|\,\Big\|^p_{L^p(X)}\lesi \epsilon^{\kappa}.
	$$
	Obviously,
	$$
	\begin{aligned}
	\Big\|\sup_{0< t\leq [\epsilon \rho(x)]^2}&|\psi_\alpha(x)e^{-t\LL}a(x)-e^{-t\LL}(a\psi_\alpha )(x)|\,\Big\|^p_{L^p(X)}\\
	&\lesi \Big\|\sup_{0< t\leq [\epsilon \rho(x)]^2}|\psi_\alpha(x)e^{-t\LL}a(x) -e^{-t\LL}(a\psi_\alpha )(x)|\,\Big\|^p_{L^p(4B)}\\
	&\ \ \ + \Big\|\sup_{0< t\leq [\epsilon \rho(x)]^2}|\psi_\alpha(x)e^{-t\LL}a(x) -e^{-t\LL}(a\psi_\alpha )(x)|\,\Big\|^p_{L^p(X\backslash 4B)}:= J_{11}+J_{12}.
	\end{aligned}
	$$
	To take care of $J_{11}$, using the fact that $\rho(x)\sim \rho(x_\alpha)$ and H\"older's inequality, we write
	$$
	\begin{aligned}
	J_{11}&=\int_{4B}\sup_{0< t\leq [\epsilon \rho(x)]^2}\Big|\int_{B}\widetilde{p}_{t}(x,y)(\psi_\alpha(x)-\psi_\alpha(y))a(y)d\mu(y)\Big|^pd\mu(x)\\
	&\lesi \int_{4B}\sup_{0< t\leq [\epsilon \rho(x)]^2}\Big[\int_{B}\f{1}{\mu(B(x,\sqrt{t}))}\exp\Big(-\f{d(x,y)^2}{ct}\Big)\f{d(x,y)}{\rho(x_\alpha)}|a(y)|d\mu(y)\Big]^pd\mu(x)\\
	&\lesi \int_{4B}\sup_{0< t\leq [\epsilon \rho(x)]^2}\Big[\int_{B}\f{1}{\mu(B(x,\sqrt{t}))}\exp\Big(-\f{d(x,y)^2}{ct}\Big)\f{\sqrt{t}}{\rho(x_\alpha)}|a(y)|d\mu(y)\Big]^pd\mu(x)\\
	&\lesi \epsilon^p\mu(B)^{1-p/q}\Big[\int_{4B}\Big(\sup_{0< t\leq [\epsilon \rho(x)]^2}\int_{B}\f{1}{\mu(B(x,\sqrt{t}))}\exp\Big(-\f{d(x,y)^2}{ct}\Big)|a(y)|d\mu(y)\Big)^qd\mu(x)\Big]^{p/q}\\
	&\lesi \epsilon^p\mu(B)^{1-p/q}\Big[\int_{4B}\Big(\mathcal{M}(|a|)\Big)^qd\mu(x)\Big]^{p/q}\\
	&\lesi \epsilon^p \mu(B)^{1-p/q}\|a\|_{L^q}^p \;\lesi  \epsilon^p,
	\end{aligned}
	$$
	where $\mathcal{M}$ is the Hardy-Littlewood maximal function.
	
	We now take care of $J_{12}$. To do this, we consider two cases:
	
	\textbf{Case 1: $\epsilon\rho(x_0)/4\leq r\leq \epsilon\rho(x_0)$}
	
	In this situation, we use the fact that $d(x,y)\sim d(x,x_0)$ and $\rho(x_\alpha)\sim\rho(x_0)$ for $x\in X\backslash 4B$ and $x_0,y\in B$ and the argument above to obtain that
	$$
	\begin{aligned}
	J_{12}&\lesi \int_{X\backslash  4B}\sup_{0< t\leq [\epsilon \rho(x)]^2}\Big|\int_{B}\f{1}{\mu(B(x,\sqrt{t}))}\exp\Big(-\f{d(x,y)^2}{ct}\Big)\f{d(x,y)}{\rho(x_\alpha)}|a(y)|d\mu(y)\Big|^pd\mu(x)\\
	&\lesi \int_{X\backslash 4B}\sup_{0< t\leq [\epsilon \rho(x)]^2}\Big|\int_{B}\f{1}{\mu(B(x_0,d(x,x_0)))}\exp\Big(-\f{d(x,x_0)^2}{c[\epsilon\rho(x)]^2}\Big)\f{\sqrt{t}}{\rho(x_0)}|a(y)|\,d\mu(y)\Big|^pd\mu(x).
	\end{aligned}
	$$
	This, in combination with \eqref{eq-rho exp}, yields that for $N>n(1-p)/p$ we have
	$$
	\begin{aligned}
	J_{12}&\lesi \epsilon^p\int_{X\backslash 4B}\Big|\int_{B}\f{1}{\mu(B(x_0,d(x,x_0)))}\Big(\f{\epsilon\rho(x_0)}{d(x,x_0)}\Big)^{N}|a(y)|\,d\mu(y)\Big|^pd\mu(x)\\
	&\lesi \epsilon^p \mu(B)^{p-1}\sum_{j\geq 3}\int_{S_j(B)}\Big[\f{1}{\mu(B(x_0,d(x,x_0)))}\Big(\f{r}{d(x,x_0)}\Big)^{N}\Big]^pd\mu(x)\\
	&\lesi \epsilon^p \sum_{j\geq 3}\Big(\f{\mu(2^jB)}{\mu(B)}\Big)^{1-p}2^{-jNp}\\
	&\lesi \epsilon^p \sum_{j\geq 3} 2^{-j(Np-(1-p)n)}
	\;\lesi \epsilon^p.
	\end{aligned}
	$$
	
	\textbf{Case 2: $r<\epsilon\rho(x_0)/4$}
	
	In this case, since $\int a(y)d\mu(y) =0$, we have
	$$
	\begin{aligned}
	J_{12}&=\int_{X\backslash 4B}\sup_{0< t\leq [\epsilon \rho(x)]^2}\Big|\int_{B}[\widetilde{p}_{t}(x,y)-\widetilde{p}_{t}(x,x_0)](\psi_\alpha(x)-\psi_\alpha(x_0))a(y)d\mu(y)\Big|^pd\mu(x)\\
	& \ \ +\int_{X\backslash 4B}\sup_{0< t\leq [\epsilon \rho(x)]^2}\Big|\int_{B}\widetilde{p}_{t}(x,y)(\psi_\alpha(x_0)-\psi_\alpha(y))a(y)d\mu(y)\Big|^pd\mu(x)\\
	&:=K_1+K_2.
	\end{aligned}
	$$
	By (\ref{H}) and the fact that $\rho(x_\alpha)\sim \rho(x_0)$ we have
	$$
	\begin{aligned}
	K_1&\lesi \int_{X\backslash 4B}\sup_{0< t\leq [\epsilon \rho(x)]^2}\Big|\int_{B}\Big(\f{d(y,x_0)}{\sqrt{t}}\Big)^{\delta_1} \f{1}{\mu(B(x,\sqrt{t}))}\exp\Big(-\f{d(x,x_0)^2}{ct}\Big)\f{d(x,x_0)}{\rho(x_0)} a(y)d\mu(y)\Big|^pd\mu(x)\\
	&\lesi \int_{X\backslash 4B}\Big|\int_{B}\Big(\f{d(y,x_0)}{\rho(x_0)}\Big)^{\delta_1} \f{1}{\ \mu(B(x_0,d(x,x_0)))}\exp\Big(-\f{d(x,x_0)^2}{c[\epsilon\rho(x)]^2}\Big)a(y)d\mu(y)\Big|^pd\mu(x)\\
	\end{aligned}
	$$
	which along with \eqref{eq1-rho exp}  gives
	$$
	\begin{aligned}
	&\lesi \int_{X\backslash 4B}\Big|\int_{B}\Big(\f{d(y,x_0)}{\rho(x_0)}\Big)^{\delta_1} \f{1}{\ \mu(B(x_0,d(x,x_0)))}\Big(\f{\epsilon\rho(x_0)}{d(x,x_0)}\Big)^{\delta}a(y)d\mu(y)\Big|^pd\mu(x)\\
	&\lesi \epsilon^{\delta_1 p}\int_{X\backslash 4B}\Big|\int_{B}\Big(\f{r}{d(x,x_0)}\Big)^{\delta_1} \f{1}{\mu(B(x_0,d(x,x_0)))}a(y)d\mu(y)\Big|^pd\mu(x)\\
	&\lesi \epsilon^{\delta p}\mu(B)^{p-1}\sum_{j\geq 3}\int_{S_j(B)}\Big[\Big(\f{r}{d(x,x_0)}\Big)^{\delta_1} \f{1}{\mu(B(x_0,d(x,x_0)))}\Big]^pd\mu(x)\\
	&\lesi \epsilon^{\delta p}\sum_{j\geq 3}\Big(\f{r}{2^jr}\Big)^{\delta_1 p} \Big(\f{\mu(2^jB)}{\mu(B)}\Big)^{1-p}\\
	&\lesi \epsilon^{\delta_1 p}\sum_{j\geq 3}2^{-j(\delta p -(1-p)n)}\lesi \epsilon^{\delta_1 p}.
	\end{aligned}
	$$
	
	This completes our proof.
\end{proof}

\subsection{Proof of Theorem \ref{mainthm2s}}
For each $t>0$ we define
\begin{equation*}
K_{t,\rho}(x,y)=\widetilde{p}_t(x,y)\exp\left[-\Big(\f{\sqrt{t}}{\rho(x)}\Big)^{\delta_1}\right]
\end{equation*}
for all $x,y\in X$ (where $\delta_1$ is the constant  in (A3)) and its associated operator by
\[
T_{t,\rho}f(x)=\int_{X}K_{t,\rho}(x,y)f(y)\dy.
\]
For each $t>0$ we set 
\begin{equation}\label{eq-Qt rho}
Q_{t,\rho}(x,y)=\widetilde{p}_t(x,y)-K_{t,\rho}(x,y)=\widetilde{p}_t(x,y)\left[1-e^{-\left[\f{\sqrt{t}}{\rho(x)}\right]^{\delta_1}}\right].
\end{equation}
 Then we have the following estimate.
\begin{lem}
	\label{lem-etL Kt ho}
	Let $Q_{t,\rho}$ be defined in \eqref{eq-Qt rho}. Then we have the following estimates.
	\begin{equation}\label{eq1-Q t rho}
	|Q_t(x,y)|\leq \Big[\f{\sqrt{t}}{\rho(x)}\Big]^{\delta_1}\f{1}{\mu(B(x,\sqrt{t}))}\exp\Big(-\f{d(x,y)^2}{ct}\Big)
	\end{equation}
	for all $x,y\in X$ and $t>0$, and 
	\begin{equation}\label{eq2-Q t rho}
	|Q_t(x,y)-Q_t(x,y_0)|\leq \Big[\f{d(y,y_0)}{\rho(x)}\Big]^{\delta_1} \f{C}{\mu(B(x,\sqrt{t}))}\exp\Big(-\f{d(x,y)^2}{ct}\Big).
	\end{equation}
	whenever $d(y,y_0)\leq d(x,y)/2$ and $t>0$.
\end{lem}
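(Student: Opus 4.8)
The plan is to exploit the algebraic form of the kernel,
\[
Q_t(x,y)=\widetilde p_t(x,y)\Bigl[1-e^{-[\sqrt t/\rho(x)]^{\delta_1}}\Bigr],
\]
in which the bracketed factor depends only on $x$ and $t$ and is therefore a harmless scalar multiplier as far as estimates in the $y$ variable are concerned. The only analytic input needed about it is the elementary bound $0\leq 1-e^{-s}\leq s$ for $s\geq 0$, which gives $0\leq 1-e^{-[\sqrt t/\rho(x)]^{\delta_1}}\leq [\sqrt t/\rho(x)]^{\delta_1}$.

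For \eqref{eq1-Q t rho} I would simply combine this scalar bound with the Gaussian estimate \eqref{GE} for $\widetilde p_t$; the product is exactly the claimed right-hand side.

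For \eqref{eq2-Q t rho} I would first pull out the $y$-independent factor, writing
\[
Q_t(x,y)-Q_t(x,y_0)=\bigl[\widetilde p_t(x,y)-\widetilde p_t(x,y_0)\bigr]\Bigl[1-e^{-[\sqrt t/\rho(x)]^{\delta_1}}\Bigr].
\]
Since $\LL$ is self-adjoint the kernel $\widetilde p_t$ is symmetric, so the bracketed difference equals $\widetilde p_t(y,x)-\widetilde p_t(y_0,x)$, to which I would apply the Hölder regularity hypothesis \eqref{H} with the substitution $x\mapsto y$, $\overline x\mapsto y_0$, $y\mapsto x$; its side condition is met because $d(y,y_0)\leq d(x,y)/2\leq[\sqrt t+d(x,y)]/2$. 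This produces the factor $[d(y,y_0)/\sqrt t]^{\delta_1}$ together with $1/\mu(B(y,\sqrt t))$ and a Gaussian in $d(x,y)$. I would then replace $\mu(B(y,\sqrt t))$ by $\mu(B(x,\sqrt t))$ by means of \eqref{doub2s}, absorbing the resulting polynomial $(1+d(x,y)/\sqrt t)^n$ into the exponential at the cost of enlarging the constant $c$. Finally, multiplying by the scalar bound $1-e^{-[\sqrt t/\rho(x)]^{\delta_1}}\leq[\sqrt t/\rho(x)]^{\delta_1}$ and using $[d(y,y_0)/\sqrt t]^{\delta_1}\,[\sqrt t/\rho(x)]^{\delta_1}=[d(y,y_0)/\rho(x)]^{\delta_1}$ yields \eqref{eq2-Q t rho}.

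There is no genuine obstacle here; the lemma is essentially bookkeeping. The only two points deserving a moment's care are checking that the side condition of \eqref{H} follows from the hypothesis $d(y,y_0)\leq d(x,y)/2$, and the now-standard trick of trading $\mu(B(y,\sqrt t))$ for $\mu(B(x,\sqrt t))$ via the doubling inequality \eqref{doub2s} and re-absorbing the polynomial growth into the Gaussian decay, both of which are used repeatedly elsewhere in the paper.
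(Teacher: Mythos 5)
Your proposal is correct and follows essentially the same route as the paper: factor out the $y$-independent scalar $1-e^{-[\sqrt t/\rho(x)]^{\delta_1}}$, bound it by $[\sqrt t/\rho(x)]^{\delta_1}$, apply \eqref{GE} for \eqref{eq1-Q t rho} and \eqref{H} for \eqref{eq2-Q t rho}. The paper's own proof is terse and simply cites (A3); you spell out what it leaves implicit — that (A3) as stated controls regularity in the first variable, so one must invoke symmetry of $\widetilde p_t$ (from self-adjointness) and then swap $\mu(B(y,\sqrt t))$ for $\mu(B(x,\sqrt t))$ via \eqref{doub2s}, absorbing the polynomial loss into the Gaussian — two standard steps the paper declares up front it will use without comment.
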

\begin{proof}
	Using (A2) and the inequality $1-e^{-x}\lesi x$, valid for all  $x>0$, we obtain \eqref{eq1-Q t rho}.
	
		We now take care of \eqref{eq2-Q t rho}. Observe that 
	\[
	Q_t(x,y)-Q_t(x,y_0)=(\widetilde{p}_t(x,y)-\widetilde{p}_t(x,y_0))\left[1-e^{-\left[\f{\sqrt{t}}{\rho(x)}\right]^{\delta_1}}\right].
	\]
	This, along with (A3) and the inequality $1-e^{-x}\lesi x$ again implies \eqref{eq2-Q t rho}.
\end{proof}

\begin{lem}\label{Lem3: Hardy spaces}
	Let $\f{n}{n+\delta_1}<p\leq 1$ and $q\in (p,\vc]\cap [1,\vc]$. Then there exists $\kappa$ so that for any $0<\epsilon\leq 1$, we have
	\begin{equation}\label{eq1-Lem3Hardy}
	\Big\|\sup_{0< t\leq [\epsilon \rho(x)]^2}|(e^{-t\LL}-T_{t,\rho})f(x)|\,\Big\|^p_{L^p(X)}\lesi \epsilon^{\kappa}\|f\|^p_{h^{p,q}_{at,\rho,\epsilon}(X)},
	\end{equation}
	for all $f\in h^{p,q}_{at,\rho,\epsilon}(X)$.
\end{lem}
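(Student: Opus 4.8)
The plan is to reduce, via Lemma~\ref{lem-Tatom}, to the estimate $\big\|\sup_{0<t\le[\epsilon\rx]^2}|(e^{-t\LL}-T_{t,\rho})a(x)|\big\|_{L^p(X)}^p\lesssim\epsilon^{\delta_1 p}$ for a single $(p,q,\rho,\epsilon)$-atom $a$ supported in $B=B(x_0,r)$; the $L^2$-boundedness hypothesis of that lemma holds because $|K_{t,\rho}(x,y)|\le|\widetilde p_t(x,y)|$, so that $\sup_{t>0}|e^{-t\LL}f|+\sup_{t>0}|T_{t,\rho}f|\lesssim\mathcal{M}f$ by (A2), and $\mathcal{M}$ is bounded on every $L^{r}(X)$, $r>1$. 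The mechanism driving the whole argument is the trivial bound $0\le 1-e^{-(\sqrt t/\rx)^{\delta_1}}\le(\sqrt t/\rx)^{\delta_1}\le\epsilon^{\delta_1}$, valid for all $x$ and all $0<t\le[\epsilon\rx]^2$. Since the bracketed factor in $Q_{t,\rho}$ depends only on $x$, this yields $|Q_{t,\rho}(x,y)|\le\epsilon^{\delta_1}|\widetilde p_t(x,y)|$ and $|Q_{t,\rho}(x,y)-Q_{t,\rho}(x,x_0)|\le\epsilon^{\delta_1}|\widetilde p_t(x,y)-\widetilde p_t(x,x_0)|$, so one is reduced to classical heat-kernel atom estimates carrying an extra factor $\epsilon^{\delta_1}$.

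I would then split $X=4B\cup(X\setminus4B)$. On $4B$, the bound $|Q_{t,\rho}(x,y)|\le\epsilon^{\delta_1}|\widetilde p_t(x,y)|$ together with (A2) gives $\sup_{0<t\le[\epsilon\rx]^2}|(e^{-t\LL}-T_{t,\rho})a(x)|\lesssim\epsilon^{\delta_1}\mathcal{M}(|a|)(x)$ uniformly in $t$; then H\"older's inequality and the $L^q$-boundedness of $\mathcal{M}$ give $\int_{4B}(\dots)^p\lesssim\epsilon^{\delta_1 p}\mu(B)^{1-p/q}\|a\|_{L^q}^p\lesssim\epsilon^{\delta_1 p}$. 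On $X\setminus4B$ I would distinguish the two cases used throughout this section. If $\epsilon\rho(x_0)/4<r\le\epsilon\rho(x_0)$ (so $\epsilon\rho(x_0)\sim r$), no cancellation is needed: from $|Q_{t,\rho}(x,y)|\le\epsilon^{\delta_1}|\widetilde p_t(x,y)|$, the facts that $d(x,y)\sim d(x,x_0)$ for $y\in B$ and $\frac1{\mu(B(x,\sqrt t))}e^{-d(x,y)^2/ct}\lesssim\frac1{\mu(B(x,d(x,x_0)))}e^{-d(x,x_0)^2/c't}$, and $t\le[\epsilon\rx]^2$, one applies \eqref{eq1-rho exp} to convert the Gaussian into $(\epsilon\rho(x_0)/d(x,x_0))^N\sim(r/d(x,x_0))^N$, obtaining $\sup_t|(e^{-t\LL}-T_{t,\rho})a(x)|\lesssim\epsilon^{\delta_1}\mu(B)^{1-1/p}\mu(B(x_0,d(x,x_0)))^{-1}(r/d(x,x_0))^N$; summing over the annuli $S_j(B)=2^{j+1}B\setminus2^jB$ and taking $N>n(1/p-1)$ bounds $\int_{X\setminus4B}(\dots)^p$ by $\epsilon^{\delta_1 p}\sum_j2^{-j(Np-n(1-p))}\lesssim\epsilon^{\delta_1 p}$. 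If $r<\epsilon\rho(x_0)/4$, one uses $\int a\,d\mu=0$ together with $|Q_{t,\rho}(x,y)-Q_{t,\rho}(x,x_0)|\le\epsilon^{\delta_1}|\widetilde p_t(x,y)-\widetilde p_t(x,x_0)|$, estimates the heat-kernel difference by (A3) (legitimate since $d(y,x_0)\le r\le d(x,y)/2$ on $X\setminus4B$), absorbs the resulting factor $[d(y,x_0)/\sqrt t]^{\delta_1}$ against the Gaussian to produce $[r/d(x,x_0)]^{\delta_1}$ in the usual way, and sums over the $S_j(B)$; this time convergence of $\sum_j2^{-j(\delta_1 p-n(1-p))}$ requires exactly $p>\frac{n}{n+\delta_1}$. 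Collecting all pieces yields \eqref{eq1-Lem3Hardy} with $\kappa=\delta_1 p$.

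I expect the only real care to be in the bookkeeping on $X\setminus4B$. Because the cutoff $t\le[\epsilon\rx]^2$ is governed by $\rx$ rather than by $\rho(x_0)$, one should route the Gaussian decay through \eqref{eq1-rho exp} rather than naively comparing $\rx$ and $\rho(x_0)$; and in the cancellation case it is important to use the $\delta_1$-regularity of $\widetilde p_t$ from \eqref{H}---whose gain comes with a $\sqrt t$ in the denominator, controllable by the Gaussian---rather than the intrinsic regularity \eqref{eq2-Q t rho} of $Q_{t,\rho}$, which carries an unfavourable factor $\rx^{-\delta_1}$ far from $B$. Once the factor $\epsilon^{\delta_1}$ has been extracted uniformly in $t$ and $x$, the remaining estimates are precisely the annular summations already performed for Lemmas~\ref{Lem1: Hardy spaces} and \ref{Lem2: Hardy spaces}.
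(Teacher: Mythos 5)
Your proof is correct and follows the same structure as the paper's: reduce to atoms via Lemma~\ref{lem-Tatom} (noting the pointwise domination by $\mathcal{M}f$), split into $4B$ and $X\setminus 4B$, treat the two cases $\epsilon\rho(x_0)/4 < r \le \epsilon\rho(x_0)$ and $r<\epsilon\rho(x_0)/4$, and sum over annuli using $p>\f{n}{n+\delta_1}$. The one small variation---extracting the factor $\epsilon^{\delta_1}$ at once from $1-e^{-(\sqrt t/\rho(x))^{\delta_1}}$ and then invoking (A3) for $\widetilde p_t$, rather than applying the intrinsic regularity bound \eqref{eq2-Q t rho} for $Q_{t,\rho}$ as the paper does---is cosmetic, and your worry that \eqref{eq2-Q t rho} carries an ``unfavourable $\rho(x)^{-\delta_1}$'' is in fact unfounded (the paper absorbs it via the Gaussian and the constraint $t\le[\epsilon\rho(x)]^2$, obtaining $(r/\rho(x))^{\delta_1}e^{-d(x,x_0)^2/c[\epsilon\rho(x)]^2}\lesssim(\epsilon r/d(x,x_0))^{\delta_1}$); either route gives $\kappa=\delta_1 p$.
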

\begin{proof}
	Observe that
	\[
	\sup_{0< t\leq [\epsilon \rho(x)]^2}|(e^{-t\LL}-T_{t,\rho})f(x)|\lesi \sup_{t>0}|e^{-t\LL}f(x)|\lesi \mathcal{M}f(x).
	\]
	Hence, from this and Lemma \ref{lem-Tatom} it suffices to prove \eqref{eq1-Lem3Hardy} for all $(p,q,\rho,\epsilon)$ atoms. Let  $a$ be $(p,q,\rho,\epsilon)$ atom associated to a ball $B:=B(x_0,r)$. We write
	$$
	\begin{aligned}
	\Big\|\sup_{0< t\leq [\epsilon \rho(x)]^2}|(e^{-t\LL}-T_{t,\rho})f(x)|\,\Big\|^p_{L^p(X)}&\leq \Big\|\sup_{0< t\leq [\epsilon \rho(x)]^2}|(e^{-t\LL}-T_{t,\rho})f(x)|\,\Big\|^p_{L^p(4B)}\\
	& \  \  \ +\Big\|\sup_{0< t\leq [\epsilon \rho(x)]^2}|(e^{-t\LL}-T_{t,\rho})f(x)|\,\Big\|^p_{L^p(X\backslash 4B)}\\
	&=I_1+I_2.
	\end{aligned}
	$$
	Using \eqref{eq1-Q t rho}, H\"older's inequality and the $L^q$-boundedness of $\mathcal{M}$, we get that
	$$
	\begin{aligned}
	I_1&\lesi \int_{4B}\Big[\sup_{0< t\leq [\epsilon \rho(x)]^2}\Big[\f{\sqrt{t}}{\rho(x)}\Big]^{\delta_1}\int_B\f{1}{\mu(B(x,\sqrt{t}))}\exp\Big(-\f{d(x,y)^2}{ct}\Big)|a(y)|d\mu(y)\Big]^pd\mu(x)\\
	&\lesi \epsilon^{p\delta_1}\mu(B)^{1-p/q} \Big[\int_{4B}\Big[\sup_{0< t\leq [\epsilon \rho(x)]^2}\int_B\f{1}{\mu(B(x,\sqrt{t}))}\exp\Big(-\f{d(x,y)^2}{ct}\Big)|a(y)|d\mu(y)\Big]^qd\mu(x)\Big]^{p/q}\\
	&\lesi \epsilon^{p\delta_1}\mu(B)^{1-p/q} \Big[\int_{4B}\Big[\mathcal{M}(|a|)(x)\Big]^qd\mu(x)\Big]^{p/q}\\
	%&\lesi \epsilon^{p(2-n/\sigma)}\mu(B)^{1-p/q}\|a\|_{L^q}^p
	%\quad
	&\lesi \epsilon^{p\delta_1}.
	\end{aligned}
	$$
	
	The estimate of $I_2$ can be done by considering the following two cases.
	
	\textbf{Case 1: $\epsilon\rho(x_0)/4\leq r\leq \epsilon\rho(x_0)$}
	
	By \eqref{eq1-Q t rho} again, we can write
	$$
	\begin{aligned}
	I_2&=\int_{X\backslash 4B}\Big[\sup_{0< t\leq [\epsilon \rho(x)]^2}\int_B \Big(\f{\sqrt{t}}{\rho(x)}\Big)^{\delta_1}\f{1}{\mu(B(x,\sqrt{t}))}\exp\Big(-\f{d(x,y)^2}{ct}\Big)|a(y)|d\mu(y)\Big]^pd\mu(x)\\
	&\lesi \epsilon^{p\delta_1}\|a\|_{L^1}^p\int_{X\backslash 4B}\Big[\f{1}{\mu(B(x_0,d(x,x_0)))}\exp\Big(-\f{d(x,x_0)^2}{c[\epsilon \rho(x)]^2}\Big)\Big]^pd\mu(x)
	\end{aligned}
	$$
	which along with \eqref{eq1-rho exp} implies that, for $N>n(1-p)/p$,
	$$
	\begin{aligned}
	I_2&\lesi \epsilon^{p\delta_1}\mu(B)^{p-1}\int_{X\backslash 4B}\Big[\f{1}{\mu(B(x_0,d(x,x_0)))}\Big(\f{\epsilon \rho(x_0)}{d(x,x_0)}\Big)^N\Big]^pd\mu(x)\\
	&\lesi \epsilon^{p\delta_1}\mu(B)^{p-1}\int_{X\backslash 4B}\Big[\f{1}{\mu(B(x_0,d(x,x_0)))}\Big(\f{r}{d(x,x_0)}\Big)^N\Big]^pd\mu(x)\\
	&\lesi \epsilon^{p\delta_1}.
	\end{aligned}
	$$
	
	\textbf{Case 2: $r< \epsilon\rho(x_0)/4$}
	
	In this situation, $\int a(y)d\mu(y)=0$. This implies that
	$$
	I_2=\int_{X\backslash 4B}\Big|\sup_{0< t\leq [\epsilon \rho(x)]^2}\int_B (Q_t(x,y)-Q_t(x,x_0))a(y)d\mu(y)\Big|^pd\mu(x).
	$$
	Hence, by \eqref{eq2-Q t rho} we obtain that
	$$
	\begin{aligned}
	I_2&=\int_{X\backslash 4B}\Big[\sup_{0< t\leq [\epsilon \rho(x)]^2}\int_B \Big(\f{d(y,x_0)}{\rx}\Big)^{\delta_1} \f{1}{\mu(B(x,\sqrt{t}))}\exp\Big(-\f{d(x,y)^2}{ct}\Big)|a(y)|d\mu(y)\Big]^pd\mu(x)\\
	&\lesi \int_{X\backslash 4B}\Big[\int_B \Big(\f{r}{\rho(x)}\Big)^{\delta_1} \f{1}{\mu(B(x_0,d(x,x_0)))}\exp\Big(-\f{d(x,x_0)^2}{c[\epsilon\rho(x)]^2}\Big)|a(y)|d\mu(y)\Big]^pd\mu(x)\\
	&\lesi \int_{X\backslash 4B}\Big[\int_B \Big(\f{\epsilon r}{d(x,x_0)}\Big)^{\delta_1} \f{1}{\mu(B(x_0,d(x,x_0)))}|a(y)|d\mu(y)\Big]^pd\mu(x)\\
	&\lesi \epsilon^{\delta_1 p},
	\end{aligned}
	$$
	as long as $p>n/(n+\delta_1)$.
	This completes our proof.
\end{proof}

\begin{proof}[Proof of Theorem \ref{mainthm2s}]
	We first prove the continuous embedding $h^{p,q}_{at,\rho}(X)\hookrightarrow h^p_{\LL,{\rm max},\rho}(X)$. Since the space $L^2(X)$ is dense in both $h^{p,q}_{at,\rho}(X)$ and $h^p_{\LL,{\rm rad},\rho}(X)$,  it suffices to show that  $h^{p,q}_{at,\rho}(X)\cap L^2(X)\hookrightarrow h^p_{\LL,{\rm rad},\rho}(X)$. Since $f^*_{\LL,\rho}$ is dominated by $\mathcal{M}f$, from Lemma \ref{lem-Tatom} it suffices to show that there exists $C$ so that
	\begin{align}\label{maximal atoms}
	\|a^*_{\LL,\rho}\|^p_{L^p}\leq C,
	\end{align}
	for all $(p,q,\rho)$-atoms associated to balls $B=B(x_0,r)$.
	
	To prove \eqref{maximal atoms}, we first write
	$$
	\|a^*_{\LL,\rho}\|^p_{L^p}\leq \|a^*_{\LL,\rho}\|^p_{L^p(4B)}+\|a^*_{\LL,\rho}\|^p_{L^p(M\backslash 4B)}: =I_1+I_2.
	$$
	The first term can be handled easily by H\"older's inequality and the $L^q$-boundedness of $\mathcal{M}$:
	$$
	\begin{aligned}
	I_1&\lesi \mu(B)^{1-p/q}\|a^*_{\LL,\rho}a\|^p_{L^q(4B)}\lesi \mu(B)^{1-p/q}\|\mathcal{M}a\|^p_{L^q(X)}
	%\lesi \mu(B)^{1-p/q}\|a\|^p_{L^q}
	\leq C.
	\end{aligned}
	$$
	To take care of the term $I_2$ we consider two cases.
	
	\textbf{Case 1: $\rho(x_0)/4 \leq r\leq \rho(x_0)$}
	
	From (A2) we have
	\[
	I_2\lesi \int_{X\backslash 4B}\sup_{0<t<\rho(x)^2}\sup_{d(x,y)<t}\Big[\int_B \f{1}{\mu(B(z,\sqrt{t}))}\exp\Big(-\f{d(y,z)^2}{ct}\Big)|a(z)|d\mu(z)\Big]^pd\mu(x).
	\]
	This together with the fact that
	\[
	\exp\Big(-\f{d(y,z)^2}{ct}\Big)\sim \exp\Big(-\f{d(x,z)^2}{ct}\Big), \ \ \ \text{as $d(x,y)<t$},
	\]
	implies that
	\[
	I_2\lesi \int_{X\backslash 4B}\sup_{0<t<\rho(x)^2}\Big[\int_B \f{1}{\mu(B(z,\sqrt{t}))}\exp\Big(-\f{d(x,z)^2}{ct}\Big)|a(z)|d\mu(z)\Big]^pd\mu(x).
	\]
	We then apply \eqref{doub2} to obtain further
	\[
		\begin{aligned}
	&\lesi \int_{X\backslash 4B}\sup_{0<t<\rho(x)^2}\Big[\int_B \f{1}{\mu(B(z,d(x,z)))}\exp\Big(-\f{d(x,z)^2}{ct}\Big)|a(z)|d\mu(z)\Big]^pd\mu(x).
		\end{aligned}
	\]
	Moreover, obverse that in this situation $d(x,z)\sim d(x,x_0)$. Hence, for $N>n(1-p)/p$,
	\[
	\begin{aligned}
	I_2
	&\lesi \int_{X\backslash 4B}\Big[\int_B \f{1}{\mu(B(x_0,d(x,x_0)))}\exp\Big(-\f{d(x,x_0)^2}{c\rho(x)^2}\Big)|a(y)|d\mu(y)\Big]^pd\mu(x)\\
	&\lesi \int_{X\backslash 4B}\Big[\int_B \f{1}{\mu(B(x_0,d(x,x_0)))}\Big(\f{\rho(x_0)}{d(x,x_0)}\Big)^N|a(y)|d\mu(y)\Big]^pd\mu(x)\\
	&\lesi \int_{X\backslash 4B}\Big[\int_B \f{1}{\mu(B(x_0,d(x,x_0)))}\Big(\f{r}{d(x,x_0)}\Big)^N|a(y)|d\mu(y)\Big]^pd\mu(x)\\
	&\leq C,
	\end{aligned}
	\]
	where in the second inequality we used \eqref{eq1-rho exp}. \\

	\textbf{Case 2: $r<\rho(x_0)/4$}
	
	Observe that
	$$
	\begin{aligned}
	I_2&\lesi \int_{X\backslash 4B}\sup_{0<t\leq 4r^2}\sup_{d(x,y)<t}\Big|\int_B \widetilde{p}_t(y,z)a(z)d\mu(z)\Big|^pd\mu(x)\\
	& \ \ + \int_{X\backslash 4B}\sup_{ 4r^2\le t<\rho(x)^2}\sup_{d(x,y)<t}\Big|\int_B \widetilde{p}_t(y,z)a(z)d\mu(z)\Big|^pd\mu(x)\\
	&=I_{21}+I_{22}.
	\end{aligned}
	$$
	Fix $N>n(1-p)/p$. Arguing similarly as above we obtain
	$$
	\begin{aligned}
	I_{21}&\lesi \int_{X\backslash 4B}\sup_{0<t<\rho(x)^2}\Big[\int_B \f{1}{\mu(B(z,d(x,z)))}\exp\Big(-\f{d(x,z)^2}{ct}\Big)|a(z)|d\mu(z)\Big]^pd\mu(x)\\
	&\lesi \int_{X\backslash 4B}\sup_{0<t\leq 4r^2}\Big[\int_B \f{1}{\mu(B(z,d(x,z)))}\Big(\f{\sqrt{t}}{d(x,z)}\Big)^N|a(z)|d\mu(z)\Big]^pd\mu(x)\\
	&\lesi \int_{X\backslash 4B}\Big[\int_B \f{1}{\mu(B(x_0,d(x,x_0)))}\Big(\f{r}{d(x,x_0)}\Big)^N|a(y)|d\mu(y)\Big]^pd\mu(x)\\
	&\leq C.
	\end{aligned}
	$$
	To take care of $I_{22}$ we use the cancellation property of $a$ to arrive at
	\begin{equation}\label{eqI22}
	I_{22}=\int_{X\backslash 4B}\sup_{ 4r^2\le t<\rho(x)^2}\sup_{d(x,y)<t}\Big|\int_B [\widetilde{p}_t(y,z)-\widetilde{p}_t(y,x_0)]a(z)d\mu(z)\Big|^pd\mu(x).
	\end{equation}
	From (A3), we have
	\[
	|\widetilde{p}_t(y,z)-\widetilde{p}_t(y,x_0)|\leq \Big(\f{d(z,x_0)}{\sqrt{t}}\Big)^{\delta_1}\Big[\f{1}{\mu(B(z,\sqrt{t}))}\exp\Big(-\f{d(y,z)^2}{ct}+\f{1}{\mu(B(x_0,\sqrt{t}))}\exp\Big(-\f{d(y,x_0)^2}{ct}\Big)\Big]
	\]
	Hence, for any $x\in (4B)^c$ with $d(x,y)<t$ we have, by \eqref{doub2},
	\[
	\begin{aligned}
	|\widetilde{p}_t(y,z)&-\widetilde{p}_t(y,x_0)|\\
	&\leq \Big(\f{d(z,x_0)}{\sqrt{t}}\Big)^{\delta_1}\Big[\f{1}{\mu(B(z,\sqrt{t}))}\exp\Big(-\f{d(x,z)^2}{ct}\Big)+\f{1}{\mu(B(x_0,\sqrt{t}))}\exp\Big(-\f{d(x,x_0)^2}{ct}\Big)\Big]\\
	&\leq \Big(\f{d(z,x_0)}{\sqrt{t}}\Big)^{\delta_1}\Big[\f{1}{\mu(B(z,d(x,z)))}\exp\Big(-\f{d(x,z)^2}{ct}\Big)+\f{1}{\mu(B(x_0,d(x,x_0)))}\exp\Big(-\f{d(x,x_0)^2}{ct}\Big)\Big]\\
	\end{aligned}
	\]
	Note that $d(x,z)\sim d(x,x_0)$ and $\mu(B(z,d(x,z)))\sim \mu(B(x_0,d(x,x_0)))$ as $z\in B$ and $x\in (4B)^c$. From this and the inequality above we obtain
	\[
	\sup_{d(x,y)<t
		}|\widetilde{p}_t(y,z)-\widetilde{p}_t(y,x_0)|\leq \Big(\f{d(z,x_0)}{\sqrt{t}}\Big)^{\delta_1}\f{1}{\mu(B(x_0,d(x,x_0)))}\exp\Big(-\f{d(x,x_0)^2}{ct}\Big).
	\]
	
	Inserting this into \eqref{eqI22} we have
	$$
	\begin{aligned}
	I_{22}&\lesi \int_{X\backslash 4B}\sup_{ 4r^2\le t}\Big[\int_B \Big(\f{d(z,x_0)}{\sqrt{t}}\Big)^{\delta_1}\f{1}{\mu(B(x_0,d(x,x_0)))}\exp\Big(-\f{d(x,x_0)^2}{ct}\Big)|a(z)|d\mu(z)\Big]^pd\mu(x)\\
	&\lesi \int_{X\backslash 4B}\Big[\int_B \Big(\f{d(z,x_0)}{d(x,x_0)}\Big)^{\delta_1}\f{1}{\mu(B(x_0,d(x,x_0)))}|a(z)|d\mu(z)\Big]^pd\mu(x)\\
	&\lesi \int_{M\backslash 4B}\Big[\int_B \Big(\f{r}{d(z,x_0)}\Big)^{\delta_1}\f{1}{\mu(B(x_0,d(x,x_0)))}|a(z)|d\mu(z)\Big]^pd\mu(x)\\
	&\leq C,
	\end{aligned}
	$$
	provided $p>n/(n+\delta_1)$. Therefore \eqref{maximal atoms} holds.\\

	Since $h^p_{\LL,{\rm max},\rho}(X)\subset h^p_{\LL,{\rm rad}, \rho}(X)$, to complete the proof we  remain to prove  that $h^p_{\LL,{\rm rad}, \rho}(X)\cap L^2(X)\hookrightarrow  h^{p,q}_{at,\rho}(X)\cap L^2(X)$. To do this we first note that for fixed numbers $\epsilon_1, \epsilon_2\in (0,1]$, there exists $C=C(\epsilon_1,\epsilon_2)$ so that
	\begin{equation}\label{eq0-mainthm}
	C^{-1}\|\cdot\|_{h^{p,q}_{at,\rho,\epsilon_1}(X)}\leq \|\cdot\|_{h^{p,q}_{at,\rho,\epsilon_2}(X)} \leq C\|\cdot\|_{h^{p,q}_{at,\rho,\epsilon_1}(X)}.
	\end{equation}
	Hence, it suffices to prove that there exists $\epsilon_0\in (0,1]$ so that
	\begin{equation}\label{eq1-mainthm}
	\|f\|_{h^{p,q}_{at,\rho,\epsilon_0}(X)}\lesi \|f\|_{h^p_{\LL,{\rm rad}, \rho}(X)},
	\end{equation}
	for all $f\in h^p_{\LL,{\rm rad}, \rho}(X)\cap L^2(X)$.

	Indeed, we note that for each $\alpha\in \mathcal{I}$, $f\psi_\alpha$ is supported in the ball $B_\alpha=B(x_\alpha,\rho(x_\alpha))$. Hence, by Proposition \ref{Hardyfunc-compactsupp} and a scaling argument, we can decompose $f\psi_\alpha$ into an atomic $(p,q,\rho,\epsilon)$-representation with $(p,q,\rho,\epsilon)$-atoms supported in $B^*_\alpha$. Moreover we have, from Lemma \ref{lem-criticalfunction} (a), the existence of $c_0$ so that 
	\[
	c_0^{-1}\rho(x_\alpha)\leq  \rx\leq c_0\rho(x_\alpha), \ \ \ \text{for all $x\in B_\alpha^*$ and all $ \alpha\in\mathcal{I}$}.
	\] 
	Therefore, from Theorem \ref{mainthm2} and Lemma \ref{Lem1: Hardy spaces}, by a scaling argument we obtain
	\begin{equation*}
	\begin{aligned}
	\sum_{\alpha\in \mathcal{I}}\|\psi_\alpha f\|^p_{h^{p,q}_{at,\rho,\epsilon}(X)}&\lesi \sum_{\alpha\in \mathcal{I}}\Big\|\sup_{0<t<[\epsilon \rho(x_\alpha)]^2}|e^{-t\LL}\psi_\alpha f|\,\Big\|^p_{L^p(X)}\\
	&\lesi \sum_{\alpha\in \mathcal{I}}\Big\|\sup_{0<t<[\epsilon \rho(x_\alpha)]^2}|e^{-t\LL}\psi_\alpha f|\,\Big\|^p_{L^p(B_\alpha^*)}+\sum_{\alpha\in \mathcal{I}}\Big\|\sup_{0<t<[\epsilon \rho(x_\alpha)]^2}|e^{-t\LL}(\psi_\alpha f)|\,\Big\|^p_{L^p(X\backslash B_\alpha^*)}\\
	&\lesi \sum_{\alpha\in \mathcal{I}}\Big\|\sup_{0<t<[\widetilde{\epsilon} \rho(\cdot)]^2}|e^{-t\LL}(\psi_\alpha f)(\cdot)|\,\Big\|^p_{L^p(B_\alpha^*)}+\sum_{\alpha\in \mathcal{I}}\epsilon^{\kappa}\sum_{j\in \mathcal{I}_\alpha}\|\psi_j f\|^p_{h^{p,q}_{at,\rho,\epsilon}(X)}\\
	&\lesi \sum_{\alpha\in \mathcal{I}}\Big\|\sup_{0<t<[\widetilde{\epsilon} \rho(\cdot)]^2}|e^{-t\LL}(\psi_\alpha f)(\cdot)|\,\Big\|^p_{L^p(B_\alpha^*)}+\epsilon^{\kappa}\sum_{\alpha\in \mathcal{I}}\|\psi_\alpha f\|^p_{h^{p,q}_{at,\rho,\epsilon}(X)},
	\end{aligned}
	\end{equation*}
	where $\widetilde{\epsilon}=c_0\epsilon$.
	
	As a consequence,
	\begin{equation*}
	\sum_{\alpha}\|\psi_\alpha f\|^p_{h^{p,q}_{at,\rho,\epsilon}(X)}\lesi \sum_{\alpha}\Big\|\sup_{0<t<[\widetilde{\epsilon} \rho(\cdot)]^2}|e^{-t\LL}(\psi_\alpha f)(\cdot)|\,\Big\|^p_{L^p(B_\alpha^*)},
	\end{equation*}
	provided that $\epsilon$ is small enough.

	This, along with the inequality
	\[
	\|f\|^p_{h^{p,q}_{at,\rho,\epsilon}(X)}\leq \sum_{\alpha\in \mathcal{I}}\|\psi_\alpha f\|^p_{h^{p,q}_{at,\rho,\epsilon}(X)},
	\]
	further implies that
	\begin{equation*}
	\begin{aligned}
	\|f\|^p_{h^{p,q}_{at,\rho,\epsilon}(X)}&\lesi \sum_{\alpha}\Big\|\sup_{0<t<[\widetilde{\epsilon} \rho(\cdot)]^2}|e^{-t\LL}(\psi_\alpha f)(\cdot)|\,\Big\|^p_{L^p(B_\alpha^*)}\\
	&\lesi \sum_{\alpha}\Big\|\sup_{0<t<[\widetilde{\epsilon} \rho(\cdot)]^2}|e^{-t\LL}(\psi_\alpha f)(\cdot)-\psi_\alpha(\cdot)e^{-t\LL} f(\cdot)|\,\Big\|^p_{L^p(B_\alpha^*)}\\
	& \ \ \ +\sum_{\alpha}\Big\|\sup_{0<t<[\widetilde{\epsilon} \rho(\cdot)]^2}|\psi_\alpha(\cdot)[e^{-t\LL}-T_{t,\rho} ]f(\cdot)|\,\Big\|^p_{L^p(B_\alpha^*)}\\
	& \ \ \ + \sum_{\alpha}\Big\|\sup_{0<t<[\widetilde{\epsilon} \rho(\cdot)]^2}|\psi_\alpha(\cdot)T_{t,\rho}f(\cdot)|\,\Big\|^p_{L^p(B_\alpha^*)}\\
	&=:I_1+I_2+I_3.
	\end{aligned}
	\end{equation*}
	From Lemma \ref{coveringlemm}, we conclude that
	\[
	I_1\lesi \Big\|\sum_{\alpha}\sup_{0<t<[\widetilde{\epsilon} \rho(\cdot)]^2}|e^{-t\LL}(\psi_\alpha f)(\cdot)-\psi_\alpha(\cdot)e^{-t\LL} f(\cdot)|\,\Big\|^p_{L^p(X)}.
	\]
	Hence, we have
	\begin{equation*}
	\begin{aligned}
	\|f\|^p_{h^{p,q}_{at,\rho,\epsilon}(X)}
	&\lesi \sum_{\alpha}\Big\|\sup_{0<t<[\widetilde{\epsilon} \rho(\cdot)]^2}|e^{-t\LL}(\psi_\alpha f)(\cdot)-\psi_\alpha(\cdot)e^{-t\LL} f(\cdot)|\,\Big\|^p_{L^p(X)}\\
	& \ \ \ +\Big\|\sup_{0<t<[\widetilde{\epsilon} \rho(\cdot)]^2}|[e^{-t\LL}-T_{t,\rho} ]f(\cdot)]|\,\Big\|^p_{L^p(X)}+ \Big\|\sup_{0<t<\widetilde{\epsilon}\rho(\cdot)^2}|T_{t,\rho}f(\cdot)|\,\Big\|^p_{L^p(X)}.
	\end{aligned}
	\end{equation*}
	This along with Lemma \ref{Lem2: Hardy spaces}, Lemma \ref{Lem3: Hardy spaces} deduces that
	\begin{equation}\label{eq2-lem3Hardy}
	\begin{aligned}
	\|f\|^p_{h^{p,q}_{at,\rho,\epsilon}(X)}&\lesi \epsilon^{\kappa}\|f\|^p_{h^{p,q}_{at,\rho,\widetilde{\epsilon}}(X)}+\Big\|\sup_{0<t<\rho(\cdot)^2}|T_{t,\rho}f(\cdot)|\,\Big\|^p_{L^p(X)}
	\end{aligned}
	\end{equation}
	as long as $\tilde{\epsilon}=c_0\epsilon<1$.

	Note that since $\widetilde{\epsilon}=c_0\epsilon$, from the definition of Hardy spaces $h^{p,q}_{at,\rho,\epsilon}(X)$, there exists $C$ independent of $\epsilon$ so that
	$$
	\|f\|^p_{h^{p,q}_{at,\rho,\widetilde{\epsilon}}(X)}\leq C\|f\|^p_{h^{p,q}_{at,\rho,\epsilon}(X)}.
	$$
	This together with (\ref{eq2-lem3Hardy}) implies that
	$$
	\|f\|^p_{h^{p,q}_{at,\rho,\epsilon}(X)}\lesi \epsilon^{\kappa}\|f\|^p_{h^{p,q}_{at,\rho,\epsilon}(X)}+\Big\|\sup_{0<t<\widetilde{\epsilon}\rho(\cdot)^2}|T_{t,\rho}f(\cdot)|\,\Big\|^p_{L^p(X)}.
	$$
	Therefore, there exists $\epsilon_0$ so that
	\[
	\|f\|_{h^{p,q}_{at,\rho,\epsilon_0}(X)}\lesi \Big\|\sup_{0<t<\rho(\cdot)^2}|T_{t,\rho}f(\cdot)|\,\Big\|^p_{L^p(X)}.
	\]
	On the other hand, from the expression of $T_{t,\rho}$ we have
	\[
	\sup_{0<t<\rho(x)^2}|T_{t,\rho}f(x]|\leq f^+_{\LL,\rho}(x), \ \ \text{for all $x\in X$}.
	\]
	Therefore,
	$$
	\|f\|_{h^{p,q}_{at,\rho,\epsilon_0}(X)}\lesi \Big\|f^+_{\LL,\rho}\Big\|_{L^p(X)}=\|f\|_{h^p_{\LL,{\rm rad}, \rho}(X)}.
	$$
	This completes our proof of Theorem \ref{mainthm2s}.
\end{proof}

\begin{rem}
	\label{rem-YZ}
	Assume that the measure $\mu$ satisfies the extra condition of `reverse doubling'. In \cite{YZ} the authors characterized the local Hardy spaces $h^{1,q}_{at,\rho}, q\in [1,\vc]$ in terms of radial maximal functions
	\[
	S_{\rho}^+f(x):=\sup_{k\in\mathbb{Z}, 2^{-k}<\rho(x)}S_kf(x)
	\]
	where $\{S_k\}_{k\in\mathbb{Z}}$ is an approximation of the identity. See \cite[Theorem 2.1]{YZ}. By replacing the semigroup $\{e^{-t\LL}\}$ by the family $\{S_k\}_{k\in\mathbb{Z}}$, our approach can be adapted easily to give the  radial maximal function $S_{\rho}^+$ characterization for the local Hardy spaces $h^{p,q}_{at,\rho}$ with $p,q$ as in Theorem \ref{mainthm2s}. We leave the details to the interested reader.
\end{rem}
\bigskip

\subsection{Proof of Theorem \ref{mainthm3}} The proof of Theorem \ref{mainthm3} is quite similar to that of Theorem \ref{mainthm2s} and hence we just sketch the main points. We first prove the following estimates which is similar to that in Lemma \ref{Lem3: Hardy spaces}.

\begin{lem}\label{Lem3: Hardy spaces-thm3}
	Let $\f{n}{n+\delta_2\wedge \delta_3}<p\leq 1$ and $q\in (p,\vc]\cap [1,\vc]$. Then there exists $\kappa$ so that for any $0<\epsilon\leq 1$, we have
	\begin{equation}\label{eq1-Lem3Hardy}
	\Big\|\sup_{0< t\leq [\epsilon \rho(x)]^2}|(e^{-t\LL}-e^{-tL})f(x)|\,\Big\|^p_{L^p(X)}\lesi \epsilon^{\kappa}\|f\|^p_{h^{p,q}_{at,\rho,\epsilon}(X)},
	\end{equation}
	for all $f\in h^{p,q}_{at,\rho,\epsilon}(X)$.
\end{lem}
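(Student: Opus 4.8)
The plan is to follow the proof of Lemma~\ref{Lem3: Hardy spaces} essentially line by line, the only structural change being that the explicit kernel $Q_{t,\rho}=\widetilde p_t-K_{t,\rho}$ used there is replaced by $q_t=p_t-\widetilde p_t$, which is (up to sign) the kernel of $e^{-t\LL}-e^{-tL}$; correspondingly, the bound \eqref{eq1-Q t rho} is replaced by assumption (B2) and \eqref{eq2-Q t rho} by assumption (B3). I would first note that, since $\LL$ and $L$ are both nonnegative self-adjoint, $q_t$ is symmetric, so (B3) also bounds the second-variable oscillation $|q_t(x,y)-q_t(x,y_0)|$ whenever $d(y,y_0)\le\min\{d(x,y)/4,\rho(y)\}$. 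Using (B1) (the factor in parentheses being $\le1$) together with (A2) and Lemma~\ref{lem-ele est}, one gets $\sup_{0<t\le[\epsilon\rho(x)]^2}|(e^{-t\LL}-e^{-tL})f(x)|\le\sup_{t>0}|e^{-t\LL}f(x)|+\sup_{t>0}|e^{-tL}f(x)|\lesssim\mathcal Mf(x)$, so the operator on the left is sublinear and bounded on $L^2(X)$; hence by Lemma~\ref{lem-Tatom} it suffices to prove the claimed bound for a single $(p,q,\rho,\epsilon)$-atom $a$ supported in a ball $B=B(x_0,r)$, and one then splits the $L^p$-norm into $I_1$ (the integral over $4B$) and $I_2$ (the integral over $X\setminus4B$).

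For $I_1$ the restriction $t\le[\epsilon\rho(x)]^2$ forces $\big(\sqrt t/(\sqrt t+\rho(x))\big)^{\delta_2}\le\epsilon^{\delta_2}$, so (B2) yields $|(e^{-t\LL}-e^{-tL})a(x)|\lesssim\epsilon^{\delta_2}\mathcal M(|a|)(x)$, and H\"older's inequality, the boundedness properties of $\mathcal M$, and the normalization of $a$ give $I_1\lesssim\epsilon^{p\delta_2}$, exactly as for the corresponding term in Lemma~\ref{Lem3: Hardy spaces}. For $I_2$ I would split into the two size regimes of $r$ as there. When $\epsilon\rho(x_0)/4\le r\le\epsilon\rho(x_0)$, apply (B2) together with $\|a\|_{L^1}\lesssim\mu(B)^{1-1/p}$, convert the Gaussian factor via \eqref{eq1-rho exp} (using $\epsilon\rho(x_0)\sim r$) into $(r/d(x,x_0))^N$, and sum over the annuli $S_j(B)$ with $N>n(1-p)/p$; this gives $I_2\lesssim\epsilon^{p\delta_2}$. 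When $r<\epsilon\rho(x_0)/4$, use the cancellation $\int a\,d\mu=0$ to write $(e^{-t\LL}-e^{-tL})a(x)=-\int_B\big(q_t(x,y)-q_t(x,x_0)\big)a(y)\,d\mu(y)$; for $x$ outside a suitable fixed dilate of $B$ the hypotheses of the symmetrized (B3) hold (note $d(y,x_0)\le r<\rho(x_0)/4\lesssim\rho(y)$ by Lemma~\ref{lem-criticalfunction}), while the bounded intervening annulus is absorbed into an $I_1$-type estimate. Using the $[d(y,x_0)/\sqrt t]^{\delta_3}$ branch of (B3), then \eqref{eq1-rho exp} and the same annular summation, one gets $I_2\lesssim\epsilon^{p\delta_3}$, the series converging precisely because $p>n/(n+\delta_3)$. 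Combining the estimates proves the lemma with $\kappa=p(\delta_2\wedge\delta_3)$.

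The routine parts — the annular summations, the H\"older estimates and the maximal-function bounds — are identical to those already carried out in Lemma~\ref{Lem3: Hardy spaces} and need not be repeated. The one place that deserves care, and which I expect to be the main (if minor) obstacle, is the use of the \emph{second-variable} smoothness of $q_t$ in the regime $r<\epsilon\rho(x_0)/4$: one must invoke the symmetry of $q_t$, check the domain restriction in (B3) after exchanging the two spatial variables (which forces integration over a dilate of $B$ slightly larger than $4B$ and disposal of the intervening bounded annulus via the $I_1$-type argument), and verify that the admissible range $t\le[\epsilon\rho(x)]^2$ is compatible with the $\rho(x)$ appearing in (B2) and with the hypotheses of \eqref{eq1-rho exp}.
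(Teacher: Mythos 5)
Your proposal tracks the paper's proof exactly: the paper disposes of this lemma in a single sentence, declaring it analogous to Lemma~\ref{Lem3: Hardy spaces} with (B2) and (B3) substituting for \eqref{eq1-Q t rho} and \eqref{eq2-Q t rho}, and your elaboration of that analogy (including the reduction to atoms via Lemma~\ref{lem-Tatom}, the splitting into $I_1$ over $4B$ and $I_2$ over $X\setminus 4B$, and the two size regimes for $r$) is correct. Your flag of the symmetry / domain-restriction issue in invoking (B3) with the two spatial variables exchanged is precisely the point that needs care and that the paper's ``minor modification'' remark glosses over.

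There is, however, one genuine misstep in the far-field cancellation estimate (the case $r<\epsilon\rho(x_0)/4$). You propose using the $[d(y,x_0)/\sqrt{t}]^{\delta_3}$ branch of (B3), but that branch does not furnish the $\epsilon^{\kappa}$ gain. Following it through, one obtains a factor $(r/\sqrt{t})^{\delta_3}\exp(-d(x,x_0)^2/(ct))$; trading part of the Gaussian for powers of $\sqrt{t}/d(x,x_0)$ gives $(r/d(x,x_0))^{\delta_3}\,(\sqrt{t}/d(x,x_0))^{N}\le (r/d(x,x_0))^{\delta_3}\,(\epsilon\rho(x)/d(x,x_0))^{N}$, and the $\epsilon^{N}$ is exactly absorbed by $(\rho(x)/d(x,x_0))^{N}\ge 1$ on the annuli $4r\le d(x,x_0)\lesssim\epsilon\rho(x_0)$, which are present precisely because $r<\epsilon\rho(x_0)/4$. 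The branch that produces the gain is the $\rho$ branch: after the symmetry swap one has $[d(y,x_0)/\rho(x)]^{\delta_3}\le (r/\rho(x))^{\delta_3}$, whose $\rho(x)$ in the denominator cancels against the $(\epsilon\rho(x)/d(x,x_0))^{\delta_3}$ coming from the Gaussian with $\sqrt{t}\le\epsilon\rho(x)$, yielding $(\epsilon r/d(x,x_0))^{\delta_3}$ and then $\epsilon^{p\delta_3}$ after annular summation, using $p>n/(n+\delta_3)$. This is the direct analogue of how \eqref{eq2-Q t rho} is used in the proof of Lemma~\ref{Lem3: Hardy spaces}, and is what the paper's one-line proof is pointing at. With that one substitution your argument is correct.
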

\begin{proof}
	The proof is completely analogous to that of Lemma \ref{Lem3: Hardy spaces} with a minor modification of using (B2) and (B3) in place of \eqref{eq1-Q t rho} and \eqref{eq2-Q t rho}, respectively.
\end{proof}

We now turn to the proof of Theorem \ref{mainthm3}.

\begin{proof}[Proof of Theorem \ref{mainthm3}:]
As in the proof of Theorem \ref{mainthm2s}, we first prove  $h^{p,q}_{at,\rho}(X)\cap L^2(X)\hookrightarrow H^p_{L, {\rm max}}(X)$. We note that the maximal operator $\mathcal{M}_{\max, L}$ is dominated by the Hardy-Littlewood maximal function $\mathcal{M}f$. This fact along with Lemma \ref{lem-Tatom} reduces our task to showing 
\begin{align}\label{maximal atoms-thm3}
\|\mathcal{M}_{\max, L}a\|^p_{L^p}\leq C,
\end{align}
for some uniform constant $C$ and any $(p,q,\rho)$-atom $a$ associated to a ball $B=B(x_0,r)$.

We write
$$
\|\mathcal{M}_{\max, L}a\|^p_{L^p}\leq \|\mathcal{M}_{\max, L}a\|^p_{L^p(4B)}+\|\mathcal{M}_{\max, L}a\|^p_{L^p(M\backslash 4B)}: =I_1+I_2.
$$
The first term can be estimated exactly the same as the term $I_1$ in the proof of Theorem \ref{mainthm2s}.
 
For the term $I_2$ we consider two cases.

\textbf{Case 1: $\rho(x_0)/4 \leq r\leq \rho(x_0)$.} From (B1) and the fact that $r\sim \rho(x_0)\sim \rho(z)$ for $z\in B$ and $d(x,y)\sim d(x,x_0)$ for $y,x_0\in B$ and $x\in (4B)^c$, we have, for $N>n(1-p)/p$,
$$
\begin{aligned}
I_2&\lesi \int_{M\backslash 4B}\sup_{t>0}\sup_{d(x,y)<t}\Big[\int_B \f{1}{\mu(B(z,\sqrt{t}))}\exp\Big(-\f{d(y,z)^2}{ct}\Big)\Big(\f{\rho(z)}{\sqrt{t}}\Big)^N|a(z)|d\mu(z)\Big]^pd\mu(x)\\
&\lesi \int_{M\backslash 4B}\sup_{t>0}\sup_{d(x,y)<t}\Big[\int_B \f{1}{\mu(B(z,\sqrt{t}))}\exp\Big(-\f{d(y,z)^2}{ct}\Big)\Big(\f{\rho(x_0)}{\sqrt{t}}\Big)^N|a(z)|d\mu(z)\Big]^pd\mu(x).
\end{aligned}
$$
Due to
\[
\exp\Big(-\f{d(y,z)^2}{ct}\Big)\sim \exp\Big(-\f{d(x,z)^2}{ct}\Big), \ \ \text{as $d(x,y)<t$},
\]
we have
$$
\begin{aligned}
I_2&\lesi \int_{X\backslash 4B}\sup_{t>0}\Big[\int_B \f{1}{\mu(B(z,\sqrt{t}))}\exp\Big(-\f{d(x,z)^2}{t}\Big)\Big(\f{\rho(x_0)}{\sqrt{t}}\Big)^N|a(z)|d\mu(z)\Big]^pd\mu(x)\\
&\lesi \int_{X\backslash 4B}\sup_{t>0}\Big[\int_B \f{1}{\mu(B(z,d(x,z)))}\exp\Big(-\f{d(x,z)^2}{t}\Big)\Big(\f{\rho(x_0)}{\sqrt{t}}\Big)^N|a(z)|d\mu(z)\Big]^pd\mu(x).
\end{aligned}
$$
This along with the fact that $d(x,z)\sim d(x,x_0)$ implies
$$
\begin{aligned}
I_2
&\lesi \int_{X\backslash 4B}\sup_{t>0}\Big[\int_B \f{1}{\mu(B(x,d(x,x_0)))}\exp\Big(-\f{d(x,x_0)^2}{t}\Big)\Big(\f{\rho(x_0)}{\sqrt{t}}\Big)^N|a(z)|d\mu(z)\Big]^pd\mu(x)\\
&\lesi \int_{X\backslash 4B}\Big[\int_B \f{1}{\mu(B(x,d(x,x_0)))}\Big(\f{r}{d(x,x_0)}\Big)^N|a(z)|d\mu(z)\Big]^pd\mu(x)\\
&\leq C.
\end{aligned}
$$

\textbf{Case 2: $r<\rho(x_0)/4$.}

We now break $I_2$ into 2 terms as follows:
$$
\begin{aligned}
I_2&\lesi \int_{X\backslash 4B}\sup_{0<t\leq 4r^2}\sup_{d(x,y)<t}\Big|\int_B p_t(y,z)a(z)d\mu(z)\Big|^pd\mu(x)\\
& \ \ + \int_{X\backslash 4B}\sup_{ 4r^2\le t}\sup_{d(x,y)<t}\Big|\int_B p_t(y,z)a(z)d\mu(z)\Big|^pd\mu(x)\\
&=I_{21}+I_{22}.
\end{aligned}
$$
The remaining parts can be done in the same manner as those in the proof of Theorem \ref{mainthm2s} using (B1) and \eqref{Holder ptxy} in place of (A2) and (A3), and so we omit the details. This completes the proof of \eqref{maximal atoms-thm3}.

Due to the fact that $H^p_{L, \max}(X)\subset H^p_{L, \rad}(X)$, it remains to verify that $H^p_{L, \rad}(X)\cap L^2(X)\hookrightarrow  h^{p,q}_{at,\rho}(X)$. This part can be done mutatis mutandis as in the proof of Theorem \ref{mainthm2s} by replacing $T_{t,\rho}$ and Lemma \ref{Lem3: Hardy spaces} by $e^{-tL}$ and Lemma \ref{Lem3: Hardy spaces-thm3}, respectively.
\end{proof}

\section{Some applications}\label{sec: applications}

We now give some applications to the main results. The list is not exhaustive but is intended to
show the variety of possible
applications and the generality of our assumptions.
\subsection{Schr\"odinger operators on noncompact Riemannian manifolds}
Let $X$ be a complete connected Riemannian manifold, $\mu$ be the
Riemannian measure and $\nabla$ be the Riemannian gradient. Let
$-\Delta$ be the Laplace-Beltrami operator. It is well-known that $-\Delta$ satisfies (A4). The geodesic distance between $x\in X$ and $y\in X$ will be denoted by $d(x,y)$. Denote by $B(x, r)$ the open ball
of radius $r > 0$ and center $x \in X$. Assume that the measure $\mu$
satisfies the doubling property, that is, there exists a constant
$C>0$ and $n\geq 0$ so that
\begin{equation}\label{doubling-manifold}
\mu(B(x,\lambda r))\leq C\lambda^n \mu(B(x,r)),
\end{equation}
for all $x\in X, r>0$ and $\lambda\geq 1$.

We also assume that $X$ admits a Poincar\'e inequality.  That is, there exists a constant $C>0$ such that for every function $f\in C_0^{\infty}(X)$ and every ball $B\subset X$, we have
\begin{equation}\label{Poincare-p}
\begin{aligned}
\Big(\fint_B|f-f_B|^2\,d\mu\Big)^{1/2} \le C r_B \Big(\fint_B|\nabla f|^2\,d\mu\Big)^{1/2}.
\end{aligned}
\end{equation}

Denote by $\widetilde{p}_t(x,y)$ the associated kernel to the semigroup $e^{t\Delta}$. 
It is well-known that  the doubling condition \eqref{doubling-manifold} and the Poincar\'e inequality \eqref{Poincare-p} imply Gaussian  and  H\"older continuity estimates for $-\Delta$. More precisely, there exist $C,c>0$ and $\delta_1$ so that 
\begin{equation}\label{eq-G manifold}
0\leq \widetilde{p}_t(x,y)\leq \f{C}{\mu(B(x,\sqrt{t}))}\exp\Big(-\f{d(x,y)^2}{c t}\Big)
\end{equation}
for all $t>0, x,y\in X$, and
\begin{equation}\label{eq-H manifold}
|\widetilde{p}_t(x,y)-\widetilde{p}_t(x',y)|\leq C\Big(\f{d(x,x')}{\sqrt{t}}\Big)^{\delta_1}\f{1}{\mu(B(x,\sqrt{t}))}\exp\Big(-\f{d(x,y)^2}{ct}\Big),
\end{equation}
for all $t>0$ and $d(x,x')<(d(x,y)+\sqrt{t})/2$. See for example \cite{Sc1, Sc2}.

Note that the conditions \eqref{eq-G manifold} and \eqref{eq-H manifold} imply that for any $\delta\in (0,\delta_1]$ we have
\begin{equation}\label{eq2-H manifold}
|\widetilde{p}_t(x,y)-\widetilde{p}_t(x',y)|\leq C\Big(\f{d(x,x')}{\sqrt{t}}\Big)^{\delta}\f{1}{\mu(B(y,\sqrt{t}))}\Big[\exp\Big(-\f{d(x,y)^2}{ct}\Big)+\exp\Big(-\f{d(x',y)^2}{ct}\Big)\Big],
\end{equation}
for all $t>0$ and $x,x',y\in X$.

Let $\rho$ be a critical function on $X$. Hence we may apply Theorem \ref{mainthm2s} to $\LL=-\Delta$  to obtain

\begin{thm}
 For $p\in (\f{n}{n+\delta_1},1]$ and $q\in [1,\vc]\cap (p,\vc]$, we have
\[
h^{p,q}_{at,\rho}(X)\equiv  h^{p}_{ -\Delta,{\rm max}, \rho}(X)\equiv  h^{p}_{-\Delta,{\rm rad},  \rho}(X).
\] 
\end{thm}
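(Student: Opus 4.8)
The plan is to check that the hypotheses of Theorem~\ref{mainthm2s} are met by $\LL=-\Delta$ and then quote that theorem directly. Condition (A1) is classical: the Laplace--Beltrami operator on a complete Riemannian manifold is essentially self-adjoint on $C_0^\infty(X)$, and its closure is a nonnegative self-adjoint operator on $L^2(X,\mu)$. Condition (A4), namely $\int_X\widetilde p_t(x,y)\,d\mu(y)=1$, is the stochastic completeness of $-\Delta$; it holds under the doubling condition \eqref{doubling-manifold} (indeed under far weaker volume growth) and is recorded in the text. Thus the only substantive points are (A2) and (A3).

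For these I would invoke the heat kernel bounds quoted immediately before the statement. By the results of Grigor'yan and Saloff-Coste (see \cite{Sc1,Sc2}), the conjunction of the doubling property \eqref{doubling-manifold} and the $L^2$-Poincar\'e inequality \eqref{Poincare-p} yields the two-sided Gaussian heat kernel estimate, the upper half of which is exactly \eqref{eq-G manifold}; this is precisely (A2) with Gaussian constant the $c$ appearing there. The same hypotheses give the spatial H\"older regularity \eqref{eq-H manifold} of $\widetilde p_t$ with some exponent $\delta_1\in(0,1]$, and one observes that \eqref{eq-H manifold} is literally condition (A3): the admissible range $d(x,x')<(d(x,y)+\sqrt t)/2$ matches that in \eqref{H}, and on that range the exponential factor in \eqref{eq-H manifold} is comparable to the one in \eqref{H} by \eqref{doub2s} together with $e^{-s}s^\alpha\lesi e^{-s/2}$. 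Hence $-\Delta$ satisfies (A1)--(A4) with the doubling exponent $n$ of \eqref{doubling-manifold} and the H\"older exponent $\delta_1$.

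With this verified, and with $\rho$ a critical function on $X$ by assumption, Theorem~\ref{mainthm2s} applies without change for $p\in(\f{n}{n+\delta_1},1]$ and $q\in[1,\vc]\cap(p,\vc]$, yielding $h^{p,q}_{at,\rho}(X)\equiv h^p_{-\Delta,\max,\rho}(X)\equiv h^p_{-\Delta,\rad,\rho}(X)$ with equivalent norms, as claimed. There is no real obstacle here beyond bookkeeping; the one place to be careful is to cite a reference furnishing the one-sided pointwise bound \eqref{eq-H manifold} rather than only the symmetrized (and strictly weaker) form \eqref{eq2-H manifold}, since it is \eqref{eq-H manifold} in the shape of (A3) that Theorem~\ref{mainthm2s} consumes.
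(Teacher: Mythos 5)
Your proposal is correct and matches the paper's argument exactly: the paper records \eqref{eq-G manifold}, \eqref{eq-H manifold}, and the conservativity of $e^{t\Delta}$ in the paragraphs leading up to the statement, notes these give (A1)--(A4) for $\LL=-\Delta$, and then simply invokes Theorem~\ref{mainthm2s}. Your extra caution about \eqref{eq-H manifold} versus \eqref{eq2-H manifold} is a sensible bookkeeping remark but not a point of divergence from the paper.
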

This result is new even for $p=1$. Moreover, in the particular case $\rho\equiv 1$, we recover the result in \cite{YZ2}.

%Note that the doubling property (\ref{doubling-manifold}) and a Poincar\'e inequality \eqref{Poincare-p} also imply that there exist $\kappa\in (0,n]$ and $C>0$ so that \begin{equation}\label{reversedoubling}
%\mu(B(x,\lambda r))\geq C\lambda^\kappa \mu(B(x,r)),
%\end{equation}
%for all $x\in X, r>0$ and $\lambda\geq 1$.

We now consider consider a Schr\"odinger operator $L=-\Delta+V$ where $V\in A_\vc\cap RH_\sigma$ with $\sigma>\max\{1,n/2\}$. See Subsection 6.1 for the definitions of the class $A_\vc$ and $RH_q$. Following the idea in \cite{Sh} we define the critical function $\rho$ on $X$ by setting
\begin{equation}\label{rhofunction}
\rho(x)=\sup\Big\{r>0: \f{r^2}{\mu(B(x,r))}\int_{B(x,r)}V(y)d\mu(y)\leq 1\Big\}.
\end{equation}
We then have the following result.
\begin{thm}
	\label{thm-Schrodinger manifolds} 
	Let $(X,d,\mu)$ satisfy \eqref{doubling-manifold} and \eqref{Poincare-p}. Let $L=-\Delta+V$ where $V\in A_\vc\cap RH_q$ with $q>\max\{1,n/2\}$, and let $\rho$ be defined as in \eqref{rhofunction}. Then $L$ satisfies (B1)-(B3) with $\LL=-\Delta$, $\delta_2=2-n/q$ and any $\delta_3<\min\{\delta_1,\delta_2\}$.
\end{thm}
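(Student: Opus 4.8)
The plan is to verify the three conditions (B1), (B2) and (B3) with $\mathfrak L=-\Delta$, noting that (A1)--(A4) for $-\Delta$ have already been recorded above via \eqref{eq-G manifold}, \eqref{eq-H manifold} (and its H\"older consequence \eqref{eq2-H manifold}) and the conservation property. Two ingredients will be used throughout. First, since $V\ge0$, the Feynman--Kac/domination bound $0\le p_t(x,y)\le\widetilde p_t(x,y)$ and the perturbation identity
\[
q_t(x,y)=p_t(x,y)-\widetilde p_t(x,y)=-\int_0^t\!\int_X\widetilde p_{t-s}(x,z)\,V(z)\,p_s(z,y)\,d\mu(z)\,ds.
\]
Second, Shen's Fefferman--Phong estimate for $V\in A_\infty\cap RH_q$ with $q>\max\{1,n/2\}$, $n$ the doubling exponent from \eqref{doubling-manifold} (the $A_\infty$ hypothesis guarantees that $V\,d\mu$ is a doubling measure, which is what Shen's argument requires): writing $\Psi(x,r):=\frac{r^2}{\mu(B(x,r))}\int_{B(x,r)}V\,d\mu$, one has $\Psi(x,\rho(x))\sim1$, $\Psi(x,r)\lesi(r/\rho(x))^{\delta_2}$ for $r\le\rho(x)$ with $\delta_2=2-n/q>0$, and $\Psi(x,\cdot)$ grows at most polynomially; this is contained in \cite{Sh} and, for the present setting, in \cite{YZ}, and $\rho$ from \eqref{rhofunction} is already known there to be a critical function.

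I would establish (B2) first. When $\sqrt t>\rho(x)$ it is immediate: then $\big(\tfrac{\sqrt t}{\sqrt t+\rho(x)}\big)^{\delta_2}\sim1$ and $|q_t|\le2\widetilde p_t$ by domination. When $\sqrt t\le\rho(x)$, I substitute $0\le p_s\le\widetilde p_s$ and \eqref{eq-G manifold} into the perturbation identity and bound the inner integral by H\"older in $z$ with exponents $q,q'$: the reverse H\"older inequality controls $\big(\int_{B(x,C\sqrt t)}V^q\big)^{1/q}$ by $\mu(B(x,\sqrt t))^{1/q}\,\Psi(x,\sqrt t)/t$, the Gaussian tails outside $B(x,C\sqrt t)$ are summable, and $\|\widetilde p_{t-s}(x,\cdot)\widetilde p_s(\cdot,y)\|_{q'}$ is estimated using \eqref{eq-G manifold}. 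This leaves a factor $(t/s)^{n/(2q)}$ in the $s$-integrand whose integral over $(0,t/2)$ converges \emph{exactly because} $q>n/2$; together with the symmetric contribution from $s\in(t/2,t)$ (handled through $y$) this yields $|q_t(x,y)|\lesi\Psi(x,\sqrt t)\,\mu(B(x,\sqrt t))^{-1}e^{-cd(x,y)^2/t}$, and inserting $\Psi(x,\sqrt t)\lesi(\sqrt t/\rho(x))^{\delta_2}$ gives (B2).

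Condition (B3) then follows by differentiating the perturbation identity in the first variable,
\[
q_t(x,y)-q_t(\overline{x},y)=-\int_0^t\!\int_X\big[\widetilde p_{t-s}(x,z)-\widetilde p_{t-s}(\overline{x},z)\big]V(z)\,p_s(z,y)\,d\mu(z)\,ds,
\]
applying the H\"older regularity \eqref{eq-H manifold} to the bracket when $d(x,\overline{x})\le\sqrt{t-s}$ (and, when $d(x,\overline{x})>\sqrt{t-s}$, using $|\widetilde p_{t-s}(x,z)|+|\widetilde p_{t-s}(\overline{x},z)|$ together with $d(z,y)\ge d(x,y)/2$), and then repeating the estimate of (B2) with the extra factor $(d(x,\overline{x})/\sqrt{t-s})^{\delta_1}$ in the integrand. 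Pulling out $(d(x,\overline{x})/\sqrt t)^{\delta_3}$ with $\delta_3\le\delta_1$ while keeping $\delta_3<\delta_2$ so the $ds$-integral stays convergent gives the $(d(x,\overline{x})/\sqrt t)^{\delta_3}$ half of the minimum; the $(d(x,\overline{x})/\rho(y))^{\delta_3}$ half is obtained on the range $d(x,y)\lesi\sqrt t\le\rho(y)$ (outside which the Gaussian factor alone suffices) by additionally spending the residual $\delta_2$-decay of (B2), namely $(d(x,\overline{x})/\sqrt t)^{\delta_3}(\sqrt t/\rho(x))^{\delta_3}\sim(d(x,\overline{x})/\rho(y))^{\delta_3}$, using $\rho(x)\sim\rho(\overline{x})\sim\rho(y)$ from Lemma \ref{lem-criticalfunction} and $d(x,\overline{x})\le\rho(x)$. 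This is exactly why $\delta_3$ may be any number strictly below $\min\{\delta_1,\delta_2\}$. Finally, Remark \ref{rem1} packages (A3) and (B3) into the H\"older bound \eqref{Holder ptxy} for $p_t$, so all hypotheses of Theorem \ref{mainthm3} hold.

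For (B1), domination gives the Gaussian bound without the factor $(1+\sqrt t/\rho(x)+\sqrt t/\rho(y))^{-N}$; to produce this arbitrary-order decay I would follow the standard argument for $RH_q$ potentials (cf. \cite{Sh}, and \cite{DZ1,DZ2,YZ} for the heat-kernel form): the Feynman--Kac identity $\int_X p_t(x,y)\,d\mu(y)=\mathbb E^x\big[e^{-\int_0^tV(B_s)\,ds}\big]$ together with the quantitative smallness of this expectation when $\sqrt t\gg\rho(x)$, combined with a Gaussian off-diagonal localization and the self-adjointness of $L$ (for the symmetric dependence on $\rho(y)$). I expect this to be the main obstacle: it is the only point where neither domination nor the finitely-iterated perturbation series suffices and the $RH_q$ structure of $V$ must be used in an essential quantitative way; once (B1) is available the remaining estimates are, as above, soft consequences of the perturbation identity.
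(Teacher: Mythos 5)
For (B2) and (B3) your outline matches the paper's proof of Propositions \ref{Proposition2: heat kernel bound} and \ref{Proposition3: heat kernel bound}: both start from the perturbation identity, split the $s$-integral at $t/2$, use the H\"older regularity \eqref{eq2-H manifold} for the $\widetilde{p}$-factor, and exploit $q>n/2$ to make the $s$-integral converge — with $\delta_3<\delta_2$ needed precisely so that the $V$-contribution $(\sqrt{s}/\rho)^{\delta_2}$ dominates the singular factor $(d(x,\overline{x})/\sqrt{s})^{\delta_3}$. The paper handles the inner $z$-integral $\int V(z)\,e^{-d(x,z)^2/(ct)}\,d\mu(z)$ via the dedicated Lemma \ref{Lem: V rho} rather than through the $L^q/L^{q'}$ H\"older computation you sketch; both routes rest on the same reverse H\"older/Fefferman--Phong input and deliver the same powers, so this is a cosmetic difference. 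One small caution: the paper's proof of (B3) inserts the (B1) decay of $p_{t-s}(z,y)$ (Proposition \ref{Proposition1: heat kernel bound}) rather than the cruder domination $p_{t-s}\le\widetilde{p}_{t-s}$, and then tunes the exponent $N$ to absorb the $\rho(x)\to\rho(y)$ change via \eqref{criticalfunction}; your ``spend the residual $\delta_2$-decay of (B2)'' step is in the right spirit but skips this bookkeeping.

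The genuine divergence — and the genuine gap — is (B1). You correctly flag it as the main obstacle, but your sketch is a pointer to the Euclidean Feynman--Kac/localization literature, not a proof. In particular, the identity $\int_X p_t(x,y)\,d\mu(y)=\mathbb{E}^x\bigl[e^{-\int_0^t V(B_s)ds}\bigr]$ gives an $L^1_y$ bound, and upgrading this to the pointwise bound $p_t(x,y)\lesssim \mu(B(x,\sqrt t))^{-1}e^{-d(x,y)^2/(ct)}(1+\sqrt t/\rho(x)+\sqrt t/\rho(y))^{-N}$ — with the Gaussian and the $\rho$-decay both present at the same time — is exactly the nontrivial step you would have to supply. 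The paper does not follow that route at all. Its Appendix (Lemmas \ref{Lem: Feff-Phong manifold}--\ref{Lem: improved subs manifold} and Theorem \ref{Th: improved heat kernel}) develops a purely PDE argument on manifolds: the improved Fefferman--Phong inequality of Badr--Ben Ali, a parabolic Caccioppoli inequality for $(\partial_t-\Delta+V)$, a mean-value inequality for weak (sub)solutions, and an iterated subsolution estimate that directly produces the polynomial factor $\bigl(1+r_Q^2\fint_{B_Q}V\bigr)^{-k}$ pointwise; combining this with the scaling identity $\rho(x)^2\fint_{B(x,\rho(x))}V=1$ and Lemma \ref{Lem1: rho-manifold}(b) gives (B1). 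This is the bulk of the technical content of the proof of Theorem \ref{thm-Schrodinger manifolds}, and it is precisely the part your proposal leaves open.
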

The proof of this theorem is quite long and will be given in Subsection \ref{subsect-proof of 6.2}. More precisely, the proof of (B1), (B2) and (B3) will be addressed in Propositions \ref{Proposition1: heat kernel bound}, \ref{Proposition2: heat kernel bound} and \ref{Proposition3: heat kernel bound}, respectively.\\

As a direct consequence of Theorem \ref{mainthm3} and Theorem \eqref{thm-Schrodinger manifolds} we obtain:
\begin{thm}
	Assume that $(X,d,\mu)$ satisfy \eqref{doubling-manifold} and \eqref{Poincare-p}. Let $L=-\Delta +V$ with $V\in A_\vc\cap RH_q$ with $q>\max\{1,n/2\}$. Let $p\in (\f{n}{n+\delta_0},1]$  and $q\in [1,\vc]\cap(p,\vc]$ with $\delta_0=\min\{\delta_1,2-n/q\}$. Then we have
	\[
	h^{p,q}_{at,\rho}(X)\equiv H^p_{L, {\rm max}}(X)\equiv H^p_{L, \rad}(X).
	\] 
\end{thm}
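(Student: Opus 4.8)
The plan is to show that the theorem is an immediate corollary of Theorem~\ref{mainthm3}, applied with $\LL=-\Delta$ the Laplace--Beltrami operator and $L=-\Delta+V$; the only work is to verify the hypotheses (A1)--(A4) for $\LL$ and (B1)--(B3) for $L$, and to match the admissible range of $p$.

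First I would verify that $\LL=-\Delta$ satisfies (A1)--(A4). Self-adjointness and nonnegativity on $L^2(X)$ ((A1)) are standard for the Laplace--Beltrami operator on a complete Riemannian manifold, and the conservation property $\int_X\widetilde{p}_t(x,y)\,d\mu(y)=1$ ((A4)) is likewise classical (and is noted in the text). Under the doubling condition \eqref{doubling-manifold} and the Poincar\'e inequality \eqref{Poincare-p}, the heat kernel enjoys the Gaussian upper bound \eqref{eq-G manifold} ((A2)) and the H\"older continuity estimate \eqref{eq-H manifold} with exponent $\delta_1$ ((A3)); these are the results of Saloff-Coste and Sturm cited after \eqref{eq-H manifold}. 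Thus $\LL=-\Delta$ meets all four assumptions.

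Next I would invoke Theorem~\ref{thm-Schrodinger manifolds}: since $V\in A_\infty\cap RH_q$ with $q>\max\{1,n/2\}$, the function $\rho$ defined by \eqref{rhofunction} is a critical function (by the result of Shen and Yang--Zhou recalled after \eqref{gammafunction}), and $L=-\Delta+V$ satisfies (B1)--(B3) relative to $\LL=-\Delta$ with $\delta_2=2-n/q$ and with $\delta_3$ any number satisfying $0<\delta_3<\min\{\delta_1,\delta_2\}$. It remains only to handle the range of $p$. Put $\delta_0=\min\{\delta_1,2-n/q\}=\min\{\delta_1,\delta_2\}$ and fix $p\in(\tfrac{n}{n+\delta_0},1]$. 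Since $\delta\mapsto\tfrac{n}{n+\delta}$ is continuous and strictly decreasing and the inequality $p>\tfrac{n}{n+\delta_0}$ is strict, we may choose $\delta_3<\delta_0$ so close to $\delta_0$ that $p>\tfrac{n}{n+\delta_3}$; for such a $\delta_3$ one has $\min\{\delta_1,\delta_2,\delta_3\}=\delta_3$. Theorem~\ref{mainthm3} now applies with this choice and yields $h^{p,q}_{at,\rho}(X)\equiv H^p_{L,\max}(X)\equiv H^p_{L,\rad}(X)$ for every $q\in[1,\infty]\cap(p,\infty]$, which is the assertion.

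The genuinely substantial input is not in this deduction but in Theorem~\ref{thm-Schrodinger manifolds} --- verifying the perturbed Gaussian bounds (B1)--(B3) for $-\Delta+V$ from the reverse H\"older property of $V$, which is carried out in Subsection~\ref{subsect-proof of 6.2} --- and, of course, in Theorem~\ref{mainthm3} itself. Given those two results, the present statement presents no further obstacle beyond the elementary exponent bookkeeping described above.
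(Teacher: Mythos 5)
Your proposal is correct and matches the paper's intended route: the paper states the result is a direct consequence of Theorem~\ref{mainthm3} and Theorem~\ref{thm-Schrodinger manifolds} without further commentary, and your deduction is exactly that. The only nontrivial addition you make is spelling out the exponent bookkeeping --- since Theorem~\ref{thm-Schrodinger manifolds} provides (B3) only for $\delta_3<\min\{\delta_1,\delta_2\}$, one must note that for each fixed $p>\tfrac{n}{n+\delta_0}$ one can choose $\delta_3<\delta_0$ with $p>\tfrac{n}{n+\delta_3}$ and then apply Theorem~\ref{mainthm3} --- which is precisely the detail the paper glosses over.
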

The result in this theorem is new even for $p=1$.
\subsection{Laguerre operators}

Let $X=((0,\vc)^m, d\mu(x))$ where 
$d\mu(x)=d\mu_1(x_1)\ldots d\mu_m(x_m)$
and $d\mu_k = x_k^{\alpha_k}dx_k$, $\alpha_k >-1$, for 
$k=1,\ldots, m$ ($dx_{j}$ being the one dimensional Lebesgue measure).
We endow $\mathcal{X}$ with the distance $d$ defined for
$x=(x_1,\ldots,x_m)$ and $y=(y_1,\ldots,y_m) \in X$ as
\begin{equation*}
d(x,y):=|x-y|=\Big(\sum_{k=1}^m|x_k-y_k|^2\Big)^{1/2}.
\end{equation*}
Then it is clear that 
\begin{equation}\label{volume-Bessel}
\mu(B(x,r))\sim r^m
\prod_{k=1}^m (x_k+r)^{\alpha_k}.
\end{equation}
Note that this estimate implies the doubling property \eqref{doub2} with $n=m+\alpha_1+\ldots+\alpha_m$.

For an element $x\in \mathbb{R}^m$, unless specified otherwise,
we shall write $x_k$ for the $k$-th component of $x$, 
$k=1,\ldots,m$. Moreover, for $\lambda\in \mathbb{R}^m$, 
we write $\lambda^2=(\lambda^2_1,\ldots, \lambda_m^2)$.

We consider the second order Bessel differential operator
\begin{align}\label{bessel}
\LL=-\Delta-\sum_{k=1}^m \f{\alpha_k}{x_k}\f{\partial}{\partial x_k}
\end{align}
whose system of eigenvectors is defined by
$$
E_\lambda(x):= \prod_{k=1}^mE_{\lambda_k}(x_k),
\quad
E_{\lambda_k}(x_k):=
(x_k\lambda_k)^{-(\alpha_k-1)/2}J_{(\alpha_k-1)/2}
(x_k\lambda_k),
\quad
\lambda, x\in \mathcal{X}
$$
where $J_{(\alpha_k-1)/2}$ is the Bessel function of the first 
kind of order $(\alpha_k-1)/2$ (see \cite{L}). 
It is known that $\LL(E_\lambda)=|\lambda|^2 E_{\lambda}$. 
Moreover, the functions $E_{\lambda_k}$ are  eigenfunctions 
of the one-dimension Bessel operators
$$
\LL_k=-\f{\partial^2}{\partial x_k\,^2}-\f{\alpha_k}
{x_k}\f{\partial}{\partial x_k}
$$
and indeed $\LL_k(E_{\lambda_k})=\lambda_k^2 E_{\lambda_k}$ for 
$k=1,\ldots, m$.

It is well known that $\LL$ satisfies (A1)-(A4) with $\delta_1=1$. See for example \cite{L}. Hence, as a consequence of Theorem \ref{mainthm2s} we have:
\begin{thm}
	Let $\LL$ be the Bessel operator defined in \eqref{bessel} and $\rho$ be a critical function on $X$. If $p\in (\f{n}{n+1},1]$ and $q\in [1,\vc]\cap (p,\vc]$ then we have
		\[
		h^{p,q}_{at,\rho}(X)\equiv  h^p_{\LL,\max,\rho}(X)\equiv  h^p_{\LL,\rad,\rho}(X).
		\]
\end{thm}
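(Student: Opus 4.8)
The plan is to verify that the Bessel operator $\LL$ of \eqref{bessel} fits the standing hypotheses of Section \ref{sect-mainresult}, namely (A1)--(A4) with H\"older exponent $\delta_1=1$, and then to quote Theorem \ref{mainthm2s}. Recall first that, by the volume estimate \eqref{volume-Bessel}, the measure $d\mu(x)=\prod_{k=1}^m x_k^{\alpha_k}\,dx_k$ on $X=((0,\vc)^m,|\cdot|)$ satisfies the doubling condition \eqref{doub2} with $n=m+\alpha_1+\dots+\alpha_m$, so $X$ is a space of homogeneous type and the abstract theory applies provided the operator conditions hold.

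For (A1), note that $\LL=\sum_{k=1}^m\LL_k$ is the tensor sum of the one-dimensional Bessel operators $\LL_k$ on $L^2((0,\vc),d\mu_k)$; each $\LL_k$ is the nonnegative self-adjoint operator associated with the closure of the Dirichlet form $f\mapsto\int|\partial_{x_k}f|^2\,d\mu_k$, and hence so is $\LL$ (with form $\int_X|\nabla f|^2\,d\mu$). For (A2) and (A3), the heat kernel factorizes as $\widetilde p_t(x,y)=\prod_{k=1}^m\widetilde p^{(k)}_t(x_k,y_k)$, where $\widetilde p^{(k)}_t$ is the one-dimensional Bessel heat kernel; the Gaussian bound and the Lipschitz-type regularity estimate (which yields $\delta_1=1$) for each factor are classical, and combining them via \eqref{volume-Bessel} and the doubling property gives \eqref{GE} and \eqref{H} on $X$; see \cite{L} and the references therein. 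Finally (A4) holds because the (suitably normalized) constant function is the eigenfunction $E_0$ of $\LL$ with eigenvalue $0$, so that $e^{-t\LL}1=1$, that is, $\int_X\widetilde p_t(x,y)\,\dy=1$.

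With (A1)--(A4) established and $\delta_1=1$, Theorem \ref{mainthm2s} applied to $\LL$ and the given critical function $\rho$ gives $h^{p,q}_{at,\rho}(X)\equiv h^p_{\LL,\max,\rho}(X)\equiv h^p_{\LL,\rad,\rho}(X)$ for every $p\in(\f{n}{n+1},1]$ and $q\in[1,\vc]\cap(p,\vc]$, which is the desired statement. The only genuinely non-formal point is the passage from the one-dimensional kernel estimates to the multi-dimensional ones: one must check that $\prod_k\mu_k(B(x_k,\sqrt t))\sim\mu(B(x,\sqrt t))$ and that the sum over $k$ of the one-dimensional Lipschitz increments telescopes into the bound in \eqref{H}. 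This is routine given \eqref{volume-Bessel}, but it is the step where the product structure of $X$ is actually used; once it is granted, the theorem is an immediate application of the general machinery developed earlier in the paper.
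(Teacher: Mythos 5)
Your proposal is correct and follows exactly the same route as the paper: reduce the statement to Theorem \ref{mainthm2s} by verifying that the Bessel operator $\LL$ satisfies (A1)--(A4) with $\delta_1=1$. The paper simply cites \cite{L} for that verification rather than spelling out the tensor-product/factorization details that you sketch, so your argument is just a fleshed-out version of the same proof.
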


We next consider the Laguerre operator defined by
\begin{equation}\label{eq-Laguerre}
L:=%\sum_{k=1}^m\Big(-\f{\partial^2}{\partial x_k\,^2}-\f{\alpha_k}{x_k}\f{\partial}{\partial x_k}\Big)+|x|^2
\sum_{k=1}^m\LL_k+|x|^2 =\LL+|x|^2.
\end{equation}
It is well-known that the heat kernel $p_t(x,y)$ associated to the semigroup $e^{-tL}$ is given by
\begin{equation}
\label{eq-heatkernelLaguerre}
p_t(x,y)=\prod_{j=1}^m \f{2e^{-2t}}{1-e^{-4t}}\exp{\Big(-\f{1}{2}\f{1+e^{-4t}}{1-e^{-4t}}(x_j^2+y_j^2)\Big)}(x_jy_j)^{-(\alpha_j-1)/2}I_{(\alpha_j-1)/2}\Big(\f{2e^{-2t}}{1-e^{-4t}}x_jy_j\Big),
\end{equation}
for all $t>0$, $x,y\in X$ and $I_{(\alpha_j-1)/2}$ being the Bessel function. See for example \cite{L}.

We define the critical function $\rho$ on $X$ by setting
\begin{equation}\label{rhofunction-Lague}
\rho(x)=\sup\Big\{r>0: \f{r^2}{\mu(B(x,r))}\int_{B(x,r)}|y|^2d\mu(y)\leq 1\Big\}.
\end{equation}
Then by a simple calculation we can find that
\begin{equation}\label{eq-rho laguer}
\rho(x)\sim \min\{1, |x|^{-1}\}.
\end{equation}

We have the following result.
\begin{thm}
	Let $L$ be a Laguerre operator defined in \eqref{eq-Laguerre}. Then $L$ satisfies (B1)-(B3) with $\LL$ from \eqref{bessel} and any $\delta_2=1$, any $\delta_3<1$ and with $\rho$ defined in \eqref{rhofunction-Lague}.
\end{thm}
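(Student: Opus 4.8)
The plan is to verify the three conditions (B1), (B2), (B3) for the Laguerre operator $L = \LL + |x|^2$ relative to the Bessel operator $\LL$, with critical function $\rho$ given by \eqref{rhofunction-Lague}, which by a direct computation satisfies $\rho(x) \sim \min\{1, |x|^{-1}\}$ (this estimate being \eqref{eq-rho laguer}). Since $\LL$ satisfies (A1)--(A4) with $\delta_1 = 1$ as recalled above, and the potential $|x|^2$ is nonnegative and locally integrable, the perturbed operator $L$ is self-adjoint and nonnegative; one should first observe that the Feynman--Kac formula gives $0 \le p_t(x,y) \le \widetilde{p}_t(x,y)$, which already yields the Gaussian bound part of (B1). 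The key new input will be the extra decay factor $(1 + \sqrt{t}/\rho(x) + \sqrt{t}/\rho(y))^{-N}$ in (B1), and the $\delta_2 = 1$ decay in (B2), (B3).

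The natural approach is to work directly from the explicit heat kernel formula \eqref{eq-heatkernelLaguerre}. Using the asymptotics of the Bessel function $I_\nu$ (namely $I_\nu(z) \sim z^\nu$ as $z \to 0$ and $I_\nu(z) \sim e^z/\sqrt{z}$ as $z \to \infty$), one can compare $p_t(x,y)$ to the Bessel heat kernel $\widetilde{p}_t(x,y)$, which has a similar product structure with $I_\nu$ but with the dilation factor replaced by its small-$t$ limit. The difference $q_t = p_t - \widetilde{p}_t$ comes from two sources: the Gaussian-type weight $\exp(-\tfrac12 \tfrac{1+e^{-4t}}{1-e^{-4t}}(x_j^2 + y_j^2))$ versus $\exp(-(x_j^2+y_j^2)/(4t))$ (roughly), and the argument $\tfrac{2e^{-2t}}{1-e^{-4t}} x_j y_j$ versus $x_j y_j/(2t)$. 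For $t$ small, $\tfrac{1-e^{-4t}}{4} = t(1 + O(t))$, so on the relevant scale $t \lesssim \rho(x)^2 \wedge \rho(y)^2$ these factors differ by multiplicative corrections of order $t(x_j^2+y_j^2) + t|x||y| \lesssim t(|x|^2 + |y|^2 + 1) \lesssim \sqrt{t}/\rho(x)$ (using $\rho(x) \sim \min\{1,|x|^{-1}\}$ and symmetry). This is where the factor $(\sqrt{t}/(\sqrt{t}+\rho(x)))^{\delta_2}$ with $\delta_2 = 1$ should emerge. For the global regime $t \gtrsim \rho(x)^2$, one uses the exponential smallness $e^{-2mt}$ coming from the prefactor $\prod (2e^{-2t}/(1-e^{-4t}))$ to absorb any polynomial loss and produce the required $(1 + \sqrt{t}/\rho(x))^{-N}$ decay in (B1).

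For (B3) — the Hölder estimate on $q_t(x,y) - q_t(\overline{x},y)$ — I would differentiate the explicit kernel in the $x$-variable (or use the mean value theorem together with the $x$-derivative bounds on $I_\nu$ and on the Gaussian factors) and track how the bounds degrade; the one-dimensional product structure reduces everything to estimating $\partial_{x_j}$ of a single factor and then using a telescoping/product-rule argument over $j = 1, \dots, m$. The minimum $\min\{[d(x,\overline{x})/\rho(y)]^{\delta_3}, [d(x,\overline{x})/\sqrt{t}]^{\delta_3}\}$ will be obtained by interpolating between the crude bound (Hölder regularity in $x$ of the Gaussian part, giving the $1/\sqrt{t}$ scale, inherited essentially from $\delta_1 = 1$ for $\LL$) and the extra gain from the perturbation which brings in the $1/\rho(y)$ scale; taking any $\delta_3 < 1$ leaves room to absorb lower-order logarithmic or power corrections from the Bessel asymptotics near the transition zone $x_j y_j \sim t$.

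The main obstacle I anticipate is the careful analysis near the transition region of the Bessel function, where $\tfrac{x_j y_j}{t}$ is of order one and neither the small-argument nor the large-argument asymptotic is sharp; there one needs uniform two-sided bounds on $I_\nu$ and its derivative (for instance $I_\nu(z) \sim (1+z)^{-1/2} e^z (z/(1+z))^\nu$ uniformly, a standard but slightly delicate estimate) and must check that the comparison between the Laguerre and Bessel semigroups still produces a clean power of $\sqrt{t}/\rho$ rather than something weaker. A secondary technical point is bookkeeping the multivariable product: one should write $p_t - \widetilde{p}_t = \sum_j (\text{factors } < j \text{ from } p)(p\text{-factor}_j - \widetilde{p}\text{-factor}_j)(\text{factors } > j \text{ from } \widetilde{p})$ and bound each summand, which is routine once the one-dimensional estimates are in hand. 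Given the paper states the full proof is long and defers it (by analogy with Subsection~\ref{subsect-proof of 6.2} for the Schrödinger case on manifolds), I would structure the argument as three propositions establishing (B1), (B2), (B3) separately, each reduced to one-dimensional Bessel-function estimates.
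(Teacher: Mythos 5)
Your plan for (B1) --- reduce to $m=1$ by the product structure, work directly from the explicit Mehler-type formula \eqref{eq-heatkernelLaguerre} with the Bessel asymptotics $I_\nu(z)\sim z^\nu$ near $0$ and $I_\nu(z)\lesssim z^{-1/2}e^z$ at infinity, and split into cases --- matches the paper's strategy. But your attribution of the extra decay factor $(1+\sqrt{t}/\rho(x)+\sqrt{t}/\rho(y))^{-N}$ in the regime $t\gtrsim\rho(x)^2$ to the prefactor $e^{-2mt}$ is a real gap. Since $\rho(x)\sim\min\{1,|x|^{-1}\}$, the condition $t\gtrsim\rho(x)^2$ is compatible with $t$ being very small while $|x|$ is large, and then $e^{-2mt}\approx 1$ gives no decay. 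What actually supplies the decay for $0<t\le 1$ is the non-heat-kernel Gaussian weight in $|x|^2+|y|^2$: after factoring off the Gaussian in $|x-y|^2$ one retains a factor of order $\exp(-c(|x|^2+|y|^2))$, and since $\sqrt{t}/\rho(x)\lesssim 1+|x|$ for $t\le 1$, this factor dominates any power $(1+\sqrt{t}/\rho(x)+\sqrt{t}/\rho(y))^{-N}$. The prefactor $e^{-2mt}$ (combined with the reverse doubling estimate \eqref{reverse doubling Lague}) only enters when $t\ge 1$. The paper's Case~1/Case~2 split on $xy\lessgtr s$ with $s=(1-e^{-4t})/(2e^{-2t})$, and the further split on $t\lessgtr 1$ and on the positions of $x,y$ relative to $\sqrt{s}$, is precisely designed to make this work, so you would need to replace your mechanism with the correct one before the argument closes.

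For (B2)--(B3) you propose a direct comparison of the explicit Laguerre and Bessel kernels and, for the H\"older bound (B3), differentiation of the explicit formula. The paper instead deduces (B2)--(B3) from (B1) via the Duhamel perturbation formula $q_t=\int_0^t e^{-s\LL}\,|x|^2\,e^{-(t-s)L}\,ds$, arguing as in Propositions~\ref{Proposition2: heat kernel bound} and~\ref{Proposition3: heat kernel bound}, reusing (B1), the H\"older regularity (A3) of $\widetilde{p}_t$, and an analogue of Lemma~\ref{Lem: V rho} for the potential $|x|^2$. Your direct route is plausible and more explicit, but it demands uniform two-sided bounds on $I_\nu$ and $I_\nu'$ through the transition region $x_jy_j\sim t$, which you yourself flag as the main obstacle; the Duhamel route sidesteps this, needing only the single pointwise bound (B1) plus elementary scalar integrals $\int_0^t s^{-1}(\sqrt{s}/\rho)^\gamma\,ds$, which is why the paper takes it.
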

\begin{proof}
	We only prove that $L$ satisfies (B1). Once this is proved, arguing similarly to the proof of Theorem \ref{thm-Schrodinger manifolds} we can show that $L$ satisfies (B2)-(B3).

We first recall some basic properties of Bessel functions $I_\nu, \nu>-1$. It is well known that
\begin{equation}\label{eq1-Bessel}
z^{-\nu}I_\nu(z)\sim 2^{-\nu}, z\in (0,1];
\end{equation}
\begin{equation}\label{eq2-Bessel}
I_\nu(z)\lesi z^{-1/2}e^z, z\geq 1;
\end{equation}
and
\begin{equation}\label{eq3-Bessel}
\f{d}{dz}(z^{-\nu}I_\nu(z))=z^{-\nu}I_{\nu+1}(z).
\end{equation}
See for example \cite{L}.

Due to \eqref{eq-heatkernelLaguerre} it suffices to prove (B1) for the one dimensional case $m=1$. More precisely we  claim that for all $N>0$, there exist positive constants $c$ and $C$ so that
\begin{equation}
\label{B1-Lague}
|p_t(x,y)|\leq \f{C}{\mu(B(x,\sqrt{t}))}\exp\Big(-\f{d(x,y)^2}{ct}\Big)\Big(1+\f{\sqrt{t}}{\rho(x)}+\f{\sqrt{t}}{\rho(y)}\Big)^{-N}
\end{equation}
for all $x,y\in X$ and $t>0$. Here $d\mu=x^\alpha dx$ for $\alpha>-1$ and 
\[
p_t(x,y)=\f{2e^{-2t}}{1-e^{-4t}}\exp{\Big(-\f{1}{2}\f{1+e^{-4t}}{1-e^{-4t}}(x^2+y^2)\Big)}(xy)^{-(\alpha-1)/2}I_{(\alpha-1)/2}\Big(\f{2e^{-2t}}{1-e^{-4t}}xy\Big).
\]

	Setting $s=\f{1-e^{-4t}}{2e^{-2t}}$, we rewrite
	\[
	p_t(x,y)=\f{1}{s}\exp{\Big(-\f{1}{2}\f{1+e^{-4t}}{1-e^{-4t}}(x^2+y^2)\Big)}(xy)^{-(\alpha-1)/2}I_{(\alpha-1)/2}\Big(\f{xy}{s}\Big).
	\]
	
%	$$
%	\begin{aligned}
%	W_t(x,y)&=\f{1}{s}\exp{\Big(-\f{1}{2}\f{1+e^{-4t}}{1-e^{-4t}}(x^2+y^2)\Big)}(xy)^{-(\alpha-1)/2}I_{(\alpha-1)/2}\Big(\f{xy}{s}\Big)\\
%	&=\f{1}{s}\exp{\Big(-\f{1}{2}\f{1+e^{-4t}}{1-e^{-4t}}(|x-y|^2)\Big)}\exp{\Big(-\f{1+e^{-4t}}{1-e^{-4t}}xy\Big)}(xy)^{-(\alpha-1)/2}I_{(\alpha-1)/2}\Big(\f{xy}{s}\Big).
%	\end{aligned}
%	$$
	We consider two cases.
	
	\textbf{Case 1. $xy<s$}
	
	In this situation, we have $x<\sqrt{s}$ or $y<\sqrt{s}$. Without the loss of generality, we may assume that $x<\sqrt{s}$ and hence $\mu(B(x,\sqrt{s}))\sim s^{(\alpha+1)/2}$.
	
	Moreover, by (\ref{eq1-Bessel}),
	$$
	(xy)^{-(\alpha-1)/2}I_{(\alpha-1)/2}\Big(\f{xy}{s}\Big)\sim s^{-(\alpha-1)/2}.
	$$
	Hence,
	\begin{equation}\label{eq1-proof B1}
	\begin{aligned}
	p_t(x,y)&\lesi \f{s^{-(\alpha-1)/2}}{s}\exp{\Big(-\f{1}{2}\f{1+e^{-4t}}{1-e^{-4t}}(|x|^2+|y|^2)\Big)}\\
	&\lesi \f{1}{\mu(B(x,\sqrt{s}))}\exp{\Big(-\f{1}{4}(|x|^2+|y|^2)\Big)}.
	\end{aligned}
	\end{equation}
	
	On the other hand, we also have
	\begin{equation}\label{eq2-proof B1}
	\begin{aligned}
	p_t(x,y)&\lesi \f{s^{-(\alpha-1)/2}}{s}\exp{\Big(-\f{1}{2}\f{1+e^{-4t}}{1-e^{-4t}}(|x|^2+|y|^2)\Big)}\\
	&\lesi \f{1}{\mu(B(x,\sqrt{s}))}\exp{\Big(-\f{1}{2}\f{1+e^{-4t}}{1-e^{-4t}}(|x-y|^2)\Big)}.
	\end{aligned}
	\end{equation}
	From \eqref{eq1-proof B1} and \eqref{eq2-proof B1} we conclude that
\begin{equation}\label{eq3-proof B1}
	p_t(x,y)\lesi \f{1}{\mu(B(x,\sqrt{s}))}\exp{\Big(-\f{1}{4}\f{1+e^{-4t}}{1-e^{-4t}}(|x-y|^2)\Big)}\exp{\Big(-\f{1}{8}(|x|^2+|y|^2)\Big)}.
	\end{equation}
	
	If $0<t\leq 1$, then $1+e^{-4t}\sim 1$, $1-e^{-4t}\sim t$ and $s\sim t$. This together with \eqref{eq-rho laguer} and \eqref{eq3-proof B1} yields, for any $N>0$,
	\[
	\begin{aligned}
	p_t(x,y)\lesi \f{1}{\mu(B(x,\sqrt{t}))}\exp{\Big(-\f{|x-y|^2}{ct}\Big)}\exp{\Big(-\f{1}{8}(|x|^2+|y|^2)\Big)}\\
	\lesi \f{1}{\mu(B(x,\sqrt{t}))}\exp{\Big(-\f{|x-y|^2}{ct}\Big)}\Big(1+\f{\sqrt{t}}{\rx}+\f{\sqrt{t}}{\rx}\Big)^{-N}.
	\end{aligned}
	\]
	If $t> 1$ then $1+e^{-4t}\sim 1$, $1-e^{-4t}\sim 1$ and $s\sim e^{2t}>te^t$. This along with  \eqref{eq3-proof B1} implies
	\begin{equation}\label{eq4-proof B1}
	\begin{aligned}
	p_t(x,y)\lesi \f{1}{\mu(B(x,\sqrt{te^t}))}\exp{\Big(-c|x-y|^2\Big)}\exp{\Big(-\f{1}{8}(|x|^2+|y|^2)\Big)}\\
	\lesi \f{1}{\mu(B(x,\sqrt{te^t}))}\exp{\Big(-\f{|x-y|^2}{ct}\Big)}\exp{\Big(-\f{1}{8}(|x|^2+|y|^2)\Big)}
	\end{aligned}
	\end{equation}
	
	Moreover, we can see that
	\begin{equation}\label{reverse doubling Lague}
	\lambda^\kappa \mu(B(x,r))\leq C\mu(B(x,\lambda r)) 
	\end{equation}
	for all $x\in X, r>0$ and $\lambda >1$, where $\kappa = \min\{1, 1+\alpha\}$.
	This, in combination with \eqref{eq4-proof B1}, implies
	\[
	p_t(x,y)
	\lesi \f{e^{-\kappa t/2}}{\mu(B(x,\sqrt{t}))}\exp{\Big(-\f{|x-y|^2}{ct}\Big)}\exp{\Big(-\f{1}{8}(|x|^2+|y|^2)\Big)}.
	\]
	This and \eqref{eq-rho laguer} gives \eqref{B1-Lague}.
	\\

	\textbf{Case 2. $xy\geq s$} \ \
	
	By (\ref{eq2-Bessel}), we have
	\begin{equation}\label{eq5-proof B1}
	\begin{aligned}
	p_t(x,y)&\lesi \f{1}{s}\exp{\Big(-\f{1}{2}\f{1+e^{-4t}}{1-e^{-4t}}(|x-y|^2)\Big)}\exp{\Big(-\f{1+e^{-4t}}{1-e^{-4t}}xy\Big)}(xy)^{-(\alpha-1)/2}\Big(\f{s}{xy}\Big)^{1/2}\exp\Big(\f{xy}{s}\Big)\\
	&=\f{1}{\sqrt{s}}\exp{\Big(-\f{1}{2}\f{1+e^{-4t}}{1-e^{-4t}}(|x-y|^2)\Big)}\exp{\Big[\Big(-\f{1+e^{-4t}}{1-e^{-4t}}+\f{1}{s}\Big)xy\Big]}(xy)^{-\alpha/2}\\
	&=\f{1}{\sqrt{s}}\exp{\Big(-\f{1}{2}\f{1+e^{-4t}}{1-e^{-4t}}(|x-y|^2)\Big)}\exp{\Big(-\f{1-e^{-2t}}{1+e^{-2t}}xy\Big)}(xy)^{-\alpha/2}.
	\end{aligned}
	\end{equation}
	Moreover,
	\[
	\begin{aligned}
	\exp{\Big(-\f{1}{2}\f{1+e^{-4t}}{1-e^{-4t}}(|x-y|^2)\Big)}&\exp{\Big(-\f{1-e^{-2t}}{1+e^{-2t}}xy\Big)}\\
	&\leq \exp{\Big(-\f{1}{2}\f{1-e^{-2t}}{1+e^{-2t}}(|x-y|^2)\Big)}\exp{\Big(-\f{1-e^{-2t}}{1+e^{-2t}}xy\Big)}\\
	&=\exp{\Big(-\f{1-e^{-2t}}{2(1+e^{-2t})}(|x|^2+|y|^2)\Big)}.
	\end{aligned}
	\]
	This along with \eqref{eq5-proof B1} implies
	\begin{equation}\label{eq6-proof B1}
	\begin{aligned}
	p_t(x,y)\lesi \f{1}{\sqrt{s}}&\exp{\Big(-\f{1+e^{-4t}}{4(1-e^{-4t})}(|x-y|^2)\Big)}\exp{\Big(-\f{1-e^{-2t}}{2(1+e^{-2t})}xy\Big)}(xy)^{-\alpha/2}\\
	&\times \exp{\Big(-\f{1-e^{-2t}}{2(1+e^{-2t})}(|x|^2+|y|^2)\Big)}.
	\end{aligned}
	\end{equation}
	
	We consider three subcases.\\

	\noindent \emph{Subcase 2.1: $x, y\geq \sqrt{s}$.} In this situation, we have $\mu(B(x,\sqrt{s}))\sim x^\alpha\sqrt{s}$ and $\mu(B(y,\sqrt{s}))\sim y^\alpha\sqrt{s}$.
	
	If $0<t\leq 1$, we find that
	\begin{equation*}
	\begin{aligned}
	p_t(x,y)&\lesi \f{1}{\sqrt{t(xy)^{\alpha}}}\exp{\Big(-\f{|x-y|^2}{ct}\Big)}\exp\Big(-c(t|x|^2+t|y|^2)\Big)\\
	&\lesi \f{1}{\Big[\mu_\alpha(B(x,\sqrt{t}))\mu_\alpha(B(y,\sqrt{t}))\Big]^{1/2}}\exp{\Big(-c\f{|x-y|^2}{t}\Big)}\exp\Big(-c(t|x|^2+t|y|^2)\Big).
	\end{aligned}
	\end{equation*}
	This proves \eqref{B1-Lague}.\\

	If $t\geq 1$, similarly we have
	$$
	\begin{aligned}
	p_t(x,y)&\lesi \f{1}{e^{2t}(xy)^{\alpha/2}}\exp{\Big(-\f{|x-y|^2}{ct}\Big)}\exp\Big(-c(|x|^2+|y|^2)\Big)\\
	&\lesi \f{1}{e^t\, \Big[\mu_\alpha(B(x,\sqrt{t}))\mu_\alpha(B(y,\sqrt{t}))\Big]^{1/2}}\exp{\Big(-\f{|x-y|^2}{ct}\Big)}\exp\Big(-c(|x|^2+|y|^2)\Big).
	\end{aligned}
	$$
	This implies  \eqref{B1-Lague}.\\

	\noindent \emph{Subcase 2.2: $x\geq \sqrt{s}\geq y$}
	
	If $-1<\alpha\leq 0$ then
	$$
	\exp{\Big(-\f{1-e^{-2t}}{1+e^{-2t}}xy\Big)}(xy)^{-\alpha/2}\lesi \Big(\f{1-e^{-2t}}{1+e^{-2t}}\Big)^{\alpha/2}.
	$$
	Substituting into \eqref{eq6-proof B1} we get that
	\[
	p_t(x,y)\lesi \f{1}{\sqrt{s}}\exp{\Big(-\f{1+e^{-4t}}{4(1-e^{-4t})}(|x-y|^2)\Big)}\Big(\f{1-e^{-2t}}{1+e^{-2t}}\Big)^{\alpha/2} \exp{\Big(-\f{1-e^{-2t}}{2(1+e^{-2t})}(|x|^2+|y|^2)\Big)}.
	\]
	At this stage, using the same argument as above we conclude \eqref{B1-Lague}.
		
	If $\alpha>0$, then
	$$
	\exp{\Big(-\f{1-e^{-2t}}{1+e^{-2t}}xy\Big)}(xy)^{-\alpha/2}\leq (xy)^{-\alpha/2}\leq s^{-\alpha/2}.
	$$
	Inserting into \eqref{eq6-proof B1} and using the argument as above, we also obtain the desired estimate.\\

	\noindent \emph{Subcase 2.3: $y\geq \sqrt{s}\geq x$}
	
	This subcase can be done in the same manner as in Subcase 2.2 and we omit details.
\end{proof}	

From the above result and Theorem \ref{mainthm3} we imply:
\begin{thm}
		Let $L$ be a Laguerre operator defined in \eqref{eq-Laguerre} and $\rho$ be a critical function as in \eqref{eq-rho laguer}. Let $p\in (\f{n}{n+1},1]$ and $q\in [1,\vc]\cap(p,\vc]$. Then we have
		\[
		h^{p,q}_{at,\rho}(X)\equiv H^p_{L, {\rm max}}(X)\equiv H^p_{L, \rad}(X).
		\]
\end{thm}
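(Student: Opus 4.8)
The plan is to obtain this as an immediate corollary of Theorem \ref{mainthm3} combined with the theorem stated just above, which asserts that the Laguerre operator $L$ of \eqref{eq-Laguerre} is a perturbation of the Bessel operator $\LL$ of \eqref{bessel} in the sense of (B1)--(B3), with $\delta_2=1$ and $\delta_3$ an arbitrary number in $(0,1)$, and with $\rho$ as in \eqref{rhofunction-Lague} (equivalently \eqref{eq-rho laguer}). The other input is that $\LL$ itself satisfies (A1)--(A4) with H\"older exponent $\delta_1=1$, which was recorded earlier in this subsection (following \cite{L}); in particular $\rho$ from \eqref{eq-rho laguer} is a critical function, so that the framework of Section \ref{sec: local hardy rho} applies.

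First I would compute $\delta_0$ in the notation of Theorem \ref{mainthm3}: since $\delta_1=\delta_2=1$ and $\delta_3<1$, we have $\delta_0=\min\{\delta_1,\delta_2,\delta_3\}=\delta_3$. Now fix an arbitrary $p\in(\f{n}{n+1},1]$. Because $\f{n}{n+\delta_3}\to \f{n}{n+1}$ as $\delta_3\uparrow 1$, one can choose $\delta_3\in(0,1)$ close enough to $1$ that $p>\f{n}{n+\delta_3}=\f{n}{n+\delta_0}$. For this choice of $\delta_3$ (and $\delta_1=\delta_2=1$), the hypotheses of Theorem \ref{mainthm3} are satisfied by the pair $\LL$, $L$, so Theorem \ref{mainthm3} yields
\[
h^{p,q}_{at,\rho}(X)\equiv H^p_{L, {\rm max}}(X)\equiv H^p_{L, \rad}(X)
\]
for every $q\in[1,\vc]\cap(p,\vc]$.

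Finally I would remark that there is no ambiguity in the auxiliary choice of $\delta_3$: the atomic space $h^{p,q}_{at,\rho}(X)$ depends only on $\rho$, $p$ and $q$, while $H^p_{L,{\rm max}}(X)$ and $H^p_{L,\rad}(X)$ depend only on $L$; none of these objects references $\delta_3$. Hence the displayed identification is intrinsic and holds for all admissible $p,q$. There is in fact no real obstacle remaining at this stage: all the substantive work has already been done, namely the verification of (B1)--(B3) for $L$ (the delicate part being the pointwise bound (B1) for the explicit Mehler-type kernel \eqref{eq-heatkernelLaguerre}, established in the preceding proof via the asymptotics \eqref{eq1-Bessel}--\eqref{eq3-Bessel} of the modified Bessel functions $I_\nu$ and the formula \eqref{eq-rho laguer}), together with the abstract equivalence of Theorem \ref{mainthm3}.
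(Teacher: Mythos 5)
Your proposal is correct and follows exactly the route the paper takes: it deduces the theorem as an immediate consequence of Theorem \ref{mainthm3} applied to the pair $(\LL, L)$, using the preceding theorem that verifies (B1)--(B3) with $\delta_2=1$ and any $\delta_3<1$. You make explicit the small but necessary point the paper leaves tacit, namely that since $\delta_3$ may be taken arbitrarily close to $1$ while the spaces $h^{p,q}_{at,\rho}(X)$, $H^p_{L,\max}(X)$, $H^p_{L,\rad}(X)$ do not depend on $\delta_3$, the range $p>\f{n}{n+\delta_0}=\f{n}{n+\delta_3}$ can be pushed to the full interval $(\f{n}{n+1},1]$.
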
	
Note that the particular case $m=1$ and $\alpha \geq 0$ was obtained in \cite{D2}. Hence, the theorem is new even for the case $m=1$.

\subsection{Degenerate Schr\"odinger operators}
Let $w$ be a weight in Muckenhoupt class $A_2(\mathbb{R}^d), d\geq 3$. That is,  there exist a constant $C>0$ so that
$$
\Big(\f{1}{|B|}\int_Bw(x)dx\Big)\Big(\f{1}{|B|}\int_Bw^{-1}(x)dx\Big)\leq C
$$
for all balls $B\subset \mathbb{R}^d$. Then the triple $(X,d,d\mu)=\big(\RR{d}, |\cdot|, wdx\big)$ satisfies (\ref{doublingcondition}). Moreover, there exist $0<\kappa\leq n<\vc$ so that
$$
\lambda^\kappa w(B(x,r))\lesi w(B(x,\lambda r))\lesi \lambda^n w(B(x,r))
$$
for all $x\in \mathbb{R}^d, r>0$ and $\lambda\geq 1$, where $w(E)=\int_{E}w(x)dx$ for any measurable subset $E\subset \mathbb{R}^d$.

Let $\{a_{i, j}\}_{i,j=1}^d$ be a real symmetric matrix function satisfying, for some $C>0$ and every
$x, \xi \in \mathbb{R}^d$,
$$
C^{-1}|\xi|^2 w(x)\leq \sum_{i,j}a_{i,j}(x)\xi_i \overline{\xi}_j\leq C|\xi|^2w(x).
$$
We consider the degenerate elliptic operator $\LL$ defined by
\begin{align}\label{degen}
\LL f(x)=-\f{1}{w(x)}\sum_{i,j}\partial_i(a_{i,j}(\cdot)\partial_j f)(x).
\end{align}
Then the operator $\LL$ satisfies the assumptions (A1)-(A4) with some $\delta_1\in (0,1)$. See for example \cite{HS}. 

Let $\rho$ be a critical function on $\mathbb{R}^d$. For $p\in (0,1]$ we define the Hardy space $h^p_{\LL,{\rm rad}, \rho}(\mathbb{R}^d, |\cdot|, w(x)dx)$ as the completion of 
\[
\left\{f\in L^2(\mathbb{R}^d, |\cdot|, w(x)dx): f^+_{\LL,\rho}\in L^p(\mathbb{R}^d, |\cdot|, w(x)dx)\right\}
\]
with respect to the norm
\[
\|f\|_{h^p_{\LL,{\rm rad}, \rho}(\mathbb{R}^d, |\cdot|, w(x)dx)}:=\left\|f^+_{\LL,\rho}\right\|_{L^p(\mathbb{R}^d, |\cdot|, w(x)dx)}.
\]
Similarly, the Hardy space $h^p_{\LL,{\rm max},\rho}(\mathbb{R}^d, |\cdot|, w(x)dx)$ as the completion of 
\[
\left\{f\in L^2(\mathbb{R}^d, |\cdot|, w(x)dx): f^*_{\LL,\rho}\in L^p(\mathbb{R}^d, |\cdot|, w(x)dx)\right\}
\]
with respect to the norm
\[
\|f\|_{h^p_{\LL,{\rm max},\rho}(\mathbb{R}^d, |\cdot|, w(x)dx)}:=\left\|f^+_{\LL,\rho}\right\|_{L^p(\mathbb{R}^d, |\cdot|, w(x)dx)}.
\]

Hence  Theorem \eqref{mainthm2} implies the following.
\begin{thm}
	Let $\LL$ be the operator in \eqref{degen} and $\rho$ be a critical function on $\mathbb{R}^d$. Let $p\in (\f{n}{n+\delta_1},1]$ and $q\in [1,\vc]\cap (p,\vc]$. Then we have
	\[
	h^{p,q}_{at,\rho}\big(\mathbb{R}^d, |\cdot|, wdx\big)\equiv h^p_{\LL,\max,\rho}\big(\mathbb{R}^d, |\cdot|, wdx\big)\equiv h^p_{\LL,\rad,\rho}\big(\mathbb{R}^d, |\cdot|, wdx\big).
	\]
\end{thm}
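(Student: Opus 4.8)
The plan is to recognise the assertion as an immediate application of Theorem \ref{mainthm2s} to the space of homogeneous type $X=(\mathbb{R}^d,|\cdot|,w\,dx)$ and the operator $\LL$ from \eqref{degen}; thus the real work is to confirm that the standing hypotheses of that theorem are met in this concrete setting.

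First I would verify that $X$ is a space of homogeneous type with $\mu(X)=\vc$. The doubling condition \eqref{doublingcondition} is exactly the upper bound $w(B(x,\lambda r))\lesi\lambda^n w(B(x,r))$ recorded just above the theorem, which holds because $w\in A_2(\mathbb{R}^d)$; the exponent $n$ there plays the role of the homogeneous dimension in the statement. The companion lower (reverse-doubling) bound $\lambda^\kappa w(B(x,r))\lesi w(B(x,\lambda r))$, also recalled above, forces $w(\mathbb{R}^d)=\vc$ and hence $\mu(X)=\vc$, as assumed throughout the paper.

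Next I would check (A1)--(A4) for $\LL$. Nonnegativity and self-adjointness on $L^2(\mathbb{R}^d,w\,dx)$, i.e.\ (A1), are built into the definition: $\LL$ is the operator associated with the symmetric, nonnegative, closed form $f\mapsto\sum_{i,j}\int a_{i,j}\,\partial_j f\,\overline{\partial_i f}\,dx$, which is comparable to $\int|\nabla f|^2\,w\,dx$ by the ellipticity assumption on $\{a_{i,j}\}$. For (A2), (A3), (A4) I would invoke \cite{HS}: the ellipticity bounds together with $w\in A_2$ yield a Gaussian upper bound \eqref{GE} for the heat kernel $\widetilde p_t$, an associated H\"older-continuity estimate \eqref{H} with some exponent $\delta_1\in(0,1)$, and the conservation identity \eqref{C}. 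Finally $\rho$ is a critical function on $X$ by hypothesis.

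With (A1)--(A4) and the critical function $\rho$ in hand, Theorem \ref{mainthm2s} applied verbatim to $\LL$ on $X=(\mathbb{R}^d,|\cdot|,w\,dx)$ gives $h^{p,q}_{at,\rho}(X)\equiv h^p_{\LL,\max,\rho}(X)\equiv h^p_{\LL,\rad,\rho}(X)$ for all $p\in(\f{n}{n+\delta_1},1]$ and $q\in[1,\vc]\cap(p,\vc]$, which is precisely the claimed equivalence. I do not expect a genuine obstacle here: the only input beyond the general theory of Section \ref{sec: proof local hardy rho} is the heat-kernel regularity for degenerate elliptic operators, which is quoted from \cite{HS}, and the admissible range of $p$ in Theorem \ref{mainthm2s} already matches the one stated in the theorem.
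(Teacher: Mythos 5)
Your proposal is correct and follows essentially the same route as the paper: verify that $(\RR{d},|\cdot|,w\,dx)$ is a space of homogeneous type with infinite measure and that $\LL$ satisfies (A1)--(A4) via \cite{HS}, then apply Theorem \ref{mainthm2s}. You spell out the verification of the standing hypotheses (including $\mu(X)=\vc$ via reverse doubling and self-adjointness via the form) in more detail than the paper, which simply cites \cite{HS} for (A1)--(A4); you also correctly invoke Theorem \ref{mainthm2s} rather than Theorem \ref{mainthm2}, which is evidently what the paper intends despite its cross-reference.
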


Let $L=\LL+V$ be a so-called degenerate Schr\"odinger operator with $V\in RH_{q}(\mathbb{R}^d, |\cdot|, w(x)dx)$ with $q>n/2$. We  define the critical function $\rho$ by setting
\begin{equation}\label{eq-rho degenerate}
\rho(x)=\sup\Big\{r>0: \f{r^2}{w(B(x,r))}\int_{B(x,r)}V(y)w(y)dy\leq 1\Big\}.
\end{equation}

It was proved that the degenerate Schr\"odinger operator $L$ satisfies the conditions (B1) and (B2) with $\delta_2= 2-n/q$. See for example \cite{D3, YZ}. By a similar argument to that in the proof of Proposition \ref{Proposition3: heat kernel bound}, we conclude that $L$ satisfies (B3) for any $\delta_3<\min\{\delta_1, \delta_2\}$. 
Therefore Theorem \ref{mainthm3} may be applied to deduce the following result.
\begin{thm}
	Let $L=\LL+V$ be a Schr\"odinger operator with $\LL$ from \eqref{degen} and $V\in RH_{q}(X)$ for some $q>n/2$. Let $\rho$ be defined as in \eqref{eq-rho degenerate}. If $p\in (\f{n}{n+\delta_0},1]$ and $q\in [1,\vc]\cap(p,\vc]$ with $\delta_0=\min\{\delta_1,2-n/q\}$ then we have
	\[
	h^{p,q}_{at,\rho}\big(\mathbb{R}^d, |\cdot|, wdx\big)\equiv H^p_{L, {\rm max}}\big(\mathbb{R}^d, |\cdot|, wdx\big)\equiv H^p_{L, \rad}\big(\mathbb{R}^d, |\cdot|, wdx\big).
	\]
\end{thm}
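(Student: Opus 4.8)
The plan is to derive this theorem as a direct corollary of Theorem \ref{mainthm3} combined with the verification that $L=\LL+V$ satisfies the structural assumptions (B1)--(B3) relative to the degenerate operator $\LL$ of \eqref{degen}. Since the excerpt already tells us that $\LL$ satisfies (A1)--(A4) with some $\delta_1\in(0,1)$ on the space of homogeneous type $(X,d,\mu)=(\mathbb{R}^d,|\cdot|,w\,dx)$, and that $L$ is known (from \cite{D3, YZ}) to satisfy (B1) and (B2) with $\delta_2=2-n/q$, the only genuinely new input needed is the H\"older-type difference estimate (B3) for the perturbation kernel $q_t(x,y)=p_t(x,y)-\widetilde p_t(x,y)$. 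Once (B1)--(B3) hold with the stated exponents, Theorem \ref{mainthm3} applies verbatim with $\delta_0=\min\{\delta_1,\delta_2,\delta_3\}$; since we may take $\delta_3$ arbitrarily close to $\min\{\delta_1,\delta_2\}$, this gives $\delta_0=\min\{\delta_1,2-n/q\}$ and hence the claimed norm equivalences $h^{p,q}_{at,\rho}\equiv H^p_{L,\max}\equiv H^p_{L,\rad}$ for $p\in(\tfrac{n}{n+\delta_0},1]$.

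To verify (B3) I would follow the scheme already laid out in the paper for the Riemannian case, namely the proof of Proposition \ref{Proposition3: heat kernel bound} (invoked in the proof of Theorem \ref{thm-Schrodinger manifolds}), adapted \emph{mutatis mutandis} to the weighted degenerate setting. The key structural facts that make this work are: (i) the perturbation formula $p_t = \widetilde p_t - \int_0^t \int_X \widetilde p_{t-s}(x,z)\,V(z)\,p_s(z,y)\,w(z)\,dz\,ds$ (Duhamel), which expresses $q_t$ as a space-time integral involving the unperturbed kernel, the potential, and the perturbed kernel; (ii) the Gaussian bound and H\"older regularity (A2)--(A3) for $\widetilde p_t$, which control $\widetilde p_{t-s}(x,z) - \widetilde p_{t-s}(\bar x,z)$ by $[d(x,\bar x)/\sqrt{t-s}]^{\delta_1}$ times a Gaussian; (iii) the pointwise bound (B1)/(B2) already established for $p_s$; and (iv) the Fefferman–Phong / reverse-H\"older inequality for $V\in RH_q$, which yields $\tfrac{r^2}{w(B(x,r))}\int_{B(x,r)} V\,w \lesssim (r/\rho(x))^{2-n/q}$ for $r\le\rho(x)$ and the complementary estimate for $r\ge\rho(x)$. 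Combining these, one differentiates in the $x$-variable, splits the $s$-integral at $s\sim d(x,y)^2$, and uses the critical-function estimates of Lemma \ref{lem-criticalfunction} together with Lemma \ref{lem-ele est} to absorb the potential integral; the two competing factors $[d(x,\bar x)/\rho(y)]^{\delta_3}$ and $[d(x,\bar x)/\sqrt t]^{\delta_3}$ in (B3) arise respectively from the regime where $\sqrt t \gtrsim \rho(y)$ (where one trades $\sqrt t$-regularity for $\rho(y)$-regularity via \eqref{eq1-rho exp}) and the regime $\sqrt t \lesssim \rho(y)$.

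The main obstacle is purely technical rather than conceptual: carrying out the Duhamel/iteration estimate for (B3) in the weighted setting requires care with the doubling dimension $n$ versus the lower dimension $\kappa$ of $w$, and with the precise range of $q$ (one needs $q>n/2$ exactly so that the exponent $2-n/q$ is positive and so that the reverse-H\"older self-improvement of $V$ survives the weighted averaging). One must also confirm that the smallness condition $d(x,\bar x)\le\min\{d(x,y)/4,\rho(x)\}$ in (B3) is exactly what is needed to invoke (A3) for $\widetilde p_{t-s}$ uniformly in $s\in(0,t)$ — this is where the restriction $d(x,\bar x)\le\rho(x)$ enters, via Lemma \ref{lem-criticalfunction}(a) to control $\rho(z)$ for $z$ in the relevant balls. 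Since the paper explicitly states ``By a similar argument to that in the proof of Proposition \ref{Proposition3: heat kernel bound}, we conclude that $L$ satisfies (B3)'', I would present the verification of (B1)--(B2) as a citation to \cite{D3,YZ} and the verification of (B3) as an adaptation of the already-written manifold argument, then simply quote Theorem \ref{mainthm3} to conclude. No new ideas beyond those already developed in Sections \ref{sec: proof local hardy} and \ref{sec: proof local hardy rho} are required.
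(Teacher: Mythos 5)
Your proposal follows exactly the paper's own route: cite \cite{D3,YZ} for (B1)--(B2) with $\delta_2=2-n/q$, verify (B3) by adapting the Duhamel-formula argument of Proposition \ref{Proposition3: heat kernel bound} to the weighted degenerate setting (obtaining any $\delta_3<\min\{\delta_1,\delta_2\}$), and then apply Theorem \ref{mainthm3}, with the observation that letting $\delta_3\uparrow\min\{\delta_1,\delta_2\}$ recovers the stated range $p\in(\tfrac{n}{n+\delta_0},1]$. Your extra remarks on the perturbation formula, the Fefferman--Phong/reverse-H\"older input, and the role of $d(x,\bar x)\le\min\{d(x,y)/4,\rho(x)\}$ are consistent with how the Riemannian-case proof is actually carried out and add nothing that contradicts the paper.
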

The equivalence between $H^p_{at,\rho}(X)$ and $H^p_{L, \rad}(X)$ for $p=1$ was obtained in \cite{D3}. .

\subsection{Schr\"odinger operators on Heisenberg groups}
Let $\mathbb{H}^n$ be a $(2n + 1)$-dimensional Heisenberg group. Recall that $\mathbb{H}^n$ is a connected and simply connected
nilpotent Lie group with the underlying manifold $\mathbb{R}^{2n}\times \mathbb{R}$. The group structure is defined by
$$
(x, s)(y, t) = (x + y, s + t + 2 \sum_{j=1}^{n}(x_{d+j}y_j - x_jy_{d+j}))
$$
The homogeneous norm on $\mathbb{H}^d$ is defined by
$$
|(x, t)| = (|x|^4 + |t|^2)^{1/4}  \ \text{for all} \  (x, t) \in \mathbb{H}^n.
$$
See for example \cite{LL}.

This norm satisfies the triangle inequality and hence induces a left-invariant metric $d((x, t), (y, s)) = |(-x, -t)(y, s)|$. Moreover,
there exists a positive constant $C$ such that $|B((x, t), r)| = Cr^Q$, where $Q = 2d + 2$ is
the homogeneous dimension of $\mathbb{H}^n$ and $|B((x, t), r)|$ is the Lebesgue measure of the ball
$B((x, t), r)$. Obviously, the triplet $(\mathbb{H}^n, d, dx)$ satisfies the doubling condition (\ref{doublingcondition}).

A basis for the Lie algebra of left-invariant vector fields on $\mathbb{H}^d$ is given by
$$
X_{2n+1}=\f{\partial}{\partial t}, X_{j}=\f{\partial}{\partial x_j}+2x_{n+j}\f{\partial}{\partial t}, X_{n+j}=\f{\partial}{\partial x_{n+j}}-2x_{j}\f{\partial}{\partial t}, \ \ j=1,\ldots,n.
$$
and the sub-Laplacian $-\Delta_{\mathbb{H}^n}$  defined by
$$
\Delta_{\mathbb{H}^n}=-\sum_{j=1}^{2n}X_j^2.
$$
Furthermore, it was proved in \cite{LL} that the sub-Laplacian $\Delta_{\mathbb{H}^n}$ satisfies (A1)-(A4) with $\delta_1=1$. Therefore from Theorem \ref{mainthm2s} we have:
\begin{thm}
	Let $\rho$ be a critical function on $\mathbb{H}^n$. Let $p\in (\f{Q}{Q+1},1]$ and $q\in [1,\vc]\cap (p,\vc]$. Then we have
	\[
	h^{p,q}_{at,\rho}(\mathbb{H}^n)\equiv h^{p}_{\Delta_{\mathbb{H}^n},{\rm max}, \rho}(\mathbb{H}^n)\equiv h^{p}_{ \Delta_{\mathbb{H}^n},{\rm rad},\rho}(\mathbb{H}^n).
	\]
\end{thm}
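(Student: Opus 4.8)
The plan is to obtain this result as a direct specialization of Theorem \ref{mainthm2s} with underlying space $X=\mathbb{H}^n$ and operator $\LL=\Delta_{\mathbb{H}^n}$. First I would record that $(\mathbb{H}^n,d,dx)$ is a space of homogeneous type with $\mu(\mathbb{H}^n)=\vc$: the identity $|B((x,t),r)|=Cr^Q$ shows that the doubling condition \eqref{doublingcondition} holds, and moreover that the exponent ``$n$'' appearing in \eqref{doub2} and \eqref{doub2s} may be taken equal to the homogeneous dimension $Q=2n+2$. In particular, wherever the abstract parameter ``$n$'' occurs in the hypotheses and conclusion of Theorem \ref{mainthm2s}, it is to be read as $Q$ here. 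Since the Kor\'anyi norm induces a genuine left-invariant metric, all structural requirements on the base space are met.

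Next I would check the four assumptions on $\LL=\Delta_{\mathbb{H}^n}$. Assumption (A1) is immediate, since $\Delta_{\mathbb{H}^n}=-\sum_{j=1}^{2n}X_j^2$ is nonnegative and self-adjoint on $L^2(\mathbb{H}^n)$. Assumptions (A2), (A3) and (A4) --- the Gaussian upper bound, the H\"older regularity of the heat kernel with exponent $\delta_1=1$, and the conservation property $\int_{\mathbb{H}^n}\widetilde{p}_t(x,y)\dy=1$ --- are established in \cite{LL}, the last reflecting that the heat semigroup on the group fixes constants and the H\"older estimate following from the standard gradient bound on the sub-Laplacian heat kernel. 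Since $\rho$ is assumed to be a critical function on $\mathbb{H}^n$ in the sense of \eqref{criticalfunction}, all the hypotheses of Theorem \ref{mainthm2s} are in place.

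Finally I would invoke Theorem \ref{mainthm2s} itself. With $\delta_1=1$ and the abstract doubling exponent equal to $Q$, the admissible range $p\in(\f{n}{n+\delta_1},1]$ becomes precisely $p\in(\f{Q}{Q+1},1]$, so for any such $p$ and any $q\in[1,\vc]\cap(p,\vc]$ the theorem yields, with equivalent (quasi-)norms,
\[
h^{p,q}_{at,\rho}(\mathbb{H}^n)\equiv h^{p}_{\Delta_{\mathbb{H}^n},{\rm max},\rho}(\mathbb{H}^n)\equiv h^{p}_{\Delta_{\mathbb{H}^n},{\rm rad},\rho}(\mathbb{H}^n),
\]
which is the asserted identity. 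There is essentially no genuine obstacle here: the only point requiring care is the bookkeeping --- confirming that the homogeneous dimension $Q$ is the correct value of the abstract doubling exponent and that the H\"older exponent furnished by \cite{LL} is indeed $\delta_1=1$, so that the stated range of $p$ is exactly the one produced by Theorem \ref{mainthm2s}.
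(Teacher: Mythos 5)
Your proposal matches the paper's proof exactly: the paper records that $(\mathbb{H}^n,d,dx)$ is doubling with homogeneous dimension $Q$, cites \cite{LL} for the fact that $\Delta_{\mathbb{H}^n}$ satisfies (A1)--(A4) with $\delta_1=1$, and then applies Theorem \ref{mainthm2s}, which is precisely what you do. The only difference is that you spell out the bookkeeping (reading the abstract doubling exponent $n$ as $Q$) that the paper leaves implicit.
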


We now consider the Schr\"odinger operator on $\mathbb{H}^n$ defined by $L= \Delta_{\mathbb{H}^n} + V$ where $V\in RH_q(\mathbb{H}^n), q>Q/2$. We define the critical function $\rho$ associated to $V$ by setting
\begin{equation}\label{eq-rho-Heisenberg}
\rho(x)=\rho(x)=\sup\Big\{r>0: \f{1}{r^{Q-2}}\int_{B((x,t),r)}V(y,s)dyds\leq 1\Big\}.
\end{equation}
Then, the Schr\"odinger operator $L$ satisfies conditions (B1) and (B2) with $\LL=\Delta_{\mathbb{H}^n}$, with any $\delta_2=2-Q/q$. See for example \cite{LL}. Arguing similarly to the proof of Proposition \ref{Proposition3: heat kernel bound} we imply that $L$ satisfies (B3) with any $0<\delta_3<\min\{1, 2-Q/q\}$. Then from Theorem \ref{mainthm3} we obtain:
\begin{thm}
	Let $L=\Delta_{\mathbb{H}^n} + V$ where $V\in RH_q(\mathbb{H}^n), q>Q/2$ and let $\rho $ be defined in \eqref{eq-rho-Heisenberg}. Let $p\in (\f{Q}{Q+\delta_0},1]$ and $q\in [1,\vc]\cap (p,\vc]$ where $\delta_0=\min\{1, 2-Q/q\}$. Then we have
	\[
	h^{p,q}_{at,\rho}(\mathbb{H}^n)\equiv H^p_{L, {\rm max}}(\mathbb{H}^n)\equiv H^p_{L, \rad}(\mathbb{H}^n).
	\]
\end{thm}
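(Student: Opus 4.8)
The plan is to deduce this theorem directly from the abstract machinery already established, namely Theorem \ref{mainthm3}, exactly in the spirit of the preceding applications (the Schr\"odinger operator on manifolds and the degenerate Schr\"odinger operator). The only thing that needs to be checked is that the Schr\"odinger operator $L=\Delta_{\mathbb{H}^n}+V$ with $V\in RH_q(\mathbb{H}^n)$, $q>Q/2$, together with $\LL=\Delta_{\mathbb{H}^n}$, fits the hypotheses: $\LL$ satisfies (A1)--(A4) with $\delta_1=1$ (this is quoted from \cite{LL}), and $L$ satisfies (B1)--(B3) relative to $\LL$ and the critical function $\rho$ of \eqref{eq-rho-Heisenberg}. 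Once (B1)--(B3) are in hand with parameters $\delta_2=2-Q/q$ and any $\delta_3<\min\{1,2-Q/q\}$, Theorem \ref{mainthm3} gives the equivalence $h^{p,q}_{at,\rho}(\mathbb{H}^n)\equiv H^p_{L,\max}(\mathbb{H}^n)\equiv H^p_{L,\rad}(\mathbb{H}^n)$ precisely for $p\in(\tfrac{Q}{Q+\delta_0},1]$ with $\delta_0=\min\{\delta_1,\delta_2,\delta_3\}=\min\{1,2-Q/q\}$, which is the stated range.

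First I would record that $(\mathbb{H}^n,d,dx)$ is a space of homogeneous type with homogeneous dimension $Q=2n+2$ (indeed $|B((x,t),r)|=cr^Q$, so $n$ in the doubling exponent equals $Q$), and that $\rho$ in \eqref{eq-rho-Heisenberg} is a genuine critical function in the sense of \eqref{criticalfunction}: this follows from the general principle recorded after \eqref{gammafunction} that for $V\in RH_q$ with $q>\max\{1,Q/2\}$ the function defined by an integral average condition is critical (here the scaling $r^2\fint_{B}V\sim r^{2-Q}\int_B V$ because the measure is Ahlfors $Q$-regular). Next I would invoke \cite{LL} for the Gaussian bound, H\"older regularity and conservation property of $e^{-t\Delta_{\mathbb{H}^n}}$, giving (A1)--(A4) with $\delta_1=1$, and for the heat kernel bounds of $e^{-tL}$ that yield (B1) and (B2) with $\delta_2=2-Q/q$. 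The subelliptic estimates underlying these are standard consequences of the scaled Poincar\'e inequality on $\mathbb{H}^n$ and Shen-type arguments for Schr\"odinger operators with reverse-H\"older potentials.

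The one step requiring actual work is (B3): the H\"older regularity in the first variable of the difference kernel $q_t(x,y)=p_t(x,y)-\widetilde p_t(x,y)$, with the two-sided gain $\min\{[d(x,\bar x)/\rho(y)]^{\delta_3},[d(x,\bar x)/\sqrt t]^{\delta_3}\}$ valid when $d(x,\bar x)\le\min\{d(x,y)/4,\rho(x)\}$. My plan is to derive this in the same way the paper handles the analogous claims elsewhere: the proof of Proposition \ref{Proposition3: heat kernel bound} (for the Riemannian Schr\"odinger case) carries over to the Heisenberg setting verbatim, because the only ingredients it uses are the Gaussian/H\"older bounds for $e^{-t\Delta_{\mathbb{H}^n}}$ (available with $\delta_1=1$), the bounds (B1)--(B2) for $L$ already established, the perturbation formula $e^{-tL}-e^{-t\Delta_{\mathbb{H}^n}}=-\int_0^t e^{-(t-s)L}V e^{-s\Delta_{\mathbb{H}^n}}\,ds$ (Duhamel), and the elementary properties of $\rho$ from Lemma \ref{lem-criticalfunction} together with the reverse H\"older/Fefferman–Phong inequality for $V$. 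Thus I would state: "Arguing exactly as in the proof of Proposition \ref{Proposition3: heat kernel bound}, using the doubling/Poincar\'e structure of $\mathbb{H}^n$ in place of that of the Riemannian manifold, one obtains (B3) with any $0<\delta_3<\min\{1,2-Q/q\}$." I expect this transfer to be the main (though routine) obstacle, since one must check that no step of that proposition uses anything beyond the space-of-homogeneous-type axioms plus (A1)--(A4) and (B1)--(B2). Having verified (B1)--(B3), the theorem follows immediately by applying Theorem \ref{mainthm3} with $n$ replaced by $Q$ and $\delta_0=\min\{1,2-Q/q\}$.
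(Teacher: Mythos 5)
Your proposal matches the paper's argument exactly: it quotes \cite{LL} for (A1)--(A4) with $\delta_1=1$ and for (B1)--(B2) with $\delta_2=2-Q/q$, obtains (B3) for any $\delta_3<\min\{1,2-Q/q\}$ by transporting the proof of Proposition \ref{Proposition3: heat kernel bound} to the Heisenberg setting, and then invokes Theorem \ref{mainthm3}. Since this is precisely the route taken in the paper, no further comparison is needed.
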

In the particular case $p=1$, the theorem is in line with that in \cite{LL}. Our result corresponding to $p<1$ is new.
\subsection{Schr\"odinger operators on connected and simply connected nilpotent
	Lie groups}
For background on connected and simply connected nilpotent Lie groups see \cite{V, NSW}. Let $\mathbb{G}$ be a connected and simply connected nilpotent Lie group. Let $X\equiv\{X_1,\ldots, X_k\}$ be left invariant vector fields on $\mathbb{G}$ satisfying the H\"omander condition. Let $d$ be the Carnot-Carath\'eodory distance on $\mathbb{G}$ associated to $X$ and $\mu$ be a left invariant Haar measure on $\mathbb{G}$. Then, there exist $0<\kappa\leq n<\vc$ such that $\mu(B(x,r))\approx r^\kappa$ when $0<r\leq 1$, and  $\mu(B(x,r))\approx r^n$ when $r\geq  1$, see for example \cite{NSW}.

The sub-Laplacian is defined by $\Delta_{\mathbb{G}}=-\sum_{j=1}^k X^2_j$. Then the operator $\Delta_{\mathbb{G}}$ generates the analytic semigroup $\{e^{-t\Delta_{\mathbb{G}}}\}_{t>0}$ whose kernels $\widetilde{p}_t(x,y)$ satisfy (A1)-(A4) with $\delta_1=1$. See for example \cite{V}. Hence, Theorem \ref{mainthm2s} implies the following.
\begin{thm}
	Let $\rho$ be a critical function on $\mathbb{G}$. Let $p\in (\f{n}{n+1},1]$ and $q\in [1,\vc]\cap (p,\vc]$. Then we have
	\[
	h^{p,q}_{at,\rho}(\mathbb{G})\equiv h^{p}_{ \Delta_{\mathbb{G}},{\rm max},\rho}(\mathbb{G})\equiv h^{p}_{ \Delta_{\mathbb{G}},{\rm rad},\rho}(\mathbb{G}).
	\]
\end{thm}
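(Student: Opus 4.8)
The plan is to deduce this statement directly from Theorem \ref{mainthm2s}, applied with $\LL=\Delta_{\mathbb{G}}$; the only work required is to check that $(\mathbb{G},d,\mu)$ is a space of homogeneous type with $\mu(\mathbb{G})=\vc$, to identify the correct value of the homogeneous dimension $n$, and to verify that $\Delta_{\mathbb{G}}$ satisfies (A1)--(A4) with $\delta_1=1$.

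First I would record that $(\mathbb{G},d,\mu)$ is a space of homogeneous type. Since $\mu(B(x,r))\approx r^\kappa$ for $0<r\leq 1$ and $\mu(B(x,r))\approx r^n$ for $r\geq 1$, with $0<\kappa\leq n<\vc$ (see \cite{NSW}), the doubling inequality \eqref{doublingcondition} holds; moreover \eqref{doub2} holds with the exponent $n$, because for dilation factors $\lambda\geq 1$ the large-scale growth dominates. Hence the parameter $n$ in the statement of the theorem is precisely this large-scale homogeneous dimension. As $\mathbb{G}$ is noncompact we have $\mu(\mathbb{G})=\vc$, so $\mathbb{G}$ is an admissible underlying space in the sense of Section \ref{sect-mainresult}.

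Next I would invoke the classical properties of the sub-Laplacian. The operator $\Delta_{\mathbb{G}}=-\sum_{j=1}^k X_j^2$ is nonnegative and self-adjoint on $L^2(\mathbb{G})$, which is (A1). Its heat kernel $\widetilde{p}_t(x,y)$ enjoys the Gaussian upper bound \eqref{GE}, the H\"older regularity \eqref{H} with $\delta_1=1$, and the conservation property \eqref{C}; all of these are standard for sub-Laplacians on connected and simply connected nilpotent Lie groups (see \cite{V}). Thus (A1)--(A4) hold with $\delta_1=1$.

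Finally, since $\f{n}{n+\delta_1}=\f{n}{n+1}$, applying Theorem \ref{mainthm2s} with $\LL=\Delta_{\mathbb{G}}$, any critical function $\rho$ on $\mathbb{G}$, $p\in(\f{n}{n+1},1]$ and $q\in[1,\vc]\cap(p,\vc]$ yields $h^{p,q}_{at,\rho}(\mathbb{G})\equiv h^{p}_{\Delta_{\mathbb{G}},{\rm max},\rho}(\mathbb{G})\equiv h^{p}_{\Delta_{\mathbb{G}},{\rm rad},\rho}(\mathbb{G})$, which is the assertion. There is no genuine obstacle in this argument; the only point worth flagging is that the exponent governing the admissible range of $p$ is the large-scale volume-growth dimension $n$, not the small-scale one $\kappa$, which is why the threshold $\f{n}{n+1}$ appears.
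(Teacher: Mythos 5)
Your proof is correct and takes essentially the same route as the paper: the result is obtained by verifying that $\Delta_{\mathbb{G}}$ satisfies (A1)--(A4) with $\delta_1=1$ (citing \cite{V}) and then invoking Theorem \ref{mainthm2s}. Your additional remarks, that the relevant exponent in the threshold $\f{n}{n+1}$ is the large-scale volume-growth dimension and that $\mu(\mathbb{G})=\vc$ ensures admissibility, are sound but the paper leaves them implicit.
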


Let $V$ be a nonnegative locally integrable function on $\mathbb{G}$. Assume that $V\in RH_q(\mathbb{G}), q>n/2$ with its associated critical function $\rho$ defined by
\begin{equation}\label{eq-rho-Lie group}
\rho(x)=\sup\Big\{r>0: \f{r^2}{\mu(B(x,r))}\int_{B(x,r)}V(y)\dy\leq 1\Big\}.
\end{equation}
Then the operator $L=\Delta_{\mathbb{G}} +V$ generates the semigroup $\{e^{-tL}\}_{t>0}$ satisfying (B1) and (B2) with $\LL=\Delta_{\mathbb{G}}$ and $\delta_2= 2-n/q$. See for example \cite{YZ}. The argument used in the proof of Proposition \ref{Proposition3: heat kernel bound} yields that $L$ satisfies (B3) with any $0<\delta_3<\min\{1, 2-q/n\}$. Therefore, Theorem \ref{mainthm3} deduces the following result.
\begin{thm}
	Let $L=\Delta_{\mathbb{G}} +V$ be a Schr\"odinger operator with $V\in RH_{q}(\mathbb{G})$ with $q>n/2$ and let $\rho$ be as in \eqref{eq-rho-Lie group}. Let $p\in (\f{n}{n+\delta_0},1]$ and $q\in [1,\vc]\cap (p,\vc]$ with $\delta_0=\min\{1,2-n/q\}$. Then we have
	\[
	h^{p,q}_{at,\rho}(\mathbb{G})\equiv H^p_{L,{\rm max}}(\mathbb{G})\equiv H^p_{L,{\rm rad}}(\mathbb{G}).
	\]
\end{thm}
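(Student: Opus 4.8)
The statement to prove is that for $L = \Delta_{\mathbb{G}} + V$ with $V \in RH_q(\mathbb{G})$, $q > n/2$, and $\rho$ the associated critical function, one has $h^{p,q}_{at,\rho}(\mathbb{G}) \equiv H^p_{L,{\rm max}}(\mathbb{G}) \equiv H^p_{L,{\rm rad}}(\mathbb{G})$ for $p \in (\tfrac{n}{n+\delta_0},1]$, $q \in [1,\infty]\cap(p,\infty]$, $\delta_0 = \min\{1, 2-n/q\}$. The whole point is that this is not proved from scratch: it is an instance of Theorem \ref{mainthm3} applied with the base operator $\LL = \Delta_{\mathbb{G}}$. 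So the plan is to verify the hypotheses of Theorem \ref{mainthm3}, namely (A1)--(A4) for $\Delta_{\mathbb{G}}$ and (B1)--(B3) for $L$, and then simply quote the theorem.

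First I would record that $(\mathbb{G}, d, \mu)$ is a space of homogeneous type with $\mu(X) = \infty$ and dimension parameter $n$ as in the excerpt (using $\mu(B(x,r)) \approx r^\kappa$ for $r \le 1$ and $\approx r^n$ for $r \ge 1$), so the standing assumptions of Section \ref{sect-mainresult} hold. Next, $\LL = \Delta_{\mathbb{G}} = -\sum_{j=1}^k X_j^2$ is nonnegative and self-adjoint on $L^2(\mathbb{G})$ (giving (A1)), its heat kernel $\widetilde p_t(x,y)$ satisfies a Gaussian upper bound (A2), H\"older continuity (A3) with exponent $\delta_1 = 1$, and is conservative (A4); all of this is classical for sub-Laplacians on connected and simply connected nilpotent Lie groups and is cited from \cite{V}. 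Then I would define $\rho$ by \eqref{eq-rho-Lie group} and observe it is a critical function in the sense of \eqref{criticalfunction}: since $V \in RH_q(\mathbb{G})$ with $q > n/2 \ge \max\{1,n/2\}$, this follows from the general result of \cite{Sh,YZ} quoted just after \eqref{gammafunction}.

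The substantive input is the verification of (B1)--(B3) for $L = \Delta_{\mathbb{G}} + V$ relative to $\LL = \Delta_{\mathbb{G}}$ with $\delta_2 = 2 - n/q$ and $\delta_3$ any number less than $\min\{\delta_1,\delta_2\} = \min\{1, 2-n/q\}$. For (B1) (pointwise Gaussian bound on $p_t$ with the extra critical-function decay factor) and (B2) (the difference kernel $q_t = p_t - \widetilde p_t$ carries a power $(\sqrt t/(\sqrt t + \rho(x)))^{\delta_2}$), these estimates are available in the literature for Schr\"odinger operators with $RH$ potentials on nilpotent Lie groups, e.g.\ \cite{YZ}, so I would cite them. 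For (B3) (H\"older regularity of $q_t$ in the first variable with exponent $\delta_3$), the excerpt itself indicates the mechanism: the argument in the proof of Proposition \ref{Proposition3: heat kernel bound} (proved in the manifold setting in Subsection \ref{subsect-proof of 6.2}) transfers mutatis mutandis, since it only uses the Gaussian bound, the H\"older estimate for $\widetilde p_t$, the perturbation formula, and properties of $\rho$ — all of which hold here. I would state this explicitly as the verification of (B3). With (A1)--(A4) and (B1)--(B3) in hand, Theorem \ref{mainthm3} applies directly and yields the claimed three-way equivalence with $\delta_0 = \min\{\delta_1,\delta_2,\delta_3\} = \min\{1, 2-n/q\}$, completing the proof.

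**Main obstacle.** Since the heavy lifting is already done in Theorems \ref{mainthm2s} and \ref{mainthm3}, there is no genuine analytic obstacle here; the only real work is bookkeeping — checking that the structural hypotheses (doubling with the right $n$, self-adjointness, Gaussian/H\"older/conservation properties of the sub-Laplacian heat kernel, and the three perturbation estimates (B1)--(B3)) are each available from the cited references, and confirming that the exponent $\delta_3$ can indeed be taken arbitrarily close to $\min\{1,2-n/q\}$ so that the stated range $p \in (\tfrac{n}{n+\delta_0},1]$ with $\delta_0 = \min\{1,2-n/q\}$ is exactly what Theorem \ref{mainthm3} delivers (the supremum over admissible $\delta_3$ being attained in the sense that any $p$ in the open interval is covered). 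The part most deserving of care is (B3): I would make sure the proof of Proposition \ref{Proposition3: heat kernel bound} does not secretly use any feature special to Riemannian manifolds, but as written it only uses Gaussian bounds, H\"older continuity of $\widetilde p_t$, and the critical-function estimates of Lemma \ref{lem-criticalfunction}, all of which are in force on $\mathbb{G}$.
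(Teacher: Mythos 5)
Your proposal is correct and follows essentially the same route as the paper: verify (A1)--(A4) for $\Delta_{\mathbb{G}}$ with $\delta_1=1$ from \cite{V}, import (B1)--(B2) from the literature (the paper cites \cite{YZ}) with $\delta_2=2-n/q$, obtain (B3) for any $\delta_3<\min\{1,2-n/q\}$ by transporting the argument of Proposition~\ref{Proposition3: heat kernel bound}, and then invoke Theorem~\ref{mainthm3}. You also correctly noted the small subtlety — that the open range $p\in(\tfrac{n}{n+\delta_0},1]$ with $\delta_0=\min\{1,2-n/q\}$ is obtained by letting $\delta_3\uparrow\min\{1,2-n/q\}$ — which the paper leaves implicit.
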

In \cite{YZ}, the authors prove the equivalence between $H^p_{at,\rho}(\mathbb{G})$ and $H_L^p(\mathbb{G})$ for $p=1$. Our result is new for $p\leq 1$.
\section{Appendices}\label{sec: appendix}
\subsection{Muckenhoupt weights}
Let $X$ be a space of homogeneous type as in Section 1. A weight $w$ is a non-negative measurable and locally integrable function on $X$.
We say that $w \in A_p$, $1 < p < \infty$, if there exists a
constant $C$ such that for every ball $B \subset X$,
$$
\Big(\fint_B w(x)\dx\Big)\Big(\fint_B w(x)^{-1/(p-1)}\dx\Big)^{p-1}\leq C.
$$
For $p = 1$, we say that $w \in A_1$ if there is a constant $C$ such
that for every ball $B \subset X$,
$$
\fint_B w(y)\dy \leq Cw(x) \ \text{for a.e. $x\in B$}.
$$
We set $A_\vc=\cup_{p\geq 1}A_p$.\\

The reverse H\"older classes are defined in the following way: $w
\in RH_q, 1 < q < \infty$, if there is a constant $C$ such that for
any ball $B \subset X$,
$$
\Big(\fint_B w(y)^q \dy\Big)^{1/q} \leq C \fint_B w(x)\dx.
$$
The endpoint $q = \infty$ is given by the condition: $w \in
RH_\infty$ whenever, there is a constant $C$ such that for any ball
$B \subset X$,
$$
w(x)\leq C \fint_B w(y)\dy  \ \text{for a.e. $x\in B$}.
$$
It is well known that $w\in A_\infty$ if and only if $w\in RH_q$ for some $q>1$.

\subsection{Proof of Theorem \ref{thm-Schrodinger manifolds}}\label{subsect-proof of 6.2}

In this subsection we always assume that $X$ is a manifold satisfying the doubling condition (\ref{doubling-manifold}) and a Poincar\'e inequality \eqref{Poincare-p}.

Our aim here is to give the proof of (B1)-(B3) in this setting. It is worth mentioning that we in fact prove something more general than (B1) in Theorem \ref{Th: improved heat kernel} by assuming $V\in A_\infty$. Estimate (B1) will then be deduced from Theorem \ref{Th: improved heat kernel} by restricting $V$ to $RH_q$ with $q>\max\{1,n/2\}$ (Proposition \ref{Proposition1: heat kernel bound}). The approach is based on the approach in \cite{K} and recently improved in \cite{RT} in the setting of Euclidean spaces. The main idea is to use the Fefferman--Phong inequality in \cite{BB} in place of the Fefferman--Phong inequality from \cite{Sh}. To keep our article self contained we give full details below.

Before giving the proof to the theorem we need some technical results. The first is the improved Fefferman-Phong inequality in \cite{BB}.
\begin{lem}\label{Lem: Feff-Phong manifold}
	Let $V\in A_{\infty}$ and $1\le p<\infty$. Then there are constants $C>0$ and $\beta\le 1$ depending only on the $A_{\infty}$ constant of $V$, on $p$, and on the constants in \eqref{doubling-manifold} and \eqref{Poincare-p}, such that for every ball $B$ of radius $r_B>0$ and $u\in W^1_{p,loc}$
	$$ \int_B \ContainB{\AbbsA{\nabla u}^2+V \AbbsA{u}^2}\,d\mu \ge C\frac{m_{\beta}\ContainB{r_B^2\fint_B V}}{r_B^2}\int_B\AbbsA{u}^2\,d\mu $$
	where
	$$ m_{\beta}(x):= \left\lbrace
	\begin{array}{ll}
	x^{\beta}\qquad &x\ge 1\\
	x \qquad &x\le 1
	\end{array}
	\right.$$
	
\end{lem}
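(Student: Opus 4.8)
\emph{Plan of proof.} The plan is to follow the strategy of \cite{K}, in the refined form of \cite{RT}, adapted to the present doubling plus Poincar\'e framework; the structural inputs are the $L^2$-Poincar\'e inequality \eqref{Poincare-p} (already stated in non-dilated form) together with the doubling property \eqref{doubling-manifold}, and the quantitative $A_\infty$ property of $V$, the latter used through the facts that $V\,d\mu$ is a doubling measure and that $V\in RH_s$ for some $s>1$, with all constants controlled only by the $A_\infty$ constant of $V$. Write $\theta_B:=r_B^2\fint_B V$, so that $m_\beta(\theta_B)=\theta_B$ for $\theta_B\le 1$ and $m_\beta(\theta_B)=\theta_B^{\beta}$ for $\theta_B>1$; the two ranges are treated separately.

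In the subcritical range $\theta_B\le 1$ the assertion is $\int_B(|\nabla u|^2+V|u|^2)\,d\mu\gtrsim (\fint_B V)\int_B|u|^2\,d\mu$. By \eqref{Poincare-p} one has $\int_B|u|^2\,d\mu\le 2|u_B|^2\mu(B)+Cr_B^2\int_B|\nabla u|^2\,d\mu$ with $u_B=\fint_B u\,d\mu$; since $\theta_B\le 1$ the gradient contribution is harmless, so it suffices to prove
\[
|u_B|^2\int_B V\,d\mu\ \lesi\ \int_B\bigl(|\nabla u|^2+V|u|^2\bigr)\,d\mu .
\]
Expanding $\int_B V|u|^2=|u_B|^2\int_B V+2u_B\int_B V(u-u_B)+\int_B V|u-u_B|^2$ and bounding the cross term by Cauchy--Schwarz and Young's inequality reduces the matter to controlling $\int_B V|u-u_B|^2\,d\mu$. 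I would then split $B$ along the level set of $V$ at height $K\fint_B V$, with $K$ a large constant: on $\{V\le K\fint_B V\}$ one uses only the plain Poincar\'e bound $\int_B|u-u_B|^2\lesi r_B^2\int_B|\nabla u|^2$, giving a term $\lesi\theta_B\int_B|\nabla u|^2$; on $\{V>K\fint_B V\}$ one uses $|u-u_B|^2\le 2|u|^2+2|u_B|^2$ together with the reverse-H\"older estimate $\int_{\{V>K\fint_B V\}}V\,d\mu\le CK^{-(s-1)}\int_B V\,d\mu$, valid because $V\in RH_s$ with $s>1$, so that taking $K$ large absorbs the remaining $|u_B|^2\int_B V$ term. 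The essential feature is that this argument uses only the $L^2$-Poincar\'e inequality and never a Sobolev--Poincar\'e inequality, which is what allows $s>1$ to be arbitrary; in particular no smallness of $\theta_B$ is needed, the estimate holds for every $\theta_B\le 1$, and all constants depend on $V$ only through its $A_\infty$ constant.

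In the supercritical range $\theta_B>1$ one needs the weaker bound, with $\theta_B^{\beta}$ replacing $\theta_B$, and I would deduce it from the subcritical estimate by a covering/iteration argument: cover $B$ by a bounded-overlap family of balls of a common sub-radius $s<r_B$, apply the subcritical estimate on the subballs with normalized mass $\le 1$ and iterate on the others, then sum, using the bounded overlap and \eqref{doubling-manifold} to control the number of balls and the comparison of volumes. The reverse-H\"older and doubling regularity of $V$ bounds how fast the normalized mass on a subball can grow as the radius decreases, which makes the iteration terminate after finitely many steps and fixes the exponent $\beta\le 1$ in terms of the $A_\infty$ constant of $V$ and of $n$; when $V\in RH_q$ with $q>n/2$ one step of the covering already suffices and recovers the value $\beta=2-n/q$ of \cite{Sh}.

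I expect the supercritical step to be the main obstacle: the covering/iteration has to be arranged so that the potential mass does not accumulate into a few subballs that escape the induction, and so that both $\beta$ and all the constants depend on $V$ only through its $A_\infty$ constant, with no reference to any a priori reverse-H\"older exponent. This last point is exactly the improvement of \cite{BB}, which we follow; the subcritical step, by contrast, is elementary once the level-set truncation of $V$ replaces the Sobolev--Poincar\'e argument used for $RH_{n/2}$ potentials.
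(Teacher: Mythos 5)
The paper does not prove this lemma; it is stated as a direct import from \cite{BB}, so there is no in-paper argument to compare against, and I will assess your sketch on its own.

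Your subcritical step ($r_B^2\fint_B V\le 1$) is essentially correct: the $L^2$-Poincar\'e inequality plus the level-set truncation of $V$ at height $K\fint_B V$, with the high-level set controlled by the quantitative $A_\infty$ bound $V(E)/V(B)\lesi (\mu(E)/\mu(B))^{\delta}$ and the low-level set by Poincar\'e, followed by absorption for $K$ large, does the job. (The exponent you quote from $RH_s$ should be $(s-1)/s$, not $s-1$, but this is cosmetic; using the $A_\infty$ comparison of measures directly is cleaner and avoids naming $s$.) It is worth noticing that this argument actually yields the stronger statement $\int_B(|\nabla u|^2+V|u|^2)\,d\mu\gtrsim \f{\min\{r_B^2\fint_B V,\,1\}}{r_B^2}\int_B|u|^2\,d\mu$ \emph{for every ball} $B$, with no restriction on $r_B^2\fint_B V$; having it in this ``$\min$'' form is what makes the supercritical step work cleanly.

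Your supercritical step ($r_B^2\fint_B V>1$) has a genuine gap, and the mechanism you describe goes in the wrong direction. You propose to shrink to sub-balls where the normalized mass drops below $1$, ``iterate on the others,'' and invoke reverse H\"older to bound how fast the mass can \emph{grow} under shrinking. The upper bound $\ContainB{sr_B}^2\fint_{B(x,sr_B)}V\lesi s^{\,2-n/q}\,r_B^2\fint_B V$ that makes this work is exactly Lemma~\ref{Lem1: rho-manifold}\,(b), and it requires $q>n/2$: for a general $A_\infty$ potential one only has $RH_s$ with $s>1$ possibly far below $n/2$, so the exponent $2-n/s$ is non-positive and the proposed iteration need not terminate, nor will a single covering produce sub-balls with normalized mass $\le 1$. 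The improvement of \cite{BB} runs on the \emph{opposite} estimate: $V\in A_\infty$ makes $V\,d\mu$ a doubling measure (with some exponent $D$), which gives the uniform \emph{lower} bound $\ContainB{sr_B}^2\fint_{B(x,sr_B)}V\gtrsim s^{\,D+2}\,r_B^2\fint_B V$ for every $x\in B$. Choosing $s\sim\ContainB{r_B^2\fint_B V}^{-1/(D+2)}<1$ makes this $\gtrsim 1$ on every sub-ball, so the $\min$-form subcritical estimate yields $\gtrsim (sr_B)^{-2}\int_{B_j}|u|^2$ on each; summing over a bounded-overlap cover gives $\beta=2/(D+2)\in(0,1]$ in one step, with no iteration and no reverse-H\"older exponent threshold. (The modest technicality that the sub-balls overflow $B$ is handled by a standard dilation.) As written, your sketch would only go through under the extra hypothesis $V\in RH_q$ with $q>n/2$, which is precisely the restriction the lemma is designed to remove.
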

We now consider some estimates related to weak subsolutions and weak solutions of the heat equation involving Schr\"odinger operators.

We fix the following notation. The set $Q$ will denote the parabolic cylinder
\begin{align*}
Q:=Q(x_Q,r_Q,t_Q)=\BracketB{(x,t)\in X\times (0,\infty): \;d(x_Q,x)<r_Q \quad\text{and}\quad t_Q-r_Q^2<t<t_Q}
\end{align*}
Given a fixed cylinder $Q$, we also write
\begin{align*}
B_Q:= B(x_Q,r_Q), && I_Q:=[t_Q-r_Q^2,t_Q] && I_Q^t := [t_Q-r_Q^2,t]
\end{align*}

\begin{defn}\label{Def: subs 1}
	Let $I$ be a closed interval in $\RR{}$ and $\Omega$ an open subset of $X$. Let $V$ be a non-negative function on $X$. We say $u$ is a weak subsolution of $(\partial_t-\Delta+V)$ in $I\times\Omega$ if $u\in W^{1,1}_2(I\times\Omega)\cap L^{\infty}(I\times\Omega)$ and
	\begin{align}\label{weak subsolution}
	\int_{I\times \Omega} (u_t\phi + \nabla u \cdot \nabla\phi + Vu\phi ) \,d\mu\,dt \le 0
	\end{align}
	for every $\phi\in C^{\infty}_0(I\times\Omega)$.
\end{defn}

\begin{defn}\label{Def: subs 2}
	We call $u(x,t)$ a weak solution to $(\partial_t-\Delta+V)u=0$ in $Q$ if
	\begin{enumerate}[(a)]
		\item
		$u\in L^{\infty}\ContainB{I_Q;W^{1,2}(B_Q)}\cap L^2\ContainB{I_Q; W^{1,2}(B_Q)}$ and
		\item $u$ satisfies for each $t\in I_Q$,
		\begin{equation*}
		\int_{B_Q} u(x,t)\phi(x,t)\,d\mu
		-\int\limits_{t_Q-r_Q^2}^t\int_{B_Q} \ContainB{u\phi_s + \nabla u\cdot\nabla \phi + Vu\phi }\,d\mu\,ds = 0
		\end{equation*}
		for all $\phi \in \mathcal{D}$, where
		$$ \mathcal{D}:=\BracketB{\varphi\in L^2\ContainB{I_Q;W^{1,2}(B_Q)} :\; \varphi_s\in L^2\ContainB{I_Q; L^2(B_Q)} \quad\text{and}\quad \varphi(x,t_Q,r_Q^2)=0} $$
	\end{enumerate}
\end{defn}

We have the following simple result.
\begin{lem}
	Let $I$ be a closed interval in $\RR{}$ and $\Omega$ an open subset of $X$. Let $V$ be a non-negative function on $X$.
	Suppose
	$u\in W^{1,1}_2(I\times\Omega)\cap L^{\infty}(I\times\Omega)$ is a weak subsolution of $(\partial_t-\Delta+V)$ in $I\times\Omega$.
	
	Let $g:\mathbb{R}\rightarrow \mathbb{R}$ satisfy $g'' \geq 0$, $g'\geq 0$ and $g(0)=0$.
	Then $g(u)$ is a weak subsolution of $(\partial_t -\Delta+V)$ in $I\times\Omega$.
\end{lem}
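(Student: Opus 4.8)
The plan is to verify the defining inequality \eqref{weak subsolution} for $g(u)$ directly, by a chain-rule computation together with the fact that $g(u)$ is still an admissible test-function multiplier. First I would record the pointwise identities that follow from the chain rule (valid in the weak/Sobolev sense since $g$ is smooth with bounded derivatives on the range of the bounded function $u$): $\partial_t g(u) = g'(u)\,u_t$ and $\nabla g(u) = g'(u)\,\nabla u$. These place $g(u)$ in $W^{1,1}_2(I\times\Omega)\cap L^\infty(I\times\Omega)$, so $g(u)$ is at least a legitimate candidate subsolution.

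Next, fix $\phi\in C_0^\infty(I\times\Omega)$ with $\phi\ge 0$ — it suffices to test against nonnegative $\phi$ since the class of test functions in \eqref{weak subsolution} can be reduced to nonnegative ones by the usual splitting $\phi=\phi_+-\phi_-$ and a density/truncation argument. The idea is to use $\psi:=g'(u)\phi$ as a test function in the subsolution inequality for $u$ itself. One must first check $\psi$ is admissible: since $g'$ is smooth and $u$ is bounded, $g'(u)\in W^{1,1}_2\cap L^\infty$ and $\phi$ is smooth with compact support, so $\psi\in W^{1,1}_2(I\times\Omega)$ with compact support in $I\times\Omega$; a standard mollification argument then lets us use it in \eqref{weak subsolution}. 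Plugging $\psi$ in and expanding,
\begin{align*}
0 &\ge \int_{I\times\Omega}\bigl(u_t\,g'(u)\phi + \nabla u\cdot\nabla(g'(u)\phi) + Vu\,g'(u)\phi\bigr)\,d\mu\,dt\\
&= \int_{I\times\Omega}\bigl(\partial_t g(u)\,\phi + \nabla g(u)\cdot\nabla\phi + g''(u)|\nabla u|^2\phi + Vu\,g'(u)\phi\bigr)\,d\mu\,dt.
\end{align*}
Since $g''\ge 0$ and $\phi\ge 0$, the term $\int g''(u)|\nabla u|^2\phi\ge 0$ may be dropped. For the potential term, the convexity hypotheses $g'\ge 0$, $g(0)=0$ together with $g''\ge 0$ give $g(s)\le s\,g'(s)$ for all $s\in\mathbb R$ (for $s\ge 0$, $g(s)=\int_0^s g'(r)\,dr\le s\,g'(s)$ by monotonicity of $g'$; for $s\le 0$ the same monotonicity gives $g(s)=-\int_s^0 g'(r)\,dr\le -s\,g'(s)$, i.e. again $g(s)\le s g'(s)$), hence $V\,u\,g'(u)\ge V\,g(u)$ pointwise because $V\ge 0$. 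Therefore
\[
0\ge \int_{I\times\Omega}\bigl(\partial_t g(u)\,\phi + \nabla g(u)\cdot\nabla\phi + V\,g(u)\,\phi\bigr)\,d\mu\,dt,
\]
which is exactly \eqref{weak subsolution} for $g(u)$ (against nonnegative $\phi$, hence against all $\phi$ after the reduction), establishing that $g(u)$ is a weak subsolution.

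I expect the main obstacle to be the justification that $\psi=g'(u)\phi$ is a legitimate test function in \eqref{weak subsolution} — the definition only literally allows $\phi\in C_0^\infty$, so one needs an approximation argument (mollify $u$, or approximate $g'$, and pass to the limit using the boundedness of $u$ and of $g', g''$ on its range, plus $\phi\in C_0^\infty$) to extend the inequality to Sobolev test functions with compact support. The chain-rule identities for Sobolev functions composed with smooth functions, and the convexity inequality $g(s)\le s g'(s)$, are routine; the care needed is entirely in the functional-analytic bookkeeping of the approximation. I would also remark that the restriction to nonnegative test functions is harmless because weak subsolutions satisfy \eqref{weak subsolution} with equality replaced by $\le 0$ precisely against nonnegative test functions in the usual parabolic formulation, and the stated definition is equivalent to that one.
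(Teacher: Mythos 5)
Your proof is correct and follows essentially the same argument as the paper: test the subsolution inequality for $u$ against $g'(u)\phi$, expand, drop the $g''(u)|\nabla u|^2\phi\geq 0$ term, and close with the convexity inequality $g(s)\le sg'(s)$. In fact your derivation of $g(s)\le sg'(s)$ is more careful than the paper's — it genuinely requires $g''\ge 0$ (monotonicity of $g'$), whereas the paper mistakenly attributes it to $g'\ge 0$ and $g(0)=0$ alone; and you correctly flag that Definition \ref{Def: subs 1} must implicitly restrict to nonnegative test functions, which the paper's own proof also tacitly assumes.
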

\begin{proof}
	Let $\varphi\in C^{\infty}_0(I\times\Omega)$ be non-negative. We will check that $g(u)$ and $\varphi$ satisfy \eqref{weak subsolution}.
	
	Firstly take $\phi = \varphi g'(u)$ as a test function   in \eqref{weak subsolution}. This gives
	$$ \int_{I\times\Omega} ( u_t\varphi g'(u) + \nabla u\cdot\nabla(\varphi g'(u))+Vu\varphi g'(u))\,d\mu\,dt\le 0$$
	or
	$$ \int_{I\times\Omega} \ContainB{ u_t\varphi g'(u) + \varphi g''(u) \AbbsA{\nabla u}^2 + g'(u)\nabla u\cdot\nabla\varphi+Vu\varphi g'(u)}\,d\mu\,dt\le 0$$
	Therefore the proceeding inequality implies
	\begin{align*}
	\int_{I\times\Omega} &\varphi \partial_t g(u) + \nabla g(u)\cdot\nabla\varphi + Vg(u)\varphi \\
	&=
	\int_{I\times\Omega} \varphi g'(u) \partial_t u + \nabla g(u)\cdot\nabla\varphi + Vg(u)\varphi \\
	&\le
	-\int_{I\times\Omega} \varphi g''(u)\AbbsA{\nabla u}^2 + \int_{I\times\Omega}V\varphi\ContainB{g(u)-ug'(u)}\\
	&\le \int_{I\times\Omega}V\varphi\ContainB{g(u)-ug'(u)} \\
	&\le 0
	\end{align*}
	The second inequality holds because $g''\ge 0$. The final inequality holds because $V, \varphi \ge 0$ and the hypotheses $g'\ge 0$ and $g(0)=0$ imply that $g(s)-sg'(s)\le 0$ for all $s\in\RR{}$.
\end{proof}	

The following estimate can be viewed as a Cacciopoli's inequality related to Schr\"odinger operators in manifolds.

\begin{lem}\label{Lem: cacciopoli manifold}
	Let $V$ a non-negative function on $X$. Suppose that $u$ is a weak solution to
	$ (\partial_t -\Delta +V)u=0 $
	in $2Q$. Then there exists $C>0$ such that for every $\sigma\in (0,1)$,
	$$\sup_{t\in[t_Q-(\sigma r_Q)^2,t_Q]} \int_{\sigma B_Q}\AbbsA{u}^2\,d\mu + \int_{\sigma Q}\AbbsA{\nabla u}^2 + V\AbbsA{u}^2\,d\mu\,ds \le \frac{C}{r_Q^2(1-\sigma)^2}\int_Q \AbbsA{u}^2\,d\mu\,ds $$
\end{lem}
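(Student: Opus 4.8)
The plan is to establish the Caccioppoli-type inequality of Lemma~\ref{Lem: cacciopoli manifold} by the standard energy method, testing the weak-solution identity of Definition~\ref{Def: subs 2} against $u$ multiplied by a suitable space-time cutoff. First I would fix a smooth cutoff $\eta=\eta(x,t)$ such that $\eta\equiv 1$ on $\sigma Q$, $\eta$ is supported in $Q$, $0\le\eta\le 1$, $|\nabla\eta|\lesi \frac{1}{r_Q(1-\sigma)}$ and $|\partial_t\eta|\lesi\frac{1}{r_Q^2(1-\sigma)^2}$; such an $\eta$ exists because $X$ carries a metric and one can build the spatial part from $d(x_Q,\cdot)$ and the temporal part from a rescaled bump. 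One technical point is that $\phi=\eta^2 u$ is an admissible test function in the class $\mathcal{D}$ from Definition~\ref{Def: subs 2}: it lies in $L^2(I_Q;W^{1,2}(B_Q))$ with $\partial_s\phi\in L^2(I_Q;L^2(B_Q))$ because $u$ is a weak solution (hence has the requisite regularity), and it vanishes at the top time $t_Q$ since $\eta$ does; a routine density/approximation argument justifies using it directly.

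The key computation is then: substitute $\phi=\eta^2 u$ into the identity, so that for each $t\in I_Q$,
\begin{equation*}
\int_{B_Q}\eta^2 u^2(\cdot,t)\,d\mu+2\int_{I_Q^t}\!\!\int_{B_Q}\eta^2\ContainB{|\nabla u|^2+Vu^2}\,d\mu\,ds=-2\int_{I_Q^t}\!\!\int_{B_Q}\eta u\,\nabla\eta\cdot\nabla u\,d\mu\,ds+\int_{I_Q^t}\!\!\int_{B_Q}u^2\,\partial_s(\eta^2)\,d\mu\,ds,
\end{equation*}
after integrating the $u\,\partial_s(\eta^2 u)$ term by parts in time and using $\eta(\cdot,t_Q)=0$. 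The cross term is absorbed via Young's inequality $2|\eta u\,\nabla\eta\cdot\nabla u|\le \half\eta^2|\nabla u|^2+2u^2|\nabla\eta|^2$, which lets the gradient term on the right be swallowed by the left-hand side. What remains on the right is controlled by $(|\nabla\eta|^2+|\partial_t(\eta^2)|)u^2\lesi \frac{1}{r_Q^2(1-\sigma)^2}u^2$ and integrated over $Q$. Taking the supremum over $t\in[t_Q-(\sigma r_Q)^2,t_Q]$ of the first term and, separately, evaluating the other two terms at $t=t_Q$ gives the claimed bound, since on $\sigma Q$ one has $\eta=1$.

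The main obstacle I anticipate is purely a matter of care rather than depth: justifying the time integration by parts $\int u\,\partial_s(\eta^2 u)=\half\int\partial_s(\eta^2 u^2)-\half\int u^2\partial_s(\eta^2)$ rigorously, since $u$ is only known to lie in $L^\infty(I_Q;W^{1,2})\cap L^2(I_Q;W^{1,2})$ with $\partial_s u$ a priori only a distribution; one resolves this by the usual Steklov averaging (mollifying in time) argument, deriving the estimate for the averaged function and passing to the limit, exactly as in the classical parabolic theory. Note also that the nonnegativity of $V$ is used twice — to keep $\int\eta^2 Vu^2$ on the left with a favorable sign and to discard it when only the gradient and sup bounds are needed — and no further structure of $V$ (no $A_\infty$ or reverse-Hölder) is required here, so Lemma~\ref{Lem: Feff-Phong manifold} is not invoked at this step. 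The preceding lemma on $g(u)$ being a subsolution is likewise not needed for this particular statement but will be used later in the Moser iteration.
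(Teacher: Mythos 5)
Your overall approach --- testing the weak-solution identity against $\phi=\eta^2 u$, integrating by parts in time, and absorbing the cross term via Young's inequality --- is the standard Caccioppoli energy method and is exactly what the paper does (the paper merely splits the cutoff into a purely spatial factor $\chi(x)$ and a purely temporal factor $\eta(t)$ rather than using a single $\eta(x,t)$; this is cosmetic). However, your treatment of the temporal cutoff is backwards, and as written it makes the proposal internally inconsistent.

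You require $\eta\equiv 1$ on $\sigma Q$, and simultaneously you invoke ``$\eta(\cdot,t_Q)=0$'' both to place $\phi=\eta^2 u$ in the admissible class $\mathcal{D}$ and to kill the boundary term in the time integration by parts. These two requirements are incompatible: the cylinder $\sigma Q=\BracketB{(y,s):d(x_Q,y)<\sigma r_Q,\ t_Q-(\sigma r_Q)^2<s<t_Q}$ has its \emph{top} at $s=t_Q$, so a cutoff that is $1$ on $\sigma Q$ cannot also vanish at $t_Q$. Indeed your final step --- ``evaluating the other two terms at $t=t_Q$'' --- requires $\eta$ to equal $1$, not $0$, at $t_Q$ in order to recover $\int_{\sigma Q}\ContainB{\AbbsA{\nabla u}^2+V\AbbsA{u}^2}$. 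The correct condition is the opposite one: $\eta$ must vanish at the \emph{initial} time $t_Q-r_Q^2$ and remain $\equiv 1$ for all $s\geq t_Q-(\sigma r_Q)^2$. This is also the correct reading of the admissibility condition in Definition~\ref{Def: subs 2} (the displayed ``$\varphi(x,t_Q,r_Q^2)=0$'' is a typo for $\varphi(x,t_Q-r_Q^2)=0$, as forced by the structure of the integral $\int_{t_Q-r_Q^2}^{t}$), and it is precisely what makes the boundary term drop in the integration by parts
$$\int_{t_Q-r_Q^2}^{t}\eta^2 u\,u_s\,ds=\tfrac12\eta^2(t)u^2(\cdot,t)-\tfrac12\int_{t_Q-r_Q^2}^{t}\ContainB{\eta^2}_s u^2\,ds.$$
The parabolic boundary of $Q$ is its bottom and lateral sides, not its top; the Caccioppoli bound is interior only with respect to that boundary. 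Correspondingly, ``$\eta$ supported in $Q$'' should read: the spatial part is compactly supported in $B_Q$, but the temporal part is identically $1$ up to and past $t_Q$. Once these are fixed the rest of your argument goes through as you describe. (One harmless algebraic slip: the cross term in your displayed identity should carry $-4\int\eta u\,\nabla\eta\cdot\nabla u$ rather than $-2$, since $\nabla(\eta^2)=2\eta\nabla\eta$; Young's inequality absorbs it regardless.)
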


\begin{proof}
	We adapt some ideas in \cite{K} and proceed in the following steps.
	
\emph{Step 1:  The cutoff functions}. We begin by defining some auxiliary functions:
		the spatial cut-off $\chi\in C^{\infty}_0(B_Q)$ with
		\begin{align*}
		0\le \chi \le 1, && \chi \equiv 1 \quad\text{on}\quad\sigma B_Q, && \AbbsA{\nabla\chi}\lesssim\frac{1}{r_Q(1-\sigma)},
		\end{align*}
		and the temporal cutoff $\eta\in C^{\infty}(\RR{})$ with
		\begin{align*}
		0\le\eta\le 1,  &&\AbbsA{\eta_t}\lesssim \frac{1}{r_Q^2(1-\sigma)^2}&&
		\eta(t)=\left\lbrace \begin{array}{ll}
		1 &\quad t\ge t_Q-(\sigma r_Q)^2 \\
		... &\quad \text{else}\\
		0 &\quad t\le t_Q-r_Q^2
		\end{array}\right.
		\end{align*}
%	\end{itemize}
	
\emph{Step 2: The test function.}
	Let $u$ be a weak solution to $(\partial_t-\Delta+V)u=0$ in $2Q$ in the sense of Definition \ref{Def: subs 2}. We may assume that $u_t\in L^2(2Q)$, since we can remove this assumption by the argument in Aronson and Serrin (1967).
	
	Take $\phi(x,t):=\eta(t)^2\chi(x)^2 u(x,t)$. Let us show that $\phi\in\mathcal{D}$.
		Firstly since $u$ is a weak solution then $u\in L^2\ContainB{I_Q;W^{1,2}(B_Q)}$ and hence $\eta^2\chi^2 u\in L^2\ContainB{I_Q;W^{1,2}(B_Q)}$.
		Secondly $\phi_t = \ContainB{\eta^2\chi^2 u}_t = 2u\chi^2\eta\eta_t+\eta^2\chi^2u_t \in L^2\ContainB{I_Q; L^2(B_Q)}$ since $u_t\in L^2(B_Q)$.
		Finally $\phi(x,t_Q-r_Q^2) = \eta^2(t_Q-r_Q^2)\chi^2(x)u(x,t_Q-r_Q^2)=0$ since $\eta(t_Q-r_Q^2)=0$.
	Therefore we may conclude $\phi\in \mathcal{D}$.
	
 \emph{Step 3: The identity for weak solutions}.
	Fix $t\in [t_Q-(\sigma r_Q)^2,t_Q]$. We use the notation $I_Q^t=[t_Q-r_Q^2,t]$. By parts with respect to the variable $s$ gives
	\begin{align}
	\int_{I_Q^t}\int_{B_Q} \eta^2\chi^2 u u_s \,d\mu\,ds
	&=
	\int_{B_Q}\GroupingB{\eta^2\chi^2 u^2}_{t_Q-r_Q^2}^t d\mu - \int_{I_Q^t}\int_{B_Q} u\, \partial_s\ContainB{\eta^2\chi^2 u}\,d\mu\,ds \notag\\
	&= \int_{B_Q}\chi^2(x)u^2(x,t)\,d\mu - \int_{I_Q^t}\int_{B_Q} u\,\partial_s\ContainB{\eta^2\chi^2 u^2}\,d\mu\,ds \label{cacciopoli 3.1}
	\end{align}
	since $\eta^2(t) =1$ because $t\ge t_Q^2-(\sigma r_Q)^2$.
	
	Next the product rule respect to $s$ gives
	\begin{align}
	\int_{I_Q^t}\int_{B_Q} u\partial_s\ContainB{\eta^2\chi^2 u}\,d\mu\,ds
	&=
	2\int_{I_Q^t}\int_{B_Q}\chi^2 u^2\eta\eta_s \,d\mu\,ds + \int_{I_Q^t}\int_{B_Q} \eta^2\chi^2 u u_s\,d\mu\,ds \label{cacciopoli 3.2}
	\end{align}
	Inserting \eqref{cacciopoli 3.1} into \eqref{cacciopoli 3.2} gives
	\begin{align*}
	\int_{I_Q^t}\int_{B_Q} u \partial_s\ContainB{\eta^2\chi^2 u }\,d\mu\,ds
	= 2\int_{I_Q^t}\int_{B_Q} \chi^2 u^2\eta\eta_s\,d\mu\,ds + \int_{B_Q}\chi^2 u^2\,d\mu - \int_{I_Q^t}\int_{B_Q} u\partial_s\ContainB{\eta^2\chi^2 u}\,d\mu\,ds
	\end{align*}
	Rearrange this to get
	\begin{align}\label{cacciopoli 3.3}
	\int_{I_Q^t}\int_{B_Q} u \partial_s\ContainB{\eta^2\chi^2 u }\,d\mu\,ds
	=\int_{I_Q^t}\int_{B_Q} \chi^2 u^2\eta\eta_s\,d\mu\,ds+\tfrac{1}{2}\int_{B_Q}\chi^2 u^2\,d\mu
	\end{align}
	Now take $\phi=\eta^2\chi^2 u$ as a test function in Definition \ref{Def: subs 2} (b) to get
	\begin{align*}
	\int_{B_Q}u^2\chi^2\,d\mu
	-\int_{I_Q^t}\int_{B_Q} u\partial_s\ContainB{\eta^2\chi^2 u}\,d\mu\,ds
	+\int_{I_Q^t}\int_{B_Q}\nabla u\cdot\nabla\ContainB{\eta^2\chi^2 u}+Vu\ContainB{\eta^2\chi^2 u}\,d\mu\,ds = 0
	\end{align*}
	Insert \eqref{cacciopoli 3.3} into the above to get
	\begin{align}\label{cacciopoli 3.4}
	\tfrac{1}{2}\int_{B_Q}\chi^2u^2\,d\mu - \int_{I_Q^t}\int_{B_Q}\chi^2u^2\eta\eta_s\,d\mu\,ds + \int_{I_Q^t}\int_{B_Q}\nabla u\cdot\nabla\ContainB{\eta^2\chi^2 u}+Vu\ContainB{\eta^2\chi^2 u}\,d\mu\,ds = 0
	\end{align}
	Noting that
	\begin{align*}
	\nabla u\cdot\nabla\ContainB{\eta^2\chi^2 u} = \eta^2\chi^2\nabla u\cdot\nabla u + \eta^2 u \nabla u\cdot\nabla(\chi^2)
	\end{align*}	
	and inserting this into the third term of \eqref{cacciopoli 3.4} gives
	\begin{align}
	\tfrac{1}{2}\int_{B_Q}\chi^2u^2\,d\mu
	+\int_{I_Q^t}\int_{B_Q} \eta^2\chi^2\AbbsA{\nabla u}^2\,d\mu\,ds
	+\int_{I_Q^t}\int_{B_Q} Vu^2\eta^2\chi^2\,d\mu\,ds \notag \\
	=
	\int_{I_Q^t}\int_{B_Q} \chi^2u^2\eta\eta_s\,d\mu\,ds
	-\int_{I_Q^t}\int_{B_Q}\eta^2 u \nabla u\cdot\nabla (\chi^2)\,d\mu\,ds
	\label{cacciopoli 3.5}
	\end{align}

\emph{Step 4: Control of the  $\int \AbbsA{\nabla u}^2$ term}.	
	By the non-negativity of $V$, Cauchy--Schwarzt's inequality, H\"older's inequality , and by taking $t=t_Q$ in \eqref{cacciopoli 3.5} we obtain
	\begin{align*}
	\int_Q \AbbsA{\nabla u}^2 \chi^2\eta^2\,d\mu\,ds
	&\le
	\int_Q\chi^2 u^2\eta\AbbsA{\eta_s} \,d\mu\,ds +\int_Q\eta^2 \AbbsA{u}\nabla u\cdot\nabla(\chi^2)\,d\mu\,ds \\
	&\le
	\int_Q u^2\AbbsA{\eta_s}\,d\mu\,ds + 2\int_Q\chi\eta^2\AbbsA{u}\AbbsA{\nabla u}\AbbsA{\nabla \chi}\,d\mu\,ds\\
	&\le
	\int_Q u^2\AbbsA{\eta_s}\,d\mu\,ds + 2\int_{I_Q}\eta^2\ContainC{\int_{B_Q}\AbbsA{\nabla u}^2\chi^2\,d\mu}^{1/2}\ContainC{\int_{B_Q}\AbbsA{\nabla \chi}^2 u^2\,d\mu}^{1/2}\,ds \\
	&\le
	\int_Q u^2\AbbsA{\eta_s}\,d\mu\,ds + 2\int_{I_Q}\eta^2\GroupingD{\frac{1}{4\varepsilon}\int_{B_Q}\AbbsA{\nabla u}^2\chi^2\,d\mu + \varepsilon\int_{B_Q}\AbbsA{\nabla \chi}^2 u^2\,d\mu}\,ds,
	\end{align*}
	which along with the fact that $\sqrt{A}\sqrt{B}\le \tfrac{1}{4\varepsilon}\sqrt{A}+\varepsilon\sqrt{B}$ gives
	\begin{align*}
	\int_Q \AbbsA{\nabla u}^2 \chi^2\eta^2\,d\mu\,ds
	&\le
	\frac{C}{r_Q^2(1-\sigma)^2}\int_Q u^2\,d\mu\,ds + \frac{1}{2\varepsilon}\int_Q\AbbsA{\nabla u}^2\chi^2\eta^2\,d\mu\,ds + \frac{C'\varepsilon}{r_Q^2(1-\sigma)^2}\int_Q u^2\,d\mu\,ds.
	\end{align*}
	By using the properties $\AbbsA{\eta_s}\lesssim r_Q^{-2}(1-\sigma)^{-2}$ and $\AbbsA{\nabla \chi}\lesssim r_Q^{-1}(1-\sigma)^{-1}$ and taking $\varepsilon=1$ we arrive at
	\begin{align*}
	\int_Q \AbbsA{\nabla u}^2 \chi^2\eta^2\,d\mu\,ds 	&\leq
	\frac{C''}{r_Q^2(1-\sigma)^2}\int_Qu^2\,d\mu\,ds + \tfrac{1}{2}\int_Q\AbbsA{\nabla u}^2\chi^2\eta^2\,d\mu\,ds.
	\end{align*}
	Rearranging this inequality gives
	\begin{align}\label{cacciopoli 4.1}
	\int_Q\chi^2\eta^2\AbbsA{\nabla u}^2\,d\mu\,ds
	\le \frac{2C''}{r_Q^2(1-\sigma)^2}\int_Q u^2\,d\mu\,ds
	\end{align}

\emph{Step 5: Control of the $\int Vu^2$ term}.
	Taking $t=t_Q$ in \eqref{cacciopoli 3.5} and applying a similar argument to Step 4 we obtain
	\begin{align}
	\int_Q Vu^2\chi^2\eta^2\,d\mu\,ds
	&\le
	\int_Q\chi^2 u^2\eta\AbbsA{\eta_s}\,d\mu\,ds +\int_Q\eta^2\AbbsA{u}^2\nabla u\cdot\nabla(\chi^2)\,d\mu\,ds \notag\\
	&\le
	\frac{C+1}{r_Q^2(1-\sigma)^2}\int_Qu^2\,d\mu\,ds + \tfrac{1}{2}\int_Q\AbbsA{\nabla u}^2\chi^2\eta^2\,d\mu\,ds.
	\end{align}
	We now apply \eqref{cacciopoli 4.1} to the second term in the inequality above to conclude that
	\begin{align}
	\int_Q Vu^2\chi^2\eta^2\,d\mu\,ds
	&\lesssim
	\frac{1}{r_Q^2(1-\sigma)^2}\int_Qu^2\,d\mu\,ds \label{cacciopoli 5.1}
	\end{align}

\emph{Step 6: Control of the $\int u^2$ term}.
		Since $V\ge 0$, the left hand side of \eqref{cacciopoli 3.5} is positive and hence by applying a similar argument to Step 4 we have
	\begin{align}
	\sup_{t\in[t_Q-(\sigma r_Q)^2,t_Q]} &\int_{B_Q}u^2(x,t)\chi^2(x)\,d\mu \notag\\
	&\le
	2\int_Q\chi^2 u^2\eta\AbbsA{\eta_s} \,d\mu\,ds +2\int_Q\eta^2 \AbbsA{u}\nabla u\cdot\nabla(\chi^2)\,d\mu\,ds \notag\\
	&\le
	\frac{C}{r_Q^2(1-\sigma)^2}\int_Q u^2\,d\mu\,ds + \frac{1}{\varepsilon}\int_Q\AbbsA{\nabla u}^2\chi^2\eta^2\,d\mu\,ds + \frac{C'\varepsilon}{r_Q^2(1-\sigma)^2}\int_Q u^2\,d\mu\,ds \notag\\
	\end{align}
	Taking $\varepsilon=1$ and applying \eqref{cacciopoli 4.1}, we derive
	\begin{align}
	\sup_{t\in[t_Q-(\sigma r_Q)^2,t_Q]} \int_{B_Q}u^2(x,t)\chi^2(x)\,d\mu \notag
	&\lesssim
	\frac{C''}{r_Q^2(1-\sigma)^2}\int_Qu^2\,d\mu\,ds + \int_Q\AbbsA{\nabla u}^2\chi^2\eta^2\,d\mu\,ds \notag\\
	&\lesssim
	\frac{1}{r_Q^2(1-\sigma)^2}\int_Q u^2\,d\mu\,ds. \label{cacciopoli 6.1}
	\end{align}
	
\emph{Step 7: Putting it all together}.
	Equations \eqref{cacciopoli 4.1} and \eqref{cacciopoli 5.1} give
	\begin{align}\label{cacciopoli 7.1}
	\int_Q\AbbsA{\nabla u}^2\chi^2\eta^2\,d\mu\,ds + \int_QV u^2\chi^2\eta^2\,d\mu\,ds
	\le
	\frac{C}{r_Q^2(1-\sigma)^2}\int_Q u^2\,d\mu\,ds
	\end{align}
	Since $\chi = 1$ on $\sigma B_Q$, $\eta =1$ for $t\ge t_Q-(\sigma r_Q)^2$, and noting that
	$$ \sigma Q= \BracketB{(y,s)\in M\times (0,\infty):\quad d(x_Q,y)<\sigma r_Q \quad\text{and}\quad t_Q-(\sigma r_Q)^2<s<t_Q}$$
	we have by \eqref{cacciopoli 7.1}
	\begin{align*}
	\int_{\sigma Q} \AbbsA{\nabla u}^2+Vu^2 \,d\mu\,ds
	\le
	\int_Q\ContainB{\AbbsA{\nabla u}^2+Vu^2}\eta^2\chi^2\,d\mu\,ds 
	\le
	\frac{C}{r_Q^2(1-\sigma)^2}\int_Q u^2\,d\mu\,ds
	\end{align*}
	Combining this final inequality with \eqref{cacciopoli 6.1} we obtain the required result.
\end{proof}

We now record the following mean value inequality related to Laplace-Beltrami operators.

\begin{lem}\label{Lem: MVI for Laplace}
	Let $u$ be a weak subsolution of $(\partial_t-\Delta)u\le 0$ in $Q$. Then
	$$ \sup_{(x,t)\in\frac{1}{2}Q}\AbbsA{u(x,t)}\le \ContainC{\frac{C}{r_Q^2 \mu(B_Q)}\int_{\frac{2}{3}Q}u^2\,d\mu\, dt}^{1/2} $$
\end{lem}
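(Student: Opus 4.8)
The plan is to run a parabolic Moser iteration between the cylinders $\tfrac12 Q$ and $\tfrac23 Q$. As usual it suffices to bound $\sup_{\frac12 Q} u_+$ for a nonnegative weak subsolution $u$ of $\partial_t-\Delta$; the stated form with $|u|$ then follows in the cases that matter (where $u$ is in fact a solution, so both $u_+$ and $u_-=(-u)_+$ are subsolutions and $|u|^2=u_+^2+u_-^2$). After a harmless translation in $t$ we fix a geometric sequence of radii $r_j=\bigl(\tfrac12+\tfrac16\,2^{-j}\bigr)r_Q$, so that $r_0=\tfrac23 r_Q$, $r_j\searrow\tfrac12 r_Q$ and $r_j-r_{j+1}\sim 2^{-j}r_Q$, with corresponding nested parabolic cylinders $Q_j=Q(x_Q,r_j,\,\cdot\,)$ whose time–intervals shrink at the same rate, and cutoffs $(\chi_j,\eta_j)$ adapted to $Q_{j+1}\subset Q_j$ with $|\nabla\chi_j|\lesssim (r_j-r_{j+1})^{-1}$ and $|\partial_t\eta_j|\lesssim (r_j-r_{j+1})^{-2}$.

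The first ingredient is a self–improvement of integrability. For $q\ge 1$ the composition lemma above (applied to a smooth convex nondecreasing approximation of $s\mapsto s^q$, $s\ge0$) shows $u^q$ is again a weak subsolution of $\partial_t-\Delta$, so the Caccioppoli inequality of Lemma~\ref{Lem: cacciopoli manifold} (its proof applies verbatim to nonnegative subsolutions, taking $V=0$; the identity simply becomes an inequality in the favourable direction) yields
\[
\sup_{t\in I_{Q_{j+1}}}\int_{B_{Q_j}}\chi_j^2\,u^{2q}\,d\mu\;+\;\int_{Q_j}\chi_j^2\eta_j^2\,|\nabla(u^q)|^2\,d\mu\,ds\;\lesssim\;\frac{C\,4^j}{r_Q^2}\int_{Q_j}u^{2q}\,d\mu\,ds .
\]
The second ingredient is that \eqref{doubling-manifold} together with the Poincaré inequality \eqref{Poincare-p} implies an $L^2$–Sobolev inequality on balls, i.e. $\bigl(\fint_B|w|^{2\kappa}\bigr)^{1/\kappa}\lesssim r_B^2\fint_B|\nabla w|^2+\fint_B|w|^2$ for some exponent $\kappa>1$ and all $w$ supported in $B$. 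Applying this at each time slice to $w=\chi_j u^q$, combining with the $\sup_t\!\int\chi_j^2 u^{2q}$ bound, and interpolating in the parabolic exponent $2(1+\tfrac1\kappa)$ gives, with $\theta:=1+\tfrac1\kappa>1$,
\[
\Bigl(\fint\!\!\fint_{Q_{j+1}}u^{2q\theta}\,d\mu\,ds\Bigr)^{1/\theta}\;\lesssim\;C\,4^{j}\,\fint\!\!\fint_{Q_j}u^{2q}\,d\mu\,ds ,
\]
where the doubling property has been used to replace $\mu(B_{Q_j}),\mu(B_{Q_{j+1}})$ and $r_j,r_{j+1}$ by $\mu(B_Q)$ and $r_Q$ up to constants.

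Now iterate: take $q=\theta^{\,j}$ at the $j$-th step and set $\Phi(j):=\bigl(\fint\!\fint_{Q_j}u^{2\theta^{j}}\,d\mu\,ds\bigr)^{\theta^{-j}}$. The last display reads $\Phi(j+1)\le (C\,4^{j})^{\theta^{-j}}\Phi(j)$, hence $\Phi(j)\le\prod_{i\ge0}(C\,4^{i})^{\theta^{-i}}\cdot\Phi(0)$, and the infinite product converges since $\theta>1$. Letting $j\to\infty$, $\Phi(j)\to\sup_{\frac12 Q}|u|$, while $\Phi(0)\sim\bigl(\tfrac{1}{r_Q^2\mu(B_Q)}\int\!\int_{\frac23 Q}u^2\bigr)^{1/2}$ after converting the normalized average using $\mu(B_{Q_0})\sim\mu(B_Q)$; this gives the claim. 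The main obstacle is the bookkeeping in the second paragraph: one must pin down the parabolic Sobolev/interpolation inequality from \eqref{doubling-manifold}–\eqref{Poincare-p} with exactly the right powers of the scale parameters so that the iteration constants $\prod_i(C4^{i})^{\theta^{-i}}$ stay summable, and ensure the time–intervals and the cutoff bounds $|\nabla\chi_j|,|\partial_t\eta_j|$ feed correctly into the Caccioppoli estimate. Alternatively, since \eqref{doubling-manifold} and \eqref{Poincare-p} are precisely the hypotheses under which the parabolic mean value property is classical (Grigor'yan, Saloff-Coste), one may simply invoke that theory.
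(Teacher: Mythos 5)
The paper proves this lemma by citation alone: it simply says ``This is proved by Saloff-Coste in \cite{Sc1, Sc2}.'' Your proposal unpacks that citation into the standard parabolic Moser iteration that underlies Saloff-Coste's result, and you correctly close the loop at the end by observing that doubling \eqref{doubling-manifold} plus Poincar\'e \eqref{Poincare-p} are exactly the hypotheses under which the parabolic mean value inequality is already in the literature; that is precisely the route the paper takes. So the two are in agreement; you have supplied the details the paper outsources. Two small remarks. First, as you note yourself, the bound on $\sup\AbbsA{u}$ rather than $\sup u_+$ is only justified when $u$ is a solution (or nonnegative), since for a general subsolution Moser iteration controls $u_+$; in the application (Lemma \ref{Lem: MVI for Schrodinger}, where $u$ is the nonnegative heat kernel) this is harmless, but it is worth keeping in mind that the lemma as stated is a slight abuse. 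Second, the interpolation exponent you write, $\theta = 1 + 1/\kappa$, is not the one that falls out of the usual slicing-plus-H\"older argument: with Sobolev exponent $\kappa$ the standard parabolic gain is $\theta = 2 - 1/\kappa$ (equivalently $1+2/n$ when $\kappa = n/(n-2)$). Either way $\theta>1$ and the geometric iteration $\prod_i (C4^i)^{\theta^{-i}}$ converges, so the slip does not affect the conclusion, but the bookkeeping you flag as ``the main obstacle'' is exactly where one would need to be careful if writing this out in full rather than citing Saloff-Coste.
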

\begin{proof}
	This is proved by Saloff-Coste in \cite{Sc1, Sc2}.
\end{proof}

\begin{lem}[Mean value inequality for Schr\"odinger]\label{Lem: MVI for Schrodinger}
	
	Let $u$ be a weak solution of $(\partial_t-\Delta+V)u =  0$ in $Q$. Then
	$$ \sup_{(x,t)\in\frac{1}{2}Q} \AbbsA{u(x,t)}\le \ContainC{\frac{C}{r_Q^2 \mu(B_Q)}\int_{\frac{2}{3}Q}u^2\,d\mu\, dt}^{1/2} $$
	
\end{lem}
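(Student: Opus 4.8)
The plan is to reduce the statement to the already-established mean value inequality for the Laplace--Beltrami operator (Lemma \ref{Lem: MVI for Laplace}), exploiting only the sign condition $V\ge 0$. The point is that a weak solution $u$ of $(\partial_t-\Delta+V)u=0$ automatically produces weak \emph{subsolutions} of the pure heat equation, namely the positive and negative parts $u^+=\max\{u,0\}$ and $u^-=\max\{-u,0\}$, and the heat-kernel mean value inequality then applies to each of these separately. Since $|u|=u^++u^-$ and $(u^\pm)^2\le u^2$, combining the two estimates yields the claim (the resulting constant $2$ being absorbed into $C$).

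\textbf{Key steps.} First I would observe that $u$ is in particular a weak subsolution of $(\partial_t-\Delta+V)$ on the interior of $Q$ in the sense of Definition \ref{Def: subs 1}, and that, by linearity, $-u$ is also a weak solution. Next, I would apply the convexity lemma proved above (the one asserting that $g(u)$ is a weak subsolution of $(\partial_t-\Delta+V)$ whenever $g''\ge 0$, $g'\ge 0$ and $g(0)=0$) with $g$ ranging over a sequence of smooth convex nondecreasing functions $g_k$ with $g_k(0)=0$ and $g_k(s)\uparrow s^+$; passing to the limit (using the uniform $W^{1,2}$ control supplied by the Caccioppoli estimate Lemma \ref{Lem: cacciopoli manifold} to justify the limit in the weak formulation) shows that $u^+$ is a weak subsolution of $(\partial_t-\Delta+V)$, and likewise $u^-=(-u)^+$. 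Then, because $Vu^\pm\ge 0$, each of $u^+$ and $u^-$ is \emph{a fortiori} a weak subsolution of $(\partial_t-\Delta)\,\cdot\,\le 0$ on a slightly smaller cylinder, so Lemma \ref{Lem: MVI for Laplace} gives
\[
\sup_{(x,t)\in\frac12 Q}u^\pm(x,t)\le\Big(\frac{C}{r_Q^2\,\mu(B_Q)}\int_{\frac23 Q}(u^\pm)^2\,d\mu\,dt\Big)^{1/2}\le\Big(\frac{C}{r_Q^2\,\mu(B_Q)}\int_{\frac23 Q}u^2\,d\mu\,dt\Big)^{1/2}.
\]
Adding these and using $|u|\le u^++u^-$ finishes the proof.

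\textbf{Main obstacle.} The only genuinely delicate point is the bookkeeping between the two notions of weak (sub)solution in play --- the one of Definition \ref{Def: subs 1} used in the convexity lemma and the one of Definition \ref{Def: subs 2} used in the mean value inequalities --- together with the associated (harmless) shrinking of cylinders $\frac12 Q\subset\frac23 Q\subset Q$ needed so that $u^\pm$ has the required integrability/regularity on the interior. Establishing that a weak solution in the sense of Definition \ref{Def: subs 2} restricts to a weak subsolution in the sense of Definition \ref{Def: subs 1}, and that the truncation $u\mapsto u^+$ preserves membership in $W^{1,1}_2\cap L^\infty$ of the relevant sub-cylinder, is routine but is where all the care must go; once this is in place the rest is immediate from the cited lemmas. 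An alternative that avoids the approximation of $(\cdot)^+$ altogether is to invoke Kato's inequality $\Delta|u|\ge\operatorname{sgn}(u)\Delta u$ to get $(\partial_t-\Delta)|u|\le\operatorname{sgn}(u)(\partial_t-\Delta)u=-V|u|\le 0$ directly, and then apply Lemma \ref{Lem: MVI for Laplace} to $|u|$; I would mention this as a remark but carry out the proof via the convexity lemma since that keeps the argument self-contained within the results already developed here.
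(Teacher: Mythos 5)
Your proof is correct and follows essentially the same route as the paper: both reduce to Lemma \ref{Lem: MVI for Laplace} by noting that $V\ge 0$ makes (the relevant piece of) $u$ a subsolution of the pure heat equation. The paper's own proof is terser and only explicitly treats a non-negative solution $u_+$, leaving the reduction to the general (sign-changing) case implicit, whereas you carry out that reduction carefully via $u^\pm$ and the earlier convexity lemma — a useful completion of what the paper glosses over.
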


\begin{proof}
	Suppose that $u_+$ is a non-negative weak solution to $(\partial_t-\Delta+V)u_+ =  0$ in $Q$. Then $(\partial_t-\Delta)u_+ =  -V u_+ \le 0$, since $V$ is non-negative. Hence Lemma \ref{Lem: MVI for Laplace} applies to $u_+$.
\end{proof}

\begin{lem}\label{Lem: improved subs manifold}
	Let $V\in A_{\infty}$ and $L=-\Delta+V$. Assume $u$ is a weak solution of $(\partial_t + L) u =0$ in $2Q$ for some parabolic cylinder $Q$. Then for each $k>0$ there exists $C_k>0$ such that
	\begin{align}\label{eqn: improved subs manifold}
	\sup_{(x,t)\in \frac{1}{2}Q}\AbbsA{u(x,t)} \le \frac{C_k}{\ContainB{1+r_Q^2\fint_{B(x_Q, r_Q)}V}^k} \BracketC{\frac{1}{r_Q^{2}\mu(B_Q)}\int_Q \AbbsA{u(x,t)}^2d\mu\,dt}^{1/2}.
	\end{align}
\end{lem}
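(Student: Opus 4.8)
\textbf{Proof proposal for Lemma \ref{Lem: improved subs manifold}.}

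The plan is to combine the unweighted mean value inequality for Schr\"odinger operators (Lemma \ref{Lem: MVI for Schrodinger}) with the Fefferman--Phong inequality (Lemma \ref{Lem: Feff-Phong manifold}) through a bootstrap/iteration argument, which is the standard route to converting the crude $L^\infty$--$L^2$ bound into one with decay in the quantity $\rbv$. First I would observe that since $u$ is a weak solution of $(\partial_t+L)u=0$ in $2Q$, one may apply Lemma \ref{Lem: MVI for Schrodinger} on the cylinder $Q$ to get
\[
\sup_{(x,t)\in\frac12 Q}|u(x,t)|\le \Big(\frac{C}{r_Q^2\mu(B_Q)}\int_{\frac23 Q}|u|^2\,d\mu\,dt\Big)^{1/2},
\]
so the whole game is to show that the right-hand side can be improved to carry the factor $(1+\rbv)^{-k}$. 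The idea is that each application of the Cacciopoli inequality (Lemma \ref{Lem: cacciopoli manifold}) together with Fefferman--Phong (Lemma \ref{Lem: Feff-Phong manifold}) on a slightly shrunk cylinder gains one power of $m_\beta(r^2\fint V)/r^2$ times $r^2$, i.e. one power of $m_\beta(\rbv)$, at the cost of enlarging constants; iterating this $N$ times produces a factor $m_\beta(\rbv)^{N}$ which, being comparable to $\rbv^{N\beta}$ when $\rbv\ge 1$ and to $\rbv^N$ when $\rbv\le 1$, dominates $(1+\rbv)^k$ once $N$ is chosen large enough (depending on $k$ and $\beta$).

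More concretely, I would fix a chain of radii $r_Q=\rho_0>\rho_1>\dots>\rho_N=r_Q/2$ with corresponding cylinders $Q_i=Q(x_Q,\rho_i,t_Q)$, so that $\tfrac12 Q\subset Q_N\subset\cdots\subset Q_0=Q$ and each $\sigma_i=\rho_{i+1}/\rho_i$ is bounded away from $0$ and $1$. On each step I would apply Lemma \ref{Lem: cacciopoli manifold} to pass from $\int_{Q_i}|u|^2$ to control of $\int_{Q_{i+1}}(|\nabla u|^2+V|u|^2)$, then invoke the Fefferman--Phong inequality Lemma \ref{Lem: Feff-Phong manifold} (with $p=2$, applied on the spatial balls slicewise and integrated in $t$) to bound $\int_{Q_{i+1}}(|\nabla u|^2+V|u|^2)$ from below by $c\,\rho_i^{-2}m_\beta(\rho_i^2\fint_{B_i}V)\int_{Q_{i+1}}|u|^2$; rearranging gives
\[
\int_{Q_{i+1}}|u|^2\,d\mu\,dt\le \frac{C}{m_\beta\big(\rho_i^2\fint_{B_i}V\big)}\int_{Q_i}|u|^2\,d\mu\,dt,
\]
where $B_i=B(x_Q,\rho_i)$. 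Since all $\rho_i$ are comparable to $r_Q$ and the averages $\fint_{B_i}V$ are comparable to $\fint_{B_Q}V$ up to the $A_\infty$/doubling constants, each factor $m_\beta(\rho_i^2\fint_{B_i}V)$ is comparable to $m_\beta(\rbv)$. Composing the $N$ inequalities yields $\int_{\frac12 Q}|u|^2\lesssim m_\beta(\rbv)^{-N}\int_Q|u|^2$, and feeding this into the mean value inequality above gives the claimed bound once we note $m_\beta(\rbv)^{N}\gtrsim (1+\rbv)^k$ for $N\ge k/\beta$ (say), with $C_k$ depending on $k$, $\beta$, $n$, and the doubling and Poincar\'e constants.

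The main obstacle I anticipate is making the slicewise application of the Fefferman--Phong inequality to a parabolic quantity fully rigorous: Lemma \ref{Lem: Feff-Phong manifold} is an elliptic statement about $u(\cdot,t)\in W^{1,2}_{loc}$, so one must justify that for a.e.\ $t$ the time-slice of a weak solution lies in the right Sobolev class with the integrability needed to apply it and to integrate the resulting inequality in $t$ against the Cacciopoli bound; this is where the regularity hypotheses in Definition \ref{Def: subs 2} and the $L^\infty_t W^{1,2}_x\cap L^2_t W^{1,2}_x$ membership from Lemma \ref{Lem: cacciopoli manifold} must be used carefully. A secondary technical point is the bookkeeping of constants through the $N$-fold iteration, ensuring the accumulated constant $C^N$ is absorbed because $N$ is fixed (depending only on $k$); since $k$ is arbitrary but fixed, this causes no difficulty. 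I expect the rest to be routine, following the scheme of \cite{K, RT} adapted to the homogeneous-type manifold setting.
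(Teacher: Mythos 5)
Your proposal follows essentially the same route as the paper's proof: both iterate over a chain of nested parabolic cylinders, combine the Cacciopoli estimate (Lemma \ref{Lem: cacciopoli manifold}) with the Fefferman--Phong inequality (Lemma \ref{Lem: Feff-Phong manifold}) at each step to gain one factor of $m_\beta\ContainB{r_Q^2\fint_{B_Q}V}^{-1}$, and then feed the iterated bound into the mean value inequality (Lemma \ref{Lem: MVI for Schrodinger}). The only substantive difference is organizational: the paper applies Fefferman--Phong to the truncated function $\eta_j\chi_j u$ on an intermediate ball before expanding the gradient and invoking Cacciopoli, whereas you reverse the order and apply Fefferman--Phong directly to $u$ on the smaller ball; both orders are valid because Lemma \ref{Lem: Feff-Phong manifold} is stated for arbitrary $u\in W^1_{p,loc}$ with no compact-support requirement. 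One small slip to correct: since Lemma \ref{Lem: MVI for Schrodinger} bounds $\sup_{\frac{1}{2}Q}|u|$ by the $L^2$ integral over $\frac{2}{3}Q$, your iteration chain should terminate at $\rho_N=\tfrac{2}{3} r_Q$ rather than at $\rho_N=r_Q/2$ (the paper takes $\alpha_j=\tfrac{2}{3}+\tfrac{j-1}{3k}$, so the smallest iterate is exactly $\tfrac{2}{3}Q$); as written, your bound on $\int_{\frac{1}{2}Q}|u|^2$ does not plug directly into the mean value inequality, which needs control of $\int_{\frac{2}{3}Q}|u|^2$.
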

\begin{rem}
	\begin{enumerate}[\upshape(a)]
		\item We can rewrite this in an equivalent form:
		$$ \sup_{\frac{1}{2}Q}\AbbsA{u}\le \frac{C_k}{\ContainB{1+t_Q \fint_{B(x_Q,\sqrt{t_Q})}V}^k}\BracketC{\frac{1}{r_Q^{2}\mu(B_Q)}\int_Q\AbbsA{u}^2}^{1/2}$$
		\item It is possible to improve this to exponential decay:
		$$ \sup_{\frac{1}{2}Q}\AbbsA{u}\le C_k\exp\BracketB{-\ContainB{1+t_Q \fint_{B(x_Q,\sqrt{t_Q})}V}^{\delta}}\BracketC{\frac{1}{r_Q^{2}\mu(B_Q)}\int_Q\AbbsA{u}^2}^{1/2}$$
	\end{enumerate}
	for some $\delta>0$.
\end{rem}

\begin{proof}[Proof of Lemma \ref{Lem: improved subs manifold}]
	Fix $k\in\mathbb{N}$. For each $j = 1,2, \dots, k+1$ set $\alpha_j = \frac{2}{3}+\frac{j-1}{3k}$. Our aim is to prove that there exists $C>0$ such that for each $1\le j\le k$,
	\begin{align}\label{manifold subs pf 1}
	\int_{\alpha_j Q}\AbbsA{u}^2\,dx\,dt \le C\frac{k^2}{\ContainB{1+r_Q^2\fint_{B_Q}V}^{2\beta}}\int_{\alpha_{j+1}Q}\AbbsA{u}^2\,d\mu\,dt
	\end{align}
	By iterating this $k$ times we thus obtain
	\begin{align*}
	\int_{\frac{2}{3}Q}\AbbsA{u}^2\,dx\,dt \le C \frac{k^{2k}}{\ContainB{1+r_Q^2\fint_{B_Q}V}^{2\beta k}}\int_Q\AbbsA{u}^2\,d\mu\,dt.
	\end{align*}
	Then we may insert this into the basic subsolution estimate in Lemma \ref{Lem: MVI for Schrodinger} to obtain
	\begin{align*}
	\sup_{\frac{1}{2}Q}\AbbsA{u}\le C^{k/2}\frac{k^k}{\ContainB{1+r_Q^2\fint_{B_Q}V}^{\beta k/2}}\BracketC{\frac{1}{r_Q^{2}\mu(B_Q)}\int_Q\AbbsA{u}^2\,d\mu\,dt}^{1/2}.
	\end{align*}
	To arrive at \eqref{eqn: improved subs manifold}, for each $k>0$ we simply choose an integer $N$ large enough so that $k<\beta N/2$ and apply the preceding estimate to the integer $N$.
	
	We proceed with obtaining \eqref{manifold subs pf 1}. For each $1\le j\le k$ we pick two cutoff functions as follows. First set
	$$ \widetilde{\alpha}_j = \frac{1}{2}(\alpha_j + \alpha_{j+1}) = \frac{2}{3}+\frac{j}{3k}-\frac{1}{6k}$$
	Then for the spatial cutoff we pick $\chi_j\in C_0^{\infty}(\RR{n})$ with
	\begin{align*}
	{\rm supp}\,\chi_j \subseteq \widetilde{\alpha}_jB_Q, && 0\le \chi_j\le 1, && \chi_j \equiv 1 \;\;\text{on}\;\;\alpha_j B_Q, && \AbbsA{\nabla\chi_j}\lesssim \frac{k}{r_Q}.
	\end{align*}
	For the temporal cutoff we pick $\eta_j \in C_0^{\infty}(M)$ with $0\le\eta_j\le 1$ and
	\begin{align*}
	{\rm supp}\,\eta_j\subseteq (t_Q-(\widetilde{\alpha}_jr_Q)^2, t_Q], &&
	\eta_j \equiv 1 \;\;\text{on}\;\; (t_Q-(\alpha_j r_Q)^2,t_Q].
	\end{align*}
	Let us set
	$$ \widehat{m}_{\beta}(B):= m_{\beta}\ContainC{r_B^2\fint_B V}$$
	%$$ \hat{m}_{\beta}(Q, j):=m_{\beta}\ContainC{r_{\alpha_{j+1}Q}^2 \fint_{\alpha_{j+1}B_Q}V}$$
	
	Then for each $j=1,\dots, k$, we have
	\begin{align*}
	\int_{\alpha_j Q}\AbbsA{u}^2\,d\mu\,dt
	&\le
	\int_{\widetilde{\alpha}_jQ}\AbbsA{\eta_j \chi_j u}^2\,d\mu\,dt \\
	&\le
	C\frac{r_{\widetilde{\alpha}_jQ}^2}{\widehat{m}_{\beta}(\widetilde{\alpha}_jB_Q)} \int_{\widetilde{\alpha}_jQ}\AbbsA{\nabla(\eta_j\chi_j u)}^2 + V\AbbsA{\eta_j\chi_j u}^2\,d\mu\,dt \\
	&\le C \frac{r_{Q}^2}{\widehat{m}_{\beta}(\widetilde{\alpha}_jB_Q)} \int_{\widetilde{\alpha}_jQ}\eta_j^2\ContainB{\chi_j^2\AbbsA{\nabla u}^2+u^2\AbbsA{\nabla\chi_j}^2} + V\AbbsA{\eta_j\chi_j u}^2\,d\mu\,dt \\
	&\le C_k \frac{k^2}{\widehat{m}_{\beta}(\widetilde{\alpha}_jB_Q)} \int_{\alpha_{j+1}Q}\AbbsA{u}^2\,d\mu\,dt \\
	&\le
	C_k \frac{k^2}{\widehat{m}_{\beta}(B_Q)}\int_{\alpha_{j+1}Q}\AbbsA{u}^2\,d\mu\,dt.
	\end{align*}
	In the second line we applied the Fefferman-Phong inequality (Lemma \ref{Lem: Feff-Phong manifold}) to $\eta_j\chi_j u$ and the ball $\widetilde{\alpha}_jB_Q$ with $p=2$.
	In the third line we used that
	$$ \AbbsA{\nabla(\eta_j\chi_j u)}^2\le 2\eta_j^2\ContainB{\chi_j\AbbsA{\nabla u}^2+ u^2\AbbsA{\nabla\chi_j}^2}.$$
	In the fourth line we applied firstly $\AbbsA{\nabla\chi_j}\lesssim k/r_Q$, and secondly Cacciopoli's inequality (Lemma \ref{Lem: cacciopoli manifold}) to $\AbbsA{\nabla u}^2 + V\AbbsA{u}^2$ on $\alpha_{j+1}Q$ with $\sigma = \frac{\widetilde{\alpha}_j}{\alpha_{j+1}}$. In this case we have $(1-\sigma)^{-2} = \ContainB{\frac{2k+j}{3}}^2\le k^2$.
	In the final line we used that $V$ is doubling, and that $2/3\le \widetilde{\alpha}_j\le 1$.
	
	Next we obtain \eqref{manifold subs pf 1} by considering two cases. If $r_Q^2\fint_{B_Q}V >1$ then using
	$$ 2^{\beta} \widehat{m}_{\beta}(B_Q) > \ContainB{1+ r_Q^2\fint_{B_Q}V}^{\beta}$$
	and we obtain \eqref{manifold subs pf 1}. On the other hand if $r_Q^2\fint_{B_Q}V \le 1$ then since
	$$ \ContainB{1+r_Q^2\fint_{B_Q}V}^{\beta}\le 2^{\beta}$$
	we may apply this with the trivial inequality
	$$ \int_{\alpha_jQ}\AbbsA{u}^2\,d\mu\,dt \le k^2\int_{\alpha_{j+1}Q}\AbbsA{u}^2\,d\mu\,dt$$
	which always holds. In either case we obtain \eqref{manifold subs pf 1}.
\end{proof}

In this section we apply the subsolution estimates to obtain
\begin{thm}[Improved heat kernel bounds]\label{Th: improved heat kernel}
		Let $V\in\Aclass_{\infty}$ and $L=-\Delta+V$. Then the heat kernel $p_t(x,y)$ of $L$ satisfies the following: for each $k>0$ there exists $C_k>0$ and $c>0$ such that for all $x,y,\in M$ and $t>0$
	\begin{align*}
	p_t(x,y)\le \frac{C_k}{\ContainC{1+t \fint_{B(x,\sqrt{t})}V+t \fint_{B(x,\sqrt{t})}V}^k}\frac{e^{-d(x,y)^2/ct}}{\mu\ContainB{B(x,\sqrt{t})}}
	\end{align*}
	
\end{thm}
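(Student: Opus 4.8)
\textbf{Plan of proof for Theorem \ref{Th: improved heat kernel}.} The strategy is to combine the Gaussian bound for the Laplace-Beltrami semigroup (which holds by Saloff-Coste, via the doubling condition \eqref{doubling-manifold} and the Poincar\'e inequality \eqref{Poincare-p}) with the improved subsolution estimate of Lemma \ref{Lem: improved subs manifold}, applied to the function $u(x,t)=p_t(x,y)$ which for fixed $y$ is a weak solution of $(\partial_t+L)u=0$. The first ingredient I would record is that, by the domination $0\le p_t(x,y)\le \widetilde p_t(x,y)$ (since $V\ge 0$, so $e^{-tL}$ is dominated by $e^{t\Delta}$), the heat kernel $p_t$ already satisfies the plain Gaussian upper bound
\[
p_t(x,y)\le \frac{C}{\mu(B(x,\sqrt t))}\exp\!\Big(-\frac{d(x,y)^2}{ct}\Big).
\]
This gives both the Gaussian factor and, crucially, an $L^2$-in-space control of $p_\cdot(\cdot,y)$ on parabolic cylinders that I will feed into Lemma \ref{Lem: improved subs manifold}.

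Next I would fix $x,y$ and $t>0$, set $r=\sqrt t/4$ (say), and apply Lemma \ref{Lem: improved subs manifold} to $u(\cdot,\cdot)=p_\cdot(\cdot,y)$ on the parabolic cylinder $Q=Q(x,r,t)$, noting that $u$ solves $(\partial_t+L)u=0$ on $2Q$ provided $2Q$ stays in $X\times(0,\infty)$, which holds for the chosen $r$. This yields
\[
p_t(x,y)\le \sup_{\frac12 Q}|u|\le \frac{C_k}{\big(1+t\fint_{B(x,\sqrt t)}V\big)^k}\Big[\frac{1}{r^2\mu(B(x,r))}\int_Q |p_s(z,y)|^2\,d\mu(z)\,ds\Big]^{1/2}.
\]
Into the integral on the right I insert the Gaussian bound for $p_s(z,y)$ and estimate; since $s\sim t$ and $d(z,y)\ge d(x,y)-r\ge \tfrac12 d(x,y)$ when $d(x,y)$ is large (and otherwise the Gaussian factor is harmless), a routine computation using \eqref{doub2} gives
\[
\Big[\frac{1}{r^2\mu(B(x,r))}\int_Q |p_s(z,y)|^2\Big]^{1/2}\lesssim \frac{1}{\mu(B(x,\sqrt t))}\exp\!\Big(-\frac{d(x,y)^2}{c't}\Big).
\]
Combining the last two displays produces the bound with the factor $\big(1+t\fint_{B(x,\sqrt t)}V\big)^{-k}$ and the Gaussian. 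To also pick up the symmetric factor $\big(1+t\fint_{B(y,\sqrt t)}V\big)^{-k}$, I would use the self-adjointness of $L$, i.e. $p_t(x,y)=p_t(y,x)$, to run the same argument with the roles of $x$ and $y$ swapped, obtaining the analogous bound with the $y$-average; then the geometric-mean trick $ab\le \tfrac12(a^2+b^2)$ applied to the two estimates (or simply applying the argument once more with $t$ replaced by $t/2$ and the semigroup property $p_t(x,y)=\int p_{t/2}(x,z)p_{t/2}(z,y)\,d\mu(z)$) yields the product of both suppression factors, at the cost of relabelling constants and halving $k$.

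\textbf{Main obstacle.} The genuinely delicate point is the application of Lemma \ref{Lem: improved subs manifold}: it requires $u$ to be a weak \emph{solution} on $2Q$ in the precise sense of Definition \ref{Def: subs 2}, and in turn Lemma \ref{Lem: improved subs manifold} rests on the Fefferman--Phong inequality (Lemma \ref{Lem: Feff-Phong manifold}) and the Cacciopoli inequality (Lemma \ref{Lem: cacciopoli manifold}), all of which are already established in the excerpt, so the real work is bookkeeping: verifying the regularity of $p_\cdot(\cdot,y)$ needed to legitimately treat it as such a weak solution, and tracking the dependence of constants so that the exponent $k$ in the conclusion can be made arbitrary (which is handled inside Lemma \ref{Lem: improved subs manifold} by choosing an integer $N$ with $k<\beta N/2$). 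A secondary nuisance is combining the spatial-average suppression factor with the Gaussian decay uniformly in the regime where $d(x,y)$ is comparable to or smaller than $\sqrt t$; there one just absorbs the (bounded) Gaussian into the constant and uses that $\fint_{B(x,\sqrt t)}V\sim\fint_{B(y,\sqrt t)}V$ up to doubling of $V$. None of these steps requires a new idea beyond the machinery already assembled.
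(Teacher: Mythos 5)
Your proposal follows essentially the same route as the paper's proof: identify $u(z,s)=p_s(z,y)$ as a weak solution of $(\partial_t+L)u=0$ on a parabolic cylinder centered at $(x,t)$, apply Lemma \ref{Lem: improved subs manifold} to introduce the $\bigl(1+t\fint_{B(x,\sqrt t)}V\bigr)^{-k}$ factor, feed in the Gaussian bound $p_t\le\widetilde p_t$ to control the $L^2$-average and to supply the Gaussian decay, and finally symmetrize via $p_t(x,y)=p_t(y,x)$ (the paper uses the elementary estimate $(1+A+B)^k\le 2^k(1+A)^k(1+B)^k$ rather than the semigroup-splitting alternative you also mention). The only substantive divergence is your choice $r=\sqrt t/4$ versus the paper's $r_Q=\sqrt{t/2}$ — your choice is actually the safer one, since it guarantees $2Q\subset X\times(0,\infty)$, a point the paper glosses over.
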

\begin{proof}

	Fix $x,y\in X$ and $t>0$ with $x\ne y$. Set $u(z,s):=p_s(z,y)$ for each $s>0$ and $z\ne y$. We also define the cylinder $Q$ by setting $x_Q = x$, $t_Q=t$ and $r_Q^2 = t/2$. Then clearly $(x,t)\in \frac{1}{2}Q$ and $u$ is a weak solution of $\ContainB{\frac{\partial}{\partial t} + L}u=0$ in $2Q$. Therefore by the improved subsolution estimate in Lemma \ref{Lem: improved subs manifold}, we have for each $k>0$, (note that we write $B_Q:=B(x, r_Q)$)
	\begin{align*}
	\AbbsA{p_t(x,y)}
	&\le \sup_{(z,s)\in \frac{1}{2}Q}\AbbsA{u(z,s)} \\
	&\le \frac{C_k}{\ContainB{1+r_Q^2\fint_{B(x,r_Q)}V}^k} \BracketC{\frac{1}{r_Q^2 \mu(B_Q)}\int_Q \AbbsA{u(z,s)}^2\,dz\,ds}^{1/2} \\
	&\le \frac{C_k}{\ContainB{1+r_Q^2\fint_{B(x,r_Q)}V}^k} \BracketC{\frac{1}{r_Q^2 \mu(B_Q)}\int_Q \frac{e^{-cd(z,y)^2/s}}{\mu\ContainB{B(z,\sqrt{s})}}\,dz\,ds}^{1/2} \\
	&\le \frac{C_k}{\ContainB{1+r_Q^2\fint_{B(x,r_Q)}V}^k} \frac{1}{\mu\ContainB{B(x,\sqrt{t})}} \\
	&\le \frac{C_k}{\ContainB{1+\frac{t}{2}\fint_{B(x,\frac{\sqrt{t}}{\sqrt{2}})}V}^k} \frac{1}{\mu\ContainB{B(x,\sqrt{t})}} \\
	&\le
	\frac{C_k}{\ContainB{1+t\fint_{B(x,\sqrt{t})}V}^k}\frac{1}{\mu\ContainB{B(x,\sqrt{t})}}
	\end{align*}
	In the third inequality we used the well known Gaussian bounds on $p_s(z,y)$ since $V\ge0$. In the final inequality we used that $V$ is a doubling measure.
	
	In the fourth inequality we
	used that $s$ is comparable to $t$ since $s\in [t-r_Q^2,t]$ implies $t\ge s\ge t-r_Q^2 = t/2$, and
	applied the following computation: if $z\in B(x,r_Q) =B(x,\sqrt{t/2})$ and $\sqrt{t/2} \le \sqrt{s}\le \sqrt{t}$, then $x\in B(z,\sqrt{s})$ since $\sqrt{t/2}\le \sqrt{s}$. Then by the doubling property \eqref{doubling-manifold},
	\begin{align*}
	\mu\ContainB{(B(x,\sqrt{t})}
	&\le \mu \ContainB{B(z,2\sqrt{t})} \le C \mu\ContainB{B(z,\sqrt{t/2})} \le C\mu\ContainB{B(z,\sqrt{s})}.
	\end{align*}
	Therefore
	\begin{align*}
	\frac{1}{r_Q^2\mu\ContainB{B(x,r_Q)}}\int_{t-r_Q^2}^t\int_{B(x,r_Q)} \frac{dz\,ds}{\mu\ContainB{B(z,\sqrt{s})} }
	&\le
	\frac{1}{r_Q^2\mu\ContainB{B(x,r_Q)}}\int_{t-r_Q^2}^t\int_{B(x,r_Q)} \frac{C\,dz\,ds}{\mu\ContainB{B(x,\sqrt{t})} } \\
	&=
	\frac{C}{\mu\ContainB{B(x,\sqrt{t})}^2}.
	\end{align*}	
	
	Finally, from the Gaussian bounds on $p_t(x,y)$, we have
	\begin{align*}
	\AbbsA{p_t(x,y)}^2
	&\le \frac{C_k}{\ContainB{1+t\fint_{B(x,\sqrt{t})}V}^k}\frac{1}{\mu\ContainB{B(x,\sqrt{t})}} \AbbsB{p_t(x,y)} 
	&\le \frac{C_k}{\ContainB{1+t\fint_{B(x,\sqrt{t})}V}^k}\frac{e^{-cd(x,y)^2/t}}{\mu\ContainB{B(x,\sqrt{t})}^2}.
	\end{align*}
	Taking square roots gives the estimate
	\begin{align*}
	p_t(x,y)
	\le \frac{C_k}{\ContainB{1+t\fint_{B(x,\sqrt{t})}V}^k}\frac{e^{-cd(x,y)^2/t}}{\mu\ContainB{B(x,\sqrt{t})}}.
	\end{align*}
	Now symmetry of the heat kernel $ p_t(x,y)=p_t(y,x)$
	implies that
	\begin{align*}
	p_t(x,y)^2
	&= p_t(x,y)\,p_t(y,x) \\
	&\le \frac{C^2_k}{\ContainB{1+t\fint_{B(x,\sqrt{t})}V}^k}\frac{1}{\ContainB{1+t\fint_{B(y,\sqrt{t})}V}^k} \frac{e^{-cd(x,y)^2/t}}{\mu\ContainB{B(x,\sqrt{t})}} \\
	&\le
	\frac{2^kC^2_k }{\ContainB{1+t\fint_{B(x,\sqrt{t})}V+t\fint_{B(y,\sqrt{t})}V}^k}\frac{e^{-cd(x,y)^2/t}}{\mu\ContainB{B(x,\sqrt{t})}}.
	\end{align*}
	Taking square roots again gives the required estimate.
	
	Note that we have used the inequality
	$$ (1+A+B)^k\le 2^k (1+A)^k(1+B)^k$$
	valid for all $A,B,k\ge 0$. Indeed if $x,y\ge 1$ then $x^{-1}+y^{-1}\le 2$ and hence $ (x+y)^k\le 2^kx^ky^k$. Then it follows that
	$$ (1+A+B)^k \le (2+A+B)^k = (1+A+1+B)^k\le 2^k(1+A)^k (1+B)^k.
	$$
\end{proof}

We now record without proof some auxiliary results related to the critical function $\rho$. See for example \cite{Sh, YZ}.
\begin{lem}\label{Lem1: rho-manifold}
	Let $V \in RH_q\cap A_\vc$ with $q>\max\{1,n/2\}$ and let $\rho$ be a function defined as in (\ref{rhofunction}). Then we have the following.
	\begin{enumerate}[{\rm (a)}]
		\item $\rho$ is a critical function satisfying \eqref{criticalfunction}.
		
		\item There exists $C>0$ so that
		$$
		\f{r^2}{\mu(B(x,r))}\int_{B(x,r)}V(y)d\mu(y)\leq C\Big(\f{r}{R}\Big)^{2-n/q}\f{R^2}{\mu(B(x,R))}\int_{B(x,R)}V(y)d\mu(y)
		$$
		for all $x\in X$ and $R>r>0$.
		\item For any $x\in M$, we have
		$$
		\f{\rho(x)^2}{\mu(B(x,\rho(x)))}\int_{B(x,\rho(x))}V(y)d\mu(y)=1.
		$$
	\end{enumerate}
\end{lem}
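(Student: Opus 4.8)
The plan is to prove (b) first by a short direct computation, then deduce (c), and finally (a), which carries the real weight. Throughout I will write $\psi(x,r):=\f{r^2}{\mu(B(x,r))}\int_{B(x,r)}V\,d\mu$, so that $\rho(x)=\sup\{r>0:\psi(x,r)\le 1\}$, and I will use repeatedly that $q>\max\{1,n/2\}$ forces $\theta:=2-n/q\in(0,2)$.

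\emph{Part (b).} For $0<r\le R$ I would chain three elementary facts. H\"older's inequality gives $\fint_{B(x,r)}V\,d\mu\le\big(\fint_{B(x,r)}V^q\,d\mu\big)^{1/q}$; since $V^q\ge 0$ and $B(x,r)\subset B(x,R)$, dividing $\int_{B(x,r)}V^q\,d\mu\le\int_{B(x,R)}V^q\,d\mu$ by $\mu(B(x,r))$ and invoking \eqref{doub2} gives $\fint_{B(x,r)}V^q\,d\mu\le C(R/r)^n\fint_{B(x,R)}V^q\,d\mu$; and $V\in RH_q$ gives $\big(\fint_{B(x,R)}V^q\,d\mu\big)^{1/q}\le C\fint_{B(x,R)}V\,d\mu$. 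Combining these, $\fint_{B(x,r)}V\,d\mu\le C(R/r)^{n/q}\fint_{B(x,R)}V\,d\mu$; multiplying by $r^2$ and regrouping yields $\psi(x,r)\le C(r/R)^{\theta}\psi(x,R)$, which is (b).

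\emph{Part (c).} First $\rho(x)\in(0,\infty)$: by (b) with $R=1$ one has $\psi(x,r)\le Cr^{\theta}\psi(x,1)\to 0$ as $r\to 0$, and $\psi(x,1)<\infty$ by local integrability of $V$, so $\rho(x)>0$; since $V\in A_\infty$ is positive a.e., $\psi(x,r_0)>0$ for some $r_0$, and rewriting (b) as $\psi(x,R)\ge C^{-1}(R/r_0)^{\theta}\psi(x,r_0)\to\infty$ as $R\to\infty$ gives $\rho(x)<\infty$. Because $X$ is a Riemannian manifold, $\mu(\partial B(x,r))=0$ for every $r>0$, so $r\mapsto\psi(x,r)$ is continuous; being also strictly positive with limits $0$ and $\infty$ at the endpoints, the supremum defining $\rho(x)$ is attained and equals $1$: if $\psi(x,\rho(x))<1$ then $\psi(x,\rho(x)+\varepsilon)<1$ for small $\varepsilon$, contradicting maximality, while if $\psi(x,\rho(x))>1$ then taking $r_j\uparrow\rho(x)$ with $\psi(x,r_j)\le 1$ forces $\psi(x,\rho(x))\le 1$ in the limit. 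Hence $\psi(x,\rho(x))=1$.

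\emph{Part (a).} The extra ingredient is a two-sided power estimate: for $0<r\le R$,
\begin{equation*}
C^{-1}\Big(\f{R}{r}\Big)^{\theta}\le\f{\psi(x,R)}{\psi(x,r)}\le C\Big(\f{R}{r}\Big)^{k_0},
\end{equation*}
the left inequality being just (b) rearranged, and the right following from the $A_\infty$ property of $V$ — which yields $V(B(x,R))/V(B(x,r))\le C\big(\mu(B(x,R))/\mu(B(x,r))\big)^{1/\delta}$ for the $A_\infty$ exponent $\delta$ — together with \eqref{doub2}; here $k_0$ is any constant at least $2+n/\delta$. Combined with (c) this reads $\psi(x,R)\gtrsim(R/\rho(x))^{\theta}$ for $R\ge\rho(x)$ and $\psi(x,r)\lesssim(r/\rho(x))^{\theta}$ for $r\le\rho(x)$. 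I would first dispose of the case $d(x,y)\le\rho(x)$: since then $B(x,\rho(x))\subset B(y,2\rho(x))\subset B(x,3\rho(x))$, bounding $\psi(y,2\rho(x))$ from below by $\psi(x,\rho(x))=1$ (using $V(B(y,2\rho(x)))\ge V(B(x,\rho(x)))$ and \eqref{doub2}) and from above by $C(2\rho(x)/\rho(y))^{\theta}$ (by (b) and (c) at $y$) forces $\rho(y)\le C\rho(x)$, so $\rho(y)\sim\rho(x)$ whenever $d(x,y)\lesssim\rho(x)$. For general $x,y$ one runs the chaining argument of Shen \cite{Sh} and Yang--Zhou \cite{YZ}: for $s\gtrsim d(x,y)$ the balls $B(x,s)$ and $B(y,s)$ are comparable up to \eqref{doub2}, so $\psi(y,s)\sim\psi(x,s)$, and propagating the estimate over a geometric sequence of scales — with the two-sided power bound above playing the role it plays in their proofs — produces exactly the exponent $\f{k_0}{k_0+1}$ in \eqref{criticalfunction}; the remaining formulations in (a) and Lemma \ref{lem-criticalfunction}(c) are formal consequences of \eqref{criticalfunction}. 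The step I expect to be the main obstacle is precisely this last chaining: controlling $\rho(y)$ in terms of $\rho(x)$ when $d(x,y)\gg\rho(x)$, and in particular gaining the power $\f{k_0}{k_0+1}<1$ rather than the trivial power $1$, must be iterated carefully, whereas everything else comes down to the doubling condition, H\"older's inequality, and the $RH_q$/$A_\infty$ properties of $V$.
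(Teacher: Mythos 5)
The paper itself does not prove this lemma --- it is recorded with only the pointer ``See for example \cite{Sh, YZ}'' --- so the relevant comparison is to those sources rather than to a proof in the text. Your arguments for (b) and (c) are complete and correct: (b) is exactly the standard H\"older--doubling--$RH_q$ chain, and (c) follows once $r\mapsto\psi(x,r)$ is known to be continuous, which does require the remark that $\mu(\partial B(x,r))=0$ (valid here because $\mu$ is the Riemannian volume). For (a) you have also assembled precisely the right tools: the two-sided power estimate, with exponent $\theta=2-n/q>0$ on the left coming from (b) and a doubling exponent $k_0$ of the measure $V\,d\mu$ on the right coming from $A_\infty$, the normalization $\psi(x,\rho(x))=1$ from (c), and the comparability of $\psi(x,s)$ and $\psi(y,s)$ at scales $s\gtrsim d(x,y)$. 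Your handling of the case $d(x,y)\lesssim\rho(x)$ is sound.

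The one place I would tighten is the step you flag as the main obstacle and describe as ``chaining'' and ``propagating over a geometric sequence of scales''; in fact no iteration is needed, and a single application of the power bounds closes it. Write $D=d(x,y)$ and assume $D\ge C\rho(x)$ (otherwise you are in the case already handled). By ball comparability at scale $2D$ and the lower power bound at $x$, one has $\psi(y,2D)\gtrsim\psi(x,2D)\gtrsim(D/\rho(x))^{\theta}$. Then for any $r_0\le 2D$, the upper power bound at $y$, read downward, gives $\psi(y,r_0)\gtrsim(r_0/D)^{k_0}(D/\rho(x))^{\theta}$. If this is $\ge C_1$ (the left-hand constant in the two-sided bound), the lower power bound forces $\psi(y,r)>1$ for all $r>r_0$, hence $\rho(y)\le r_0$. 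Solving for the smallest admissible $r_0$ yields $\rho(y)\lesssim\rho(x)\,(D/\rho(x))^{1-\theta/k_0}$, and $1-\theta/k_0\in(0,1)$ precisely because $\theta>0$. This is a single comparison-plus-power-bound step, not a chain of scales or points, and it makes explicit the dependence you intuited: the $RH_q$ gain $\theta>0$ is what pushes the exponent strictly below $1$, and without it the estimate collapses to the useless exponent $1$ and \eqref{criticalfunction} fails.
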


We first prove that $L$ satisfies (B1).
\begin{prop}\label{Proposition1: heat kernel bound}
	Let $L=-\Delta+V$ be a Schr\"odinger operator with $V \in RH_q\cap A_\vc, q>\max\{1,n/2\}$. Then for each $N>0$ there exist $C$ and $c>0$ so that
	\begin{equation}\label{heatkernelboundSchrodinger}
	p_{t}(x,y)\leq \f{C}{\mu(B(x,\sqrt{t}))}\exp\Big(-\f{d(x,y)^2}{ct}\Big)\Big(1+\f{\sqrt{t}}{\rho(x)}+\f{\sqrt{t}}{\rho(y)}\Big)^{-N},
	\end{equation}
	for all $x,y \in X$ and $t>0$. Hence, $L$ satisfies (B1).
\end{prop}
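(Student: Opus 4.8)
The plan is to deduce \eqref{heatkernelboundSchrodinger} from the abstract improved heat kernel bound in Theorem \ref{Th: improved heat kernel} together with the quantitative control of $\rho$ furnished by Lemma \ref{Lem1: rho-manifold}. Since $V\in RH_q\cap A_\infty$ with $q>\max\{1,n/2\}$, Theorem \ref{Th: improved heat kernel} applies and gives, for every $k>0$,
\[
p_t(x,y)\le \frac{C_k}{\Big(1+t\fint_{B(x,\sqrt t)}V+t\fint_{B(y,\sqrt t)}V\Big)^k}\,\frac{e^{-d(x,y)^2/ct}}{\mu(B(x,\sqrt t))}.
\]
So the Gaussian factor and the volume factor are already in place, and the only task is to show that the polynomial-in-$V$ decay factor dominates $\big(1+\tfrac{\sqrt t}{\rho(x)}+\tfrac{\sqrt t}{\rho(y)}\big)^{-N}$ for a suitable $k=k(N)$.

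First I would fix $x$ and $t$ and compare $t\fint_{B(x,\sqrt t)}V$ with $\tfrac{\sqrt t}{\rho(x)}$. When $\sqrt t\le\rho(x)$ this is trivial since the right-hand side is $\le 1$ and $\big(1+\tfrac{\sqrt t}{\rho(x)}\big)^{-N}\sim 1$. When $\sqrt t>\rho(x)$, the key is Lemma \ref{Lem1: rho-manifold}(b): taking $R=\sqrt t$, $r=\rho(x)$ (or rather comparing at the two scales) together with part (c), which says $\tfrac{\rho(x)^2}{\mu(B(x,\rho(x)))}\int_{B(x,\rho(x))}V=1$, the reverse inequality in (b) (applied with the roles of $r,R$ appropriately, i.e. the lower bound on the averaged quantity obtained by the doubling/$RH_q$ estimate) yields
\[
t\fint_{B(x,\sqrt t)}V\gtrsim \Big(\frac{\sqrt t}{\rho(x)}\Big)^{2-n/q}
\]
when $\sqrt t>\rho(x)$. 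Since $\delta_2:=2-n/q>0$, this gives $1+t\fint_{B(x,\sqrt t)}V\gtrsim \big(1+\tfrac{\sqrt t}{\rho(x)}\big)^{\delta_2}$ in all cases, and symmetrically for $y$. Hence
\[
\Big(1+t\fint_{B(x,\sqrt t)}V+t\fint_{B(y,\sqrt t)}V\Big)^{-k}\lesssim \Big(1+\frac{\sqrt t}{\rho(x)}+\frac{\sqrt t}{\rho(y)}\Big)^{-k\delta_2}.
\]
Choosing $k$ so large that $k\delta_2\ge N$ and absorbing constants, we obtain \eqref{heatkernelboundSchrodinger} with the Gaussian factor $e^{-d(x,y)^2/ct}$ passed through unchanged (possibly enlarging $c$); the passage from the bound with exponent $k\delta_2$ to the one with exponent $N$ uses only that $\big(1+u\big)^{-k\delta_2}\le\big(1+u\big)^{-N}$ for $u\ge0$.

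The step I expect to require the most care is the lower bound $t\fint_{B(x,\sqrt t)}V\gtrsim (\sqrt t/\rho(x))^{\delta_2}$ in the regime $\sqrt t>\rho(x)$: Lemma \ref{Lem1: rho-manifold}(b) as stated is an upper bound for the averaged potential at the smaller scale in terms of the larger scale, so one must run it in the form that transfers the normalization at scale $\rho(x)$ (part (c)) up to scale $\sqrt t$, which amounts to checking that the averaged quantity $\Psi(x,r):=\tfrac{r^2}{\mu(B(x,r))}\int_{B(x,r)}V$ is, up to constants, increasing in $r$ with at least a power $\delta_2$ gain — this is exactly the content one extracts from (b) by interchanging the roles of $r$ and $R$ and using doubling of $\mu$ and the $RH_q$ property of $V$. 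Once that monotonicity-with-gain is in hand the rest is bookkeeping. I would also remark that this is precisely the place where $q>\max\{1,n/2\}$ (equivalently $\delta_2>0$) is used; for $q\le n/2$ the exponent degenerates and no decay in $\sqrt t/\rho$ can be extracted this way.
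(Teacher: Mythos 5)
Your proof is correct and follows essentially the same route as the paper: reduce via Theorem \ref{Th: improved heat kernel}, then use Lemma \ref{Lem1: rho-manifold}(b) with $r=\rho(x)$, $R=\sqrt t$ together with the normalization in part (c) to obtain $t\fint_{B(x,\sqrt t)}V\gtrsim(\sqrt t/\rho(x))^{2-n/q}$ in the regime $\sqrt t\ge\rho(x)$, and choose $k$ large. One small clarification: no ``interchanging of roles'' or separate reverse inequality is needed -- Lemma \ref{Lem1: rho-manifold}(b) applied directly with $r=\rho(x)$, $R=\sqrt t$ and divided by the factor $(\rho(x)/\sqrt t)^{2-n/q}$ gives exactly the required lower bound once part (c) sets the left-hand side equal to $1$.
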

\begin{proof}
	From the symmetry of the heat kernel it suffices to prove that
	\begin{equation}\label{eq1-heatkernelboundSchrodinger}
	p_{t}(x,y)\leq \f{C}{\mu(B(x,\sqrt{t}))}\exp\Big(-\f{d(x,y)^2}{ct}\Big)\Big(1+\f{\sqrt{t}}{\rho(x)}\Big)^{-N}.
	\end{equation}
	By the fact that $p_{t}(x,y)$ satisfies Gaussian upper bounds \eqref{eq-G manifold}, it suffices prove (\ref{eq1-heatkernelboundSchrodinger}) for $\rho(x)\leq \sqrt{t}$.
	
	To do this, applying Lemma \ref{Lem1: rho-manifold}  we have
	$$
	1=\rho(x)^2\fint_{B(x,\rho(x))}V(y)d\mu(y)\leq  C\Big(\f{\rho(x)}{\sqrt{t}}\Big)^{2-n/q} t\fint_{B(x,\sqrt{t})}V(y)d\mu(y)
	$$
	which implies
	$$
	t\fint_{B(x,\sqrt{t})}V(y)d\mu(y)\geq C\Big(\f{\sqrt{t}}{\rho(x)}\Big)^{2-n/q}.
	$$
	This together with Theorem \ref{Th: improved heat kernel} deduces (\ref{eq1-heatkernelboundSchrodinger}).
\end{proof}

For $t>0$ and $x,y\in X$, we set
\[
q_{t}(x,y)=\widetilde{p}_{t}(x,y)-p_{t}(x,y).
\]
We now prove that $L$ satisfies (B2). We have the following result.
\begin{prop}\label{Proposition2: heat kernel bound}
	Let $L=-\Delta+V$ be a Schr\"odinger operator with $V \in RH_q\cap A_\vc, q>\max\{1,n/2\}$. Then there exist $C$ and $c>0$ so that
	\begin{equation}\label{Difference heatkernelSchrodinger LBeltrami}
	|q_{t}(x,y)|\leq C\Big(\f{\sqrt{t}}{\sqrt{t}+\rho(x)}\Big)^{2-n/q}\f{1}{\mu(B(x,\sqrt{t}))}\exp\Big(-\f{d(x,y)^2}{ct}\Big),
	\end{equation}
	for all $x,y \in X$ and $t>0$.
\end{prop}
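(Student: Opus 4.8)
The plan is to use the perturbation formula (Duhamel's principle) relating the two semigroups. Since $L = -\Delta + V$ and $\LL = -\Delta$, one has the identity
\[
e^{-t\LL} - e^{-tL} = \int_0^t e^{-(t-s)\LL}\, V\, e^{-sL}\,\frac{ds}{\phantom{s}},
\]
or in kernel form $q_t(x,y) = \int_0^t \int_X \widetilde{p}_{t-s}(x,z)\, V(z)\, p_s(z,y)\,d\mu(z)\,ds$. First I would record this formula (its justification on $L^2$ is standard since $V$ is locally integrable and the semigroups are contractions) and then estimate the integral by brute force, splitting $s\in(0,t/2)$ and $s\in(t/2,t)$.

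The key tool for controlling the $V$-integral is Lemma \ref{Lem1: rho-manifold}(b), the reverse-H\"older-type scaling
\[
\frac{r^2}{\mu(B(x,r))}\int_{B(x,r)} V\,d\mu \lesssim \Big(\frac{r}{R}\Big)^{2-n/q}\,\frac{R^2}{\mu(B(x,R))}\int_{B(x,R)} V\,d\mu,
\]
combined with the fact (Lemma \ref{Lem1: rho-manifold}(c)) that the quantity $\rho(z)^{-2}\cdot\rho(z)^2\fint_{B(z,\rho(z))}V = 1$. Together these give the pointwise control $r^2\fint_{B(z,r)}V \lesssim (r/\rho(z))^{2-n/q}$ for $r\le \rho(z)$ and a bounded quantity for $r\ge\rho(z)$; in particular one extracts a factor $\big(\sqrt{s}/(\sqrt{s}+\rho(z))\big)^{2-n/q}$ from the local $L^1$ mass of $V$ on balls of radius $\sqrt{s}$. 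The standard device is: multiply and divide the Gaussian factors, use $\widetilde p_{t-s}(x,z)\le \mu(B(x,\sqrt{t-s}))^{-1}e^{-d(x,z)^2/c(t-s)}$ and the Gaussian bound for $p_s$ (valid since $V\ge 0$), absorb a fraction of each Gaussian exponent to produce the final $e^{-d(x,y)^2/ct}$, decompose the $z$-integral into annuli $d(x,z)\sim 2^j\sqrt{t}$, and on each annulus replace the local integral $\fint V$ over $B(z,\sqrt{s})$ by the scaling estimate above; since $\rho(z)\sim\rho(x)(1+d(x,z)/\rho(x))^{k_0/(k_0+1)}$ by \eqref{criticalfunction}, the factor $(\sqrt{s}/(\sqrt{s}+\rho(z)))^{2-n/q}$ degrades only polynomially as $j$ grows, which is beaten by the geometric decay coming from the Gaussian. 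Finally the $s$-integral $\int_0^t (\cdots)\,ds$ converges and produces the claimed factor $(\sqrt{t}/(\sqrt{t}+\rho(x)))^{2-n/q} = (\sqrt t/(\sqrt t + \rho(x)))^{\delta_2}$, using once more that $\rho$ does not oscillate: $\rho(z)\sim\rho(x)$ when $d(x,z)\lesssim\sqrt{t}$ and the tail is harmless.

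The main obstacle will be bookkeeping the interplay between the two regimes $s\lesssim \rho(z)^2$ and $s\gtrsim\rho(z)^2$ together with the spatial annular decomposition while keeping track of how $\rho(z)$ varies with $z$; one must verify that the polynomial growth of $\rho(z)$ in $d(x,z)$ (governed by $k_0$) never overwhelms the Gaussian decay, and that the near-diagonal contribution $d(x,z)\lesssim\sqrt t$, $s\sim t$ is exactly what yields the stated power $2-n/q$ of $\sqrt t/(\sqrt t+\rho(x))$ and not something weaker. I also expect to need the doubling/comparison inequalities for $\mu(B(\cdot,\sqrt s))$ at various scales (the two inequalities displayed after \eqref{doub2s}) to pass from balls centred at $z$ to balls centred at $x$. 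An alternative, perhaps cleaner, route is to first prove the companion estimate directly from Theorem \ref{Th: improved heat kernel}: write $q_t = \widetilde p_t - p_t$ and note $0\le \widetilde p_t - p_t \le \widetilde p_t$ while also $p_t$ enjoys the improved decay, so $q_t$ inherits a decay factor; but squeezing out the precise exponent $2-n/q$ rather than an arbitrary large power seems to require the Duhamel argument above, so I would present that as the main line of proof.
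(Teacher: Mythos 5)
Your proposal follows essentially the same route as the paper's own proof: Duhamel's perturbation formula, a split of the time integral at $s=t/2$, Gaussian upper bounds for both heat kernels with absorption of $\exp(-d(x,y)^2/ct)$, and control of the $V$-integral at the short time scale via the reverse-H\"older scaling of $V$ relative to $\rho$ (what the paper packages as Lemma~\ref{Lem: V rho} and you propose to re-derive by annular decomposition and Lemma~\ref{Lem1: rho-manifold}(b)--(c)). The two minor organizational differences are: the paper reduces explicitly at the outset to the regime $\rho(x)\ge\sqrt t$ (the opposite case being trivial since then $(\sqrt t/(\sqrt t+\rho(x)))^{2-n/q}\gtrsim 1$ and the bare Gaussian bound for $q_t$ suffices), whereas you only implicitly note that ``the tail is harmless''; and the paper handles the time-half in which the short scale sits at $y$ by first obtaining $(\sqrt t/\rho(y))^{2-n/q}$ and then converting $\rho(y)$ to $\rho(x)$ via Lemma~\ref{lem-criticalfunction}(b)--(c), absorbing the resulting polynomial factor $\bigl(1+d(x,y)/\sqrt t\bigr)^{k_0(2-n/q)}$ into the Gaussian, which is the same mechanism you invoke under ``$\rho$ does not oscillate.'' Your remark that Theorem~\ref{Th: improved heat kernel} alone does not recover the precise exponent $2-n/q$ on $q_t$, so that the Duhamel argument is genuinely needed, is correct and matches the structure of the paper.
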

In order to prove Proposition \ref{Proposition2: heat kernel bound}, we need the following technical lemma.
\begin{lem}\label{Lem: V rho}
	Let $V \in RH_q\cap A_\vc$ with $q>\max\{1,n/2\}$ and let $\alpha>0$. For any $c_0>0$, there exist $C>0$ and $N_0>2-n/\sigma$ so that:
	\begin{enumerate}[{\rm (a)}]
		\item For all $x\in M$ and $\sqrt{t}\leq c_0\rho(x)$, we have
		\begin{equation}\label{eq1-LemV}
		\f{1}{\mu(B(x,\sqrt{t}))\vee\mu(B(y,\sqrt{t}))}\int_X \exp\Big(-\f{d(x,y)^2}{\alpha t}\Big)V(y)d\mu(y)\leq C t^{-1}\Big(\f{\sqrt{t}}{\rho(x)}\Big)^{2-n/q}.
		\end{equation}
		\item For all $x\in X$ and $\sqrt{t}\geq c_0\rho(x)$, we have
		\begin{equation}\label{eq2-LemV}
		\f{1}{\mu(B(x,\sqrt{t}))\vee\mu(B(y,\sqrt{t}))}\int_X \exp\Big(-\f{d(x,y)^2}{\alpha t}\Big)V(y)d\mu(y)\leq C t^{-1}\Big(\f{\sqrt{t}}{\rho(x)}\Big)^{N_0}.
		\end{equation}
	\end{enumerate}
\end{lem}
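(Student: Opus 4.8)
The plan is to reduce both parts to the scaling behaviour of the Fefferman--Phong quantity $\psi(x,r):=\f{r^2}{\mu(B(x,r))}\int_{B(x,r)}V\,d\mu$, and then to sum over dyadic annuli, using the Gaussian factor to make every resulting series converge. (Here the $\vee$ in the statement is understood inside the integral.)

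First I would record two elementary bounds on $\psi$. By Lemma \ref{Lem1: rho-manifold}(b) together with the identity $\psi(x,\rho(x))=1$ from Lemma \ref{Lem1: rho-manifold}(c), one has $\psi(x,r)\lesi (r/\rho(x))^{2-n/q}$ for all $0<r\le\rho(x)$. For the opposite regime, since $V\in A_\vc$ the measure $V\,d\mu$ is doubling with respect to $\mu$, say $\int_{B(x,2r)}V\,d\mu\le C_V\int_{B(x,r)}V\,d\mu$; iterating this from the critical radius $\rho(x)$ and using $\psi(x,\rho(x))=1$ and $\mu(B(x,r))\le\mu(B(x,R))$ for $r\le R$, one gets $\psi(x,R)\lesi (R/\rho(x))^{N_1}$ for all $R\ge\rho(x)$, with $N_1:=2+\log_2 C_V$; in particular $N_1\ge 2>2-n/q$, and this $N_1$ will be the constant $N_0$ in (b).

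Next I would decompose the integral over the annuli $S_0:=B(x,\sq t)$ and $S_j:=B(x,2^j\sq t)\setminus B(x,2^{j-1}\sq t)$ for $j\ge1$. On $S_j$ one has $\exp(-d(x,y)^2/\alpha t)\lesi e^{-c4^j}$, while $\mu(B(x,\sq t))\vee\mu(B(y,\sq t))\ge \mu(B(x,\sq t))$; since $\int_{B(x,2^j\sq t)}V\,d\mu=\mu(B(x,2^j\sq t))(2^j\sq t)^{-2}\psi(x,2^j\sq t)$ and $\mu(B(x,2^j\sq t))\lesi 2^{jn}\mu(B(x,\sq t))$ by \eqref{doubling-manifold}, the $S_j$-part of the left-hand side is at most a constant times $e^{-c4^j}\,2^{j(n-2)}\,t^{-1}\,\psi(x,2^j\sq t)$.

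Finally I would insert the two power bounds for $\psi$ and sum. For (a), assume $\sq t\le c_0\rho(x)$: when $2^j\sq t\le\rho(x)$ use $\psi(x,2^j\sq t)\lesi 2^{j(2-n/q)}(\sq t/\rho(x))^{2-n/q}$, giving a summand $\lesi e^{-c4^j}2^{j(n-n/q)}t^{-1}(\sq t/\rho(x))^{2-n/q}$; when $2^j\sq t>\rho(x)$ use the $N_1$-bound and note that $(\sq t/\rho(x))^{N_1}\lesi (\sq t/\rho(x))^{2-n/q}$ with a constant depending on $c_0$, since $N_1>2-n/q$ and $\sq t/\rho(x)\le c_0$. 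Summing the resulting (geometric)$\times$(Gaussian) series yields (a). For (b), assume $\sq t\ge c_0\rho(x)$: now use $\psi(x,2^j\sq t)\lesi 2^{jN_1}(\sq t/\rho(x))^{N_1}$ whenever $2^j\sq t\ge\rho(x)$, which holds for all $j$ if $c_0\ge1$ and for all but boundedly many indices (depending on $c_0$) otherwise; the finitely many exceptional annuli contribute only a harmless bounded multiple of $t^{-1}(\sq t/\rho(x))^{N_0}$, and the main series converges, giving (b) with $N_0=N_1$. The only point needing a little care is the bookkeeping for these crossover annuli and the dependence of constants on $c_0$; since the Gaussian factor $e^{-c4^j}$ dominates all polynomial growth in $j$, I do not expect a genuine obstacle.
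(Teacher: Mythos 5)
Your argument is correct and is essentially the one behind the paper's citation of Dziuba\'nski--Zienkiewicz (the paper itself omits the proof, deferring to \cite{DZ}). The two ingredients you extract --- $\psi(x,r)\lesssim (r/\rho(x))^{2-n/q}$ for $r\leq\rho(x)$ from Lemma \ref{Lem1: rho-manifold}(b)--(c), and a polynomial upper bound $\psi(x,R)\lesssim (R/\rho(x))^{N_1}$ for $R\geq\rho(x)$ from the doubling of $V\,d\mu$ (which holds because $V\in A_\infty$) --- are exactly the inputs that make the dyadic-annulus decomposition close. The reduction $\mu(B(x,\sqrt t))\vee\mu(B(y,\sqrt t))\geq\mu(B(x,\sqrt t))$ is legitimate and matches how the lemma is actually invoked in the proof of Proposition \ref{Proposition2: heat kernel bound}. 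Your bookkeeping of the crossover annuli (finitely many of them, with the Gaussian factor crushing the polynomial growth) is the only mildly delicate point, and you have handled it correctly; the constants do depend on $c_0$, which is allowed by the statement.
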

\begin{proof}
	The proof of (i) and (ii) can be done in a similar way to that in \cite[Lemma 5.1]{DZ}, and we omit details.
\end{proof}

We are ready to give the proof of Proposition \ref{Proposition2: heat kernel bound}.

\begin{proof}[Proof of Proposition \ref{Proposition2: heat kernel bound}]
	Note that from the Gaussian upper bounds of $\widetilde{p}_{t}(x,y)$ and $p_{t}(x,y)$ we have
	$$
	|q_{t}(x,y)|\leq C\f{1}{\mu(B(x,\sqrt{t}))}\exp\Big(-\f{d(x,y)^2}{ct}\Big).
	$$
	Hence it suffices to prove (\ref{Difference heatkernelSchrodinger LBeltrami}) for $\rho(x)\geq \sqrt{t}$.
	
	It is well-known that by the perturbation formula we have
	\begin{equation}\label{perturbation formula}
	\begin{aligned}
	q_{t}(x,y)&=\int_0^t\int_X \widetilde{p}_s(x,z)V(z)p_{t-s}(z,y)d\mu(z)ds\\
	&=\int_0^{t/2}\int_X\ldots+\int_{t/2}^t\int_X\ldots:= I_1+I_2.
	\end{aligned}
	\end{equation}
	We take care of $I_1$ first. Note that since  $\widetilde{p}_s(x,z)$ and $p_{t-s}(z,y)$ satisfy Gaussian upper bounds, there exists $C, c>0$ so that for $0<s\leq t/2$,
	\begin{equation}\label{eq2-Sh-LB}
	\begin{aligned}
	\widetilde{p}_s(x,z)p_{t-s}(z,y)&\leq C\f{1}{\mu(B(x,\sqrt{s}))}\exp\Big(-\f{2d(x,z)^2}{cs}\Big) \f{1}{\mu(B(y,\sqrt{t-s}))}\exp\Big(-\f{d(z,y)^2}{c(t-s)}\Big)\\
	&\lesi \f{1}{\mu(B(x,\sqrt{s}))}\exp\Big(-\f{d(x,z)^2}{cs}\Big)\exp\Big(-\f{d(x,z)^2}{ct}\Big)\f{1}{\mu(B(y,\sqrt{t}))}\exp\Big(-\f{d(z,y)^2}{ct}\Big)\\
	&\lesi \f{1}{\mu(B(x,\sqrt{s}))}\exp\Big(-\f{d(x,z)^2}{cs}\Big)\f{1}{\mu(B(y,\sqrt{t}))}\exp\Big(-\f{d(x,y)^2}{ct}\Big).
	\end{aligned}
	\end{equation}
	Inserting this into the expression of $I_2$ and using \eqref{criticalfunction}, we obtain that
	$$
	\begin{aligned}
	I_1&\lesi \f{1}{\mu(B(y,\sqrt{t}))}\exp\Big(-\f{d(x,y)^2}{ct}\Big) \int_0^{t/2}\int_M\f{1}{\mu(B(x,\sqrt{s}))}\exp\Big(-\f{d(x,z)^2}{cs}\Big)V(z)d\mu(z)\\
	&\lesi \f{1}{\mu(B(x,\sqrt{t}))}\exp\Big(-\f{d(x,y)^2}{ct}\Big)\int_0^{t/2}\Big(\f{\sqrt{s}}{\rho(x)}\Big)^{2-n/q}\f{ds}{s}\\
	&\lesi \f{1}{\mu(B(x,\sqrt{t}))}\exp\Big(-\f{d(x,y)^2}{ct}\Big)\Big(\f{\sqrt{t}}{\rho(x)}\Big)^{2-n/q}.
	\end{aligned}
	$$
	Similarly we obtain that
	$$
	I_2\lesi \f{1}{\mu(B(x,\sqrt{t}))}\exp\Big(-\f{d(x,y)^2}{ct}\Big)\Big(\f{\sqrt{t}}{\rho(y)}\Big)^{2-n/q}.
	$$
	This together with Lemma \ref{Lem1: rho-manifold} gives, for $\rho(x)\geq \sqrt{t}$,
	$$
	\begin{aligned}
	I_2&\lesi \f{1}{\mu(B(x,\sqrt{t}))}\exp\Big(-\f{d(x,y)^2}{ct}\Big)\Big(\f{\sqrt{t}}{\rho(x)}\Big)^{2-n/q}\Big(\f{\rho(x)+d(x,y)}{\rho(x)}\Big)^{k_0(2-n/q)}\\
	&\lesi \f{1}{\mu(B(x,\sqrt{t}))}\exp\Big(-\f{d(x,y)^2}{ct}\Big)\Big(\f{\sqrt{t}}{\rho(x)}\Big)^{2-n/q}\Big(\f{\sqrt{t}+d(x,y)}{\sqrt{t}}\Big)^{k_0(2-n/q)}\\
	&\lesi \f{1}{\mu(B(x,\sqrt{t}))}\exp\Big(-\f{d(x,y)^2}{c't}\Big)\Big(\f{\sqrt{t}}{\rho(x)}\Big)^{2-n/q}.
	\end{aligned}
	$$
	This completes the proof of  (\ref{Difference heatkernelSchrodinger LBeltrami}).
\end{proof}

We have the following result which shows that $L$ satisfies (B3).
\begin{prop}\label{Proposition3: heat kernel bound}
	Let $L=-\Delta+V$ be a Schr\"odinger operator with $V \in RH_q\cap A_\vc, q>\max\{1,n/2\}$. Then for any $0<\delta<\min\{\delta_1, 2-n/q\}$ there exist $C$ and $c>0$ so that
	\begin{equation}\label{Difference heatkernelSchrodinger LBeltrami}
	|q_{t}(x,y)-q_{t}(\overline{x},y)|\leq C\min\left\{\Big(\f{d(x,\overline{x})}{\rho(y)}\Big)^{\delta}, \Big(\f{d(x,\overline{x})}{\sqrt{t}}\Big)^{\delta}\right\}\f{1}{\mu(B(x,\sqrt{t}))}\exp\Big(-\f{d(x,y)^2}{ct}\Big),
	\end{equation}
	for all $t>0$,  $d(x,\overline{x})<d(x,y)/4$ and $d(x,\overline{x})<\rho(x)$.
\end{prop}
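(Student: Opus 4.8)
The plan is to prove the estimate \eqref{Difference heatkernelSchrodinger LBeltrami} by differentiating the perturbation formula \eqref{perturbation formula} in the first spatial variable, exactly as was done for the size estimate in Proposition \ref{Proposition2: heat kernel bound}. Recall
\[
q_t(x,y)=\int_0^t\int_X \widetilde{p}_s(x,z)V(z)p_{t-s}(z,y)\,d\mu(z)\,ds,
\]
so that
\[
q_t(x,y)-q_t(\overline{x},y)=\int_0^t\int_X \bigl(\widetilde{p}_s(x,z)-\widetilde{p}_s(\overline{x},z)\bigr)V(z)p_{t-s}(z,y)\,d\mu(z)\,ds.
\]
Since both $\widetilde p_t$ and $p_t$ enjoy Gaussian bounds, and $\widetilde p_t$ enjoys the H\"older estimate \eqref{eq-H manifold} (equivalently \eqref{eq2-H manifold}), the strategy is first to reduce to the regime $\sqrt t\le \rho(x)$ (for $\sqrt t\ge \rho(x)$ the claimed bound follows from $|q_t(x,y)-q_t(\overline x,y)|\le |q_t(x,y)|+|q_t(\overline x,y)|$ combined with Proposition \ref{Proposition2: heat kernel bound} and the fact that in that range $(\sqrt t/(\sqrt t+\rho(x)))^{2-n/q}\sim 1\gtrsim (d(x,\overline x)/\sqrt t)^\delta$ because $d(x,\overline x)<\rho(x)\le\sqrt t$; note also that when $d(x,\overline x)\ge d(x,y)/8$, say, the inequality is trivial from the size bounds, so we may also assume $d(x,\overline x)$ small relative to $d(x,y)$).

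In the main regime $\sqrt t\le\rho(x)$ I would split the time integral at $t/2$ as in \eqref{perturbation formula}. For the piece $s\in(0,t/2)$ I put the H\"older difference on the first factor: by \eqref{eq-H manifold}, whenever $d(x,\overline x)\le (\sqrt s+d(x,z))/2$,
\[
|\widetilde p_s(x,z)-\widetilde p_s(\overline x,z)|\lesssim \Bigl(\tfrac{d(x,\overline x)}{\sqrt s}\Bigr)^{\delta_1}\tfrac{1}{\mu(B(x,\sqrt s))}\exp\!\bigl(-d(x,z)^2/cs\bigr),
\]
and when that condition fails one still controls the difference by the sum of the two Gaussians; then absorbing an $\exp(-d(x,z)^2/ct)$ factor to produce $\exp(-d(x,y)^2/ct)\,\mu(B(y,\sqrt t))^{-1}$ (as in \eqref{eq2-Sh-LB}) reduces the inner integral to
\[
\int_X \tfrac{1}{\mu(B(x,\sqrt s))}\exp\!\bigl(-d(x,z)^2/cs\bigr)V(z)\,d\mu(z)\lesssim s^{-1}\bigl(\sqrt s/\rho(x)\bigr)^{2-n/q}
\]
by Lemma \ref{Lem: V rho}(a). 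But we also have the cruder bound with $(d(x,\overline x)/\sqrt s)^{\delta_1}$ replaced by $(d(x,\overline x)/\sqrt s)^{\delta}$ for any $\delta\le\delta_1$ and, when $\delta< \delta_1$, with a convergent surplus power of $\sqrt s$; choosing $0<\delta<\min\{\delta_1,2-n/q\}$ the resulting $s$-integral $\int_0^{t/2}(d(x,\overline x)/\sqrt s)^\delta (\sqrt s/\rho(x))^{2-n/q}\,\tfrac{ds}{s}$ converges and is $\lesssim (d(x,\overline x)/\sqrt t)^{\delta}(\sqrt t/\rho(x))^{2-n/q}$; since $\sqrt t\le\rho(x)$ this is $\lesssim (d(x,\overline x)/\sqrt t)^{\delta}$, and also, using Lemma \ref{lem-criticalfunction} and $d(x,\overline x)<d(x,y)/4$ together with the critical-function comparison $\rho(x)+d(x,y)\sim\rho(y)+d(x,y)$, one converts it to the $(d(x,\overline x)/\rho(y))^{\delta}$ form just as in the last display of the proof of Proposition \ref{Proposition2: heat kernel bound}. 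For the piece $s\in(t/2,t)$ one writes $t-s=:r$, moves the H\"older difference onto $\widetilde p_s(x,z)-\widetilde p_s(\overline x,z)$ still (this factor is smooth down to $s\sim t$ and the difference is $\lesssim (d(x,\overline x)/\sqrt t)^{\delta}$ times a Gaussian), leaves $p_{r}(z,y)$ alone, integrates $z$ against the Gaussian in $p_r(z,y)$ first and $V(z)$ against the Gaussian coming from the $z$-variable of $\widetilde p_s$, applies Lemma \ref{Lem: V rho}(a) with the roles arranged so the gain is $(\sqrt r/\rho(y))^{2-n/q}$, and then the $r$-integral $\int_0^{t/2}(\sqrt r/\rho(y))^{2-n/q}\,dr/r\lesssim (\sqrt t/\rho(y))^{2-n/q}$ — finally reconverting via the critical-function inequalities to the claimed $\min\{(d(x,\overline x)/\rho(y))^\delta,(d(x,\overline x)/\sqrt t)^\delta\}$ prefactor.

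The main obstacle I anticipate is bookkeeping the Gaussian factors carefully enough to simultaneously produce the correct $\exp(-d(x,y)^2/ct)$ decay and both forms of the prefactor ($(d(x,\overline x)/\sqrt t)^\delta$ and $(d(x,\overline x)/\rho(y))^\delta$) in the correct range $d(x,\overline x)<\min\{d(x,y)/4,\rho(x)\}$; in particular one must be careful that the H\"older estimate \eqref{eq-H manifold} is only available when $d(x,\overline x)\le(\sqrt s+d(x,z))/2$, so on the complementary set (which, because $d(x,\overline x)<\rho(x)$ is small, forces $d(x,z)$ to be comparably small and hence $d(z,y)\sim d(x,y)$) one instead estimates the difference by the sum of two heat kernels and absorbs the factor $(d(x,\overline x)/\sqrt s)^\delta\ge$ a constant there. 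This is exactly the point where the proof is "completely analogous" to \cite[Lemma 5.1 and the proof of Proposition 5.x]{DZ}, and the cleanest write-up is to say the argument is a combination of the proof of Proposition \ref{Proposition2: heat kernel bound} with the H\"older estimate \eqref{eq-H manifold}, omitting the repetitive details.
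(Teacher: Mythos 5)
Your overall scaffolding (perturbation formula, split of the time integral at $t/2$, H\"older difference on $\widetilde p_s$ for the small-time piece, then Lemma \ref{Lem: V rho} to handle the $V$-integral) matches the paper's proof, but the opening reduction to $\sqrt t\le\rho(x)$ is incorrect, and the direction of the inequality you invoke gives it away. You claim that when $\sqrt t\ge\rho(x)$ the estimate follows from the size bound of Proposition \ref{Proposition2: heat kernel bound} because $(\sqrt t/(\sqrt t+\rho(x)))^{2-n/q}\sim 1\gtrsim (d(x,\overline x)/\sqrt t)^\delta$. But showing that the prefactor from the size bound \emph{dominates} the required prefactor is the opposite of what is needed: the target estimate asks for an upper bound involving $\min\{(d(x,\overline x)/\rho(y))^\delta,(d(x,\overline x)/\sqrt t)^\delta\}$, which can be arbitrarily small (take $d(x,\overline x)\ll\sqrt t$), while $|q_t(x,y)|+|q_t(\overline x,y)|$ is only controlled by a Gaussian with an $O(1)$ prefactor. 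So the triangle-inequality shortcut simply does not yield \eqref{Difference heatkernelSchrodinger LBeltrami} in that regime. The same issue undermines your ``$d(x,\overline x)\ge d(x,y)/8$ is trivial'' sub-reduction, since the size bound again does not produce the needed smallness.

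The paper avoids this by never reducing to $\sqrt t\le\rho(x)$: it splits the $s$-integral inside $I_1$ at $\rho(x)^2$ and uses Lemma \ref{Lem: V rho}(a) on $(0,\rho(x)^2)$ and Lemma \ref{Lem: V rho}(b) on $(\rho(x)^2,t/2)$, using as well the $(1+\sqrt t/\rho(y))^{-N}$ decay from (B1) for $p_{t-s}$ (and similarly within $I_2$). Your proposal never invokes part (b) of Lemma \ref{Lem: V rho} or the $N$-decay factor, precisely because the flawed reduction made you think the large-scale regime is free; to repair the argument you would need to reinstate both. Once the $J_2$-type term (using Lemma \ref{Lem: V rho}(b)) is added and the $(1+\sqrt t/\rho(y))^{-N}$ factor is used to trade between $\rho(x)$, $\rho(y)$ and $\sqrt t$ as in the paper's computation, the rest of your sketch (including the treatment of the H\"older-estimate restriction $d(x,\overline x)\le(\sqrt s+d(x,z))/2$ and the passage from $\rho(x)$ to $\rho(y)$ via Lemma \ref{lem-criticalfunction}) is essentially the paper's argument.
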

\begin{proof}
	By the perturbation formula, we have
	$$
	\begin{aligned}
	q_{t}(x,y)-q_{t}(\overline{x},y)&=\int_0^t\int_X (\widetilde{p}_s(x,z)-\widetilde{p}_s(\overline{x},z))V(z)p_{t-s}(z,y)d\mu(z)ds\\
	&=\int_0^{t/2}\int_X\ldots+\int_{t/2}^t\int_X\ldots:= I_1+I_2.
	\end{aligned}
	$$
	We now take care of $I_1$ first. To do this we write
	$$
	I_1=\int_0^{t/2}\int_{B(x,B(x,d(x,y)/2)}\ldots+\int_0^{t/2}\int_{X\backslash B(x,B(x,d(x,y)/2)}\ldots := I_{11}+I_{12}.
	$$
	Note that for $z\in B(x,d(x,y)/2)$, $d(z,y)\sim d(x,y)$. This, together with \eqref{eq2-H manifold} and the fact that $t-s\sim t$ for  $s\in (0,t/2)$ gives
	$$
	\begin{aligned}
	I_{11}&\lesi\int_0^{t/2}\int_{B(x,B(x,d(x,y)/2)}\Big(\f{d(x,\overline{x})}{\sqrt{s}}\Big)^{\delta}\Big)\f{1}{\mu(B(z,\sqrt{s}))}\Big[\exp\Big(-\f{d(x,z)^2}{cs}\Big)+\exp\Big(-\f{d(\overline{x},z)^2}{cs}\Big)\Big]V(z)\\
	& \ \ \ \ \ \times \f{1}{\mu(B(y,\sqrt{t}))}\exp\Big(-\f{d(x,y)^2}{ct}\Big)\Big(1+\f{\sqrt{t}}{\rho(y)}\Big)^{-N}d\mu(z)ds\\
	&\lesi\int_0^{\rho(x)^2}\int_{B(x,2d(x,\overline{x}))}\ldots+\int_{\rho(x)^2}^{t/2}\int_{B(x,2d(x,\overline{x}))}\ldots:= J_1+J_2,
	\end{aligned}
	$$
	where $N$ is a sufficiently large number which will be fixed later.
	
	Note that $\rho(x)\sim \rho(\overline{x})$ for $d(x,\overline{x})\leq \rho(x)$. This, together with Lemma \ref{Lem: V rho} (a) and $\delta<2-n/q$, gives
	$$
	\begin{aligned}
	J_1&\lesi \f{1}{\mu(B(y,\sqrt{t}))}\exp\Big(-\f{d(x,y)^2}{ct}\Big)\Big(1+\f{\sqrt{t}}{\rho(y)}\Big)^{-N-\delta}\int_0^{\rho(x)^2}\Big(\f{d(x,\overline{x})}{\sqrt{s}}\Big)^\delta\Big(\f{\sqrt{s}}{\rho(x)}\Big)^{2-n/q}\f{ds}{s}\\
	&\lesi \Big(\f{d(x,\overline{x})}{\rho(x)}\Big)^\delta\f{1}{\mu(B(y,\sqrt{t}))}\exp\Big(-\f{d(x,y)^2}{ct}\Big)\Big(1+\f{\sqrt{t}}{\rho(y)}\Big)^{-N-\delta}.
	\end{aligned}
	$$
	This along with Lemma \ref{Lem1: rho-manifold} implies that
	$$
	\begin{aligned}
	J_1&\lesi \Big(\f{d(x,\overline{x})}{\rho(y)}\Big)^\delta\f{1}{\mu(B(y,\sqrt{t}))}\exp\Big(-\f{d(x,y)^2}{ct}\Big)\Big(1+\f{d(x,y)}{\rho(y)}\Big)^{\delta k_0}\Big(1+\f{\sqrt{t}}{\rho(y)}\Big)^{-N-\delta}.
	\end{aligned}
	$$
	Taking $N=\delta k_0$, we have
	$$
	\Big(1+\f{d(x,y)}{\rho(y)}\Big)^{\delta k_0}\Big(1+\f{\sqrt{t}}{\rho(y)}\Big)^{-N}\lesi \Big(1+\f{d(x,y)}{\sqrt{t}}\Big)^{\delta k_0}.
	$$
	Hence,
	$$
	\begin{aligned}
	J_1&\lesi \Big(\f{d(x,\overline{x})}{\rho(y)}\Big)^\delta\f{1}{\mu(B(y,\sqrt{t}))}\exp\Big(-\f{d(x,y)^2}{ct}\Big)\Big(1+\f{d(x,y)}{\sqrt{t}}\Big)^{\delta k_0}\Big(1+\f{\sqrt{t}}{\rho(y)}\Big)^{-\delta}\\
	&\lesi \Big(\f{d(x,\overline{x})}{\rho(y)}\Big)^\delta\f{1}{\mu(B(y,\sqrt{t}))}\exp\Big(-\f{d(x,y)^2}{ct}\Big)\Big(1+\f{\sqrt{t}}{\rho(y)}\Big)^{-\delta}.
	\end{aligned}
	$$
	Similarly, by Lemma \ref{Lem: V rho} (b) and $N_0>2-n/q>\delta$, we have
	$$
	\begin{aligned}
	J_2&\lesi \f{1}{\mu(B(y,\sqrt{t}))}\exp\Big(-\f{d(x,y)^2}{ct}\Big)\Big(1+\f{\sqrt{t}}{\rho(y)}\Big)^{-N-\delta}\int_{\rho(x)^2}^{t/2}\Big(\f{d(x,\overline{x})}{\sqrt{s}}\Big)^\delta\Big(\f{\sqrt{s}}{\rho(x)}\Big)^{N_0}
	\f{ds}{s}\\
	&\lesi \f{1}{\mu(B(y,\sqrt{t}))}\exp\Big(-\f{d(x,y)^2}{ct}\Big)\Big(\f{d(x,\overline{x})}{\rho(x)}\Big)^\delta\Big(\f{\sqrt{t}}{\rho(x)}\Big)^{N_0-\delta}
	\Big(1+\f{\sqrt{t}}{\rho(y)}\Big)^{-N-\delta_0}\\
	&\lesi \f{1}{\mu(B(y,\sqrt{t}))}\exp\Big(-\f{d(x,y)^2}{ct}\Big)\Big(\f{d(x,\overline{x})}{\rho(x)}\Big)^\delta\Big(\f{\rho(y)}{\rho(x)}\Big)^{N_0-\delta}
	\Big(1+\f{\sqrt{t}}{\rho(y)}\Big)^{-N-\delta+N_0}.
	\end{aligned}
	$$
	We now take $N=N_0(k_0+1)$ and use the argument above to obtain that
	$$
	J_2\lesi \Big(\f{d(x,\overline{x})}{\rho(y)}\Big)^\delta\f{1}{\mu(B(y,\sqrt{t}))}\exp\Big(-\f{d(x,y)^2}{ct}\Big)\Big(1+\f{\sqrt{t}}{\rho(y)}\Big)^{-\delta}.
	$$
	Arguing similarly we obtain
	$$
	I_{12}\lesi \Big(\f{d(x,\overline{x})}{\rho(y)}\Big)^\delta\f{1}{\mu(B(y,\sqrt{t}))}\exp\Big(-\f{d(x,y)^2}{ct}\Big)\Big(1+\f{\sqrt{t}}{\rho(y)}\Big)^{-\delta}.
	$$
	Taking estimates $J_1, J_2$ and $I_{12}$ into account we conclude that
	\[
	\begin{aligned}
	I_1&\lesi \Big(\f{d(x,\overline{x})}{\rho(y)}\Big)^\delta\f{1}{\mu(B(y,\sqrt{t}))}\exp\Big(-\f{d(x,y)^2}{ct}\Big)\Big(1+\f{\sqrt{t}}{\rho(y)}\Big)^{-\delta}\\
	&\lesi \min\left\{\Big(\f{d(x,\overline{x})}{\sqrt{t}}\Big)^\delta,\Big(\f{d(x,\overline{x})}{\rho(y)}\Big)^\delta\right\}\f{1}{\mu(B(y,\sqrt{t}))}\exp\Big(-\f{d(x,y)^2}{ct}\Big).
	\end{aligned}
	\]
	
	We turn to the term $I_2$. By a change of variable we can rewrite
	$$
	I_2=\int_{0}^{t/2}\int_X (\widetilde{p}_{t-s}(x,z)-\widetilde{p}_{t-s}(\overline{x},z))V(z)p_{s}(z,y)d\mu(z)ds.
	$$
	Using (\ref{eq2-H manifold}), Proposition \ref{Proposition1: heat kernel bound} and the fact that $t-s\sim t$ for $s\in (0,t/2]$, we obtain 
	$$
	\begin{aligned}
	I_{2}&\lesi\int_0^{t/2}\int_{X}\Big(\f{d(x,\overline{x})}{\sqrt{t}}\Big)^{\delta}\f{1}{\mu(B(z,\sqrt{t}))}\exp\Big(-\f{d(x,z)^2}{ct}\Big)V(z)\\
	& \ \ \times \f{1}{\mu(B(y,\sqrt{s}))}\exp\Big(-\f{d(z,y)^2}{cs}\Big)\Big(1+\f{\sqrt{s}}{\rho(y)}\Big)^{-N}d\mu(z)ds\\
	&\ +\int_0^{t/2}\int_{X}\Big(\f{d(x,\overline{x})}{\sqrt{t}}\Big)^\delta\f{1}{\mu(B(z,\sqrt{t}))}\exp\Big(-\f{d(\overline{x},z)^2}{ct}\Big)V(z)\\
	& \ \ \times \f{1}{\mu(B(y,\sqrt{s}))}\exp\Big(-\f{d(z,y)^2}{cs}\Big)\Big(1+\f{\sqrt{s}}{\rho(y)}\Big)^{-N}d\mu(z)ds\\
	&=I_{21}+I_{22}.
	\end{aligned}
	$$
	Note that for $s\in (0,t/2]$ we have
	$$
	\exp\Big(-\f{d(x,z)^2}{ct}\Big)\exp\Big(-\f{d(z,y)^2}{cs}\Big)\lesi \exp\Big(-\f{d(x,y)^2}{c't}\Big)\exp\Big(-\f{d(z,y)^2}{c''s}\Big).
	$$
	Inserting this into the expression of $I_{21}$ we obtain, for $N>N_0$,
	\[
	\begin{aligned}
	I_{21}& \lesi\Big(\f{d(x,\overline{x})}{\sqrt{t}}\Big)^\delta\f{1}{\mu(B(x,\sqrt{t}))}\exp\Big(-\f{d(x,y)^2}{c't}\Big)\\
	& \ \ \times \int_0^{t/2}\int_{X}V(z) \f{1}{\mu(B(y,\sqrt{s}))}\exp\Big(-\f{d(z,y)^2}{c''s}\Big)\Big(1+\f{\sqrt{s}}{\rho(y)}\Big)^{-N}d\mu(z)ds.
	\end{aligned}
	\]
	If $t/2>\rho(y)$, then by Lemma \ref{Lem: V rho} we have
	\[
	\begin{aligned}
      \int_0^{t/2}\int_{X}V(z) &\f{1}{\mu(B(y,\sqrt{s}))}\exp\Big(-\f{d(z,y)^2}{c''s}\Big)\Big(1+\f{\sqrt{s}}{\rho(y)}\Big)^{-N}d\mu(z)ds\\
      &\lesi \int_0^{\rho(y)^2}\Big(\f{\sqrt{s}}{\rho(y)}\Big)^{2-n/q}\f{ds}{s}+ \int_{\rho(y)^2}^\vc\Big(\f{\sqrt{s}}{\rho(y)}\Big)^{N_0}\Big(\f{\sqrt{s}}{\rho(y)}\Big)^{-N}\f{ds}{s}\\
      &\lesi 1.
    \end{aligned}
    \]
    Hence,
    \[
   \begin{aligned}
    I_{21}&
    \lesi\Big(\f{d(x,\overline{x})}{\sqrt{t}}\Big)^\delta\f{1}{\mu(B(x,\sqrt{t}))}\exp\Big(-\f{d(x,y)^2}{c't}\Big)\\
    &
    \lesi\min\left\{\Big(\f{d(x,\overline{x})}{\sqrt{t}}\Big)^\delta,\Big(\f{d(x,\overline{x})}{\rho(y)}\Big)^\delta\right\}\f{1}{\mu(B(x,\sqrt{t}))}\exp\Big(-\f{d(x,y)^2}{c't}\Big).
    \end{aligned}
    \]
    If $t/2<\rho(y)$, then by Lemma \ref{Lem: V rho} (a) we obtain 	
	\[
	\begin{aligned}
	I_{21}&\lesi \Big(\f{d(x,\overline{x})}{\sqrt{t}}\Big)^\delta\f{1}{\mu(B(x,\sqrt{t}))}\exp\Big(-\f{d(x,y)^2}{c't}\Big)\int_0^{t/2}\Big(\f{\sqrt{s}}{\rho(y)}\Big)^{2-n/q}\Big(\f{\sqrt{s}}{\rho(y)}\Big)^{2-n/q-\delta}\f{ds}{s}\\
	&\lesi \Big(\f{d(x,\overline{x})}{\sqrt{t}}\Big)^\delta\f{1}{\mu(B(x,\sqrt{t}))}\exp\Big(-\f{d(x,y)^2}{ct}\Big)\Big(\f{\sqrt{t}}{\rho(y)}\Big)^\delta\\
	&\lesi \f{1}{\mu(B(x,\sqrt{t}))}\exp\Big(-\f{d(x,y)^2}{ct}\Big)\Big(\f{d(x,\overline{x})}{\rho(y)}\Big)^\delta\\
	 &
	 \lesi\min\left\{\Big(\f{d(x,\overline{x})}{\sqrt{t}}\Big)^\delta,\Big(\f{d(x,\overline{x})}{\rho(y)}\Big)^\delta\right\}\f{1}{\mu(B(x,\sqrt{t}))}\exp\Big(-\f{d(x,y)^2}{c't}\Big).
	\end{aligned}
	\]	
	By a  similar argument, we also have
	$$
	\begin{aligned}
	I_{22}&\lesi \f{1}{\mu(B(\overline{x},\sqrt{t}))}\exp\Big(-\f{d(\overline{x},y)^2}{ct}\Big)\min\left\{\Big(\f{d(x,\overline{x})}{\sqrt{t}}\Big)^\delta,\Big(\f{d(x,\overline{x})}{\rho(y)}\Big)^\delta\right\}\\
	&\lesi \f{1}{\mu(B(y,\sqrt{t}))}\exp\Big(-\f{d(\overline{x},y)^2}{ct}\Big)\min\left\{\Big(\f{d(x,\overline{x})}{\sqrt{t}}\Big)^\delta,\Big(\f{d(x,\overline{x})}{\rho(y)}\Big)^\delta\right\}\\
	&\lesi \f{1}{\mu(B(y,\sqrt{t}))}\exp\Big(-\f{d(x,y)^2}{ct}\Big)\Big(\f{d(x,\overline{x})}{\rho(y)}\Big)^\delta.
	\end{aligned}
	$$
	This completes our proof.
\end{proof}

\bigskip

{\bf Acknowledgement.} T. A. Bui and X. T. Duong were supported by the research grant ARC DP140100649 from the Australian Research Council. The authors would like to thank the referee for useful comments and suggestions.

\end{document}